\DeclareMathAlphabet\mathbb{U}{msb}{m}{n}
\tikzset{%
  treenode/.style = {shape=rectangle, rounded corners,%
                     draw, align=center,%
                     top color=white, bottom color=blue!20},%
  root/.style     = {treenode, font=\Large, bottom color=red!30},%
  env/.style      = {treenode, font=\ttfamily\normalsize},%
  dummy/.style    = {circle,draw,inner sep=0pt,minimum size=2mm}%
}%
\def\@testdef #1#2#3{%
  \def\reserved@a{#3}\expandafter \ifx \csname #1@#2\endcsname
  \reserved@a  \else
  \typeout{^^Jlabel #2 changed:^^J%
    \meaning\reserved@a^^J%
    \expandafter\meaning\csname #1@#2\endcsname^^J}%
  \@tempswatrue \fi}
\numberwithin{equation}{section} 
\numberwithin{figure}{section}
\newtheorem*{theorem*}{Theorem}%
\newtheorem{lemma}[equation]{Lemma}%
\newtheorem{proposition}[equation]{Proposition}%
\newtheorem{corollary}[equation]{Corollary}%
\newtheorem*{conjecture*}{Conjecture}%
\providecommand{\customgenericname}{}
\newcommand{\newcustomtheorem}[2]{%
  \newenvironment{#1}[1]
  {%
   \renewcommand\customgenericname{#2}%
   \renewcommand\theinnercustomgeneric{##1}%
   \innercustomgeneric
  }
  {\endinnercustomgeneric}
}
\theoremstyle{definition} 
\newtheorem{definition}[equation]{Definition}%
\newtheorem*{definition*}{Definition}%
\newtheorem{example}[equation]{Example}%
\newtheorem{remark}[equation]{Remark}%
\newtheorem{notation}[equation]{Notation}%
\newcommand{\set}[1]{\left\{#1\right\}}%
\newcommand{\sets}[2]{\left\{ #1 \;|\; #2\right\}}%
\newcommand{\longto}{\longrightarrow}%
\newcommand{\into}{\hookrightarrow}%
\newcommand{\vect}[1]{\text{\overrightharp{\ensuremath{#1}}}}
\newcommand{\Sym}{\ensuremath{\mathsf{Sym}}}%
\newcommand{\Set}{\ensuremath{\mathsf{Set}}}
\newcommand{\sSet}{\ensuremath{\mathsf{sSet}}}%
\newcommand{\Cat}{\mathsf{Cat}}
\newcommand{\Op}{\mathsf{Op}}%
\newcommand{\sOp}{\ensuremath{\mathsf{sOp}}}%
\newcommand{\Fun}{\mathsf{Fun}}
\newcommand{\Alg}{\mathsf{Alg}}
\DeclareMathOperator{\colim}{colim}%
\DeclareMathOperator{\Lan}{Lan}%
\DeclareMathOperator{\Ran}{Ran}%
\DeclareMathOperator{\Aut}{Aut}%
\DeclareMathOperator{\Iso}{Iso}
\newcommand{\F}{\ensuremath{\mathcal F}}
\newcommand{\V}{\ensuremath{\mathcal V}}
\renewcommand{\O}{\ensuremath{\mathcal O}}
\renewcommand{\P}{\ensuremath{\mathcal P}}
\newcommand{\C}{\ensuremath{\mathcal C}}
\newcommand{\G}{\ensuremath{\mathcal G}}
\newcommand{\SC}{\Sigma_{\mathfrak C}}
\newcommand{\OC}{\Omega_{\mathfrak C}}
\title{Homotopy theory of equivariant operads with fixed colors}
\author{Peter Bonventre, Lu\'is A. Pereira}%
\date{\today}
\begin{document}

\maketitle

\begin{abstract}
	We build model structures 
	on the category of equivariant simplicial operads with a fixed set of colors,
	with weak equivalences determined by families of subgroups.
	In particular, by specifying to the family of graph subgroups 
	(or, more generally, one of the indexing systems of Blumberg-Hill),
	we obtain model structures on the category of equivariant simplicial operads with a fixed set of colors,
	with weak equivalences determined by norm map data.
\end{abstract}

\tableofcontents

\section{Introduction}

This paper follows \cite{Per18}, \cite{BP21}, \cite{BP20}
as part of a larger project studying 
\emph{equivariant operads with norm maps}.
Here, norm maps are an extra piece of data 
(not present non-equivariantly)
which must be considered
when studying equivariant operads,
the importance of which was made clear by
Hill-Hopkins-Ravenel 
in their solution to the Kervaire invariant one problem \cite{HHR16}.

For concreteness, let us fix a finite group $G$
and consider the category
$\mathsf{sOp}^{G}_{\**} = \mathsf{Op}_{\**}(\mathsf{sSet}^G)$
or, in words,
the category of single-colored operads
in the category $\mathsf{sSet}^G$
of simplicial sets with a $G$-action.
We note that, for $\O \in \mathsf{sOp}^{G}_{\**}$,
the $n$-th operadic level $\O(n)$ has both a $\Sigma_n$-action and a $G$-action, commuting with each other, or, equivalently, 
a $G \times \Sigma_n$-action.
One of the key upshots
of Blumberg and Hill's work \cite{BH15}
is that the preferred notion of weak equivalence in $\mathsf{sOp}^{G}_{\**}$
is that of \emph{graph equivalence},
i.e. those maps 
$\O \to \mathcal{P}$
such that the fixed point maps
\begin{equation}\label{GRAPHEQ EQ}
\O(n)^{\Gamma} \xrightarrow{\sim} \mathcal{P}(n)^{\Gamma}
\qquad
\text{for }
\Gamma \leq G \times \Sigma_n
\text{ such that }
\Gamma \cap \Sigma_n = \**
\end{equation}
are Kan equivalences in $\mathsf{sSet}$.
Here, the term ``graph'' comes from a neat characterization of the $\Gamma$
as in \eqref{GRAPHEQ EQ}:
such a $\Gamma$ is necessarily the graph of a partial homomorphism
$\phi \colon H \to \Sigma_n$ for some subgroup $H \leq G$,
i.e.
$\Gamma = \sets{(h,\phi(h))}{h \in H}$. 
Note that one hence has a canonical isomorphism $\Gamma \simeq H$.
Briefly, the need to consider such \emph{graph subgroups} $\Gamma$ comes from the study of algebras.
Suppose $X \in \mathsf{sSet}^G$ is an algebra over
$\O$,
so that one has algebra multiplication maps
as on the left below
\begin{equation}\label{ALGNORM EQ}
\O(n) \times X^n \to X
\qquad \qquad
\O(n)^{\Gamma} \times N_{\Gamma}X \to X
\end{equation}
which are required to be 
$G \times \Sigma_n$-equivariant
(where the target $X$ is given the trivial $\Sigma_n$-action).
One then has induced 
$H$-equivariant maps 
on the right in \eqref{ALGNORM EQ},
where the \emph{norm object} $N_{\Gamma} X$
denotes $X^{n}$ with the $H$-action determined by $\Gamma \simeq H$.
In particular, each point
$\rho \in \O(n)^{\Gamma}$
determines a $H$-equivariant \emph{norm map}
$\rho \colon N_{\Gamma} X \to X$,  
and such maps turn out to be a key piece of data
for algebras in the equivariant context.
The reason to prefer the graph equivalences in \eqref{GRAPHEQ EQ}
is then to ensure that weakly equivalent operads
have equivalent ``spaces of norm maps'' $\O(n)^{\Gamma}$.

The existence of a model structure 
on $\mathsf{sOp}^G_{\**}$
with weak equivalences given by the graph equivalences
in \eqref{GRAPHEQ EQ}
was established independently as a particular case of either
\cite[Thm. I]{BP21} or \cite[Thm. 3.1]{GW18}.
It seems worth noting that these results are somewhat non formal.
On the one hand, using the description 
$\mathsf{sOp}_{\**}^G \simeq \mathsf{Op}_{\**}(\mathsf{sSet}^G)$
as single-colored operads enriched in 
$\mathsf{sSet}^G$,
one could obtain a model structure
on $\mathsf{sOp}_{\**}^G$
by applying \cite[Thm. 3.2]{BM03}
to the genuine/strong model structure on $\mathsf{sSet}^G$.
Alternatively, one also has an identification
$\mathsf{sOp}_{\**}^G \simeq 
\left(\mathsf{Op}_{\**}(\mathsf{sSet})\right)^G$
as $G$-objects on the category
$\mathsf{Op}_{\**}(\mathsf{sSet})$
of single-colored simplicial operads,
so one could likewise build a model structure on 
$\mathsf{sOp}_{\**}^G$
by applying \cite[Prop. 2.6]{Ste16}
to the category
$\mathsf{Op}_{\**}(\mathsf{sSet})$
with its usual model structure.
However, neither of these approaches
recovers the graph equivalences in \eqref{GRAPHEQ EQ},
instead leading to a weaker notion of equivalence for which the fixed points in \eqref{GRAPHEQ EQ}
only need to be weak equivalences for
$\Gamma \leq G \leq G \times \Sigma_n$.

The main result in this paper, 
Theorem \ref{THMI}, 
extends the graph equivalence model structures of
\cite[Thm. I]{BP21}, \cite[Thm. 3.1]{GW18}
from the context of single-colored operads 
to the context of $G$-operads
$\sOp^G_{\mathfrak C}$
with any fixed $G$-set of colors $\mathfrak{C}$.
In the direct sequel \cite{BP_ACOP},
we will then use the model structures in 
Theorem \ref{THMI} to obtain a Dwyer-Kan style model structure
\cite[Thm. \ref{AC-THMA}]{BP_ACOP}
on the larger category
$\sOp^G_\bullet = \left(\Op_\bullet(\sSet)\right)^G$
of $G$-objects on the category $\Op_\bullet(\sSet)$
of colored operads with varying sets of colors
(here, and throughout, we use $\bullet$ to indicate that colors are allowed to change;
note that in $\sOp^G_\bullet$ the color sets
thus have $G$-actions).
More precisely, one has inclusions
$\sOp^G_{\mathfrak C} \subset \sOp^G_\bullet$,
so that the model structures in Theorem \ref{THMI}
are almost (but not quite)
the restrictions of the model structure in 
\cite[Thm. \ref{AC-THMA}]{BP_ACOP},
cf. \cite[Prop. \ref{AC-FIBERGLMOD PROP}]{BP_ACOP},
extending the analogue stories for categories
$\mathsf{sCat}_\bullet$
\cite{Ber07b,BM13}
and operads $\mathsf{sOp}_\bullet$
\cite{Rob,CM13b,Cav}.

The motivation for 
\cite[Thm. \ref{AC-THMA}]{BP_ACOP},
and thus also for Theorem \ref{THMI},
is that it allows us to formulate 
a Quillen equivalence \cite{BP_TAS}
\begin{equation}\label{CMMAINTHM EQ}
\mathsf{dSet}^G \rightleftarrows \mathsf{sOp}_{\bullet}^G
\end{equation}
(where $\mathsf{dSet}^G$
is the category of equivariant dendroidal sets
of \cite{Per18}), thereby completing the 
generalization of the Cisinski-Moerdijk project of \cite{CM11,CM13a,CM13b} to the equivariant setting.
Further discussion on \eqref{CMMAINTHM EQ} can be found in the introduction to \cite{BP_TAS}.

We end this introduction by noting that, 
as in the single-colored case,
building the desired model structures on 
$\mathsf{sOp}^G_{\mathfrak C}$, $\mathsf{sOp}^G_{\bullet}$
is again non formal, 
and the colored case brings extra nuances.

Focusing first on
$\mathsf{sOp}^G_{\mathfrak C}$,
it is tempting to try to identify this as
either the category
$\mathsf{Op}_{\mathfrak C}(\mathsf{sSet}^G)$
of $\mathfrak{C}$-colored operads on $\mathsf{sSet}^G$,
or as the $G$-object category
$\left(\mathsf{Op}_{\mathfrak C}(\mathsf{sSet})\right)^G$.
However, both of these alternatives forget the 
$G$-action on $\mathfrak C$,
so that the identifications only hold if 
$G$-acts trivially on $\mathfrak{C}$.
As such, Theorem \ref{THMI}
can not be derived from neither 
\cite[Thm. 3.2]{BM03} nor \cite[Prop. 2.6]{Ste16}.

As for the larger category
$\mathsf{sOp}^G_{\bullet}$,
formal approaches again run into issues, albeit different ones.
First, since one only has an inclusion
$\mathsf{Op}_{\bullet}(\mathsf{sSet}^G)
\subsetneq \mathsf{sOp}_{\bullet}^G$,
rather than an equivalence,
the model structure on $\mathsf{sOp}_{\bullet}^G$
can not be built using \cite{Cav}.
Second, since, by definition,
$\mathsf{sOp}^G_{\bullet}$
is a category of $G$-objects
$\left(\mathsf{sOp}_{\bullet}\right)^G$,
one could try to build a model structure
on $\mathsf{sOp}^G_{\bullet}$
by applying \cite[Prop. 2.6]{Ste16}
to the model structure on $\mathsf{sOp}_\bullet$ from \cite{Cav}.
However, and as in the single-colored case,
this approach would not produce
the desired notion of equivalences suggested by graph subgroups.

\subsection{Main Results}

As noted in \eqref{GRAPHEQ EQ},
our preferred notion of equivalence of equivariant operads is determined by the graph subgroups.
However, throughout this paper we will find it technically 
no harder to work with a largely arbitrary collection of subgroups,
defined as follows.

\begin{definition}\label{FAM1ST DEF}
	A \emph{$(G,\Sigma)$-family} is a
	a collection
	$\mathcal{F} = \{\mathcal{F}_n\}_{n \leq 0}$,
	where each $\mathcal{F}_n$
	is a family of subgroups of $G \times \Sigma_n^{op}$.
\end{definition}

The use of $\Sigma_n^{op}$ rather than $\Sigma_n$
in Definition \ref{FAM1ST DEF} 
(and throughout the paper) 
is motivated by regarding $\Sigma$
as the category of corollas (trees with a single node; 
see \eqref{OPSSYMS EQ},\eqref{CSYM EQ1},\eqref{CSYM EQ2}),
and the fact that the dendroidal nerve \cite[\S 1]{MW07} of an operad is contravariant on the category of trees.


Recall that a colored operad $\O$
with color set $\mathfrak{C}$ has levels 
$
\O(\vect{C})=
\O(\mathfrak{c}_1,\cdots,\mathfrak{c}_n;\mathfrak{c}_0)$
indexed by tuples
$\vect{C} = (\mathfrak{c}_1,\cdots,\mathfrak{c}_n;\mathfrak{c}_0) = (\mathfrak c_i)_{0 \leq i \leq n}$
of elements in $\mathfrak{C}$, called \emph{$\mathfrak{C}$-profiles}.
Our operads will always be symmetric, i.e. equipped with associative and unital isomorphisms
$
\O(\mathfrak{c}_1,\cdots,\mathfrak{c}_n;\mathfrak{c}_0) \to 
\O(\mathfrak{c}_{\sigma(1)},\cdots,\mathfrak{c}_{\sigma(n)};\mathfrak{c}_0)
$
for each permutation $\sigma \in \Sigma_n$.
Moreover, if 
$\O \in \mathsf{sOp}^G_{\mathfrak{C}}$
is a $G$-equivariant operad, 
the color set $\mathfrak{C}$ is itself a $G$-set,
and one has additional associative and unital isomorphisms
$
\O(\mathfrak{c}_1,\cdots,\mathfrak{c}_n;\mathfrak{c}_0) \to 
\O(g\mathfrak{c}_{1},\cdots,g\mathfrak{c}_{n};g\mathfrak{c}_0)
$ for $g \in G$.
All together, one thus has isomorphisms
\begin{equation}\label{OPSSYMS EQ}
\O(\mathfrak{c}_1,\cdots,\mathfrak{c}_n;\mathfrak{c}_0)
\to 
\O(g \mathfrak{c}_{\sigma(1)},\cdots,g \mathfrak{c}_{\sigma(n)};g\mathfrak{c}_0)
\end{equation}
for $(g,\sigma) \in G \times \Sigma_n^{op}$.
Note that these isomorphisms 
are associated with an action of 
$G \times \Sigma_n^{op}$
on the set $\mathfrak{C}^{n+1}$ of $n$-ary profiles via
$(g,\sigma) (\mathfrak{c}_i)_{0\leq i \leq n}
= (g \mathfrak{c}_{\sigma(i)})_{0\leq i \leq n}$,
where we implicitly write $\sigma(0)=0$.
As such, we say that a subgroup 
$\Lambda \leq G \times \Sigma_n^{op}$
\emph{stabilizes} a profile $\vect{C}=(\mathfrak{c}_i)_{0 \leq i \leq n}$ if,
for any $(g,\sigma) \in \Lambda$,
it is 
$\mathfrak{c}_i = g \mathfrak{c}_{\sigma(i)}$ for all $0 \leq i \leq n$.
Note that,
for $\O \in \mathsf{sOp}^G_{\mathfrak{C}}$,
the level $\O(\vect{C})$ has a $\Lambda$-action.

\begin{customthm}{I}\label{THMI}
Let $G$ be a finite group. Fix a $G$-set of colors $\mathfrak{C}$
and a $(G,\Sigma)$-family $\F = \set{\F_n}_{n \geq 0}$.

Then there exists a model structure on
$\mathsf{sOp}^G_{\mathfrak{C}} =
\mathsf{Op}^G_{\mathfrak{C}}(\mathsf{sSet})$,
which we call the \emph{$\mathcal{F}$-model structure},
such that a map
$\mathcal{O} \to \mathcal{P}$
is a weak equivalence (resp. fibration) if the maps
\begin{equation}\label{THMI_EQ}
	\O(\vect{C})^{\Lambda} \to \mathcal{P}(\vect{C})^{\Lambda}
\end{equation}
are Kan equivalences (Kan fibrations)
in $\mathsf{sSet}$
for all $\mathfrak{C}$-profiles $\vect{C}$
and $\Lambda \in \F$ which stabilize $\vect{C}$.

More generally, a model structure on 
$\Op^G_{\mathfrak C}(\V)$
with weak equivalences and fibrations determined as in 
\eqref{THMI_EQ}
exists provided that:
\begin{enumerate}[label = (\roman*)]
	\item $\V$ is a cofibrantly generated model category
	such that the domains of the generating (trivial) cofibrations are small;
	\item for any finite group $G$, the $G$-object category $\V^G$ admits the genuine model structure (Definition \ref{GENMOD DEF});
	\item $(\V, \otimes)$ is a closed symmetric monoidal model category with cofibrant unit;
	\item $(\V, \otimes)$ satisfies the global monoid axiom (Definition \ref{GLOBMONAX_DEF});
	\item $(\V, \otimes)$ has cofibrant symmetric pushout powers (Definition \ref{CSPP_DEF}).
\end{enumerate}
\end{customthm}

The proof of Theorem \ref{THMI} is given in \S \ref{OPC_MS_SEC}.
As usual, the $\F$-model structure on 
$\Op^G_{\mathfrak C}(\V)$ is lifted from 
a similar $\F$-model structure on the simpler category
$\mathsf{Sym}^G_{\mathfrak C}(\V)$
of $\mathfrak{C}$-colored symmetric sequences
(Definition \ref{CSSYM DEF}),
which are objects with the isomorphism data as in 
\eqref{OPSSYMS EQ}, 
but lacking the operadic composition maps.

For certain special $(G,\Sigma)$-families
(motivated by the indexing systems of Blumberg-Hill \cite{BH15})
we will also show that cofibrant objects in 
$\Op^G_{\mathfrak C}(\V)$
forget to cofibrant objects in 
$\mathsf{Sym}^G_{\mathfrak C}(\V)$.

\begin{customthm}{II}\label{THMII}
	Suppose $\V$ satisfies the hypotheses in Theorem \ref{THMI},
	and let $\F$ be a pseudo indexing system (Definition \ref{PIS_DEF}).
	Then, if $\O \to \mathcal{P}$ in $\Op^G_{\mathfrak{C}}(\V)$ is a cofibration between cofibrant objects for the $\F$-model structure,
	so is the underlying
	map of symmetric sequences in $\Sym^G_{\mathfrak{C}}(\V)$.
\end{customthm}

Theorem \ref{THMII} is proven in \S \ref{INDSYS_SEC}
as a particular case of Proposition \ref{SIGMAG_COF PROP}.
The notion of pseudo indexing systems
extends that of weak indexing systems \cite[Def. 4.58]{BP21}
(or, equivalently, realizable sequences \cite[Def. 4.6]{GW18}),
which themselves extend the original 
indexing systems of Blumberg-Hill \cite{BH15}.
Notably, the graph subgroups in \eqref{GRAPHEQ EQ} form a (pseudo) indexing system.

\begin{remark}\label{RESTTOCATS REM}
	There are identifications
	$\Cat_\bullet^G(\V) \simeq \Op_\bullet^G(\V) \downarrow \**$ and
	$\Cat_{\mathfrak C}^G(\V) \simeq \Op_{\mathfrak C}^G(\V) \downarrow \**_{\mathfrak C}$,
	where $\**$ (resp. $\**_{\mathfrak C}$) denotes the terminal $\V$-category (with color set $\mathfrak C$),
	so the $\F$-model structures on $\Op_{\mathfrak C}^G(\V)$, $\Op_\bullet^G(\V)$
	induce model structures on $\Cat_{\mathfrak C}^G(\V)$, $\Cat_\bullet^G(\V)$.
	Since categories contain only unary operations,
	these latter model structures depend only on $\F_1$,
	which is identified with a family of subgroups of $G$ itself.

	Moreover, we note that the analogues 
	for $\Cat_{\mathfrak C}^G(\V)$
	of both Theorems \ref{THMI} and \ref{THMII}
	follow from our proofs without using
	the cofibrant pushout power condition (v)
	in Theorem \ref{THMI},
	and without additional restrictions on $\F_1$
	(i.e. no analogue of the pseudo indexing system condition  is needed).
	For details, see Remark \ref{CSPNTHI REM}.
	For a similar discussion concerning
	$\Cat_\bullet^G(\V)$ and
	\cite[Thm. \ref{AC-THMA}]{BP_ACOP},
	see \cite[Rem. \ref{AC-RESTTOCATS REM}]{BP_ACOP}.
\end{remark}

\begin{remark}\label{SEMI_REM}
	When working with operads, some authors (e.g. \cite{Spi,Whi17,WY18})
	discuss \emph{semi-model structures}.
	Briefly, these are a weakening of Quillen's original definition,
	where those factorization and lifting axioms
	that involve trivial cofibrations
	are only required to hold if the trivial cofibration 
	has cofibrant source \cite[\S 2.2]{WY18}.
	We note that, in particular, semi-model structures suffice for 
	performing 
	bifibrant replacements.
        
	The semi-model structure analogues of 
	Theorems \ref{THMI} and \ref{THMII}
	can be obtained by slight variants of our proofs
	without using the global monoid axiom (iv).
	For details, see Remark \ref{THMISM REM}.
\end{remark}

\begin{remark}\label{GTRIV REM}
	It may be tempting to think that if the group $G=\**$ is trivial
	one can omit the existence of genuine model structures assumption in (ii) of Theorem \ref{THMI}.
	However, that is not the case since, 
	even when $\F$ is the $(G,\Sigma)$-family of trivial subgroups
	(i.e. the projective case usually discussed in the literature),
	our arguments in the proofs of Theorems \ref{THMI},\ref{THMII}
	are still using the rather strong
	cofibrant pushout powers assumption (v).
	However, in this specific case 
	there are less restrictive sufficient conditions available in the literature, such as those in \cite[Thm. 1.1]{PS18}.
%
 \end{remark}

\subsection{Examples}\label{EXAMPLES SEC}

The examples of 
model categories satisfying 
all of conditions (i) through (v)
in Theorem \ref{THMI}
are fairly limited, 
mostly due to 
the cofibrant pushout powers axiom (v),
which is rather restrictive.
We further discuss the role of this condition 
in Remarks 
\ref{CPPWHY REM} and
\ref{SPNONEX REM} below.

Below we list all known examples of categories satisfying all the above conditions.
\begin{enumerate}[label = (\alph*)]
\item $(\mathsf{sSet},\times)$ or $(\mathsf{sSet}_{\**},\wedge)$
with the Kan model structure.
\item $(\mathsf{Top},\times)$ or $(\mathsf{Top}_{\**},\wedge)$
with the usual Serre model structure.
\item $(\mathsf{Set},\times)$ the category of sets with its canonical model structure,
where weak equivalences are the bijections and all maps are both cofibrations and fibrations.
\item $(\Cat,\times)$ the category of usual categories
with the ``folk'' or canonical model structure (e.g. \cite{Rez})
where weak equivalences are the equivalences of categories,
cofibrations are the functors which are injective on objects,
and fibrations are the isofibrations.
\end{enumerate}
In all these cases, conditions (i) and (iii) 
are well known.
Moreover,
the existence of the genuine model structures on $\V^G$ for condition (ii)
and the global monoid axiom for condition (iv)
all follow from Proposition \ref{WEAKCELL PROP},
since these $(\V,\otimes)$ satisfy the usual monoid axiom and
are readily seen to satisfy the conditions in Definition \ref{WEAKCELL DEF}.

Lastly, we discuss the cofibrant symmetric pushout powers condition in (v).
In case (a) this was shown in 
\cite[Ex. 6.19]{BP21}.
Case (b) is a consequence of case (a), 
since the generating (trivial) cofibrations of (b) are geometric realizations of those in (a),
and by \cite[Rem. 6.17]{BP21}
it suffices to verify the cofibrant pushout power condition
on generating sets.
Case (c) is straightforward and left as an exercise 
(the ``hardest'' step is the observation that fixed points of isomorphisms are isomorphisms).
For case (d), the only non obvious claim is that
if $u$ is a trivial cofibration in $\mathsf{Cat}$
then the pushout product map $u^{\square n}$
is a $\Sigma_n$-genuine trivial cofibration in $\mathsf{Cat}^{\Sigma_n}$.
By \cite[Rem. 6.17]{BP21}
we can assume that $u = \left(\{0\} \to (0\rightleftarrows 1)\right)$,
i.e. that u is the map from a singleton to the walking isomorphism category,
as this is the only generating trivial cofibration of 
$\mathsf{Cat}$.
One can then either repeat the argument in \cite[Ex. 6.19]{BP21}
or compute $u^{\square n}$ directly.
The target of $u^{\square n}$
is the contractible groupoid on the set $\{0,1\}^{\times n}$,
so that $u^{\square n}$
is the inclusion of the full subcategory of 
$\{0,1\}^{\times n}$
with $(1,1,\cdots,1)$ removed.
It then follows that $u^{\square n}$
is a pushout of the map
$\Sigma_n/\Sigma_n \cdot (0 \rightleftarrows 1)$.

\begin{remark}
        These examples also satisfy the axioms necessary for \cite[Thm. \ref{AC-THMA}]{BP_ACOP},
        yielding a Dwyer-Kan model structure on $\Op_\bullet^G(\V)$, cf \cite[\S \ref{AC-EXAMPLES SEC}]{BP_ACOP}.
\end{remark}

\begin{remark}\label{CPPWHY REM}
	As noted above, the cofibrant pushout powers
	condition (v) is the most restrictive 
	out of all the conditions in 
	Theorem \ref{THMI}. 
	Nonetheless, we chose to use this property 
	since it has two very convenient bootstrapping properties.
	Firstly, as noted in the previous discussion, \cite[Rem. 6.17]{BP21}
	says that the cofibrant pushout powers condition needs only be checked on generating (trivial cofibrations).
	Secondly, the cofibrant pushout powers condition 
	can be used to deduce more complex
	``$\G$-cofibrant pushout powers'' analogue conditions for any groupoid $\G$,
	as formulated in Proposition \ref{SIGMAWRGF PROP}.
	
	In our view, the main obstacle to generalizing our results is the identification of 
	a substitute for the cofibrant pushout powers condition (v)
	which has similar bootstrapping properties.
\end{remark}

We end this section by discussing a noteworthy 
example for which our results do not apply.

\begin{remark}\label{SPNONEX REM}
	The category $(\mathsf{Sp}^{\Sigma}(\mathsf{sSet}),\wedge)$
	of symmetric spectra (on simplicial sets),
	with the positive $S$ model structure,
	satisfies most of the axioms in Theorem
	\ref{THMI} (and \cite[Thm. \ref{AC-THMA}]{BP_ACOP}), 
	with the exceptions being 
	the cofibrant unit requirement in (iii)
	and the cofibrant pushout powers axiom in (v).
	However, we believe that with some care 
	both of these problems could be sidestepped
	if necessary. 
	On the one hand, \cite{GV12}
	showed that the non-cofibrancy of the unit 
	does not prevent the existence of fixed color model structures
	and, on the other hand, 
	the second author showed in \cite{Pe16}
	that pushout powers $u^{\square n}$ of positive $S$ cofibrations $u$
	in $(\mathsf{Sp}^{\Sigma}(\mathsf{sSet}),\wedge)$
	satisfy a ``lax-$\Sigma_n$-cofibrancy'' condition,
	which enjoys analogues of the bootstrapping conditions
	in Remark \ref{CPPWHY REM}
	(in fact, our key result concerning cofibrant pushout powers,
	Proposition \ref{SIGMAWRGF PROP}, and its direct precursor
	\cite[Prop. 6.25]{BP21}
	were originally inspired by \cite[Thm. 1.2]{Pe16}).

	As such, we believe that symmetric spectra
	$\mathsf{Sp}^{\Sigma}(\mathsf{sSet})$
	likely satisfy a close analogue of Theorems 
	\ref{THMI}, \ref{THMII} (and \cite[Thm. \ref{AC-THMA}]{BP_ACOP}).
	However, such results 
	would be fundamentally unsatisfying, 
	since the resulting notion of weak equivalence on
	$G$-symmetric spectra
	$\left(\mathsf{Sp}^{\Sigma}(\mathsf{sSet})\right)^G$
	does not match the correct notion of 
	\emph{genuine equivalences of $G$-spectra}.
	More precisely, the latter equivalences are a localization of the former, so that the direct analogues of 
	Theorems \ref{THMI}, \ref{THMII} (and \cite[Thm. \ref{AC-THMA}]{BP_ACOP})
	would at best represent only an intermediate step
	towards the ``genuine'' results
	for equivariant colored spectral operads.

	More generally,
	for an arbitrary model category $\V$,
	the initial choice of the $(G,\Sigma)$-family $\F$ in  
	Definition \ref{FAM1ST DEF}
	should be replaced with a choice of model structures on
	$\V^{G \times \Sigma_n^{op}}$
	for each $n \geq 0$.	 
\end{remark}

\subsection{Outline}

We start in \S \ref{PRE SEC} by discussing some preliminary notions that will be needed throughout.
As noted in the introduction to 
\S \ref{PRE SEC}, our treatment will be simplified by 
using an ``all colors'' approach,
working with the category $\Op_\bullet(\V)$ of all colored operads,
regarded as suitably ``fibered'' over the category $\mathsf{Set}$
of sets.
To that end, 
\S \ref{GROTFIB SEC} recalls the necessary
notion of Grothendieck fibration, 
while \S \ref{FIBCAT_SEC}
discusses how the notions of adjunction and monad 
interact with such fibrations.

\S \ref{ECO_SEC} then applies the abstract setup in \S \ref{PRE SEC}
to discuss equivariant colored symmetric sequences and operads. 
\S \ref{EQCOSYMSEQ SEC}
explores the category
$\mathsf{Sym}_{\bullet}(\V)$
of all colored symmetric sequences,
with a highlight being
Proposition \ref{EQUIVFNCON PROP},
which shows that the category
$\mathsf{Sym}^G_{\mathfrak{C}}(\V)$
of $G$-symmetric sequences with a fixed $G$-set of objects
$\mathfrak{C}$ can be described as a presheaf category.
In \S \ref{REPFUN_SEC} we prove
Proposition \ref{REPALTDESC PROP},
which provides a convenient description of the representable functors in 
$\mathsf{Sym}^G_{\mathfrak{C}} = \mathsf{Sym}^G_{\mathfrak{C}}(\mathsf{Set})$.
Lastly, \S \ref{EQCOSYMOP SEC}
briefly describes the category 
$\mathsf{Op}_{\bullet}(\V)$ 
of all colored operads as the algebras over a ``fibered monad''
on  
$\mathsf{Sym}_{\bullet}(\V)$,
and unpacks the abstract discussion in \S \ref{FIBCAT_SEC}
so as to likewise describe the category 
$\mathsf{Op}_{\bullet}^G(\V)$ 
of all equivariant colored operads
as algebras on 
$\mathsf{Sym}^G_{\bullet}(\V)$.

\S \ref{EHT_SEC} develops the equivariant homotopy theory
needed to prove of Theorems \ref{THMI} and \ref{THMII}.
First, \S \ref{GMA_SEC} introduces the global monoid axiom featured in Theorem \ref{THMI}.
Then, in \S \ref{FGPP_SEC}, 
we extend 
the work 
about equivariant model structures determined by families of subgroups in \cite[\S 6]{BP21}
from the context of groups to that of groupoids.
This culminates in 
Proposition \ref{SIGMAWRGF PROP},
which concerns the properties of pushout powers $f^{\square n}$,
and is one of the key technical results in the paper. 

Lastly, \S \ref{FIXCOL SEC}
is dedicated to proving our two main results, 
Theorems \ref{THMI} and \ref{THMII}.
\S \ref{SYMC_MS_SEC} first specifies the theory in
\S \ref{FGPP_SEC}
to obtain model structures on the categories
$\mathsf{Sym}^G_{\mathfrak{C}}(\V)$ of symmetric sequences with fixed colors.
These are then used in \S \ref{OPC_MS_SEC}
to obtain transferred model structures on the 
categories
$\mathsf{Op}^G_{\mathfrak{C}}(\V)$
of operads with fixed colors, establishing Theorem \ref{THMI}.
To finish, \S \ref{INDSYS_SEC} proves Theorem \ref{THMII}
via a more careful analysis of the argument in the proof of 
Theorem \ref{THMI}.


In Appendix \ref{MONAD_APDX},
we fill in some technical work that was postponed in \S \ref{ECO_SEC} and \S \ref{FIXCOL SEC},
namely the full description of the ``free operad monad''
from \S \ref{EQCOSYMSEQ SEC},
and the proof of Lemma \ref{OURE LEM},
which provides the key filtrations used in the proofs of
Theorems \ref{THMI} and \ref{THMII}.

\section{Preliminaries}\label{PRE SEC}


Much as in the non-equivariant case,
the first step towards the
construction of a model structure on the category $\Op_\bullet^G(\V)$ of operads on all sets of colors
is to build model structures on each 
fixed color subcategory $\Op_{\mathfrak C}^G(\V)$.
However, the equivariant setting presents some technical challenges that will 
require us to somewhat repackage the non-equivariant narrative of
\cite{CM13b},\cite{Cav}.

To see why, recall that \cite{CM13b},\cite{Cav}
follow a ``work color by color and then assemble'' strategy.
More precisely, first the model structures on fixed color operads 
$\mathsf{Op}_{\mathfrak{C}}(\V)$
are built by identifying these as algebras over a monad
$\mathbb{F}_{\mathfrak{C}}$
on the category $\mathsf{Sym}_{\mathfrak{C}}(\V)$
of fixed color symmetric sequences.
As such, maps of operads that do not fix colors
only appear afterward when assembling the model structure on the full category $\mathsf{Op}_{\bullet}(\V)$.

However, when working equivariantly, 
while the maps in the fixed $G$-set of objects categories 
$\mathsf{Op}^G_{\mathfrak{C}}(\V)$ do fix colors, 
the $G$-action on \emph{objects}
$\O \in \mathsf{Op}^G_{\mathfrak{C}}(\V)$
involves maps of operads $\O \xrightarrow{g} \O$
that need not fix colors (unless $\mathfrak{C}$ is a trivial $G$-set).
In other words, when discussing equivariant operads,
even the fixed color categories 
$\mathsf{Op}^G_{\mathfrak{C}}(\V)$
require color change data.
Due to this issue, 
our approach will be that the transition from the non-equivariant to the equivariant case is easier to describe 
if we regard the ``all colors'' framework as the primary framework,
and then restrict to color fixed operads only when needed.

More explicitly, the basis of our approach is to combine the 
fixed color symmetric sequence categories
$\mathsf{Sym}_{\mathfrak{C}}(\V)$ for all color sets 
$\mathfrak{C} \in \mathsf{Set}$
(and change of color data between them)
into a single category $\mathsf{Sym}_{\bullet}(\V)$
(Definition \ref{CSSYM DEF}).
There is then a Grothendieck fibration
$\mathsf{Sym}_{\bullet}(\V) \to \mathsf{Set}$
which records the underlying set of colors,
and whose fiber over $\mathfrak{C} \in \mathsf{Set}$ is 
$\mathsf{Sym}_{\mathfrak{C}}(\V)$.
Similarly, the monads $\mathbb{F}_{\mathfrak{C}}$
on $\mathsf{Sym}_{\mathfrak{C}}(\mathcal{V})$
(and change of color data between them) 
assemble into a single monad $\mathbb{F}$
on $\mathsf{Sym}_{\bullet}(\V)$
(Definition \ref{FREEOP DEF})
which is suitably compatible with 
the Grothendieck fibration 
$\mathsf{Sym}_{\bullet}(\V) \to \mathsf{Set}$,
and, by considering (a suitable subcategory of) algebras over $\mathbb{F}$,
one obtains the category $\mathsf{Op}_{\bullet}(\V)$
of colored operads for all colors.
These fit together as on the left below.
\begin{equation}\label{FIBADJMON EQ}
\begin{tikzcd}[row sep = small, column sep = tiny]
\Sym_\bullet(\V) \arrow[rr, "\mathbb F"] \arrow[dr]
&&
\Op_\bullet(\V) \arrow[dl]
&&& 
\Sym^G_\bullet(\V) \arrow[rr, "\mathbb F^G"] \arrow[dr]
&&
\Op^G_\bullet(\V) \arrow[dl]
\\
&
\mathsf{Set}
&
&&& 
&
\mathsf{Set}^G
\end{tikzcd}
\end{equation}
Within this ``all colors'' framework, passing to the equivariant case is simply a matter of applying $G$-objects throughout as on the right above, 
so that one has a Grothendieck fibration
$\mathsf{Sym}_{\bullet}^G \to \mathsf{Set}^G$
with a compatible monad $\mathbb{F}^G$
from which one obtains the category 
$\mathsf{Op}_{\bullet}(\V)$
of $G$-equivariant colored operads for all colors.

The plan for this preliminary section is then as follows.
\S \ref{GROTFIB SEC} recalls the notion of Grothendieck fibration and
introduces some related constructions that are used throughout.
\S \ref{FIBCAT_SEC} discusses how the notions of adjunction and monad interact with Grothendieck fibrations,
which will allow us in \S \ref{ECO_SEC}
to regard the monads
(and associated adjunctions)
in \eqref{FIBADJMON EQ}
as being suitably fibered over $\mathsf{Set}$ and $\mathsf{Set}^G$.

\begin{remark}
        As an aside,
        we note that our discussion of fibered category theory will
        also streamline our work in the sequel \cite{BP_TAS}.
        Therein, we will need to establish a Quillen equivalence
        $\mathsf{PreOp}^G \rightleftarrows \mathsf{sOp}^G$
        between the category of (simplicial) preoperads and
        the category of simplicial operads.
        In that work, the claim that the adjunction is Quillen 
        will be greatly simplified by noting that in both categories 
        the generating (trivial) cofibrations
        can be described using a ``fibered simplicial cotensoring''.
\end{remark}

\subsection{Grothendieck fibrations}\label{GROTFIB SEC}

Recall that a functor 
$\pi \colon \mathcal{C} \to \mathcal{B}$
is called a \emph{Grothendieck fibration} if,
for all arrows
$\varphi \colon b' \to b$ in $\mathcal{B}$
and $c \in \mathcal{C}$ such that $\pi(c) = b$,
there exists a \emph{cartesian arrow}
$\varphi^{\**}c \to c$
lifting $\varphi$,
meaning that for any choice of solid arrows
\begin{equation}\label{CARTARDEF EQ}
\begin{tikzcd}
c'' \ar{rr} \ar[dashed]{rd}[swap]{\exists!} 
&&
c
&
b'' \ar{rr} \ar{rd} 
&&
b
\\
& \varphi^{\**} c \ar{ru}
&
&
& b'\ar{ru}[swap]{\varphi}
&
\end{tikzcd}
\end{equation}
such that the right diagram commutes and 
$c'' \to c$ lifts $b'' \to b$,
there exists a unique dashed arrow
$c'' \to \varphi^{\**} c$ lifting $b'' \to b'$
and making the left diagram commute.

A \emph{cleavage} of $\pi$ is a fixed choice of cartesian arrows
$\varphi^{\**} c \to c$
for each $\varphi \colon b' \to b$ and $c$ with $\pi(c)=b$.
Note that, writing $\mathcal{C}_b$ for the fiber over $b \in \mathcal{B}$, a cleavage determines functors
$\varphi^{\**} \colon \mathcal{C}_b \to \mathcal{C}_{b'}$
for each $\varphi \colon b' \to b$.

Dually, if $\pi^{op} \colon \mathcal{C}^{op} \to \mathcal{B}^{op}$
is a Grothendieck fibration,
we say that $\pi$ is a \emph{Grothendieck opfibration}.
More explicitly, this means that, for any arrow
$\varphi \colon b \to b'$ in $\mathcal{B}$
and $c \in \mathcal{C}$ such that $\pi(c) = b$,
there exists a \emph{cocartesian arrow}
$c \to \varphi_! c$ lifting $\varphi$
and satisfying the dual of the universal property in 
\eqref{CARTARDEF EQ}.
A cleavage of an opfibration is similarly defined as a 
choice of cocartesian arrows $c \to \varphi_!c$.

\begin{notation}\label{MAPSDEC NOT}
	Given any functor of categories
	$\pi \colon \mathcal{C} \to \mathcal{B}$,
	one has a natural decomposition of mapping sets
	\begin{equation}
	\mathcal{C}(c',c) = 
	\coprod_{f \in \mathcal{B}(\pi(c'),\pi(c))}
	\mathcal{C}_{\varphi}\left(c',c \right)
	\end{equation}
	where $\mathcal{C}_{\varphi}\left(c',c \right)$ consists of the arrows projecting to $\varphi$.
\end{notation}

\begin{remark}\label{CARTCHAR REM}
	Specifying to the case $b'' = b'$ in 
	\eqref{CARTARDEF EQ},
	one has that,
	when $\pi$ is a Grothendieck fibration, the contravariant functors
	\begin{equation}\label{FIXPIREP EQ}
	\mathcal{C}_{\varphi}(-,c)
	\colon
	\mathcal{C}_{b'} 
	\to
	\mathsf{Set}
	\end{equation}
	are represented by (some choice of) $\varphi^{\**}c$.
	Moreover, note that under the representing isomorphism
	$\mathcal{C}_{b'}(\varphi^{\**}c,\varphi^{\**}c)
	\simeq \mathcal{C}_{\varphi}(\varphi^{\**}c,c)$
	the identity
	$\varphi^{\**}c \xrightarrow{=} \varphi^{\**}c$
	yields the canonical map
	$\varphi^{\**}c \to c$ over $\varphi$.
\end{remark}

\begin{remark}\label{COMPSEMI REM}
	The condition in \eqref{CARTARDEF EQ}
	is stronger than the representability of 
	\eqref{FIXPIREP EQ}.
	More precisely, let us say an arrow 
	$\varphi^{\**}c \to c$
	is weakly cartesian if it represents \eqref{FIXPIREP EQ},
	and that  
	$\pi \colon \mathcal{C} \to \mathcal{B}$
	is a weak Grothendieck fibration if it admits all weakly cartesian arrows.
	Then $\pi$ is in fact a Grothendieck fibration,
	i.e. the weakly cartesian arrows 
	$\varphi^{\**}c \to c$
	satisfy the stronger condition in \eqref{CARTARDEF EQ},
	iff the composites of weakly cartesian arrows are again weakly cartesian, 
	i.e. if 
	for any composable arrows
	$b'' \xrightarrow{\psi} b' \xrightarrow{\varphi} b$
	and $c$ with $c \in \mathcal{C}_b$ it is
	$\psi^{\**} \varphi^{\**} c \simeq 
	\left( \varphi \psi \right)^{\**} c$,
	where the isomorphism is in $\mathcal{C}_b$.
\end{remark}

\begin{remark}\label{ALSOOPADJ REM}
	As previously noted, a cleavage of a Grothendieck fibration $\pi \colon \mathcal{C} \to \mathcal{B}$ determines functors 
	$\varphi^{\**} \colon \mathcal{C}_b \to \mathcal{C}_{b'}$
	for each arrow $\varphi \colon b' \to b$.
	In addition, 
	the claim that $\pi$ is also an opfibration is then equivalent to the existence of left adjoints
	\[
	\varphi_! \colon
	\mathcal{C}_{b'}
	\rightleftarrows
	\mathcal{C}_{b}
	\colon \varphi^{\**}
	\]
	for all arrows $\varphi \colon b' \to b$ in $\mathcal{B}$.
	Note that the required condition that the functors
	$\varphi_! \psi_!$ and
	$(\varphi \psi)_!$
	are naturally isomorphic (cf. Remark \ref{COMPSEMI REM})
	is automatic, 
	as these are left adjoints to 
	$\psi^{\**} \varphi^{\**} \simeq 
	\left( \varphi \psi \right)^{\**}$.
\end{remark}

\begin{remark}\label{FUNISGROTH REM}
	Let $\pi \colon \mathcal{C} \to \mathcal{B}$
	be a Grothendieck fibration and
	$I$ a small category.
	Writing $\mathcal{C}^{I}$, $\mathcal{B}^{I}$ for the categories of functors 
	$I \to \mathcal{C}$, $I \to \mathcal{B}$,
	the functor
	$\pi^I \colon \mathcal{C}^I \to \mathcal{B}^I$
	is again a Grothendieck fibration, with cartesian arrows in $\mathcal{C}^I$ the natural transformations built of cartesian arrows in $\mathcal{C}$.
\end{remark}

\begin{notation}\label{GROTHCONS NOT}
	Given a functor $\mathcal{B} \to \mathsf{Cat}$
	let us write
	$\mathcal{C}_b \in \mathsf{Cat}$
	for the image of $b \in \mathcal{B}$ and 
	$\varphi_! \colon \mathcal{C}_{b} \to \mathcal{C}_{b'}$
	for the functor induced by
	$\varphi \colon b \to b'$.
	We then write 
	$\mathcal{B} \ltimes \mathcal{C}_{\bullet}$
	for the associated (covariant) Grothendieck construction.
	
	More explicitly, $\mathcal{B} \ltimes \mathcal{C}_{\bullet}$ is
	the category whose objects are pairs 
	$(b,c)$ with $b \in \mathcal{B}$ and $c \in \mathcal{C}_b$,
	and with an arrow
	$(b,c) \to (b',c')$
	given by an arrow 
	$\varphi \colon b \to b'$ in $\mathcal{B}$
	together with an arrow
	$f \colon \varphi_! c \to c'$ in $\mathcal{C}_{b'}$.
	Note that the composite of 
	$f \colon \varphi_! c \to c'$ and
	$f' \colon \psi_! c' \to c''$ is given by
	\[
	\psi_! \varphi_! c \xrightarrow{\psi_!f}
	\psi_! c' \xrightarrow{f'}
	c''.
	\]
	Lastly, note that the natural projection
	$\pi \colon \mathcal{B} \ltimes \mathcal{C}_{\bullet} \to \mathcal{B}$
	is naturally a Grothendieck opfibration.
\end{notation}

\begin{remark}\label{GROTHUNPCK REM}
	It can be helpful to simplify notation 
	and write elements $(b,c)$ of $\mathcal{B} \ltimes \mathcal{C}_{\bullet}$ simply as $c$.
	Under this convention, we depict arrows in 
	$\mathcal{B} \ltimes \mathcal{C}_{\bullet}$
	as composites $c \rightsquigarrow \varphi_! c \xrightarrow{f} c'$
	where 
	$c \rightsquigarrow \varphi_! c$
	denotes the cocartesian arrow from $c$ to $\varphi_! c$
	and $f$ is the fiber arrow.
	Composites of either two cocartesian or two fiber arrows
	work as obvious:
	$c \rightsquigarrow
	\varphi_! c 
	\rightsquigarrow
	\psi_! \varphi_! c$
	equals 
	$c \rightsquigarrow
	(\psi \varphi_!) c$
	while
	$c \xrightarrow{f} c' \xrightarrow{f'} c''$
	equals
	$c \xrightarrow{f'f} c''$.
	The only non-obvious composites are then those of the form
	$c \xrightarrow{f} c' \rightsquigarrow \varphi_! c'$,
	which are determined by the commutativity of the square
	\begin{equation}\label{GROTHUNPCK EQ}
	\begin{tikzcd}
	c \ar{d}[swap]{f} 
	\arrow[rightsquigarrow]{r}
	&
	\varphi_! c \arrow{d}{\varphi_! f}
	\\
	c' \arrow[rightsquigarrow]{r} &
	\varphi_! c'
	\end{tikzcd}
	\end{equation}
\end{remark}

\begin{remark}\label{SPLITOPFIB REM}
	If $\pi \colon \mathcal{C} \to \mathcal{B}$
	is an opfibration then, by (the dual of) Remark \ref{COMPSEMI REM},
	for any cleavage one must have associativity isomorphisms
	$\varphi_! \psi_! \simeq \left(\varphi \psi\right)_!$,
	but these need not be equalities.
	Should a cleavage be strictly associative, i.e. 
	$\varphi_! \psi_! = \left(\varphi \psi\right)_!$
	for all composable $\varphi,\psi$
	and unital, i.e.
	$(id_b)_! = id_{\mathcal{C}_b}$,
	then the Grothendieck opfibration is called \emph{split}.
	One can show that an opfibration is split iff it is isomorphic to a Grothendieck construction in the sense of Notation \ref{GROTHCONS NOT}.
\end{remark}

The following are the two main instances of 
Notation \ref{GROTHCONS NOT} we will make use of.

\begin{example}\label{GLTIMES EQ}
	When $G$ is a group regarded as a category with a single object,
	a functor
	$G \to \mathsf{Cat}$
	consists of a single category $\C$ with a $G$-action.
	In this case, $G \ltimes \mathcal{C}$
	can be thought of as the category obtained from $\mathcal{C}$
	by formally adding ``action arrows''
	$c \xrightarrow{g} gc$ for each $g\in G,c\in \mathcal{C}$.
	
	More explicitly, an arrow from $c$ to $c'$
	in $G \ltimes \mathcal{C}$
	is uniquely described as an arrow
	$\varphi \colon gc \to c'$ in $\mathcal{C}$ for some $g \in G$,
	with the composite of 
	$\varphi \colon gc \to c'$
	and
	$\bar{\varphi} \colon \bar{g}c' \to c''$
	given by
	the composite
	$ \bar{g}g c \xrightarrow{\bar{g} \varphi} \bar{g}c' \xrightarrow{\bar{\varphi}} c''$ in $\C$.
\end{example}

\begin{remark}\label{INVLTIMES REM}
	Any group $G$ admits an inversion isomorphism
	$G \simeq G^{op}$ given by $g \mapsto g^{-1}$.
	If $\mathcal{C}$ is a $G$-category, 
	so is $\mathcal{C}^{op}$,
	and the inversion isomorphism lifts
	to an isomorphism
	$G \ltimes \mathcal{C}^{op} \simeq \left(G \ltimes \mathcal{C}\right)^{op}$ which is the identity on objects.
	Indeed, an arrow from $c$ to $\bar{c}$
	in $G \ltimes \mathcal{C}^{op}$
	is an arrow $f \colon \bar{c} \to gc$ in $\mathcal{C}$
	while an arrow in $\left(G \ltimes \mathcal{C}\right)^{op}$
	is an arrow
	$f' \colon g \bar{c} \to c$ in $\mathcal{C}$.
	The isomorphism then identifies 
	the arrow of $G \ltimes \mathcal{C}^{op}$
	represented by
	$f \colon \bar{c} \to gc$ 
	with the arrow of $\left(G \ltimes \mathcal{C}\right)^{op}$
	represented by
	$g^{-1}f \colon g^{-1}\bar{c} \to c$.
\end{remark}

\begin{notation}\label{SIGWR NOT}
	Let $\Sigma = \coprod_{n \geq 0} \Sigma_n$
	be the \emph{symmetric category},
	whose objects are the non-negative integers $n\geq 0$,
	and whose arrows are all automorphisms,
	with $n$ having automorphism group $\Sigma_n$.
	
	For any category $\mathcal{C}$ there is then a functor
	\begin{equation}
	\Sigma^{op} \longto \Cat,
	\qquad
	n \mapsto \mathcal C^{\times n}
	\end{equation}
	and we will abbreviate
	$\Sigma \wr \C = 
	\left(\Sigma^{op} \ltimes \mathcal C^{\times (-)}\right)^{op}$.
	Unpacking notation, 
	the elements of $\Sigma \wr \mathcal{C}$
	are tuples
	$(c_i)_{1 \leq i \leq n}$
	of elements $c_i \in \mathcal{C}$ for some $n \geq 0$,
	and a map of tuples $(c_i) \to (d_i)$, 
	necessarily of the same size $n$,
	consists of a permutation 
	$\sigma \in \Sigma_n$
	together with maps
	$c_i \to d_{\sigma(i)}$
	for $1 \leq i \leq n$.
\end{notation}

\begin{remark}\label{FWR REM}
	Writing $\mathsf{F}$ for the skeleton of 
	the category of finite sets consisting of the sets 
	$\underline{n} = \{1,\cdots,n\}$ for $n \geq 0$,
	we can regard
	$\Sigma \subset \mathsf{F}$
	as the maximal subgroupoid.
	
	The tuple category $\Sigma \wr \mathcal{C}$
	in Notation \ref{SIGWR NOT}
	is then a subcategory of an analogous category
	$\mathsf{F} \wr \mathcal{C}$.
\end{remark}

\begin{remark}
	It is clear from the description
	of $\Sigma \wr (-)$
	as a category of tuples that, 
	should $(\V,\otimes)$ be a \emph{symmetric} monoidal category,
	then $\otimes$
	induces a functor
	$\Sigma \wr \V \to \mathcal{V}$
	via $(v_i) \mapsto \bigotimes_i v_i$.
	
	In fact, one can reverse this process: a symmetric monoidal structure on $\V$ can be described as a functor
	$\Sigma \wr \V \to \V$
	satisfying suitable associativity and unitality conditions, and we will make use of this alternative description when defining operads.
	
	However, there is a slight caveat. 
	We will in fact prefer to describe a symmetric monoidal structure
	as a functor
	$\left(\Sigma \wr \V^{op}\right)^{op} \to \V$
	or, equivalently, 
	$\Sigma \wr \V^{op} \to \V^{op}$.
	The equivalence between this description and the one above
	follows since a (symmetric) monoidal structure $\otimes$ on $\V$ is also a monoidal structure on $\V^{op}$
	(or, using the isomorphism
	$\Sigma \simeq \Sigma^{op}$ given by $\sigma \mapsto \sigma^{-1}$,
	since $\Sigma \wr \V^{op} \simeq \left(\Sigma \wr \V \right)^{op}$).
	The reason for us to prefer this seemingly more cumbersome setup is because it actually seems to be more natural in practice.
	For example, if $\otimes = \Pi$ is the cartesian product then,
	in addition to symmetry isomorphisms, 
	$\Pi$ also admits projection maps and diagonals,
	and to encode these one must use a map
	$\left(\mathsf{F} \wr \mathcal{V}^{op}\right)^{op} \to \mathcal{V}$
	rather than
	$\mathsf{F} \wr \mathcal{V} \to \mathcal{V}$.
        For further discussion, see \cite[\S 2.2]{BP21}.
\end{remark}

\begin{remark}\label{WRDIAG REM}
	If $\mathcal{C}$ is a category with a $G$-action,
	the constructions in 
	Example \ref{GLTIMES EQ} and Notation \ref{SIGWR NOT}	
	can be applied in either order to obtain categories
	$G \ltimes (\Sigma \wr \mathcal{C})$
	and
	$\Sigma \wr (G \ltimes \mathcal{C})$.
	
	In either of these categories, the objects are the tuples
	$(c_i)$ with $c_i \in \mathcal{C}$.
	As for the arrows,
	in $G \ltimes (\Sigma \wr \mathcal{C})$
	an arrow $(c_i) \to (d_i)$
	between tuples of size $n$ is encoded by arrows 
	$gc_i \to d_{\sigma(i)}$
	in $\C$ for some $g\in G$, $\sigma \in \Sigma_n$
	while in $\Sigma \wr (G \ltimes \mathcal{C})$
	such an arrow is encoded by arrows
	$g_i c_i \to d_{\sigma(i)}$
	in $\C$ for some $(g_i) \in G^{\times n}$, $\sigma \in \Sigma_n$.
	Hence, we see that there is an inclusion of categories
	\begin{equation} 
	G \ltimes (\Sigma \wr \mathcal C) \to \Sigma \wr (G \ltimes \mathcal C).
	\end{equation}
	Informally, $G \ltimes (\Sigma \wr \mathcal C)$
	is the subcategory where the only
	$G$-action arrows are diagonal action arrows
	(i.e. those corresponding to constant tuples $(g)_{1 \leq i \leq n} \in G^{\times n}$).
\end{remark}

\begin{remark}\label{LIMINFIBSUP REM}
	Let $\pi \colon \mathcal{C} \to \mathcal{B}$ be a Grothendieck fibration.
	Then, if the base $\mathcal{B}$ and fibers 
	$\mathcal{C}_b$
	are all complete, so is the total category $\mathcal{C}$.
	Indeed, given a diagram $I \xrightarrow{c_{\bullet}} \mathcal{C}$,\
	and writing
	$b = \lim_{i \in I} \pi(c_i)$
	and 
	$\varphi_i \colon b \to \pi(c_i)$
	for the canonical maps,
	one has
	\begin{equation}\label{LIMINFIB EQ}
	\lim_{i \in I} c_i = 
	\lim_{i \in I} \varphi_{i}^{\**}c_i
	\end{equation}
	where the second limit formula is computed in $\mathcal{C}_b$.
	
	Moreover, keeping the setup above, let $\bar{\varphi}_i \colon \bar{b} \to \pi(c_i)$
	be an arbitrary cone in $\mathcal{B}$
	and $\bar{\varphi} \colon \bar{b} \to b$
	the induced map.
	Then \eqref{LIMINFIB EQ} implies that one further has
	\begin{equation}\label{LIMINFIBSUP EQ}
	\bar{\varphi}^{\**}\left(\lim_{i \in I} c_i\right) 
	= 
	\bar{\varphi}^{\**}\left(\lim_{i \in I} \varphi_{i}^{\**}c_i\right)
	=
	\lim_{i \in I} \bar{\varphi}^{\**} \varphi_{i}^{\**}c_i
	=
	\lim_{i \in I} \bar{\varphi}_{i}^{\**}c_i.
	\end{equation}
\end{remark}

\subsection{Fibered adjunctions and fibered monads}
\label{FIBCAT_SEC}

\begin{definition}\label{FIBADJ DEF}
	Let 
	$\pi \colon \mathcal{C} \to \mathcal{B}$,
	$\pi \colon \mathcal{D} \to \mathcal{B}$
	be functors with a common target.
	A \emph{fibered adjunction} is an adjunction
	\[
	L \colon \mathcal{C} \rightleftarrows \mathcal{D} \colon R
	\]
	where the functors $L,R$, 
	unit $\eta \colon id_{\mathcal{C}} \Rightarrow RL$ and 
	counit $\epsilon \colon LR \Rightarrow id_{\mathcal{D}}$
	are all fibered, i.e.
	\[
	\pi L=\pi, \qquad
	\pi R = \pi, \qquad
	\pi \eta = id_{\pi}, \qquad 
	\pi\epsilon = id_{\pi}.
	\]
\end{definition}

\begin{remark}
	A fibered adjunction induces natural isomorphisms
	(cf. Notation \ref{MAPSDEC NOT})
	\[
	\mathcal{D}_{\varphi}\left(Lc,d\right)
	\simeq
	\mathcal{C}_{\varphi}\left(c,Rd\right)
	\]
	for each $c\in \mathcal{C}$, $d \in \mathcal{D}$, 
	$\varphi \colon \pi(c)\to \pi(d)$. 
\end{remark}

\begin{proposition}\label{FIBADJCAR PROP}
	Let $L \colon \mathcal{C} \rightleftarrows \mathcal{D} \colon R$
	be an adjunction between Grothendieck fibrations
	over $\mathcal{B}$.
	
	If the adjunction is fibered then 
	$R$ is 
	a fibered functor which preserves cartesian arrows.
	
	Conversely, if the right adjoint $R$ is 
	a fibered functor which preserves cartesian arrows, 
	then one can modify the left adjoint (and unit, counit)
	so that the adjunction becomes a fibered adjunction.
\end{proposition}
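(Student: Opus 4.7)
The proof naturally splits into the two directions of the bi-implication. For the forward direction, assume $L \dashv R$ is a fibered adjunction. Then $\pi R = \pi$ is built into the definition, so it remains to show $R$ preserves cartesian arrows. Given a cartesian arrow $\kappa \colon d' \to d$ in $\mathcal{D}$ over $\psi \colon b' \to b$, I would verify that $R \kappa$ represents the functor $\mathcal{C}_\psi(-, R d) \colon \mathcal{C}_{b'} \to \mathsf{Set}$; by Remark \ref{CARTCHAR REM} combined with Remark \ref{COMPSEMI REM} (using that $\mathcal{C}$ is already a fibration), this suffices. The representability follows from the chain of natural isomorphisms
\[
\mathcal{C}_\psi(c, R d) \simeq \mathcal{D}_\psi(L c, d) \simeq \mathcal{D}_{b'}(L c, d') \simeq \mathcal{C}_{b'}(c, R d'),
\]
obtained by successively applying the fibered adjunction (which respects the fiber decomposition of hom-sets from Notation \ref{MAPSDEC NOT}), the cartesian property of $\kappa$ in $\mathcal{D}$, and the fibered adjunction in the fiber over $b'$; one then checks that the composite bijection is implemented by post-composition with $R \kappa$.

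For the backward direction, assume $R$ is fibered, preserves cartesian arrows, and admits a (possibly non-fibered) left adjoint $L$ with unit $\eta$. I would replace $L$ by a fibered functor $\tilde L$ via cartesian lifting. For $c \in \mathcal{C}_b$, the unit $\eta_c \colon c \to R L c$ projects to $\pi \eta_c \colon b \to \pi L c$; define $\tilde L c := (\pi \eta_c)^{*} L c \in \mathcal{D}_b$ with cartesian arrow $\kappa_c \colon \tilde L c \to L c$. Since $R$ preserves cartesian arrows, $R \kappa_c$ is cartesian in $\mathcal{C}$ over $\pi \eta_c$, so applying the universal property to $\eta_c$ yields a unique $\tilde \eta_c \colon c \to R \tilde L c$ lifting $id_b$ with $R \kappa_c \circ \tilde \eta_c = \eta_c$. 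Functoriality of $\tilde L$ on a morphism $f \colon c \to c'$ is then forced by the cartesian universal property of $\kappa_{c'}$ applied to $L f \circ \kappa_c$, and naturality of $\tilde \eta$ follows analogously.

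The main task is to verify that $(\tilde L, R, \tilde \eta)$ satisfies the universal property of an adjunction, from which the fibered counit $\tilde \epsilon$ is recovered. Given $f \colon c \to R d'$ with original adjunct $\bar f \colon L c \to d'$, the candidate $g := \bar f \circ \kappa_c \colon \tilde L c \to d'$ satisfies $R g \circ \tilde \eta_c = R \bar f \circ R \kappa_c \circ \tilde \eta_c = R \bar f \circ \eta_c = f$, giving existence. The hard part, which I expect to be the main obstacle, is uniqueness: the plan is to reduce to the fiberwise statement by factoring any competing $g$ over $\pi f$ through the cartesian arrow $(\pi f)^{*} d' \to d'$, so that one must only show $\tilde \eta_c$ is initial among vertical maps from $c$ into $R(\mathcal{D}_b)$, and then to exploit the cartesian property of $\kappa_c$ in $\mathcal{D}$ together with the bijectivity of the original adjunction to pin $g$ down. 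Verticality of $\tilde \eta$ is by construction, and verticality of $\tilde \epsilon$ is forced by the resulting fibered adjunction.
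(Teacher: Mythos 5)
Your forward direction and your construction of $\tilde L$ and $\tilde\eta$ coincide with the paper's: the chain of isomorphisms proving $R$ preserves cartesian arrows is the same one used there, and the paper likewise defines $\bar L c = (\pi\eta_c)^{*}Lc$ with $\bar\eta_c$ obtained by factoring $\eta_c$ through the cartesian arrow $R\kappa_c$. Where you diverge is the verification that $(\tilde L, R, \tilde\eta)$ is an adjunction: the paper checks the two triangle identities directly, whereas you go for the universal property of $\tilde\eta_c$. Your existence half is fine, but the uniqueness half --- which you yourself flag as the main obstacle --- is left as a plan that does not work as stated. The reduction to vertical maps via the cartesian arrow $(\pi f)^{*}d' \to d'$ and the fact that $R$ preserves it is correct, but after that you say you will ``exploit the cartesian property of $\kappa_c$ \dots to pin $g$ down.'' The universal property of the cartesian arrow $\kappa_c \colon \tilde L c \to Lc$ controls maps \emph{into} $\tilde L c$ (factorizations through $\kappa_c$), while your competing map $g$ is a map \emph{out of} $\tilde L c$, and it need not be of the form $\bar f \circ \kappa_c$ a priori; likewise the original adjunction only parametrizes maps out of $Lc$, not out of $\tilde L c$. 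So the step is genuinely missing, not merely unwritten.

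The way to close it --- which is exactly what the paper does when it verifies the second triangle identity using cartesianness of $\gamma$ --- is to show that the endomorphism $\iota := \epsilon_{\tilde L c} \circ L\tilde\eta_c \circ \kappa_c$ of $\tilde L c$ is the identity: one computes $\kappa_c \circ \iota = \epsilon_{Lc} \circ LR\kappa_c \circ L\tilde\eta_c \circ \kappa_c = \epsilon_{Lc}\circ L\eta_c \circ \kappa_c = \kappa_c$ using naturality of $\epsilon$ and the original triangle identity, and then cancels $\kappa_c$ because a vertical endomorphism of $\tilde L c$ whose postcomposition with the cartesian $\kappa_c$ equals $\kappa_c$ must be the identity. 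Given $\iota = \mathrm{id}$, your candidate $\beta(f) = \bar f\circ\kappa_c$ satisfies $\beta(Rg\circ\tilde\eta_c) = g\circ\iota = g$, which is the uniqueness you need. Since this computation is precisely the paper's verification of $\bar\epsilon\bar L \circ \bar L\bar\eta = \mathrm{id}$ in disguise, your route is not really shorter; as written it has a gap at the one point where the cartesianness hypothesis does real work.
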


\begin{proof}
	For the first claim, 
	letting $\Phi \colon \bar{d} \to d$ be a cartesian arrow
	in $\mathcal{D}$, 
	the fact that $R(\Phi)$ is again cartesian follows from
	Remark \ref{CARTCHAR REM} applied to the composite isomorphism
	\[
	\mathcal{C}_{\pi(\bar{d})}
	\left(-,R\bar{d}\right)
	\simeq 
	\mathcal{D}_{\pi(\bar{d})}
	\left(L(-),\bar{d}\right)
	\xrightarrow{\simeq}
	\mathcal{D}_{\pi(\Phi)}\left(L(-),d\right)
	\simeq
	\mathcal{C}_{\pi(\Phi)}\left(-,Rd\right).
	\]
	For the ``conversely'' claim,
	noting that by assumption it is $\pi RL = \pi L$,
	one can choose a cartesian natural transformation 
	$\bar{L} \xrightarrow{\gamma} L$
	(i.e. a cartesian arrow in $\mathcal{D}^{\mathcal{C}}$)
	over the projection of the adjunction unit
	$\pi \xrightarrow{\pi \eta} \pi RL$
	(which is an arrow in $\mathcal{E}^{\mathcal{C}}$).
	Moreover, noting that by assumption
	$R\bar{L} \xrightarrow{R \gamma} RL$ is again cartesian,
	we write
	$id_{\mathcal{C}} \xrightarrow{\bar{\eta}} 
	R \bar{L} \xrightarrow{R \gamma} RL$
	for the natural factorization
	with fibered $\bar{\eta}$,
	as well as $\bar{\epsilon}$ for the composite
	$\bar{L}R \xrightarrow{\gamma R} 
	LR \xrightarrow{\epsilon} id_{\mathcal{D}}$.
	We claim that $\bar{L},R,\bar{\eta},\bar{\epsilon}$
	now form a fibered adjunction, with the non obvious claim being that this is in fact still an adjunction.
	That the composite
	$R\xrightarrow{\bar{\eta}R} R\bar{L}R \xrightarrow{R\bar{\epsilon}} R$
	is the identity follows since
        we can rewrite this composite as
	$R \xrightarrow{\bar{\eta} R} R\bar{L}R 
	\xrightarrow{R \gamma R} RLR \xrightarrow{R \epsilon} R$ 
	and thus also as
        $R \xrightarrow{\eta R} RLR \xrightarrow{R \epsilon} R$.
	The remaining claim is that the top horizontal composite in the diagram below is the identity, 
	\begin{equation}
	\begin{tikzcd}
	\bar{L} \ar{d}[swap]{\gamma} \arrow{r}{\bar{L}\bar{\eta}}
	&
	\bar{L} R \bar{L} \arrow[d] \ar{r} \ar{d}
	\arrow[bend left]{rr}{\bar{\epsilon}\bar{L}}
	&
	L R \bar{L} \ar{r}{\epsilon \bar{L}} \ar{d}
	&
	\bar{L} \ar{d}{\gamma}
	\\
	L \arrow{r}{L\bar{\eta}}
	\arrow[bend right]{rr}[swap]{L \eta}
	&
	LR\bar{L} \ar{r}
	&
	LRL \ar{r}{\epsilon L}
	&
	L
	\end{tikzcd}
	\end{equation}
	and, since 
	$\gamma \colon \bar{L} \to L$ is cartesian, 
	it in fact suffices to show that the overall composite 
	$\bar{L} \to L$ in the diagram above
	is the map $\gamma$, which is clear. 
\end{proof}

\begin{remark}
	Let $\pi \colon \mathcal{C} \to \mathcal{B}$
	be a functor that preserves coproducts and $G$ be a group.
	Then the free-forget adjunctions 
	\begin{equation}
	\begin{tikzcd}[column sep =50pt]
	\mathcal{C}
	\arrow[shift left]{r}{G \cdot (-)}
	\arrow[d, "\pi"']
	&
	\mathcal{C}^G 
	\arrow[shift left]{l}{\mathsf{fgt}}
	\arrow[d, "\pi"]
	\\
	\mathcal{B} 
	\arrow[shift left]{r}{G \cdot (-)}
	&
	\mathcal{B}^G
	\arrow[shift left]{l}{\mathsf{fgt}}
	\end{tikzcd}
	\end{equation}
	are compatible in the sense that both the left and right adjoints commute with the projections $\pi$.
	In addition, given $b \in \mathcal{B}$, $b' \in \mathcal{B}^G$
	and $\varphi \colon b \to b'$ a map in $\mathcal{B}$, write
	$G\cdot \varphi \colon G\cdot b \to b'$
	for the adjoint map.
	Then, for $c \in \mathcal{C}_b$ and $c' \in \mathcal{C}^G_{b'}$,
	one has that the left isomorphism below decomposes
	into the isomorphisms on the right
	\begin{equation}\label{ADJOVADJ EQ}
	\C(c,c') 
	\simeq 
	\C^G(G \cdot c,c')\qquad
	\C_{\varphi}(c,c') 
	\simeq 
	\C^G_{G\cdot\varphi}(G \cdot c,c')
	\end{equation}
\end{remark}

\begin{definition}\label{FIBMON DEF}
	Given a functor $\pi \colon \mathcal{C} \to \mathcal{B}$,
	a \textit{fibered monad} is a monad $T \colon \mathcal{C} \to \mathcal{C}$ such that 
	the functor $T$,
	multiplication 
	$\mu \colon TT \Rightarrow T$
	and unit $\eta \colon I \Rightarrow T$
	are all fibered, i.e.
	\[
	\pi T = \pi,\qquad
	\pi\mu=\pi\eta=id_{\pi}.
	\]
	%
	Moreover, a \textit{fiber algebra} is a $T$-algebra $c \in \mathcal{C}$
	such that the multiplication map
	$Tc \xrightarrow{m} c$ satisfies 
	$\pi(m)=id_{\pi(c)}$.
	Lastly, we write $\mathsf{Alg}^{\pi}_T(\mathcal{C}) \subseteq \mathsf{Alg}_T(\mathcal{C})$ for the full subcategory of fiber algebras.
\end{definition}

\begin{remark}
        \label{MONFIB_REM}
	For each $b\in \mathcal{B}$, a fibered monad $T$ restricts to a monad on each fiber $\mathcal{C}_b$, and we write $T_b$ to denote this restricted monad.
\end{remark}

\begin{remark}
	If $T$ is a fibered monad then any free algebra $Tc$ is automatically a fiber algebra, so that the free $T$-algebra functor factors
	as 
	$\mathcal{C} \to \mathsf{Alg}^{\pi}_T(\mathcal{C}) \subseteq \mathsf{Alg}_T(\mathcal{C})$.
\end{remark}

\begin{proposition}\label{FIBALGGR PROP}
	Given a fibered monad on a Grothendieck fibration $\pi \colon \mathcal{C} \to \mathcal{B}$,
	the projection $\mathsf{Alg}^{\pi}_T(\mathcal{C}) \to \mathcal{B}$
	is again a Grothendieck fibration, with fibers
	$\left(\mathsf{Alg}^{\pi}_T(\mathcal C)\right)_b = \mathsf{Alg}_{T_b}(\mathcal C_b)$.
	
	Moreover, the free-algebra and forgetful functors then form a fibered adjunction
	$\mathcal{C} \rightleftarrows \mathsf{Alg}^{\pi}_T(\mathcal{C})$.
\end{proposition}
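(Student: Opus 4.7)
The plan is to first identify the fibers of $\Alg^\pi_T(\mathcal{C}) \to \mathcal{B}$, then construct cartesian lifts in $\Alg^\pi_T(\mathcal{C})$ by transferring the algebra structure along cartesian arrows of $\mathcal{C}$, and finally obtain the fibered adjunction as a fairly direct consequence. For the fibers, note that an object of $\Alg^\pi_T(\mathcal{C})$ over $b$ is a pair $(c,m)$ with $c \in \mathcal{C}_b$ and $m \colon Tc \to c$ satisfying $\pi(m) = \mathrm{id}_b$; since $T$ is fibered we have $Tc \in \mathcal{C}_b$, so $m$ is entirely an arrow of $\mathcal{C}_b$ and the $T$-algebra axioms pull back verbatim to $T_b$-algebra axioms on $\mathcal{C}_b$, giving the identification $\left(\Alg^\pi_T(\mathcal{C})\right)_b = \Alg_{T_b}(\mathcal{C}_b)$.

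For the fibration property, fix $\varphi \colon b' \to b$ and a fiber algebra $(c,m)$ over $b$, and choose a cartesian lift $f \colon \varphi^{\**} c \to c$ in $\mathcal{C}$. Since $\pi T = \pi$, the composite $T \varphi^{\**} c \xrightarrow{Tf} Tc \xrightarrow{m} c$ projects to $\varphi$, and the cartesian property of $f$ produces a unique fibered arrow $m' \colon T \varphi^{\**} c \to \varphi^{\**} c$ in $\mathcal{C}_{b'}$ with $f \circ m' = m \circ Tf$. I would then check that $m'$ makes $\varphi^{\**} c$ a $T$-algebra by a cartesian uniqueness argument: both $m' \circ \eta_{\varphi^{\**} c}$ and $\mathrm{id}_{\varphi^{\**} c}$ are fibered maps, and postcomposing with $f$ gives the same arrow by the naturality of $\eta$ and the unit axiom $m \circ \eta_c = \mathrm{id}_c$; the associativity axiom for $m'$ is settled analogously, replacing $\eta$ with $\mu$. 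By construction $f$ is an algebra map $(\varphi^{\**} c, m') \to (c,m)$.

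To see that $f$ is cartesian in $\Alg^\pi_T(\mathcal{C})$, suppose $g \colon (c'',m'') \to (c,m)$ is an algebra map whose projection factors through $\varphi$. The cartesian property of $f$ in $\mathcal{C}$ yields a unique arrow $h \colon c'' \to \varphi^{\**} c$ with $f \circ h = g$; the claim that $h$ is an algebra map, i.e.\ $m' \circ Th = h \circ m''$, follows since both sides are fibered over the projection of $h$ and agree after composing with $f$ (using $f \circ m' = m \circ Tf$ and that $g$ is an algebra map), so they agree by cartesian uniqueness.

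Finally, for the fibered adjunction, the forgetful functor $\Alg^\pi_T(\mathcal{C}) \to \mathcal{C}$ is tautologically fibered, and the construction above shows it preserves cartesian arrows, so Proposition \ref{FIBADJCAR PROP} applies. Equivalently, one can check directly that the free functor $F(c) = (Tc,\mu)$ lies over $\pi$ and that the unit $\eta$ and counit (the algebra multiplication) are fibered by the fibered monad and fiber algebra hypotheses. The only nontrivial step is the construction of $m'$ and verification of its algebra axioms; everything else is a formal consequence of cartesian uniqueness.
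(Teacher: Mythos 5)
Your proposal is correct and follows essentially the same route as the paper: identify the fibers directly, pull back the fiber algebra structure along a cartesian arrow of $\mathcal{C}$ using the universal property, and verify the algebra axioms and cartesianness of the lifted arrow by cartesian uniqueness. The paper handles the fibered adjunction by the direct check you mention at the end (the unit of the adjunction is the fibered monad unit, the counit on a fiber algebra is its multiplication), which you also supply, so the two arguments match.
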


The key to this proof is that fiber algebra structures can be ``pulled back'' along cartesian arrows
(which, by Proposition \ref{FIBADJCAR PROP}, must be the case if $\mathcal{C} \rightleftarrows \mathsf{Alg}^{\pi}_T(\mathcal{C})$ is to be a fibered adjunction).

\begin{proof}
        The identification of the fibers is straightforward.
	Given a cartesian arrow $\Phi \colon \bar{c} \to c$ on $\mathcal{C}$ and a fiber algebra structure on $c$, we claim there is a unique fiber algebra structure on $\bar{c}$ making $\Phi$ into an algebra map. Indeed, the properties of cartesian arrows imply that there is a unique way to choose a dashed fiber arrow in the diagram
	\[
	\begin{tikzcd}
	T \bar{c} \ar{r}{T \Phi} \ar[dashed]{d} & T c \ar{d}
	\\
	\bar{c} \ar{r}[swap]{\Phi} & c
	\end{tikzcd}
	\]
	which provides the multiplication on $\bar{c}$.
	The claims that $T\bar{c} \to \bar{c}$ is an algebra structure and that 
	$\Phi$ is again cartesian when viewed as an algebra map likewise follow from 
	$\Phi$ being cartesian in $\mathcal{C}$.
	
	The ``moreover'' claim that 
	$\mathcal{C} \rightleftarrows \mathsf{Alg}^{\pi}_T(\mathcal{C})$
	is a fibered adjunction
	follows by noting that the adjunction unit is the unit
	$I \Rightarrow T$ of the monad $T$, 
	which is fibered by assumption,
	while the counit, evaluated on a fiber algebra $c$, is the multiplication $Tc \to c$, and hence fibered by the definition of fiber algebras.
\end{proof}

\begin{remark}\label{FIBMONCL REM}
	Suppose a cleavage of a Grothendieck fibration
	$\pi \colon \mathcal{C} \to \mathcal{B}$ 
	has been chosen.
	
	A fibered monad $T$ is then equivalent to the data of the fiber monads 
	$T_b$ on the fibers $\mathcal{C}_b$ together with,
	for each arrow $\varphi \colon b' \to b$ in $\mathcal{B}$,
	natural transformations
	$T_{b'} \varphi^{\**} \Rightarrow \varphi^{\**} T_{b}$,
	such that
	\begin{itemize}
		\item[(a)]
		for composites $b'' \xrightarrow{\psi} b' \xrightarrow{\varphi} b$
		and identities $b \xrightarrow{id_b} b$,
		the induced diagrams below commute
		\begin{equation}\label{GROTHASS EQ}
		\begin{tikzcd}
		T_{b''} \psi^{\**} \varphi^{\**} 
		\ar[Rightarrow]{r} \ar[Leftrightarrow]{d}[swap]{\simeq} &
		\psi^{\**} T_{b'} \varphi^{\**} \ar[Rightarrow]{r} &
		\psi^{\**} \varphi^{\**}  T_{b} \ar[Leftrightarrow]{d}{\simeq} &
		T_b \ar[equal]{r} \ar[Leftrightarrow]{d}[swap]{\simeq} &
		T_b \ar[Leftrightarrow]{d}{\simeq}
		\\
		T_{b''} (\varphi \psi)^{\**} \ar[Rightarrow]{rr}{} &&
		(\varphi \psi)^{\**} T_{b} &
		T_b id_b^{\**} \ar[Rightarrow]{r}{} &
		id_b^{\**} T_{b}
		\end{tikzcd}
		\end{equation}
		\item[(b)] the natural squares below,
		with vertical maps induced by the monads $T_{b'},T_b$, commute 
		\begin{equation}\label{GROTHCART EQ}
		\begin{tikzcd}
		T_{b'} T_{b'} \varphi^{\**} \ar[Rightarrow]{r} \ar[Rightarrow]{d}[swap]{} &
		T_{b'} \varphi^{\**} T_{b} \ar[Rightarrow]{r} &
		\varphi^{\**} T_{b} T_{b} \ar[Rightarrow]{d}{} &
		\varphi^{\**} \ar[equal]{r} \ar[Rightarrow]{d}&
		\varphi^{\**} \ar[Rightarrow]{d}
		\\
		T_{b'} \varphi^{\**} \ar[Rightarrow]{rr}{} &&
		\varphi^{\**} T_{b} &
		T_{b'} \varphi^{\**} \ar[Rightarrow]{r}{} &
		\varphi^{\**} T_{b}
		\end{tikzcd}
		\end{equation}
	\end{itemize}
\end{remark}

\begin{remark}\label{ABSPUSH REM}
	Suppose the Grothendieck fibration 
	$\pi \colon \mathcal{C} \to \mathcal{B}$
	in Remark \ref{FIBMONCL REM}
	is also an opfibration so that,
	by Remark \ref{ALSOOPADJ REM},
	the cleavage functors $\varphi^{\**}$ for 
	$\varphi \colon b' \to b$
	admit left adjoints $\varphi_!$.
	
	Then $\varphi^{\**} T_b \varphi_!$ has a monad structure
	(which combines the multiplication and unit of $T_b$ with the unit and counit of the $(\varphi_!,\varphi^{\**})$ adjunction)
	and commutativity of the diagrams in $\eqref{GROTHCART EQ}$
	is equivalent to the claim that the induced natural transformation
	$T_{b'} \Rightarrow \varphi^{\**}T_{b}\varphi_{!}$
	is a map of monads.
\end{remark}

\begin{remark}\label{ALGPUSHLL REM}
	Suppose that both $\mathcal{C}$ and $\mathsf{Alg}_T^{\pi}(\mathcal{C})$
	admit adjunctions as in 
	Remark \ref{ABSPUSH REM} for each map $\varphi \colon b' \to b$.
	We denote these two adjunctions by
	\[
	\varphi_! \colon \mathcal{C}_{b'} 
	\rightleftarrows
	\mathcal{C}_{b}\colon \varphi^{\**}
	\qquad
	\check{\varphi}_! \colon \mathsf{Alg}_{T_{b'}}(\mathcal{C}_{b'}) 
	\rightleftarrows 
	\mathsf{Alg}_{T_{b}}(\mathcal{C}_{b})\colon \varphi^{\**}
	\]
	to emphasize the fact that the algebraic cleavage functor $\varphi^{\**}$ lifts the underlying $\varphi^{\**}$ (cf. the proof of Proposition \ref{FIBALGGR PROP}).

	On the other hand, the algebraic $\check{\varphi}_!$ functor is not a lift of
	the underlying $\varphi_!$.
	Rather, by the dual of Proposition \ref{FIBADJCAR PROP},
	one has that $T \colon \mathcal{C} \to \mathsf{Alg}_T^{\pi}(\mathcal{C})$ preserves cocartesian arrows,
	so that the map $T_{b'} \Rightarrow \check{\varphi}_! T_{b}$
	consisting of algebraic cocartesian arrows
	is identified with 
	$T_{b'} \Rightarrow T_{b} \varphi_!$,
	i.e. the image under $T$
	of the natural map
	$id_{\mathcal{C}_{b'}} \Rightarrow \varphi_!$
	consisting of underlying cocartesian arrows in $\mathcal{C}$.

	Thus, since any 
	$c \in \mathsf{Alg}_{T_{b'}}(\mathcal{C}_{b'})$
	is given by a coequalizer of free algebras
	$c \simeq \mathrm{coeq}(T_{b'}T_{b'} c \rightrightarrows T_{b'} c)$,
	one has the formula
	$\check{\varphi}_! c \simeq 
	\mathrm{coeq}(T_{b}\varphi_!T_{b'} c \rightrightarrows T_{b}\varphi_! c)$.
\end{remark}

\begin{proposition}\label{DIAGRAMFM_PROP}
	Let $I$ be a fixed small category, and $T$ a fibered monad with respect to a Grothendieck fibration 
	$\pi \colon \mathcal{C} \to \mathcal{B}$. Then:
	\begin{itemize}
		\item[(i)] $T^I$ is a fibered monad with respect to $\pi^I\colon \mathcal{C}^I \to \mathcal{B}^I$;
		\item[(ii)] there is a natural identification 
		$\mathsf{Alg}_{T^I}^{\pi^I}(\mathcal{C}^I)
		\simeq
		\left(\mathsf{Alg}_T^{\pi}(\mathcal{C})\right)^I$.
	\end{itemize}
\end{proposition}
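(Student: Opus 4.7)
The plan is to verify both parts by direct unpacking, relying on the fact that the monad $T^I$ on $\mathcal{C}^I$ and the projection $\pi^I$ are both defined pointwise. For part (i), I will note that $\pi^I \colon \mathcal{C}^I \to \mathcal{B}^I$ is a Grothendieck fibration by Remark \ref{FUNISGROTH REM}, and that the multiplication $\mu^I$ and unit $\eta^I$ of $T^I$ at a diagram $c_{\bullet} \colon I \to \mathcal{C}$ have $i$-th components $\mu_{c_i}$ and $\eta_{c_i}$ respectively. Since by assumption $\pi T = \pi$ and $\pi \mu = \pi \eta = id_{\pi}$ hold at every object of $\mathcal{C}$, applying these pointwise to any diagram yields $\pi^I T^I = \pi^I$ and $\pi^I \mu^I = \pi^I \eta^I = id_{\pi^I}$, as required by Definition \ref{FIBMON DEF}.

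For part (ii), I will unpack the definition of a fiber $T^I$-algebra. Such data consists of a diagram $c_{\bullet} \colon I \to \mathcal{C}$ together with a natural transformation $m \colon T^I c_{\bullet} \Rightarrow c_{\bullet}$ satisfying the $T^I$-algebra axioms, and with $\pi(m_i) = id_{\pi(c_i)}$ for each $i \in I$. Because $T^I$ and $\pi^I$ act pointwise, the algebra axioms and the fiber condition both reduce to the statement that each pair $(c_i, m_i)$ is a fiber $T$-algebra, i.e. an object of $\mathsf{Alg}_{T}^{\pi}(\mathcal{C})$. Meanwhile, the naturality of $m$ with respect to an arrow $\varphi \colon i \to j$ of $I$ is precisely the condition that the image $c_{\varphi} \colon c_i \to c_j$ is a morphism of $T$-algebras. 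Assembling these observations exhibits $c_{\bullet}$ as a functor $I \to \mathsf{Alg}_{T}^{\pi}(\mathcal{C})$, and conversely any such functor determines a fiber $T^I$-algebra by the same formulas. An analogous unpacking on morphisms — a map in $\mathsf{Alg}_{T^I}^{\pi^I}(\mathcal{C}^I)$ being a natural transformation between underlying diagrams which is a $T^I$-algebra map, i.e. pointwise a $T$-algebra map — completes the identification of categories.

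There is no real obstacle here; the argument is bookkeeping, and the only point requiring any care is the observation that the fiber condition $\pi^I(m) = id_{\pi^I(c_{\bullet})}$ decomposes, by the pointwise definition of $\pi^I$, into the collection of fiber conditions $\pi(m_i) = id_{\pi(c_i)}$ at each $i \in I$. Once this is noted, parts (i) and (ii) follow formally from the pointwise nature of all structure involved.
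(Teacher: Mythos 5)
Your proof is correct and is exactly the unpacking the paper intends: its own proof of this proposition reads, in full, ``Both parts follow readily from the definitions,'' and your pointwise verification (using Remark \ref{FUNISGROTH REM} for the fibration claim and decomposing the fiber condition and naturality of the multiplication componentwise) is the standard way to carry that out.
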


\begin{proof}
	Both parts follow readily from the definitions.
\end{proof}

\section{Equivariant colored operads}
\label{ECO_SEC}

This section discusses the categories of equivariant colored symmetric sequences and operads.
First, \S \ref{EQCOSYMSEQ SEC} applies the abstract theory from \S \ref{PRE SEC}
to the categories
$\mathsf{Sym}_{\bullet}(\V)$,
$\mathsf{Sym}^G_{\bullet}(\V)$
of symmetric sequences with varying sets of colors 
(Definition \ref{CSSYM DEF}), 
with most of the work being devoted to providing 
explicit descriptions of the equivariant fibers
$\mathsf{Sym}^G_{\mathfrak{C}}(\V)$
for $\mathfrak{C} \in \mathsf{Set}^G$
as a presheaf category,
given in Proposition \ref{EQUIVFNCON PROP}. 
Then, \S \ref{REPFUN_SEC} provides a convenient description of the representable functors in these fibers in Proposition \ref{REPALTDESC PROP}.
Lastly, in \S \ref{EQCOSYMOP SEC}, 
we describe the fibered monad $\mathbb F$ on $\Sym_\bullet(\V)$
which defines operads $\mathsf{Op}_\bullet(\V)$,
cf. \eqref{FIBADJMON EQ},
as given in Definition \ref{FREEOP DEF}
(and elaborated on in Appendix \ref{MONAD_APDX}). Moreover, 
in \eqref{FGC_EQ}
we determine the fibers of the equivariant monad $\mathbb F^G$ on $\Sym_\bullet^G(\V)$.

\subsection{Equivariant colored symmetric sequences}
\label{EQCOSYMSEQ SEC}

We now discuss the categories
$\mathsf{Sym}_{\bullet}(\V)$,
$\mathsf{Sym}^G_{\bullet}(\V)$
of symmetric sequences appearing in \eqref{FIBADJMON EQ}.

\begin{definition}\label{CSYM DEF}
	Let $\mathfrak {C} \in \mathsf{Set}$ be a fixed set of colors (or objects).
	A tuple
	$\vect C = (\mathfrak c_1, \dots, \mathfrak c_n; \mathfrak c_0) \in \mathfrak{C}^{\times n+1}$
	is called a \textit{$\mathfrak {C}$-profile} of \textit{arity} $n$.
	The \textit{$\mathfrak C$-symmetric category} $\Sigma_{\mathfrak C}$ is the category whose objects are the $\mathfrak{C}$-profiles and whose morphisms are action maps
	\begin{equation}\label{CSYM EQ1}
	\vect{C} =
	(\mathfrak c_1, \dots, \mathfrak c_n; \mathfrak c_0) \xrightarrow{\sigma} (\mathfrak c_{\sigma^{-1}(1)}, \dots, \mathfrak c_{\sigma^{-1}(n)}; \mathfrak c_0)
	= \vect{C} \sigma^{-1}
	\end{equation}
	for each permutation $\sigma \in \Sigma_n$, with the natural notion of composition.
	
	Alternatively, we will find it useful to visualize profiles as corollas (i.e. trees with a single node)
	with edges decorated by colors in $\mathfrak{C}$, as depicted below, so that the map labeled $\sigma$
	is the unique map of trees indicated such that the coloring of an edge equals the coloring of its image.
	\begin{equation}\label{CSYM EQ2}
	\begin{tikzpicture}
	[grow=up,auto,level distance=2.3em,every node/.style = {font=\footnotesize},dummy/.style={circle,draw,inner sep=0pt,minimum size=1.75mm}]
	
	\node at (0,0) [font=\normalsize]{$\vect{C}$}
	child{node [dummy] {}
		child{
			edge from parent node [swap,near end] {$\mathfrak c_n$} node [name=Kn] {}}
		child{
			edge from parent node [near end] {$\mathfrak c_1$}
			node [name=Kone,swap] {}}
		edge from parent node [swap] {$\mathfrak c_0$}
	};
	\draw [dotted,thick] (Kone) -- (Kn) ;
	\node at (5,0) [font=\normalsize] {$\vect{C} \sigma^{-1}
		$}
	child{node [dummy] {}
		child{
			edge from parent node [swap,near end] {$\mathfrak c_{\sigma^{-1}(n)}$} node [name=Kn] {}}
		child{
			edge from parent node [near end] {$\mathfrak c_{\sigma^{-1}(1)}$}
			node [name=Kone,swap] {}}
		edge from parent node [swap] {$\mathfrak c_0$}
	};
	\draw [dotted,thick] (Kone) -- (Kn) ;
	
	\draw[->] (1.5,0.8) -- node{$\sigma$} (3,0.8);
	\end{tikzpicture}
	\end{equation}
	Given any map of color sets $\varphi \colon \mathfrak{C} \to \mathfrak{D}$,
	there is a functor (abusively written)
	$\varphi \colon \Sigma_{\mathfrak{C}} \to \Sigma_{\mathfrak{D}}$,
	given by 
	$\varphi (\mathfrak c_1, \dots, \mathfrak c_n; \mathfrak c_0) = (\varphi(\mathfrak c_1),\cdots,\varphi(\mathfrak c_n);\varphi(\mathfrak c_0))$. 
\end{definition}

\begin{remark}\label{GLOBSIG REM}
	The notation $\vect{C} \sigma^{-1}$
	in \eqref{CSYM EQ1},\eqref{CSYM EQ2}
	reflects the fact that $\Sigma_n$
	acts on the right on $\mathfrak{C}$-profiles of arity $n$
	via 
	$\vect{C} \sigma = (\mathfrak{c}_i)\sigma = 
	(\mathfrak{c}_{\sigma(i)})$,
	where we make the convention that $\sigma(0)=0$.
\end{remark}

\begin{remark}
	If one regards $\mathfrak{C} \in \mathsf{Set}$
	as a discrete category, 
	following Notation \ref{SIGWR NOT} one has an identification 
	of groupoids
	$\Sigma_{\mathfrak{C}} = (\Sigma \wr \mathfrak{C}) \times \mathfrak{C}$,
	where the 
	$\Sigma \wr \mathfrak{C}$ factor accounts for the sources
	$\mathfrak{c}_1,\cdots,\mathfrak{c}_n$ of a profile
	and the $\mathfrak{C}$ factor accounts for the target 
	$\mathfrak{c}_0$.
\end{remark}

\begin{notation}
	We will (slightly abusively) write
	$\Sigma_{\bullet} \to \mathsf{Set}$
	for the Grothendieck construction (see Notation \ref{GROTHCONS NOT})
	of the functor
	$\mathsf{Set} \to \mathsf{Cat}$ given by
	$\mathfrak{C} \mapsto \Sigma_{\mathfrak{C}}$.
	
	More explicitly, 
	the objects of $\Sigma_{\bullet}$
	are the $\vect{C} \in \Sigma_{\mathfrak{C}}$ 
	for some set of colors $\mathfrak{C}$
	and an arrow from
	$\vect{C} \in \Sigma_{\mathfrak{C}}$ to
	$\vect{D} \in \Sigma_{\mathfrak{D}}$
	over $\varphi \colon \mathfrak{C} \to \mathfrak{D}$
	is an arrow
	$\varphi \vect{C} \to \vect{D}$ in $\Sigma_{\mathfrak{D}}$.
\end{notation}

\begin{definition}\label{CSSYM DEF}
	Let $\mathcal{V}$ be a category.
	The category $\mathsf{Sym}_\bullet(\mathcal{V})$ of
	\textit{symmetric sequences on $\mathcal{V}$} 
	(on all colors) is the category with:
	\begin{itemize}
		\item objects given by pairs $(\mathfrak C, X)$ with
		$\mathfrak{C} \in \mathsf{Set}$ a set of colors and
		$X \colon \Sigma_{\mathfrak{C}}^{op} \to \mathcal{V}$ a functor;
		\item arrows $(\mathfrak C, X) \to (\mathfrak D, Y)$ given by a map 
		$\varphi \colon \mathfrak{C} \to \mathfrak{D}$ of colors and a natural transformation $X \Rightarrow Y \varphi$ as below.
		\begin{equation}
		\begin{tikzcd}[row sep = tiny, column sep = 45pt]
		\Sigma_{\mathfrak{C}}^{op} \arrow[dr, "X"{name=U}] 
		\arrow{dd}[swap]{\varphi}
		\\
		& \mathcal{V}
		\\
		|[alias=V]| \Sigma_{\mathfrak{D}}^{op} \arrow[ur, "Y"']
		\arrow[Leftarrow, from=V, to=U,shorten >=0.25cm,shorten <=0.25cm]
		\end{tikzcd}
		\end{equation}
	\end{itemize}
\end{definition}

\begin{remark}\label{SUBCATDOWNL REM}
	Given a category $\V$, let $\Cat \downarrow^l \V$ denote the category with
	objects given by functors $\mathcal{C} \to \V$,
	and arrows from $\mathcal{C} \to \V$ to $\mathcal{D} \to \V$
	given by pairs 
	$(\varphi,\phi)$ with 
	$\varphi \colon \mathcal{C} \to \mathcal{D}$ a functor and
	$\phi$ a natural transformation
	\begin{equation}\label{SUBCATDOWNL EQ}
	\begin{tikzcd}[row sep = tiny, column sep = 45pt]
	\mathcal C \arrow{dr}[name=U]{} \arrow{dd}[swap]{\varphi}
	\\
	& \mathcal{V}
	\\
	|[alias=V]| \mathcal D \arrow{ur}[swap]{}
	\arrow[Leftarrow, from=V, to=U,shorten >=0.25cm,shorten <=0.25cm
	,swap,"\phi"]
	\end{tikzcd}
	\end{equation}
	Then $\mathsf{Sym}_{\bullet}(\mathcal{V})$ is naturally a (neither wide nor full) subcategory of  $\mathsf{Cat}\downarrow^l \mathcal{V}$.
\end{remark}

\begin{remark}\label{COLCHADJ REM}
	We caution that 
	$\mathsf{Sym}_{\bullet}(\V)$
	is quite different from the presheaf category 
	$\mathsf{Fun}(\Sigma_{\bullet}^{op},\V)$.

	Instead, one should think of 
	$\mathsf{Sym}_{\bullet}(\V)$
	as a form of ``fibered presheaves''.
	More precisely,
        the obvious functor
        $\mathsf{Sym}_{\bullet}(\V) \to \mathsf{Set}$
        is a (split) Grothendieck fibration,
	with fiber over 
	$\mathfrak{C} \in \mathsf{Set}$
	the presheaf category
	$\Sym_{\mathfrak C}(\V)=
	\mathsf{Fun}(\Sigma_{\mathfrak{C}}^{op},\mathcal{V})$.

	Note that,
	for any map $\varphi \colon \mathfrak{C} \to \mathfrak{D}$,
	one has adjunctions
	\begin{equation}\label{COLCHADJ EQ}
	\varphi_! \colon \Sym_{\mathfrak C}(\V) 
	\rightleftarrows 
	\Sym_{\mathfrak D}(\V) \colon \varphi^{\**}
	\end{equation}
	where $\varphi^{\**}$
	(resp. $\varphi_!$)
	is precomposition with (resp. left Kan extension along)
	$\varphi\colon 
	\Sigma^{op}_{\mathfrak{C}}
	\to 
	\Sigma^{op}_{\mathfrak{D}}$,
	so that the Grothendieck fibration
	$\mathsf{Sym}_{\bullet}(\V) \to \mathsf{Set}$
	is also an opfibration. 
\end{remark}

\begin{remark}\label{SUBCOCART REM}
	The forgetful functor $\mathsf{Cat} \downarrow^l \mathcal{V} \to \mathsf{Cat}$
	remembering only the source category is likewise both a fibration and an opfibration, 
	with a diagram \eqref{SUBCATDOWNL EQ}
	being a cartesian arrow if  $\phi$ an isomorphism
	and cocartesian if it is a left Kan extension. 
\end{remark}

Building on Remark \ref{COLCHADJ REM},
one can define a fibered Yoneda embedding, as in the following,
where we abbreviate
$\mathsf{Sym}_{\bullet} = \mathsf{Sym}_{\bullet}(\mathsf{Set})$.

\begin{notation}\label{FIBYON NOT}
	Let $\mathfrak{C} \in \mathsf{Set}$, $\vect{C} \in \Sigma_{\mathfrak{C}}$.
	We write
	$\Sigma_{\mathfrak{C}}[\vect{C}] 
	\in \mathsf{Sym}_{\mathfrak{C}} = \mathsf{Set}^{\Sigma_{\mathfrak{C}}^{op}}$ for the representable presheaf
	\[\Sigma_{\mathfrak{C}}[\vect{C}](-)
	= \Sigma_{\mathfrak{C}}(-,\vect{C}).\]
	Moreover, we define the \emph{fibered Yoneda embedding}
	\begin{equation}\label{FIBYON EQ}
	\Sigma_{\bullet} \xrightarrow{\Sigma_{\bullet}[-]} \mathsf{Sym}_{\bullet}
	\end{equation}
	to be
        $\Sigma_{\mathfrak{C}}[\vect{C}]$
	when evaluated on an object
	$\vect{C} \in \Sigma_{\mathfrak{C}} \subset \Sigma_{\bullet}$,
	and on an arrow 
	$\varphi \vect{C} \to \vect{D}$
	over $\varphi \colon \mathfrak{C} \to \mathfrak{D}$
	to be
        the natural transformation
	$\Sigma_{\mathfrak{C}}[\vect{C}]
	\Rightarrow
	\varphi^{\**}
	\Sigma_{\mathfrak{D}}[\vect{D}]
	$ given by the natural composites
	\[\Sigma_{\mathfrak{C}}[\vect{C}](-)
	= \Sigma_{\mathfrak{C}}(-,\vect{C})
	\to 
	\Sigma_{\mathfrak{D}}(\varphi(-),\varphi\vect{C})
	\to
	\Sigma_{\mathfrak{D}}(\varphi(-),\vect{D})
	=
	\varphi^{\**} \Sigma_{\mathfrak{D}}[\vect{D}](-).
	\]
\end{notation}

\begin{proposition}\label{FIBYONPUSH PROP}
	Let $\vect{C} \in \Sigma_{\mathfrak{C}}$,
	$\varphi \colon \mathfrak{C} \to \mathfrak{D}$
	be a map of colors and consider the adjunction \eqref{COLCHADJ EQ}.
	
	Then the adjoint of the canonical map
	$\Sigma_{\mathfrak{C}}[\vect{C}]
	\to
	\varphi^{\**}\Sigma_{\mathfrak{D}}[\varphi\vect{C}]$
	in $\mathsf{Sym}_{\mathfrak{C}}$
	is an isomorphism
	$\varphi_{!} \Sigma_{\mathfrak{C}}[\vect{C}]
	\xrightarrow{\simeq}
	\Sigma_{\mathfrak{D}}[\varphi\vect{C}]$
	in $\mathsf{Sym}_{\mathfrak{D}}$.
\end{proposition}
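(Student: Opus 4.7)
My plan is to verify the isomorphism by a short Yoneda argument, avoiding any direct manipulation of the coend formula for the left Kan extension $\varphi_!$ of Remark \ref{COLCHADJ REM}.

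First, for any test object $Y \in \mathsf{Sym}_{\mathfrak{D}}$, I would assemble the chain of natural isomorphisms
\[
\mathsf{Sym}_{\mathfrak{D}}\!\left(\varphi_! \Sigma_{\mathfrak{C}}[\vect{C}], Y\right)
\simeq
\mathsf{Sym}_{\mathfrak{C}}\!\left(\Sigma_{\mathfrak{C}}[\vect{C}], \varphi^{\**} Y\right)
\simeq
(\varphi^{\**} Y)(\vect{C})
= Y(\varphi \vect{C})
\simeq
\mathsf{Sym}_{\mathfrak{D}}\!\left(\Sigma_{\mathfrak{D}}[\varphi \vect{C}], Y\right),
\]
where the first isomorphism comes from the $(\varphi_!, \varphi^{\**})$ adjunction in \eqref{COLCHADJ EQ}, the second and fourth are instances of the ordinary Yoneda lemma applied to the fiber presheaf categories $\mathsf{Sym}_{\mathfrak{C}}$ and $\mathsf{Sym}_{\mathfrak{D}}$, and the middle equality is simply the definition of $\varphi^{\**} = $ precomposition with $\varphi \colon \Sigma_{\mathfrak{C}}^{op} \to \Sigma_{\mathfrak{D}}^{op}$. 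Since the composite is natural in $Y$, a second application of the Yoneda lemma yields an isomorphism $\varphi_! \Sigma_{\mathfrak{C}}[\vect{C}] \xrightarrow{\simeq} \Sigma_{\mathfrak{D}}[\varphi \vect{C}]$ in $\mathsf{Sym}_{\mathfrak{D}}$.

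It then remains to identify this abstractly-constructed isomorphism with the adjoint of the canonical map described in Notation \ref{FIBYON NOT}. To do so I would set $Y = \Sigma_{\mathfrak{D}}[\varphi \vect{C}]$ and trace the identity morphism at the right-hand end of the display backwards through the chain: by the Yoneda lemma it corresponds to $\mathrm{id}_{\varphi \vect{C}} \in \Sigma_{\mathfrak{D}}(\varphi \vect{C}, \varphi \vect{C}) = Y(\varphi \vect{C})$, which in turn corresponds via the second Yoneda isomorphism to the unique natural transformation $\Sigma_{\mathfrak{C}}[\vect{C}] \to \varphi^{\**} \Sigma_{\mathfrak{D}}[\varphi \vect{C}]$ sending $\mathrm{id}_{\vect{C}}$ to $\mathrm{id}_{\varphi \vect{C}}$ in $\mathsf{Set}$. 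This is precisely the canonical map of Notation \ref{FIBYON NOT}, so its image under the $(\varphi_!, \varphi^{\**})$-adjunction agrees with the constructed isomorphism.

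No serious obstacle is expected; the only point requiring care is the bookkeeping in the last paragraph, where one must check that the canonical map in Notation \ref{FIBYON NOT} is indeed the one classified by $\mathrm{id}_{\varphi \vect{C}}$ under Yoneda. This reduces to unwinding the definitions, since the canonical map is by construction the one assembled from the identities $\Sigma_{\mathfrak{D}}(\varphi(-), \varphi \vect{C}) \to \Sigma_{\mathfrak{D}}(\varphi(-), \varphi \vect{C})$.
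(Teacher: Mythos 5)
Your argument is correct and is essentially identical to the paper's proof: both establish the isomorphism by chaining the $(\varphi_!,\varphi^{\**})$ adjunction with the Yoneda lemma in each fiber presheaf category, and both identify the result with the adjoint of the canonical map by tracking where $\mathrm{id}_{\vect{C}}$ and $\mathrm{id}_{\varphi\vect{C}}$ go. Nothing further is needed.
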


Alternatively, this result states that 
the fibered Yoneda $\Sigma_{\bullet}[-]$
preserves cocartesian arrows.

\begin{proof}
	For $Y \in \mathsf{Sym}_{\mathfrak{D}}$, by the (usual) Yoneda lemma one has isomorphisms
	\begin{equation}\label{TWOYON EQ}
	\mathsf{Sym}_{\mathfrak{D}}
	(\Sigma_{\mathfrak{D}}[\varphi \vect{C}],Y)
	\simeq
	Y(\varphi \vect{C})
	\simeq
	\mathsf{Sym}_{\mathfrak{C}}(\Sigma_{\mathfrak{C}}[\vect{C}],\varphi^{\**}Y)
	\end{equation}
	proving 
	$\Sigma_{\mathfrak{D}}[\varphi \vect{C}] \simeq \varphi_! \Sigma_{\mathfrak{C}}[\vect{C}]$.
	That this identification is adjoint to the canonical map
	$\Sigma_{\mathfrak{C}}[\vect{C}]
	\to
	\varphi^{\**}\Sigma_{\mathfrak{C}}[\varphi\vect{C}]$
	in $\mathsf{Sym}_{\mathfrak{C}}$
	follows since this sends
	$id_{\vect{C}}$ to $id_{\varphi\vect{C}}$
	(and since these identities determine the isomorphisms in \eqref{TWOYON EQ}).
\end{proof}

Now let $G$ be a group. Writing 
$\mathsf{Sym}^G_\bullet(\mathcal{V})
=
\left(\mathsf{Sym}_\bullet(\mathcal{V})\right)^G$
for the category of $G$-objects
on $\mathsf{Sym}_{\bullet}(\V)$,
the argument in Remark \ref{FUNISGROTH REM}
yields that
$\mathsf{Sym}^G_\bullet(\mathcal{V}) \to \mathsf{Set}^G$
is again a Grothendieck fibration.
For each $\mathfrak{C} \in \mathsf{Set}^G$,
we will then write
$\mathsf{Sym}^G_{\mathfrak{C}}(\V)$
to denote the fiber of
$\mathsf{Sym}^G_\bullet(\mathcal{V}) \to \mathsf{Set}^G$
over $\mathfrak{C}$.
We caution that 
$\mathsf{Sym}^G_{\mathfrak{C}}(\V)$
\emph{is not} the category 
$\left(\mathsf{Sym}_{\mathfrak{C}}(\V)\right)^G$
of $G$-objects on $\mathsf{Sym}_{\mathfrak{C}}(\V)$
unless the $G$-action on $\mathfrak{C}$
happens to be trivial.

We will thus find it convenient to have a more explicit description of
$\mathsf{Sym}^G_{\mathfrak{C}}(\V)$,
provided by the following result, 
which adapts
\cite[Lemma A.6]{BP21}.
In fact, we prove a more general description
for the categories $\mathsf{Cat} \downarrow^l \mathcal{V}$
in Remark \ref{SUBCATDOWNL REM},
for which the natural ``source category'' functor
$\mathsf{Cat} \downarrow^l \mathcal{V} \to \mathsf{Cat}$
is likewise a split Grothendieck fibration,
with fiber over $\mathcal{C} \in \mathsf{Cat}$
given by $\mathsf{Fun}(\mathcal{C},\mathcal{V})$.

\begin{proposition}\label{EQUIVFNCON PROP}
	Let $G$ be a group and $\mathfrak{C} \in \mathsf{Set}^G$
	be a $G$-set. There is then a natural identification
	\[
	\mathsf{Sym}^G_{\mathfrak{C}}(\V)
	\simeq
	\mathsf{Fun}(G \ltimes \Sigma^{op}_{\mathfrak{C}},\V).
	\]
	More generally, for a category $\mathcal{B}$,
	the fiber of
	$\left(\mathsf{Cat} \downarrow^l \V \right)^{\mathcal{B}}
	\to \mathsf{Cat}^{\mathcal{B}}$
	over a functor
	$\mathcal{B} \xrightarrow{b \mapsto \mathcal{C}_b} \mathsf{Cat}$
	is given by
	\begin{equation}\label{FUNBLTICV EQ}
	\mathsf{Fun}(\mathcal{B} \ltimes \mathcal{C}_{\bullet},\V).
	\end{equation}
\end{proposition}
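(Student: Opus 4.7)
The plan is to first reduce the initial claim to the more general statement, and then to prove the general statement by unpacking both sides explicitly and matching data.

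\textbf{Reduction.} Take $\mathcal{B} = G$ (viewed as a one-object category) and $\mathcal{C}_\bullet \colon G \to \Cat$ the functor encoding the $G$-action on $\Sigma_{\mathfrak{C}}^{op}$ induced by the $G$-action on $\mathfrak{C}$. To derive the first identification from the general one, observe that the fibers of $\Sym_\bullet(\V) \to \Set$ and of $\Cat \downarrow^l \V \to \Cat$ over $\mathfrak{C}$ and $\Sigma_{\mathfrak{C}}^{op}$ respectively agree (both are $\Fun(\Sigma_{\mathfrak{C}}^{op},\V)$), and a $G$-object in either category projecting to $\mathfrak{C}$ involves only morphisms between objects in these (equal) fibers. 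Hence the fiber of $\Sym_\bullet^G(\V) \to \Set^G$ over $\mathfrak{C}$ coincides with the fiber of $(\Cat \downarrow^l \V)^G \to \Cat^G$ over the $G$-category $\Sigma_{\mathfrak{C}}^{op}$, which by the general statement is $\Fun(G \ltimes \Sigma_{\mathfrak{C}}^{op}, \V)$.

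\textbf{Unpacking the general claim.} An object of the fiber of $(\Cat \downarrow^l \V)^{\mathcal{B}} \to \Cat^{\mathcal{B}}$ over $\mathcal{C}_\bullet$ is a functor $\mathcal{B} \to \Cat \downarrow^l \V$ projecting to $\mathcal{C}_\bullet$, which by definition of $\Cat \downarrow^l \V$ unpacks as: for each $b \in \mathcal{B}$, a functor $X_b \colon \mathcal{C}_b \to \V$; for each arrow $\beta \colon b \to b'$ in $\mathcal{B}$, a natural transformation $\phi_\beta \colon X_b \Rightarrow X_{b'} \circ \beta_!$ (where $\beta_! = \mathcal{C}_\beta$); together with unitality $\phi_{\mathrm{id}_b} = \mathrm{id}$ and associativity $\phi_{\beta'\beta} = (\phi_{\beta'} \beta_!) \circ \phi_\beta$.

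\textbf{Matching with functors out of the Grothendieck construction.} On the other side, using the explicit description of $\mathcal{B} \ltimes \mathcal{C}_\bullet$ from Remark \ref{GROTHUNPCK REM}, a functor $F \colon \mathcal{B} \ltimes \mathcal{C}_\bullet \to \V$ is determined by its restrictions $X_b := F|_{\mathcal{C}_b}$ to each fiber together with, for each $\beta \colon b \to b'$ and each $c \in \mathcal{C}_b$, the values of $F$ on the cocartesian arrows $c \rightsquigarrow \beta_! c$, which I package as morphisms $(\phi_\beta)_c \colon X_b(c) \to X_{b'}(\beta_! c)$. The commutation square \eqref{GROTHUNPCK EQ} for a fiber arrow $f \colon c \to c'$ composed with $c' \rightsquigarrow \beta_! c'$ is precisely the naturality of $\phi_\beta$ in $c$, while the composition of two cocartesian arrows (and the identity cocartesian arrow) gives exactly the associativity and unitality conditions above. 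Thus the data of $F$ matches the data unpacked in the previous paragraph. An entirely parallel unpacking identifies morphisms: a natural transformation between two such functors $F, G$ is a compatible family $\alpha_b \colon X_b \Rightarrow Y_b$ of natural transformations, and the remaining naturality conditions on the cocartesian arrows become compatibility with the $\phi$'s on both sides.

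\textbf{Main obstacle.} The argument is essentially bookkeeping; the sole subtle point is verifying that the naturality square \eqref{GROTHUNPCK EQ} in the Grothendieck construction translates on the nose to naturality of the $\phi_\beta$, and that the two associativity/unitality conditions coincide. This is routine once the data on both sides has been unpacked side by side.
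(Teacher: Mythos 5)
Your proposal is correct and follows essentially the same route as the paper's proof: reduce to the general fibered statement, unpack an object of the fiber as a family of functors $\gamma_b$ together with natural transformations $\phi_\varphi$ satisfying associativity and unitality, and match this data against a functor out of $\mathcal{B} \ltimes \mathcal{C}_\bullet$ via its values on fiber arrows and cocartesian arrows, with the square \eqref{GROTHUNPCK EQ} encoding naturality of the $\phi_\varphi$. No gaps.
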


\begin{proof}
	It is immediate from the definitions that
	$\mathsf{Sym}^G_{\mathfrak{C}}(\C)$
	matches the fiber of 
	$\left(\mathsf{Cat} \downarrow^l \V \right)^{G}
	\to \mathsf{Cat}^{G}$
	over $\Sigma_{\mathfrak{C}}^{op} \in \mathsf{Cat}^G$,
	so we need only address the general case.
	
	As in Notation \ref{GROTHCONS NOT},
	let us write $\varphi_! \colon \mathcal{C}_b \to \mathcal{C}_{b'}$
	for the functor induced by the arrow 
	$\varphi \colon b \to b'$
	in $\mathcal{B}$.
	Unpacking definitions, an object of 
	$\left(\mathsf{Cat} \downarrow^l \V \right)^{\mathcal{B}}$
	over 
	$\mathcal{C}_{\bullet} \colon \mathcal{B} \to \mathsf{Cat}$
	corresponds to the data of functors
	$\gamma_b \colon \mathcal{C}_b \to \mathcal{V}$ for each $b \in \mathcal{B}$
	and natural transformations
	$\phi_{\varphi} \colon \gamma_b \Rightarrow \gamma_{b'} \varphi_!$
	for each $\varphi \colon b \to b'$ in $\mathcal{B}$
	\begin{equation}\label{EQUIVFUNEX EQ}
	\begin{tikzcd}[row sep = tiny, column sep = 50pt]
	\mathcal{C}_{b} \arrow{dr}[name=U]{\gamma_b} \arrow{dd}[swap]{\varphi_!}
	\\
	& \mathcal{V}
	\\
	|[alias=V]| \mathcal{C}_{b'} \arrow{ur}[swap]{\gamma_{b'}}
	\arrow[Leftarrow, from=V, to=U,shorten >=0.25cm,shorten <=0.25cm
	,swap, near end, "\phi_{\varphi}"
	]
	\end{tikzcd}
	\end{equation}
	subject to the requirements that
	$\gamma_b \overset{\phi_{\varphi}}{\Rightarrow}
	\gamma_{b'} \varphi_!
	\overset{\phi_{\psi}\varphi_!}{\Rightarrow}
	\gamma_{b'} \varphi_! \psi_!
	$
	equals 
	$\gamma_b \overset{\phi_{\psi\varphi}}{\Rightarrow}
	\gamma_{b'} \varphi_! \psi_!
	$
	for composable $b \xrightarrow{\varphi} b' \xrightarrow{\psi} b''$
	and that
	$\phi_{id_b} = id_{\gamma_b}$.
	
	We now claim that the data above is exactly the data of a functor
	$F \colon \mathcal{B} \ltimes \mathcal{C}_{\bullet} \to \mathcal{V}$.
	To ease notation, we follow
	Remark \ref{GROTHUNPCK REM}
	and describe arrows in 
	$\mathcal{B} \ltimes \mathcal{C}_{\bullet}$
	as composites
	$c \rightsquigarrow \varphi_! c \xrightarrow{f} c'$.
	On fiber arrows 
	$f\colon c \to c'$ in $\mathcal{C}_b$
	set $F(f) = \gamma_b(f)$,
	and on cocartesian arrows set
	$F(c \rightsquigarrow \varphi_! c) =
	(\phi_{\varphi})_c$.
	Then:
	\begin{enumerate*}[label=(\roman*)]
		\item
		associativity and unitality of $F$ with respect to fiber arrows is equivalent to associativity and unitality of the $\gamma_b$;
		\item associativity and unitality of $F$ with respect to the cocartesian arrows is equivalent to the conditions following 
		\eqref{EQUIVFUNEX EQ};
		\item $F$ respecting the commutative squares \eqref{GROTHUNPCK EQ}
		is equivalent to naturality of the $\phi_{\varphi}$.
	\end{enumerate*}
	
	This finishes the proof that \eqref{FUNBLTICV EQ} has the correct set of objects. The claim that it also has the correct arrows is similar.
\end{proof}

Following the result above, 
we will represent $G$-equivariant symmetric sequences
$X \in \mathsf{Sym}^G_{\mathfrak{C}}(\mathcal{V})$
for some $\mathfrak{C} \in \mathsf{Set}^G$
as functors
$X \colon G \ltimes \Sigma_{\mathfrak{C}}^{op} \to \mathcal{V}$,
and objects of
$\left( \mathsf{Cat} \downarrow^{l} \mathcal{V} \right)^G$
as functors $G \ltimes \mathcal{C} \to \V$.

\begin{remark}\label{RHOPURP REM}
	Recall that, if $\mathcal{C}\in \mathsf{Cat}^G$ is a category with a $G$-action,
	then 
	$\Sigma \wr \mathcal{C}$
	has a $G$-action where $G$ acts diagonally on tuples $(c_i)$ with $c_i \in \mathcal{C}$.
	Therefore, by applying the 
	$\Sigma \wr (-)$
	construction to the categories
	$\mathsf{Cat} \downarrow^l \V$
	in Remark \ref{SUBCATDOWNL REM}
	(note that that one can apply $\Sigma \wr (-)$
	to the entirety of diagram \eqref{SUBCATDOWNL REM}),
	one obtains a description of the functor
	\begin{equation}\label{RHOPURP EQ}
	\begin{tikzcd}[column sep =40,row sep =0]
	\left( \mathsf{Cat} \downarrow^{l} \mathcal{V} \right)^G
	\ar{r}{\Sigma \wr (-) } &
	\left( \mathsf{Cat} \downarrow^{l} \Sigma \wr \mathcal{V} \right)^G
	\\
	G \ltimes \mathcal{C} \to \mathcal{V} \ar[mapsto]{r} &
	\left(G \ltimes (\Sigma \wr \mathcal{C}) \to 
	\Sigma \wr (G \ltimes  \mathcal{C}) \to \Sigma \wr \mathcal{V}\right)
	\end{tikzcd}
	\end{equation}
	where the functor
	$G \ltimes (\Sigma \wr \mathcal{C}) \to 
	\Sigma \wr (G \ltimes  \mathcal{C})$
	is the natural inclusion described in Remark \ref{WRDIAG REM}.
\end{remark}

\subsection{Representable functors}
\label{REPFUN_SEC}

Since  
$\mathsf{Sym}^G_{\mathfrak{C}}(\mathcal{V}) \simeq 
\V^{G \ltimes \Sigma^{op}_{\mathfrak{C}}}$ is again a functor category, 
we will find it useful to discuss its representable functors.
In the following discussion 
we set $\V = \mathsf{Set}$ and abbreviate
$\mathsf{Sym}^G_{\mathfrak{C}} = \mathsf{Sym}^G_{\mathfrak{C}}(\mathsf{Set})$.

As the objects of 
$G \ltimes \Sigma^{op}_{\mathfrak{C}}$
are simply the profiles
$\vect{C} \in \Sigma_{\mathfrak{C}}$,
each such profile induces a representable functor
$\left(G \ltimes \Sigma^{op}_{\mathfrak{C}}\right)(\vect{C},-)$
in 
$\mathsf{Sym}^G_{\mathfrak{C}}$.
However, caution is needed.
If one forgets the $G$-action on $\mathfrak{C}$,
then $\vect{C} \in \Sigma_{\mathfrak{C}}$
likewise induces a representable functor 
$\Sigma_{\mathfrak{C}}[\vect{C}]=\Sigma^{op}_{\mathfrak{C}}(\vect{C},-)$
in $\mathsf{Sym}_{\mathfrak{C}}$,
as discussed in Notation \ref{FIBYON NOT}.
As such, our next task is to understand the relation between 
$\left(G \ltimes \Sigma^{op}_{\mathfrak{C}}\right)(\vect{C},-)$
and 
$\Sigma_{\mathfrak{C}}[\vect{C}]$
in such a way that we have analogues of the 
fibered Yoneda \eqref{FIBYON EQ}
and Proposition \ref{FIBYONPUSH PROP}.

As in our previous work in 
\cite[Not. 5.56]{Per18}, \cite[\S 2.3]{BP20},
the key to achieve this will be to extend the
$\Sigma_{\mathfrak{C}}[\vect{C}]$
notation to be defined not only for corollas $\vect{C}$
but also for ``colored forests of corollas''.
In fact, we will do a little more. 
In anticipation of our discussion of operads, 
we will extend this notation by defining 
$\Sigma_{\mathfrak{C}}[\vect{F}]$
for $\vect{F}$ a general colored forest (of trees).
In the following, $\Phi$ denotes the category of forests
(i.e. formal coproducts of trees; see \cite[\S 5.1]{Per18}).

\begin{definition}\label{COLFOR DEF}
	Let $\mathfrak{C}$ be a set of colors.
	The category $\Phi_{\mathfrak{C}}$ of $\mathfrak{C}$-colored forests has
	\begin{itemize}
		\item objects pairs
		$\vect{F} = (F,\mathfrak{c})$
		where 
		$F\in \Phi$ is a forest
		and 
		$\mathfrak{c}\colon \boldsymbol{E}(F) \to \mathfrak{C}$ 
		is a coloring of its edges;
		\item a map
		$\vect{F}=(F,\mathfrak{c}) \to 
		(F',\mathfrak{c}') = \vect{F'}$
		is a map $\rho \colon F \to F'$ in $\Phi$
		such that
		$\mathfrak{c} = \mathfrak{c}' \rho$.
	\end{itemize}
	If $\varphi\colon \mathfrak{C} \to \mathfrak{D}$ is a map of colors,
	we write
	$\varphi \colon \Phi_{\mathfrak{C}} \to \Phi_{\mathfrak{D}}$
	for the functor sending 
	$\vect{F} = (F,\mathfrak{c})$
	to 
	$\varphi\vect{F} = (F,\varphi\mathfrak{c})$.
	Note that this defines a Grothendieck opfibration
	\begin{equation}\label{PHIBGRO EQ}
	\Phi_{\bullet} \to \mathsf{Set}
	\end{equation}
	whose objects are the $\vect{F} \in \Phi_{\mathfrak{C}}$ for some
	$\mathfrak{C} \in \mathsf{Set}$,
	and with an arrow
	from $\vect{F}$ to $\vect{F'}$
	over
	$\varphi \colon \mathfrak{C} \to \mathfrak{D}$
	given by an arrow
	$\varphi \vect{F} \to \vect{F'}$ in $\Phi_{\mathfrak{D}}$.
\end{definition}

For each vertex $v \in \boldsymbol{V}(F)$ in a forest,
we write $F_v$ for the associated corolla.
Note that, given a $\mathfrak{C}$-coloring $\vect{F}$ on $F$,
one one likewise obtains colorings $\vect{F}_v$ on $F_v$.

\begin{notation}
	Given $\vect{F} \in \Phi_{\mathfrak{C}}$
	we define
	\begin{equation}\label{GENSIGC EQ}
	\Sigma_{\mathfrak{C}}[\vect{F}]=
	\coprod\nolimits^{\mathfrak{C}}_{v \in \boldsymbol{V}(F)} 
	\Sigma_{\mathfrak{C}}[\vect{F}_v]
	\end{equation}
	where we highlight that the coproduct $\amalg^{\mathfrak{C}}$ is fibered, i.e. it takes place in $\mathsf{Sym}_{\mathfrak{C}}$
	rather than $\mathsf{Sym}_{\bullet}$.
\end{notation}

\begin{example}\label{COLFORES EX}
	Let 
	$\mathfrak{C} = \{ \mathfrak{a}, \mathfrak{b}, \mathfrak{c} \}$.
	On the left below we depict a $\mathfrak{C}$-colored forest 
	$\vect{F} = \vect{T} \amalg \vect{S}$
	with tree components $\vect{T},\vect{S}$.
	\begin{equation}
	\begin{tikzpicture}[auto,grow=up, level distance = 2.2em,
	every node/.style={font=\scriptsize,inner sep = 2pt}]%
	\tikzstyle{level 2}=[sibling distance=3em]%
	\node at (0,0) [font = \normalsize] {$\vect{T}$}%
	child{node [dummy] {}%
		child[level distance = 2.9em]{node [dummy] {}%
			child{node {}%
				edge from parent node [swap] {$\mathfrak{c}$}}%
			edge from parent node [swap,near end] {$\mathfrak{b}$}}%
		child[level distance = 2.9em]{node [dummy] {}%
			child[level distance = 2.9em]{node [dummy] {}%
				edge from parent node [swap,	near end] {$\mathfrak{a}\phantom{\mathfrak{b}}$}}%
			child[level distance = 2.9em]{node {}%
				edge from parent node [near end] {$\mathfrak{b}$}}%
			edge from parent node [near end] {$\phantom{\mathfrak{b}}\mathfrak{a}$}}%
		edge from parent node [swap] {$\mathfrak{a}$}};%
	\node at (2,0) [font = \normalsize] {$\vect{S}$}%
	child{node [dummy] {}%
		child[level distance = 2.9em]{node [dummy] {}%
			child{node [dummy] {}%
				edge from parent node [swap] {$\mathfrak{c}$}}%
			edge from parent node [swap] {$\mathfrak{b}$}}%
		edge from parent node [swap] {$\mathfrak{a}$}};%
	\node at (4.5,1.5) [font = \normalsize] {$\vect{T}_1$}%
	child{node [dummy] {}%
		edge from parent node [swap] {$\mathfrak{a}$}};%
	\node at (6.5,1.5) [font = \normalsize] {$\vect{T}_2$}%
	child{node [dummy] {}%
		child{node {}%
			edge from parent node [swap, near end] {$\mathfrak{a}\phantom{\mathfrak{b}}$}}%
		child{node {}%
			edge from parent node [near end] {$\mathfrak{b}$}}%
		edge from parent node [swap] {$\mathfrak{a}$}};%
	\node at (8.5,1.5) [font = \normalsize] {$\vect{T}_3$}%
	child{node [dummy] {}%
		child{node {}%
			edge from parent node [swap] {$\mathfrak{c}$}}%
		edge from parent node [swap] {$\mathfrak{b}$}};%
	\node at (10.5,1.5) [font = \normalsize] {$\vect{T}_4$}%
	child{node [dummy] {}%
		child{node {}%
			edge from parent node [swap, near end] {$\mathfrak{b}$}}%
		child{node {}%
			edge from parent node [near end] {$\phantom{\mathfrak{b}}\mathfrak{a}$}}%
		edge from parent node [swap] {$\mathfrak{a}$}};%
	\node at (4.5,-0.7) [font = \normalsize] {$\vect{S}_1$}%
	child{node [dummy] {}%
		edge from parent node [swap] {$\mathfrak{c}$}};%
	\node at (6.5,-0.7) [font = \normalsize] {$\vect{S}_2$}%
	child{node [dummy] {}%
		child{node {}%
			edge from parent node [swap] {$\mathfrak{c}$}}%
		edge from parent node [swap] {$\mathfrak{b}$}};%
	\node at (8.5,-0.7) [font = \normalsize] {$\vect{S}_3$}%
	child{node [dummy] {}%
		child{node {}%
			edge from parent node [swap] {$\mathfrak{b}$}}%
		edge from parent node [swap] {$\mathfrak{a}$}};%
	\draw[decorate,decoration={brace,amplitude=2.5pt}] (2.1,-0.5) -- (-1.1,-0.5) 
	node[midway,inner sep=4pt,font=\normalsize]{$\vect{F}$}; %
	\end{tikzpicture}%
	\end{equation}%
	Moreover, on the right we depict the $\mathfrak{C}$-profiles/corollas
	$\vect{T}_i$ and $\vect{S}_j$
	corresponding to the vertices of $T,S$, so that
	\[
	\Sigma_{\mathfrak{C}}[\vect{F}]
	=
	\Sigma_{\mathfrak{C}}[\vect{T}] 
	\amalg^{\mathfrak{C}}
	\Sigma_{\mathfrak{C}}[\vect{S}]
	=
	\left(
	\coprod_{1\leq i \leq 4}^{\mathfrak{C}}
	\Sigma_{\mathfrak{C}}[\vect{T}_i] 
	\right)
	\amalg^{\mathfrak{C}}
	\left(
	\coprod_{1\leq j \leq 3}^{\mathfrak{C}}
	\Sigma_{\mathfrak{C}}[\vect{S}_j]
	\right)
	\]
\end{example}

\begin{remark}
	The representables $\Sigma_{\mathfrak{C}}[-]$ in \eqref{GENSIGC EQ}
	do not quite define a functor on the entire category  $\Phi_{\bullet}$,
	due to the fact that the only maps of forests 
	sending vertices to vertices are the outer maps \cite[\S 3.2]{BP21}.
	Writing $\Phi^o_{\bullet} \hookrightarrow \Phi_{\bullet}$
	for the wide subcategory of those arrows whose underlying maps of uncolored forests are outer (on each tree component),
	\eqref{FIBYON EQ}
	extends to give generalized fibered Yoneda embeddings
	(where the right functor is obtained from the left functor by taking $G$-objects)
	\[
	\Phi_{\bullet}^o 
	\xrightarrow{\Sigma_{\bullet}[-]}
	\mathsf{Sym}_{\bullet},
	\qquad
	\Phi_{\bullet}^{o,G}
	\xrightarrow{\Sigma_{\bullet}[-]}
	\mathsf{Sym}^G_{\bullet}
	\]
	which are fibered over $\mathsf{Set}$ and $\mathsf{Set}^G$, respectively.
	Note that, thanks to formula \eqref{GENSIGC EQ},
	Proposition \ref{FIBYONPUSH PROP} automatically generalizes, i.e. one has natural identifications
	\begin{equation}\label{PUSHEQAG EQ}
	\varphi_! \Sigma_{\mathfrak{C}}[\vect{F}] \simeq 
	\Sigma_{\mathfrak{D}}[\varphi \vect{F}]
	\end{equation}
	for each map of colors
	$\varphi \colon \mathfrak{C} \to \mathfrak{D}$
	(which is an equivariant map in the equivariant case).
\end{remark}

\begin{notation}
	We write $(-)^{\tau} \colon \Phi \to \Phi_{\bullet}$
	for the \emph{tautological coloring} functor
	which sends $F \in \Phi$ to 
	$F^{\tau} \in \Phi_{\boldsymbol{E}(T)}$
	where
	$F^{\tau} = (F,\mathfrak{t})$ is the underlying forest $F$
	together with the identity coloring
	$\mathfrak{t} \colon \boldsymbol{E}(T) \xrightarrow{=} \boldsymbol{E}(T)$.
	Moreover, we then abbreviate 
	$\Sigma_{\tau}[F] = \Sigma_{\boldsymbol{E}(F)}[F^{\tau}]$.
\end{notation}

\begin{remark}
	For any colored forest $\vect{F}=(F,\mathfrak{c})$,
	regarding $\mathfrak{c} \colon \boldsymbol{E}(T) \to \mathfrak{C}$
	as a change of color map, 
	one has $\vect{F} = \mathfrak{c} F^{\tau}$,
	so that \eqref{PUSHEQAG EQ} then yields 
	\begin{equation}\label{CANPUSH EQ}
	\Sigma_{\mathfrak{C}}[\vect{F}] = 
	\mathfrak{c}_! \Sigma_{\tau}[F]
	\end{equation}
\end{remark}

\begin{definition}
	Let $G$ be a group,
	$\mathfrak{C} \in \mathsf{Set}^G$
	be a $G$-set of colors, 
	and $\vect{C} \in \Sigma_{\mathfrak{C}}$ be a $\mathfrak{C}$-profile/corolla.
	Write
	$\vect{C} = (C,\mathfrak{c})$
	with $C\in \Sigma$ the underlying corolla
	and 
	$\mathfrak{c} \colon \boldsymbol{E}(T) \to \mathfrak{C}$.
	
	Writing 
	$G \cdot \mathfrak{c} \colon G \cdot \boldsymbol{E}(T) \to \mathfrak{C}$
	for the adjoint map, 
	$G \cdot C \in \Phi^G$
	for the $G$-free forest determined by $C$,
	and noting that
	$\boldsymbol{E}(G \cdot C) \simeq 
	G \cdot \boldsymbol{E}(C)$,
	we define $G \cdot_{\mathfrak{C}} \vect{T} \in \Phi^G_{\mathfrak{C}}$ by
	\begin{equation}\label{GCDTCC EQ}
	G \cdot_{\mathfrak{C}} \vect{C} = 
	(G \cdot \mathfrak{c})(G \cdot C)^{\tau}.
	\end{equation}
\end{definition}

\begin{remark}
	Writing $g \colon \mathfrak{C} \to \mathfrak{C}$
	for the $G$-action maps,
	one has the more explicit formula 
	(see Example \ref{GDOTCC EX})
	\[
	G \cdot_{\mathfrak{C}} \vect{C}
	= 
	\coprod_{g \in G}
	g \vect{C}
	\]
	However, in practice we will prefer to use 
	\eqref{GCDTCC EQ} for technical purposes.
\end{remark}

\begin{remark}
	One can further extend \eqref{GCDTCC EQ}
	to define a functor
	$G \cdot_{\mathfrak{C}} (-) \colon \Phi_{\mathfrak{C}}
	\to \Phi_{\mathfrak{C}}^G$,
	which is the left adjoint to the forgetful functor
	$ \Phi_{\mathfrak{C}}^G
	\to \Phi_{\mathfrak{C}}$.
\end{remark}

\begin{example}\label{GDOTCC EX}
	Let $G = \{1,i,-1,-i\} \simeq \mathbb{Z}_{/4}$ 
	be the group of quartic roots of unit and
	$\mathfrak{C} = \{\mathfrak{a}, -\mathfrak{a}, i\mathfrak{a}, -i\mathfrak{a}, \mathfrak{b}, i \mathfrak{b} \}$ where we implicitly have
	$-\mathfrak{b} = \mathfrak{b}$.
	The following depicts the forest (of corollas)
	$G \cdot_{\mathfrak{C}} \vect{C}$
	in $\Phi_{\mathfrak{C}}^G$
	for $\vect{C}$ in $\Sigma_{\mathfrak{C}}$ the leftmost corolla.
	\begin{equation}
	\begin{tikzpicture}[auto,grow=up, level distance = 2.2em,
	every node/.style={font=\scriptsize,inner sep = 2pt}]%
	\tikzstyle{level 2}=[sibling distance=3em]%
	\node at (0,0) [font = \normalsize] {$\vect{C}$}%
	child{node [dummy] {}%
		child{node {}%
			edge from parent node [swap] {$-\mathfrak{a}$}}%
		child[level distance = 2.9em]{node {}%
			edge from parent node [swap,	near end] {$i\mathfrak{b}$}}%
		child[level distance = 2.9em]{node {}%
			edge from parent node [near end] {$i\mathfrak{b}$}}%
		child{node {}%
			edge from parent node  {$\mathfrak{a}$}}%
		edge from parent node [swap] {$\mathfrak{b}$}};%
	\node at (3.5,0) [font = \normalsize] {$i\vect{C}$}%
	child{node [dummy] {}%
		child{node {}%
			edge from parent node [swap] {$-i\mathfrak{a}$}}%
		child[level distance = 2.9em]{node {}%
			edge from parent node [swap,	near end] {$\mathfrak{b}$}}%
		child[level distance = 2.9em]{node {}%
			edge from parent node [near end] {$\mathfrak{b}$}}%
		child{node {}%
			edge from parent node  {$i\mathfrak{a}$}}%
		edge from parent node [swap] {$i\mathfrak{b}$}};%
	\node at (7,0) [font = \normalsize] {$-\vect{C}$}%
	child{node [dummy] {}%
		child{node {}%
			edge from parent node [swap] {$\mathfrak{a}$}}%
		child[level distance = 2.9em]{node {}%
			edge from parent node [swap,	near end] {$i\mathfrak{b}$}}%
		child[level distance = 2.9em]{node {}%
			edge from parent node [near end] {$i\mathfrak{b}$}}%
		child{node {}%
			edge from parent node  {$-\mathfrak{a}$}}%
		edge from parent node [swap] {$\mathfrak{b}$}};%
	\node at (10.5,0) [font = \normalsize] {$-i\vect{C}$}%
	child{node [dummy] {}%
		child{node {}%
			edge from parent node [swap] {$i\mathfrak{a}$}}%
		child[level distance = 2.9em]{node {}%
			edge from parent node [swap,	near end] {$\mathfrak{b}$}}%
		child[level distance = 2.9em]{node {}%
			edge from parent node [near end] {$\mathfrak{b}$}}%
		child{node {}%
			edge from parent node  {$-i\mathfrak{a}$}}%
		edge from parent node [swap] {$i\mathfrak{b}$}};%
	\draw[decorate,decoration={brace,amplitude=2.5pt}] (11.2,-0.4) -- (-0.7,-0.4) 
	node[midway,inner sep=4pt,font=\normalsize]{$G \cdot_{\mathfrak{C}} \vect{C}$}; %
	\end{tikzpicture}%
	\end{equation}%
	Note that the pairs $\vect{C},-\vect{C}$
	and $i\vect{C},-i\vect{C}$ are isomorphic in $\Sigma_{\mathfrak{C}}$
	while any other pair,
	such as $\vect{C},i\vect{C}$, is not.
	In general, it is moreover possible for two or more tree components of
	$G \cdot_{\mathfrak{C}} \vect{C}$ to be equal.
\end{example}

\begin{proposition}\label{REPALTDESC PROP}
	For any $G$-set of colors $\mathfrak C$
	and $\mathfrak{C}$-profile
	$\vect{C} \in \Sigma_{\mathfrak{C}}$
	one has a natural identification
	\[
	(G \ltimes \Sigma^{op}_{\mathfrak{C}})(\vect{C},-)
	\simeq 
	\Sigma_{\mathfrak{C}} [G \cdot_{\mathfrak{C}} \vect{C}].
	\]
\end{proposition}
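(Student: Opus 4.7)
The plan is to verify the identification directly: evaluate both sides at an arbitrary $\vect{D} \in \Sigma_{\mathfrak{C}}$, match the underlying sets via a double application of Yoneda, and then check that the resulting bijection respects both the $\Sigma^{op}_{\mathfrak{C}}$-structure and the $G$-action built into $G \ltimes \Sigma^{op}_{\mathfrak{C}}$.

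First I would unpack the LHS using Example \ref{GLTIMES EQ}: an arrow $\vect{C} \to \vect{D}$ in $G \ltimes \Sigma^{op}_{\mathfrak{C}}$ is uniquely encoded by a pair $(g, \varphi)$ with $g \in G$ and $\varphi \colon g\vect{C} \to \vect{D}$ in $\Sigma^{op}_{\mathfrak{C}}$, i.e., $\varphi \colon \vect{D} \to g\vect{C}$ in $\Sigma_{\mathfrak{C}}$. So
\[
(G \ltimes \Sigma^{op}_{\mathfrak{C}})(\vect{C}, \vect{D}) = \coprod_{g \in G} \Sigma_{\mathfrak{C}}(\vect{D}, g\vect{C}).
\]
For the RHS, writing $\vect{C} = (C, \mathfrak{c})$, I would unpack \eqref{GCDTCC EQ}: the vertices of the underlying forest $G \cdot C$ are indexed by $g \in G$ (since $C$ is a single corolla), and after applying the coloring $G \cdot \mathfrak{c}$ the restriction to the $g$-th corolla's edges $\{g\} \times \boldsymbol{E}(C)$ produces exactly the signature $g\vect{C}$. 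Formula \eqref{GENSIGC EQ} then gives
\[
\Sigma_{\mathfrak{C}}[G \cdot_{\mathfrak{C}} \vect{C}] = \coprod\nolimits^{\mathfrak{C}}_{g \in G} \Sigma_{\mathfrak{C}}[g\vect{C}],
\]
and evaluating at $\vect{D}$ recovers the same set $\coprod_{g \in G} \Sigma_{\mathfrak{C}}(\vect{D}, g\vect{C})$.

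The step that will require the most careful bookkeeping — and what I expect to be the main obstacle — is verifying that this pointwise bijection is natural as a morphism of functors $G \ltimes \Sigma^{op}_{\mathfrak{C}} \to \mathsf{Set}$, i.e., that it is $G$-equivariant. Naturality along $\Sigma^{op}_{\mathfrak{C}}$-arrows is immediate componentwise by the classical Yoneda lemma for $\Sigma_{\mathfrak{C}}$. For the $G$-action, applying the composition formula in Example \ref{GLTIMES EQ} shows that the action arrow $\vect{D} \to h\vect{D}$ sends a $g$-indexed element $\varphi \colon \vect{D} \to g\vect{C}$ of the LHS to the $hg$-indexed element $h\varphi \colon h\vect{D} \to hg\vect{C}$. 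On the RHS, the $G$-action on $G \cdot_{\mathfrak{C}} \vect{C}$ permutes corolla components via $h \cdot (g\text{-th corolla}) = hg\text{-th corolla}$ and, through the isomorphism $g\vect{C} \xrightarrow{h} hg\vect{C}$, induces on $\Sigma_{\mathfrak{C}}[g\vect{C}](\vect{D}) = \Sigma_{\mathfrak{C}}(\vect{D}, g\vect{C})$ precisely the assignment $\varphi \mapsto h\varphi$, matching the LHS. Naturality in $\vect{C}$ is manifest from the constructions, so this completes the identification.
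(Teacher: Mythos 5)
Your proof is correct, but it takes a genuinely different route from the paper's. You argue by direct evaluation: you decompose both sides at each $\vect{D}$ into $\coprod_{g \in G}\Sigma_{\mathfrak{C}}(\vect{D},g\vect{C})$ and then explicitly verify that the resulting pointwise bijection commutes with the action arrows of $G \ltimes \Sigma^{op}_{\mathfrak{C}}$, which is indeed the step carrying all the content. The paper instead proves corepresentability: it shows $\mathsf{Sym}^G_{\mathfrak{C}}(\Sigma_{\mathfrak{C}}[G\cdot_{\mathfrak{C}}\vect{C}],X)\simeq X(\vect{C})$ naturally in $X$ by chaining together the pushforward isomorphism \eqref{CANPUSH EQ} along $G\cdot\mathfrak{c}$, the free--forget adjunction \eqref{ADJOVADJ EQ} for $G$-objects (using $\Sigma_\tau[G\cdot C]\simeq G\cdot\Sigma_\tau[C]$), the pushforward along $\mathfrak{c}$, and the ordinary Yoneda lemma in $\mathsf{Sym}_{\mathfrak{C}}$, then concludes by uniqueness of representing objects. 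The paper's route buys the $G$-equivariance for free (it is absorbed into the adjunctions and the cartesian/cocartesian formalism) and explains why the definition $G\cdot_{\mathfrak{C}}\vect{C}=(G\cdot\mathfrak{c})(G\cdot C)^{\tau}$ is technically convenient; your route relies on the explicit coproduct formula $G\cdot_{\mathfrak{C}}\vect{C}\simeq\coprod_g g\vect{C}$ (which the paper states but deliberately avoids using) and requires the hands-on check that $h$ acts by $\varphi\mapsto h\varphi$ with index shift $g\mapsto hg$ on both sides, but in exchange it makes the isomorphism completely explicit at the level of elements.
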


\begin{proof}
	Recalling the $C_{\varphi}(-,-)$ notation (cf. Notation \ref{MAPSDEC NOT})
	for maps over $\varphi\colon b \to b'$ we likewise write 
	$\mathsf{Sym}^G_{\varphi}(-,-),\mathsf{Sym}_{\varphi}(-,-)$
	for maps over the map of colors $\varphi$.
	The result now follows from the string of isomorphisms
	(which show that $\vect{C}$ represents
	$\Sigma_{\mathfrak{C}} [G \cdot_{\mathfrak{C}} \vect{C}] \in \mathsf{Set}^{G \ltimes \Sigma^{op}_{\mathfrak{C}}}$)
	\[
	\mathsf{Sym}^G_{\mathfrak{C}}
	(\Sigma_{\mathfrak{C}} [G \cdot_{\mathfrak{C}} \vect{C}],X)
	\simeq
	\mathsf{Sym}^G_{G \cdot \mathfrak{c}}
	(\Sigma_{\tau} [G \cdot C],X)
	\simeq
	\mathsf{Sym}_{\mathfrak{c}}
	(\Sigma_{\tau} [C],X)
	\simeq
	\mathsf{Sym}_{\mathfrak{C}}
	(\Sigma_{\mathfrak{C}}[\vect{C}],X)
	=
	X(\vect{C})
	\]
	where:
        the first and third steps 
        use the canonical pushforwards \eqref{CANPUSH EQ}
        and (the dual of) Remark \ref{CARTCHAR REM},
        the second step uses \eqref{ADJOVADJ EQ}
        and the observation that
        $\Sigma_{\tau}[G \cdot C] \simeq G \cdot \Sigma_{\tau}[C]$, and
        the last step is the Yoneda lemma in $\mathsf{Sym}_{\mathfrak{C}}$.
\end{proof}

We end this section by discussing convenient notation  
for subgroups 
$\Lambda \leq \mathsf{Aut}_{G \ltimes \Sigma_{\mathfrak{C}}^{op}}(\vect{C})$,
where the $\mathfrak{C}$-profile $\vect{C}$
is regarded as an object of 
$G \ltimes \Sigma_{\mathfrak{C}}^{op}$.

\begin{remark}\label{SIGACT REM}
	Extending Remark \ref{GLOBSIG REM}, 
	the group $G \times \Sigma_n^{op}$
	acts on the set $\Sigma_{\mathfrak{C},n}$
	of $n$-ary $\mathfrak{C}$-profiles
	$\vect{C} = (\mathfrak{c}_1,\cdots,\mathfrak{c}_n;\mathfrak{c}_0)$
	via the assignment (where $g \in G$ acts on the left and $\sigma \in \Sigma_n$ on the right)
	\begin{equation}\label{SIGACT_EQ}
	g \vect{C} \sigma =
	g (\mathfrak{c}_1,\cdots,\mathfrak{c}_n;\mathfrak{c}_0) \sigma
	=
	(g\mathfrak{c}_{\sigma(1)},\cdots,g\mathfrak{c}_{\sigma(n)};g\mathfrak{c}_{\sigma(0)})
	\end{equation}
	or, more compactly, $g (\mathfrak{c}_i) \sigma = (g \mathfrak{c}_{\sigma(i)})$.
	
	Moreover, the natural functor 
	$G \ltimes \Sigma^{op}_{\mathfrak{C}}
	\to G \times \Sigma^{op}$
	which forgets colors is faithful,
	with an arrow
	$\vect{C} \to \vect{C'}$
	between arity $n$ profiles
	in $G \ltimes \Sigma_{\mathfrak{C}}^{op}$
	given by 
	$(g,\sigma) \in G \times \Sigma_n^{op}$
	such that
	$g \vect{C} \sigma = \vect{C'}$.
\end{remark}

\begin{definition}\label{STABS DEF}
	If a subgroup $\Lambda \leq G \times \Sigma_n^{op}$
	fixes a profile 
	$\vect{C} = (\mathfrak{c}_1,\cdots,\mathfrak{c}_n;\mathfrak{c}_0)$,
	i.e. if
	$g\mathfrak{c}_{\sigma(i)} = \mathfrak{c}_i$
	for all $(g, \sigma) \in \Lambda, 0 \leq i \leq n$,
	we say that \textit{$\Lambda$ stabilizes $\vect C$}. 
\end{definition}

\subsection{Equivariant colored symmetric operads}
\label{EQCOSYMOP SEC}

Our goal in this section is to describe the category 
$\mathsf{Op}^{G}_{\bullet}(\V)$
of equivariant colored operads in a way suitable for our 
proof of the main theorems in \S \ref{FIXCOL SEC}. 

We start by considering the non-equivariant case
$G = \**$,
for which we will describe
$\mathsf{Op}_{\bullet}(\V)$
as the category of fiber algebras over a certain fibered monad $\mathbb{F}$ (Def. \ref{FREEOP DEF}) on 
$\mathsf{Sym}_\bullet(\mathcal{V})$.
The restrictions $\mathbb{F}$ to each fiber
$\mathsf{Sym}_{\mathfrak{C}}(\V)$,
i.e. the fiber monads $\mathbb{F}_{\mathfrak{C}}$,
are simply the ``free operad with set of objects $\mathfrak{C}$'' monads (Rem. \ref{CONVER REM}),
which are well known to be defined using 
$\mathfrak{C}$-colored trees.

As such, cf. Definition \ref{COLFOR DEF},
we first write 
$\Omega_{\mathfrak{C}} \subset \Phi_{\mathfrak{C}}$
for the subcategory of $\mathfrak{C}$-colored forests which happen to be trees,
as well as 
$\Omega^0_{\mathfrak{C}} \subseteq \Omega_{\mathfrak{C}}$
for the wide subcategory whose arrows are the isomorphisms.
Next, following \cite[Not. 3.38]{BP21},
there is a ``colored arity functor''
\begin{equation}\label{LRDEF EQ}
\Omega_{\mathfrak C}^0 \xrightarrow{\mathsf{lr}_{\mathfrak C}} \Sigma_{\mathfrak{C}}
\end{equation}
which we call the \emph{leaf-root functor}, described as follows.
Given $\vect{T} \in \Omega^0_{\mathfrak{C}}$, 
its leaf-root
$\mathsf{lr}(\vect{T}) \in \Sigma_{\mathfrak{C}}$
is the only $\mathfrak{C}$-corolla
admitting a planar tall map
$\mathsf{lr}(\vect{T}) \to \vect{T}$
(see \cite[Defs. 3.21 and 3.35]{BP21}),
where by tall map we mean a map
which sends leaves to leaves and the root to the root.

\begin{example}
	For $\vect{T},\vect{S}$ the trees in Example \ref{COLFORES EX},
	we depict $\mathsf{lr}(\vect{T})$, $\mathsf{lr}(\vect{S})$
	below,
	which, informally, are obtained by keeping only the leaves and roots of the trees $\vect{T}, \vect{S}$.
	\begin{equation}
	\begin{tikzpicture}[auto,grow=up, level distance = 2.2em,
	every node/.style={font=\scriptsize,inner sep = 2pt}]%
	\tikzstyle{level 2}=[sibling distance=3em]%
	\node at (0,0) [font = \normalsize] {$\mathsf{lr}(\vect{T})$}%
	child{node [dummy] {}%
		child{node {}%
			edge from parent node [swap, near end] {$\mathfrak{c}\phantom{\mathfrak{b}}$}}%
		child{node {}%
			edge from parent node [near end] {$\mathfrak{b}$}}%
		edge from parent node [swap] {$\mathfrak{a}$}};%
	\node at (3,0) [font = \normalsize] {$\mathsf{lr}(\vect{S})$}%
	child{node [dummy] {}%
		edge from parent node [swap] {$\mathfrak{a}$}};%
	\end{tikzpicture}%
	\end{equation}%
	Alternatively, in profile notation we have
	$\mathsf{lr}(\vect{T}) = (\mathfrak{b},\mathfrak{c};\mathfrak{a})$
	and 
	$\mathsf{lr}(\vect{S}) = (;\mathfrak{a})$.
\end{example}

\begin{remark}\label{ETACNOT REM}
	Note that for a stick tree $\eta_{\mathfrak{c}}$,
	consisting of a single edge decorated by the color 
	$\mathfrak{c} \in \mathfrak{C}$,
	it is 
	$\mathsf{lr}(\eta_{\mathfrak{c}}) = (\mathfrak{c};\mathfrak{c})$,
	which is the corolla with two edges (a leaf and a root),
	both labeled by $\mathfrak{c}$.
\end{remark}

\begin{remark}
	The colored leaf-root functor is in fact fibered over $\mathsf{Set}^G$.
	More explicitly, for any map of colors 
	$\varphi \colon \mathfrak C \to \mathfrak D$,
	we have the following commuting diagram.
	\begin{equation}
	\label{COLORLR_EQ}
	\begin{tikzcd}
	\Omega_{\mathfrak C}^0 \arrow[r, "\varphi"] \arrow[d, "\mathsf{lr}_{\mathfrak C}"']
	&
	\Omega_{\mathfrak D}^0 \arrow[d, "\mathsf{lr}_{\mathfrak D}"]
	\\
	\Sigma_{\mathfrak C} \arrow[r, "\varphi"]
	&
	\Sigma_{\mathfrak D}
	\end{tikzcd}
	\end{equation}
\end{remark}

For each $\mathfrak{C}$-profile $\vect{C}$,
we write $\vect{C} \downarrow \Omega_{\mathfrak{C}}^0$
for the undercategory with respect to $\mathsf{lr}$, 
whose objects consist of a tree $\vect{T}\in \Omega_{\mathfrak{C}}^0$
together with a choice of isomorphism 
$\vect{C} \to \mathsf{lr}(\vect{T})$.
Morally, $\vect{C} \downarrow \Omega_{\mathfrak{C}}^0$
is the ``groupoid of trees with arity $\vect{C}$. 

We can now provide the ``usual'' formula for the ``free operad monad'' 
(see \cite[page 816]{BM07} for the non-colored case).
Letting $X \in \mathsf{Sym}_{\mathfrak{C}}(\V)$,
then for each $\mathfrak{C}$-profile $\vect{C}$ we have
\begin{equation}
        \label{FROPEXP EQ}
        \mathbb{F}_{\mathfrak{C}} X (\vect{C})
        =
        \coprod_{[\vect{T}] \in 
          \mathsf{Iso}(\vect{C} \downarrow \Omega^0_{\mathfrak{C}})}
        \left(
                \left(
                        \bigotimes_{v \in \boldsymbol{V}(T)} X(\vect{T}_v)
                \right)
                \cdot_{\mathsf{Aut}_{\Omega_{\mathfrak{C}}}(\vect{T})}
                \mathsf{Aut}_{\Sigma_{\mathfrak{C}}}(\vect{C})
        \right)
\end{equation}
where $\mathsf{Iso}(-)$ denotes isomorphism classes of objects.

However, one drawback of the formula  
\eqref{FROPEXP EQ}
is that it is not immediately clear how it should be modified 
in the equivariant case,
where $\mathfrak{C}$ is a $G$-set
and $X \in \mathsf{Sym}^G_{\mathfrak{C}}$
is a functor 
$X \colon G \ltimes \Sigma_{\mathfrak{C}}^{op} \to \V$.
To address this, we first repackage \eqref{FROPEXP EQ}, 
following our approach in \cite[\S 4]{BP21}.
We first need to define another functor, which we call the \emph{vertex functor}.
As motivation, we note that
in \eqref{FROPEXP EQ}
the $\mathsf{Aut}_{\Omega_{\mathfrak{C}}}(\vect{T})$-action
on the term
$\bigotimes_{v \in \boldsymbol{V}(T)} X(\vect{T}_v)$
depends on both permutations 
of the set $\boldsymbol{V}(T)$
and on automorphisms of the corollas $\vect{T}_v$.
As such, rather than regard the vertices of $\vect{T}$ as merely a set
we define 
\begin{equation}\label{VFUNDEF EQ}
\Omega_{\mathfrak{C}}^0 \xrightarrow{\boldsymbol{V}} \Sigma \wr \Sigma_{\mathfrak{C}}
\qquad 
\vect{T} \mapsto 
\boldsymbol{V}(\vect{T})=(\vect{T}_v )_{v \in \boldsymbol{V}(T)}
\end{equation}
In words, $\boldsymbol{V}(\vect{T})$
is the tuple of corollas indexed by the vertices of $T$. 
Note that,
by regarding $\boldsymbol{V}(\vect{T})$ as an object in 
$\Sigma \wr \Sigma_{\mathfrak{C}}$ rather than just a set,
we keep track of extra automorphism data.

Noting that both the leaf root and vertex functors are naturally compatible with change of colors 
$\varphi \colon \mathfrak{C} \to \mathfrak{D}$, 
we can now provide the following alternative 
description of \eqref{FROPEXP EQ}.

\begin{definition}\label{FREEOP DEF}
	Let $\mathcal{V}$ be a closed symmetric monoidal category.
	
	The \textit{fibered free operad monad} $\mathbb{F}$ on $\mathsf{Sym}_\bullet(\mathcal{V})$ 
	assigns to 
	$\Sigma_{\mathfrak{C}}^{op} \xrightarrow{X} \mathcal{V}$
	the left Kan extension
	\begin{equation}\label{FREEOP_EQ}
                \begin{tikzcd}[column sep = 50pt]
                        \Omega^{0,op}_{\mathfrak{C}}
                        \arrow[d, "\mathsf{lr}^{op}"']
                        \arrow[r, "\boldsymbol{V}^{op}"]
                        &
                        (\Sigma \wr \Sigma_{\mathfrak{C}})^{op} \arrow[r, "(\Sigma \wr X^{op})^{op}"]
                        \arrow[dl, Rightarrow]
                        &
                        (\Sigma \wr \V^{op})^{op} \arrow[r, "\otimes"]
                        &
                        \V
                        \\
                        \Sigma^{op}_{\mathfrak{C}}
                        \arrow[urrr, "\Lan = \mathbb F_{\mathfrak{C}} X = \mathbb F X"']
                \end{tikzcd}
        \end{equation}
        Then $\Op_\bullet(\V) = \Alg^\pi_{\mathbb F}(\Sym_\bullet(\V))$,
        with fibers $\Op_{\mathfrak C}(\V) = \Alg_{\mathbb F_{\mathfrak C}}(\Sym_{\mathfrak C}(\V))$ over each $\mathfrak C \in \Set$. 
\end{definition}
The complete discussion of the monad structure
$\mathbb{F}\mathbb{F} \Rightarrow \mathbb{F}$,
$id \Rightarrow \mathbb{F}$ is postponed to Appendix \ref{MONAD_APDX},
culminating in Definition \ref{COLORMON_DEF}.

\begin{remark}\label{CONVER REM}
	To relate \eqref{FROPEXP EQ} and \eqref{FREEOP_EQ}, recall that, 
	for any span 
	$\bar{\mathcal{G}} \overset{k}{\leftarrow} \mathcal{G} \xrightarrow{X} \mathcal{V}$
	with $\mathcal{G},\bar{\mathcal{G}}$ groupoids,
	one has the formula
	\[\Lan X (\bar{G}) \simeq 
	\colim_{(k(g) \to \bar{g})\in (\mathcal{G} \downarrow \bar{g})} X(g) \simeq
	\coprod_{[k(g) \to \bar{g}] 
		\in \mathsf{Iso}(\mathcal{G} \downarrow \bar{g})}
	\mathsf{Aut}_{\bar{\mathcal{G}}}(\bar{g})
	\cdot_{\mathsf{Aut}_{\mathcal{G}}(g)}
	X(g)
	\]
	where the first identification is
	(the dual of) \cite[Thm. 1.3.5]{Ri14}
	and the second identification uses the observation that
	$\mathcal{G} \downarrow \bar{g}$ is also a groupoid.
\end{remark}

By the categorical argument in 
Proposition \ref{DIAGRAMFM_PROP},
we have that, by taking $G$-objects, 
$\mathbb{F}$ also induces a fibered monad 
$\mathbb{F}^G$ on $\mathsf{Sym}^G_{\bullet}(\V)$.

To describe $\mathbb{F}^G$,
note that \eqref{FREEOP_EQ}
can be regarded as an arrow in 
the category $\mathsf{Cat} \downarrow^l \V$
from Remark \ref{SUBCATDOWNL REM}
which, being a left Kan extension, is cocartesian over $\mathsf{Cat}$ (cf. Remark \ref{SUBCOCART REM}).
Hence, if $X \in \mathsf{Sym}_{\bullet}^G(\V)$
is $G$-equivariant, 
\eqref{FREEOP_EQ} is then a cocartesian arrow in 
$\left(\mathsf{Cat} \downarrow^l \V\right)^G$
over $\mathsf{Cat}^G$.
By Proposition \ref{EQUIVFNCON PROP},
we can hence rewrite 
such a $G$-equivariant \eqref{FREEOP_EQ}
as the left Kan extension for a span
$G \ltimes \Sigma^{op}_{\mathfrak{C}}
\leftarrow 
G \ltimes \Omega^{0,op}_{\mathfrak{C}}
\to 
\V$.
To fully describe this span,
we need to understand how equivariance 
affects the top composite in \eqref{FREEOP_EQ},
with the non-obvious issue being that of understanding
what happens to the middle map
therein, which can be described using
Remark \ref{RHOPURP REM}.
Putting all of this together
(and using the isomorphisms
$G \ltimes \mathcal{C}^{op} \simeq (G \ltimes \mathcal{C})^{op}$
from Remark \ref{INVLTIMES REM}),
we get the following.

\begin{proposition}\label{FGC PROP}
	The monad $\mathbb{F}^G$ on $\mathsf{Sym}^G_{\bullet}(\V)$
	assigns to 
	$G \ltimes \Sigma^{op}_{\mathfrak{C}} \xrightarrow{X} \V$
	the left Kan extension below.
	\begin{equation}\label{FGC_EQ}
                \begin{tikzcd}[column sep = 28pt]
                        G \ltimes \Omega^{0,op}_{\mathfrak{C}}
                        \arrow[d, "G \ltimes \mathsf{lr}_{\mathfrak C}^{op}"']
                        \arrow[r, "G \ltimes \boldsymbol{V}^{op}"]
                        &[6pt]
                        G \ltimes (\Sigma \wr \Sigma_{\mathfrak{C}})^{op} \arrow{r}
                        \arrow[dl, Rightarrow, shorten >=0.15cm,shorten <=0.15cm]
                        &
                        \left( \Sigma \wr \left( G^{op} \ltimes \Sigma_{\mathfrak{C}} \right) \right)^{op}
                        \arrow[r, "(\Sigma \wr X^{op})^{op}"]
                        &[10pt]
                        \left(\Sigma \wr \V^{op}\right)^{op} \arrow[r, "\otimes"]
                        &
                        \V
                        \\
                        G \ltimes \Sigma^{op}_{\mathfrak{C}}
                        \arrow[urrrr, "\Lan = \mathbb F_{\mathfrak{C}}^G X = \mathbb F^G X"', end anchor = south west]
                \end{tikzcd}
        \end{equation}
        Then $\Op_\bullet^G(\V) = \Alg^\pi_{\mathbb F^G}(\Sym_\bullet^G(\V))$,
        with fibers $\Op_{\mathfrak C}^G(\V) = \Alg_{\mathbb F_{\mathfrak C}^G}(\Sym_{\mathfrak C}^G(\V))$ over each $\mathfrak C \in \Set^G$. 
\end{proposition}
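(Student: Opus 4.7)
The plan is to deduce both claims from the abstract nonsense already set up in the paper, with the substantive work concentrated in identifying the equivariant version of the top composite in \eqref{FREEOP_EQ}. Applying Proposition \ref{DIAGRAMFM_PROP} with $I = G$ (viewed as a one-object category) to the fibered monad $\mathbb{F}$ on $\Sym_\bullet(\V)$ immediately yields a fibered monad $\mathbb{F}^G$ on $\Sym_\bullet^G(\V)$, together with the identification $\Op_\bullet^G(\V) = \Alg^\pi_{\mathbb{F}^G}(\Sym_\bullet^G(\V))$; the fiberwise identification $\Op^G_{\mathfrak C}(\V) = \Alg_{\mathbb F^G_{\mathfrak C}}(\Sym^G_{\mathfrak C}(\V))$ is then a direct consequence of Proposition \ref{FIBALGGR PROP}.

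The remaining task is to show that, after using Proposition \ref{EQUIVFNCON PROP} to represent $X \in \Sym^G_{\mathfrak{C}}(\V)$ as a functor $X \colon G \ltimes \Sigma^{op}_{\mathfrak{C}} \to \V$, the restricted monad $\mathbb{F}^G_{\mathfrak C}$ is given by the left Kan extension in \eqref{FGC_EQ}. My key observation is that the left Kan extension defining $\mathbb{F}_{\mathfrak C} X$ in \eqref{FREEOP_EQ} can be viewed as a cocartesian arrow in $\Cat \downarrow^l \V$ over $\Cat$, by Remark \ref{SUBCOCART REM}. Passing to $G$-objects, such a cocartesian arrow lifts to a cocartesian arrow in $(\Cat \downarrow^l \V)^G$ over $\Cat^G$, so that $\mathbb{F}^G_{\mathfrak C} X$ is again a left Kan extension, this time along the $G$-equivariant leaf-root functor.

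The main point — and what I expect to be the one genuinely non-obvious step — is identifying the $G$-equivariant top composite in \eqref{FREEOP_EQ}. The outer two factors are easy: the leaf-root functor $\mathsf{lr}_{\mathfrak{C}}^{op}$ and vertex functor $\boldsymbol{V}^{op}$ are compatible with color change by the commuting square \eqref{COLORLR_EQ} and the analogous one for $\boldsymbol{V}$, so taking $G$-objects gives the functors $G \ltimes \mathsf{lr}_{\mathfrak{C}}^{op}$ and $G \ltimes \boldsymbol{V}^{op}$ appearing in \eqref{FGC_EQ}. The middle factor $(\Sigma \wr X^{op})^{op}$, however, requires the careful analysis of Remark \ref{RHOPURP REM}: the functor $\Sigma \wr (-)$ applied to an equivariant functor $G \ltimes \Sigma_{\mathfrak{C}}^{op} \to \V$ only sees the diagonal $G$-action on tuples, so to obtain a functor defined on all of $\Sigma \wr (G^{op} \ltimes \Sigma_{\mathfrak{C}})$ one must precompose with the inclusion $G \ltimes (\Sigma \wr \Sigma_{\mathfrak{C}}^{op}) \to \Sigma \wr (G \ltimes \Sigma_{\mathfrak{C}}^{op})$ from Remark \ref{WRDIAG REM}. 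This is precisely the unlabeled arrow in \eqref{FGC_EQ}.

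Finally, stitching the pieces together and applying the canonical isomorphism $G \ltimes \mathcal{C}^{op} \simeq (G \ltimes \mathcal{C})^{op}$ of Remark \ref{INVLTIMES REM} throughout yields exactly the span in \eqref{FGC_EQ}. Having identified $\mathbb{F}^G_{\mathfrak C}$ in this explicit form, the fiber statement $\Op^G_{\mathfrak C}(\V) = \Alg_{\mathbb{F}^G_{\mathfrak C}}(\Sym^G_{\mathfrak C}(\V))$ is then just Proposition \ref{FIBALGGR PROP} applied to the fibered monad $\mathbb{F}^G$.
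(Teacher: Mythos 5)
Your proposal is correct and follows essentially the same route as the paper: the paper likewise obtains $\mathbb{F}^G$ from Proposition \ref{DIAGRAMFM_PROP}, observes that \eqref{FREEOP_EQ} is a cocartesian arrow in $\Cat \downarrow^l \V$ so that its $G$-object version is again a left Kan extension, and identifies the equivariant top composite using Remarks \ref{RHOPURP REM}, \ref{WRDIAG REM} and \ref{INVLTIMES REM}, with the fiber identification coming from Propositions \ref{DIAGRAMFM_PROP}(ii) and \ref{FIBALGGR PROP}. Your isolation of the middle factor as the genuinely non-obvious step matches the paper's own emphasis.
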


\begin{remark}\label{FROPEXPG REM}
	By Remark \ref{CONVER REM} we now have the following analogue of
	\eqref{FROPEXP EQ}.
	\begin{equation}\label{FROPEXPG EQ}
	\mathbb{F}^G_{\mathfrak{C}} X (\vect{C})
	=
	\coprod_{[\vect{T}] \in 
		\mathsf{Iso}(\vect{C} \downarrow G^{op} \ltimes \Omega^0_{\mathfrak{C}})}
	\left(
	\left(
	\bigotimes_{v \in \boldsymbol{V}(T)} X(\vect{T}_v)
	\right)
	\cdot_{\mathsf{Aut}_{G^{op} \ltimes \Omega_{\mathfrak{C}}}(\vect{T})}
	\mathsf{Aut}_{G^{op} \ltimes \Sigma_{\mathfrak{C}}}(\vect{C})
	\right)
	\end{equation}
	In comparing \eqref{FROPEXPG EQ} with \eqref{FROPEXP EQ},
	note that,
	since $G^{op} \ltimes \Omega^0_{\mathfrak{C}}$
	has more morphisms than
	$\Omega^0_{\mathfrak{C}}$,
	equation \eqref{FROPEXPG EQ} has fewer coproduct summands than \eqref{FROPEXP EQ},
	though this is compensated by the fact that the inductions
	$(-)\cdot_{\mathsf{Aut}_{G^{op} \ltimes \Omega_{\mathfrak{C}}}(\vect{T})}
	\mathsf{Aut}_{G^{op} \ltimes \Sigma_{\mathfrak{C}}}(\vect{C})$
	are correspondingly larger than the inductions
	$(-)\cdot_{\mathsf{Aut}_{\Omega_{\mathfrak{C}}}(\vect{T})}
	\mathsf{Aut}_{\Sigma_{\mathfrak{C}}}(\vect{C})$.
\end{remark}

\begin{remark}\label{OP_MAP REM}
	Following Remark \ref{ALGPUSHLL REM},
	for any map of $G$-sets 
	$\varphi \colon \mathfrak C \to \mathfrak D$
	and $\mathfrak D$-symmetric sequence $X$,
	one has a pullback $\mathfrak C$-symmetric sequence $\varphi^{\**}X$
	given by
	$\varphi^{\**}X(\vect D) = X(\varphi(\vect D))$,
	which is an operad if $X$ itself is an operad.
	Moreover, one then has a pair of adjunctions 
	\begin{equation}\label{GC_CHANGE_EQ}
	\begin{tikzcd}
	\Op^G_{\mathfrak C}(\V) 
	\arrow[shift left]{r}{\check{\varphi}_!}
	\arrow[d, "\mathsf{fgt}"']
	&
	\Op^G_{\mathfrak D}(\V) 
	\arrow[shift left]{l}{\varphi^{\**}}
	\arrow[d, "\mathsf{fgt}"]
	\\
	\Sym^G_{\mathfrak C}(\V) 
	\arrow[shift left]{r}{\varphi_!}
	&
	\Sym^G_{\mathfrak D}(\V) 
	\arrow[shift left]{l}{\varphi^{\**}}
	\end{tikzcd}
	\end{equation}
	where we highlight that
	the right adjoints are compatible with the forgetful functors, in the sense that 
	$\varphi^{\**} \circ \mathsf{fgt} = 
	\mathsf{fgt} \circ \varphi^{\**}$, 
	but the left adjoints are not:
	$\varphi_!$ is simply a left Kan extension, while $\check{\varphi}_!$ is given by the coequalizer
	\begin{equation}\label{CFS_EQ}
	\check{\varphi}_! \O \simeq \mathop{coeq}(\mathbb F_{\mathfrak D} \varphi_! \mathbb F_{\mathfrak C}\O \rightrightarrows \mathbb F_{\mathfrak D} \varphi_! \O).
	\end{equation}
	In general, we do not have a more explicit description of $\check{\varphi}_!$.
	However, when $\varphi$ is injective, 
	$\varphi_!X$ is the extension by $\emptyset$,
	from which it follows that 
	$\mathbb F_{\mathfrak D} \varphi_! = \varphi_! \mathbb F_{\mathfrak C}$,
	and \eqref{CFS_EQ}
	then says that
	$\check{\varphi}_! \O
	\simeq
	coeq \left( \varphi_! \mathbb{F}_{\mathfrak{C}}  \mathbb{F}_{\mathfrak{C}} \O
	\rightrightarrows 
	\varphi_!  \mathbb{F}_{\mathfrak{C}} \O
	\right)
	\simeq 
	\varphi_! \left( coeq \left( 
	\mathbb{F}_{\mathfrak{C}}  \mathbb{F}_{\mathfrak{C}} \O
	\rightrightarrows 
	\mathbb{F}_{\mathfrak{C}} \O
	\right) \right)
	\simeq 
	\varphi_! \O$,
	so that  	
	$\varphi_! \circ \mathsf{fgt} \simeq 
	\mathsf{fgt} \circ \check{\varphi}_!$.
\end{remark}

\section{Equivariant homotopy theory}
\label{EHT_SEC}

This section develops the 
equivariant homotopy theory 
needed for our main proofs.
\S \ref{GMA_SEC} introduces the \emph{global monoid axiom}
in Definition \ref{GLOBMONAX_DEF}, 
with sufficient conditions for this axiom given in 
Proposition \ref{WEAKCELL PROP}.
In \S \ref{FGPP_SEC},
motivated by the identification
$\mathsf{Sym}^{G}_{\mathfrak{C}}(\V)
\simeq \V^{G \ltimes \Sigma^{op}_{\mathfrak{C}}}$
in Proposition \ref{EQUIVFNCON PROP}, 
we extend our discussion 
from \cite[\S 6]{BP21}
regarding model structures determined by families
from the context of $\V^G$ with $G$ a group to that of
$\V^{\G}$ with $\G$ a groupoid.
This culminates in Proposition \ref{SIGMAWRGF PROP}, an extension of \cite[Prop. 6.25]{BP21}
which will greatly simplify the proof of Theorem \ref{THMII},
cf. Remark \ref{STRTED REM}.

\subsection{Global monoid axiom}
\label{GMA_SEC}

\begin{definition}\label{GENMOD DEF}
	Let $\V$ be a model category and $G$ a group.
	
	The \emph{genuine (or fine) model structure} on $G$-objects $\V^G$
	is the model structure (if it exists)
	such that
	$f\colon X \to Y$
	is a weak equivalence (resp. fibration)
	if the fixed point maps
	$f^H\colon X^H \to Y^H$
	are weak equivalences (fibrations) in $\V$
	for all $H \leq G$.
\end{definition}

\begin{notation}
	We write $\mathcal{W}_G$
	to denote the class of genuine weak equivalences in $\V^G$.
\end{notation}

\begin{remark}
	If $\V$ is cofibrantly generated with 
	$\mathcal{I}$ (resp. $\mathcal{J}$)
	the sets of generating (trivial) cofibrations,
	then the genuine model structure on $\V^G$,
	should it exist,
	is again cofibrantly generated with generating sets
	\begin{equation}\label{GENGENSETEQ}
	\mathcal{I}_G = \{G/H \cdot i \ | \ H\leq G,i\in \mathcal{I}\}
	\qquad
	\mathcal{J}_G = \{G/H \cdot j \ | \ H\leq G,j\in \mathcal{J}\}.
	\end{equation}
\end{remark}

\begin{notation}
	\label{CELL_NOT}
	Let $I$ be a class of maps in a model category $\V$.
	Following \cite{Hov99}, we write 
	$I$-cell (resp. $I$-cof)
	to denote the closure of $I$ under pushouts and transfinite composition
	(and, in addition, retracts).
\end{notation}

We can now introduce the global monoid axiom.

\begin{definition}\label{GLOBMONAX_DEF}
	Let $(\V,\otimes)$ 
	be a cofibrantly generated monoidal model category.
	
	For a finite group $G$, let $\mathcal{J}^{\otimes}_G$ be the set of
	maps in $\V^G$ given by 
	\[
	\mathcal{J}^{\otimes}_G
	=
	\mathcal J \otimes \V^G
	=
	\sets{j \otimes X}{j \in \mathcal{J},X \in \V^G}
	\]
	We refer to the class of maps  
	$\mathcal{J}^{\otimes}_G$-cof in $\V^G$
	as the \emph{$G$-genuine $\otimes$-trivial cofibrations}.
	
	Moreover, we say $\V$ satisfies the \textit{global monoid axiom} if
	$G$-genuine $\otimes$-trivial cofibrations are $G$-genuine weak equivalences,
	i.e. if $\mathcal J^{\otimes}_G \text{-cof} \subseteq \mathcal W_G$,
	for all finite groups $G$.
\end{definition}

\begin{remark}
	The global monoid axiom holds provided
	$\mathcal J^{\otimes}_G \text{-cell} \subseteq \mathcal W_G$
	for all finite groups $G$.
\end{remark}

\begin{remark}\label{MONAX_REM}
	Restricting to $G = \**$, the global monoid axiom  
	recovers the monoid axiom of Schwede-Shipley \cite{SS00}.
\end{remark}


We now discuss convenient sufficient conditions 
for the existence of the genuine model structures in 
Definition \ref{GENMOD DEF}
and for the global monoid axiom in 
Definition \ref{GLOBMONAX_DEF}.
These are given by the following definition,
which is motivated by \cite[Remark 2.7]{Ste16},
and gives two variants of the 
\emph{cellular fixed points} conditions of
\cite[Prop. 2.6]{Ste16}.

\begin{definition}\label{WEAKCELL DEF}
	Let $\V$ (resp. $(\V,\otimes)$)
	be a cofibrantly generated (monoidal) model category.
	
	We say $\V$ has \textit{weak acyclic cellular fixed points}
	(resp. \textit{$(\V,\otimes)$ has monoidal weak acyclic cellular fixed points}) if,
	for all finite groups $G$ and subgroups $H \leq G$,
	the fixed point functor $(-)^H \colon \V^G \to \V$
	\begin{enumerate}[label = (\roman*)]
		\item preserves direct colimits of maps in $\mathcal J_G$-cof (resp. $\mathcal {J}^{\otimes}_G$-cof);
		\item preserves pushout diagrams 
		where one leg is in $\mathcal J_G$ 
		(resp. in $\mathcal {J}^{\otimes}_G$);
		\item sends maps in $\mathcal J_G$ to maps in $\mathcal J$-cof (resp. maps in $\mathcal {J}^{\otimes}_G$ to maps in 
		$\mathcal{J}^{\otimes}$-cof).
	\end{enumerate}
\end{definition}

\begin{proposition}\label{WEAKCELL PROP}
	Let $\V$ (resp. $(\V,\otimes)$) be a cofibrantly generated (monoidal) model category.
	\begin{enumerate}[label=(\roman*)]
		\item If $\V$ has weak acyclic cellular fixed points, 
		the genuine model structure on $\V^G$
		exists for any finite group $G$.
		
		Moreover, for any $H \leq G$,
		fixed points $(-)^H \colon \V^G \to \V$
		send genuine trivial cofibrations to trivial cofibrations. 
		\item If $(\V,\otimes)$ has monoidal weak acyclic cellular fixed points and satisfies the usual monoid axiom
		(cf. Remark \ref{MONAX_REM}),
		then $\V$ satisfies the global monoid axiom.
	\end{enumerate}
\end{proposition}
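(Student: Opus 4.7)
The plan is to establish (i) by applying a standard cofibrant generation recognition theorem (a version of Kan's transfer theorem) to the proposed generating sets $\mathcal{I}_G, \mathcal{J}_G$ of \eqref{GENGENSETEQ}. The class $\mathcal{W}_G$ of genuine weak equivalences clearly satisfies two-out-of-three and is closed under retracts, since it is defined levelwise by $H$-fixed point weak equivalences in $\V$. Smallness of the domains of $\mathcal{I}_G, \mathcal{J}_G$ relative to the respective cellular classes follows from the smallness hypothesis on $\V$, since the forgetful functor $\V^G \to \V$ preserves filtered colimits. Using the adjunctions $G/H \cdot (-) \dashv (-)^H$ for each $H \leq G$, an arrow $f$ lies in $\mathcal{I}_G\text{-inj}$ if and only if $f^H$ is a trivial fibration in $\V$ for every $H$; in particular every map in $\mathcal{I}_G\text{-inj}$ is simultaneously a genuine fibration and a genuine weak equivalence, and the analogous statement holds for $\mathcal{J}_G\text{-inj}$ versus fibrations. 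Thus the only non-formal hypothesis left to verify is the acyclicity requirement $\mathcal{J}_G\text{-cell} \subseteq \mathcal{W}_G$.

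To check acyclicity, fix $f \in \mathcal{J}_G\text{-cell}$ and $H \leq G$. Conditions (i) and (ii) of Definition \ref{WEAKCELL DEF} say that $(-)^H$ commutes with the directed colimits and the pushouts along maps in $\mathcal{J}_G$ that exhibit $f$ as a cell complex, while condition (iii) says that for each generator $j \in \mathcal{J}_G$ the fixed-point map $j^H$ lies in $\mathcal{J}\text{-cof}$. Combining these, $f^H$ is exhibited as a transfinite composition of pushouts of maps in $\mathcal{J}\text{-cof}$, hence itself lies in $\mathcal{J}\text{-cof}$ and is in particular a trivial cofibration in $\V$. This simultaneously yields the acyclicity condition and the ``moreover'' claim: any genuine trivial cofibration in $\V^G$ is a retract of some $\mathcal{J}_G\text{-cell}$ map, so its $H$-fixed points are a retract of a map in $\mathcal{J}\text{-cof}$, and $\mathcal{J}\text{-cof}$ is closed under retracts.

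Part (ii) then follows by an essentially parallel argument in which $\mathcal{J}_G$ is replaced by $\mathcal{J}^{\otimes}_G$ throughout, using the monoidal analogues of Definition \ref{WEAKCELL DEF}. Given $f \in \mathcal{J}^{\otimes}_G\text{-cof}$, retract closure reduces us to $f \in \mathcal{J}^{\otimes}_G\text{-cell}$, and the same cellular bookkeeping as above expresses $f^H$ as a transfinite composition of pushouts of maps in $\mathcal{J}^{\otimes}\text{-cof}$. The usual monoid axiom for $\V$ then guarantees that every such map is a weak equivalence in $\V$, so $f^H$ is a weak equivalence for every $H$ and $f \in \mathcal{W}_G$. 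The main obstacle in executing this plan is largely bookkeeping: one must carefully track that $(-)^H$ distributes over each stage of the cellular filtration and verify that the hypotheses of Definition \ref{WEAKCELL DEF} apply to each pushout and each directed colimit appearing in that filtration; no essentially new homotopical input is needed beyond the stated hypotheses.
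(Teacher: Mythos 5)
Your proposal is correct and follows essentially the same route as the paper: both apply the recognition theorem \cite[Thm.\ 2.1.19]{Hov99} to the generating sets \eqref{GENGENSETEQ} and reduce everything to the acyclicity condition $\mathcal{J}_G\text{-cell} \subseteq \mathcal{W}_G$, which is verified by using conditions (i)--(iii) of Definition \ref{WEAKCELL DEF} to express $H$-fixed points of relative cell complexes as transfinite composites of pushouts of maps in $\mathcal{J}$-cof (resp.\ $\mathcal{J}^{\otimes}$-cof, combined with the usual monoid axiom, for part (ii)). Your write-up is simply a more detailed version of the paper's terse argument, and it additionally spells out the ``moreover'' claim via the retract argument.
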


\begin{proof}
	For (i), we apply \cite[Theorem 2.1.19]{Hov99} to the generating sets in 
	\eqref{GENGENSETEQ}.
	All conditions therein are immediate 
	except for the requirement that
	$\mathcal{J}_{G}\text{-cell} \subseteq \mathcal{W}_G$,
	which holds since the conditions in Definition \ref{WEAKCELL DEF}
	guarantee that $H$-fixed points of maps in 
	$\mathcal{J}_{G}$-cell are in $\mathcal{J}$-cell.
	
	(ii) is similar. Definition \ref{WEAKCELL DEF}
	implies that $H$-fixed points of maps in 
	$\mathcal{J}^{\otimes}_{G}$-cell are in $\mathcal{J}^{\otimes}$-cell.
\end{proof}

\subsection{Families in groupoids and pushout powers}
\label{FGPP_SEC}

In addition to the genuine model structures
from Definition \ref{GENMOD DEF} in \S \ref{GMA_SEC},
we will need to consider variants
where the subgroups $H\leq G$ therein are restricted to a family of subgroups.
In fact, motivated by the identification
$\mathsf{Sym}^G_{\mathfrak{C}}(\V) \simeq 
\mathsf{Fun}(G \ltimes \Sigma^{op}_{\mathfrak{C}},\V)$
in Proposition \ref{EQUIVFNCON PROP},
we will further consider a groupoid variant.
We first extend the notion of family of subgroups to the context of groupoids.

\begin{definition}\label{FAMGROUPOID DEF}
	Let $\G$ be a groupoid.
	A \textit{family of subgroups} of $\G$
	is a collection 
	$\mathcal{F} = \left\{\mathcal{F}_x\ | \ x\in \G\right\}$
	where each $\F_x$ is itself a collection of subgroups
	$H \leq \mathsf{Aut}_{\mathcal{G}}(x)$ and such that:
	\begin{enumerate}[label = (\roman*)]
		\item if $H \in \F_x$ and $K \leq H$ then $K \in \mathcal{F}_x$;
		\item if $H \in \mathcal{F}_x$
		then for any arrow $g \colon x \to x'$
		it is $g H g^{-1} \in \mathcal{F}_{x'}$.
	\end{enumerate}
\end{definition}

\begin{remark}
	If $\G$ has a single object, i.e. if $\G$ is a group $G$ regarded as a category with a single object, Definition \ref{FAMGROUPOID DEF} recovers the usual definition of a family of subgroups of $G$ as a collection of subgroups $H\leq G$ closed under inclusion and conjugation.
	
	Moreover, if $\F$ is a family of subgroups of $\G$, each $\F_x$ is a family of subgroups of $\mathsf{Aut}(x)$ in the usual sense. 
	In fact, $\mathcal{F}$ is completely determined by a choice of families
	$\F_x$ for $x$ ranging over a set of representatives of the isomorphism classes/components of $\G$.
\end{remark}

\begin{definition}
        \label{FCMODEL_DEF}
	Let $\V$ be a model category, $\G$ a groupoid and 
	$\F$ a family of subgroups of $\G$.
	
	The \textit{$\F$-model structure} on $\V^\G = \Fun(\G, \V)$, 
	which we denote by $\V^\G_\F$,
	is the model structure (if it exists)
	such that 
	$f\colon X \to Y$
	is a weak equivalence (resp. fibration)
	if the maps $f(x)^H \colon X(x)^H \to Y(x)^H$ are weak equivalences (fibrations) in $\V$ for all $x \in \G$ and $H \in \F_x$.
\end{definition}

\begin{remark}\label{VGFGEN REM}
	Generalizing \eqref{GENGENSETEQ}, one has that,
	if $\V$ is cofibrantly generated and 
	the model structure $\V^{\G}_{\F}$ exists,
	then the latter is likewise cofibrantly generated with generating sets
	\begin{equation}\label{VGFGEN EQ}
	\mathcal I_{\F} := \left\{
	\G(x,-)/H \cdot i
	\ | \ x \in \G, H \in \F_x, i\in \mathcal{I}
	\right\}
	\qquad
	\mathcal J_{\F} := \left\{
	\G(x,-)/H \cdot j
	\ | \ x \in \G, H \in \F_x, j\in \mathcal{J}
	\right\}.
	\end{equation}
\end{remark}

\begin{remark}\label{SIGMACOF_REM}
	Since, for any groupoid, one has an equivalence of categories
	$\mathcal{G} \simeq 
	\coprod_{[x] \in \mathsf{Iso}(\mathcal{G})}
	\mathsf{Aut}(x)$,
	(trivial) cofibrations in $\V^{\G}_{\F}$
	also admit a pointwise description: 
	a map $f$ in $\V^\G_\F$ is a (trivial) cofibration iff $f(x)$ is a (trivial) cofibration in $\V^{\Aut(x)}_{\F_x}$ for all $x \in \G$.
\end{remark}

\begin{proposition}\label{ALLEQ PROP}
	Suppose $\V$ is cofibrantly generated.
	Then the $\V^{\G}_{\F}$
	model structures exist for all 
	groupoids $\G$ and families $\F$
	if and only if
	the genuine model structures on $\V^G$
	exist for all groups $G$.
	
	In particular, the $\V^{\G}_{\F}$
	model structures exist whenever $\V$ has weak acyclic cellular fixed points.
\end{proposition}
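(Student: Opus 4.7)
The $(\Leftarrow)$ direction of the equivalence is immediate by taking $\G = G$ viewed as a one-object groupoid, with $\F$ the family of all subgroups of $G$; the resulting $\F$-model structure is exactly the genuine one. For the $(\Rightarrow)$ direction, I would first use the canonical equivalence of groupoids $\G \simeq \coprod_{[x] \in \Iso(\G)} \Aut(x)$ to obtain an equivalence of categories $\V^{\G} \simeq \prod_{[x]} \V^{\Aut(x)}$, under which, by Remark \ref{SIGMACOF_REM}, the would-be $\F$-model structure corresponds to the product of the $\F_x$-model structures on each $\V^{\Aut(x)}$. Since products of model categories are again model categories, this reduces the problem to the following claim: for any group $G$ and family of subgroups $\F$ of $G$, if the genuine model structure on $\V^G$ exists, then so does the $\F$-model structure.

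To prove this claim, I would apply the recognition theorem \cite[Thm. 2.1.19]{Hov99} to the sets $\mathcal{I}_{\F}, \mathcal{J}_{\F}$ of \eqref{VGFGEN EQ}. Bicompleteness of $\V^G$ and the fact that $\mathcal{W}_{\F}$ (the $\F$-weak equivalences) is closed under retracts and satisfies 2-of-3 are routine, the latter because fixed points preserve both. Smallness of the domains of $\mathcal{I}_{\F}, \mathcal{J}_{\F}$ is inherited from smallness of the domains in $\V$, since $G/H \cdot (-)$ preserves all colimits. The characterization $\mathcal{I}_{\F}\text{-inj} = \mathcal{J}_{\F}\text{-inj} \cap \mathcal{W}_{\F}$ follows from the adjunction $G/H \cdot (-) \dashv (-)^H$: a map $f$ lies in $\mathcal{I}_{\F}\text{-inj}$ (resp.\ $\mathcal{J}_{\F}\text{-inj}$) iff $f^H$ is a trivial fibration (resp.\ fibration) in $\V$ for every $H \in \F$, giving the two characterizations of the $\F$-trivial fibrations and $\F$-fibrations.

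The single non-formal step, and what I expect to be the main obstacle in a more general context, is verifying that $\mathcal{J}_{\F}\text{-cell} \subseteq \mathcal{W}_{\F}$. Here, however, this is bypassed by the hypothesis: writing $\mathcal{J}_G = \{G/H \cdot j \mid H \leq G, j \in \mathcal{J}\}$ for the generating trivial cofibrations of the \emph{genuine} model structure on $\V^G$, one has the containment $\mathcal{J}_{\F} \subseteq \mathcal{J}_G$, and hence $\mathcal{J}_{\F}\text{-cell} \subseteq \mathcal{J}_G\text{-cell}$. Since the genuine model structure on $\V^G$ is assumed to exist, $\mathcal{J}_G\text{-cell}$ consists of genuine trivial cofibrations, and in particular of genuine weak equivalences; these are a fortiori $\F$-weak equivalences, since the $\F$-condition requires fixed points only at a subfamily of subgroups. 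One likewise obtains $\mathcal{J}_{\F}\text{-cof} \subseteq \mathcal{I}_{\F}\text{-cof}$ from $\mathcal{J} \subseteq \mathcal{I}\text{-cof}$ in $\V$. All hypotheses of \cite[Thm. 2.1.19]{Hov99} are then met.

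The final ``in particular'' statement then follows by combining this equivalence with Proposition \ref{WEAKCELL PROP}(i), which supplies the genuine model structures on all $\V^G$ from the weak acyclic cellular fixed points assumption.
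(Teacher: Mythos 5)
Your proof is correct and follows essentially the same route as the paper: apply Hovey's recognition theorem \cite[Thm. 2.1.19]{Hov99} to the generating sets of \eqref{VGFGEN EQ}, reduce to the one-group-at-a-time case via Remark \ref{SIGMACOF_REM}, and obtain the acyclicity condition from the chain $\mathcal{J}_{\F}\text{-cell} \subseteq \mathcal{J}_{G}\text{-cell} \subseteq \mathcal{W}_{G} \subseteq \mathcal{W}_{\F}$, which is exactly the paper's \eqref{FAMTOGEN EQ}. The only cosmetic difference is that you package the reduction as a product decomposition $\V^{\G} \simeq \prod_{[x]} \V^{\Aut(x)}$ up front, whereas the paper applies the recognition theorem directly to $\V^{\G}$ and invokes the pointwise description only for the key inclusion.
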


\begin{proof}
	For the main claim, only the ``if'' direction requires proof.
	Much as in Proposition \ref{WEAKCELL PROP}(i), 
	we consider \cite[Theorem 2.1.19]{Hov99}
	applied to the generating sets in \eqref{VGFGEN EQ},
	and again the only non immediate condition is the requirement
	$\mathcal{J}_{\F}\text{-cell} \subseteq \mathcal{W}_{\F}$,
	where $\mathcal{W}_{\F}$ denotes the weak equivalences 
	in $\V^{\G}_{\F}$.
	By Remark \ref{SIGMACOF_REM}, it suffices to show 
	$\mathcal{J}_{\F_x}\text{-cell} \subseteq \mathcal{W}_{\F_x}$
	for all $x \in \G$.
	This now follows since
	\begin{equation}\label{FAMTOGEN EQ}
	\mathcal{J}_{\F_x}\text{-cell} \subseteq 
	\mathcal{J}_{\Aut(x)}\text{-cell} \subseteq 
	\mathcal{W}_{\Aut(x)} \subseteq 
	\mathcal{W}_{\F_x}
	\end{equation}
	where the middle inclusion follows by the existence of the 
	genuine model structure on $\V^{\Aut(x)}$.
	
	The ``in particular'' claim now reduces to Proposition \ref{WEAKCELL PROP}(i).
\end{proof}

\begin{remark}\label{PULLFAM REM}
	The families of subgroups $\F$ of a groupoid $\G$ 
	form a poset (in fact lattice) under inclusion.
	Given a map of groupoids $\phi\colon \G \to \bar{\G}$
	and family $\bar{\F}$ of subgroups of $\bar{\G}$
	we have a ``pullback family'' 
	$\phi^{\**} \bar{\F}$ 
	of subgroups of $\G$ given by
	\begin{equation}
	\label{PHISTAR_EQ}
	\left(\phi^{\**} \bar{\F} \right)_{x}
	=
	\left\{H \leq \Aut(x) \ | \ \phi(H) \in \bar{\F}_{\phi(x)}\right\}.
	\end{equation}
\end{remark}

The purpose of the $\phi^{\**} \mathcal{F}$ families is given by the following,
which adapts \cite[Props. 6.5, 6.6]{BP21}
in light of Remark \ref{SIGMACOF_REM}.

\begin{proposition}\label{EQQUILADJ PROP}
	Suppose all the model structures appearing in \eqref{FAMADJ EQ} exist.
	
	Let $\phi \colon \G \to \bar{\G}$
	be a map of groupoids and
	$\F,\bar{\F}$ families of subgroups of $\G,\bar{\G}$.
	Then the adjunctions
	\begin{equation}\label{FAMADJ EQ}
	\phi_! \colon \V^{\G}_{\F}
	\rightleftarrows
	\V^{\bar{\G}}_{\bar{\F}} \colon \phi^{\**}
	\qquad
	\phi^{\**} \colon \V^{\bar{\G}}_{\bar{\F}} 
	\rightleftarrows
	\V^{\G}_{\F} \colon \phi_{\**}
	\end{equation}
	are Quillen provided
	$\F \subseteq \phi^{\**} \bar{\F}$
	for the left adjunction and 
	$\phi^{\**} \bar{\F} \subseteq \F$
	for the right adjunction.
\end{proposition}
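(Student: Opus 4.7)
The plan is to verify each of the two claimed Quillen adjunctions directly, using the cofibrantly generated description of the $\F$-model structures from Remark~\ref{VGFGEN REM} together with the pointwise characterizations of (trivial) fibrations and (trivial) cofibrations.

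For the first adjunction, with hypothesis $\F \subseteq \phi^{\**}\bar{\F}$, I would show that the right adjoint $\phi^{\**} \colon \V^{\bar{\G}}_{\bar{\F}} \to \V^{\G}_{\F}$ preserves (trivial) fibrations. Since both model structures have pointwise fixed-point characterizations of (trivial) fibrations, it suffices to observe that for $x \in \G$ and $H \leq \Aut(x)$ the action of $H$ on $f(\phi(x))$ factors through $\phi(H) \leq \Aut(\phi(x))$, so that
\[
(\phi^{\**} f)(x)^{H} \;=\; f(\phi(x))^{\phi(H)}.
\]
The hypothesis $\F \subseteq \phi^{\**}\bar{\F}$ then says precisely that $H \in \F_x$ implies $\phi(H) \in \bar{\F}_{\phi(x)}$, so the right-hand side is a (trivial) fibration whenever $f$ is one. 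This step is essentially formal.

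For the second adjunction, with hypothesis $\phi^{\**}\bar{\F} \subseteq \F$, I would show that the left adjoint $\phi^{\**} \colon \V^{\bar{\G}}_{\bar{\F}} \to \V^{\G}_{\F}$ preserves (trivial) cofibrations. Since $\phi^{\**}$ commutes with colimits, and cofibrations are characterized pointwise by Remark~\ref{SIGMACOF_REM}, it suffices to check the generators $\bar{\G}(\bar{x},-)/\bar{H} \cdot i$ from \eqref{VGFGEN EQ}. At $y \in \G$ the pullback presheaf value is $\bar{\G}(\bar{x},\phi(y))/\bar{H}$ equipped with the $\Aut(y)$-action via $\phi$, so the task reduces to showing that every $\Aut(y)$-orbit in this set has stabilizer lying in $\F_y$. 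A direct computation shows the stabilizer of a coset $[g]$, for $g \colon \bar{x} \to \phi(y)$, is $K = \{h \in \Aut(y) : \phi(h) \in g\bar{H}g^{-1}\}$, so $\phi(K) \leq g\bar{H}g^{-1}$. Since $\bar{\F}$ is closed under conjugation, $g\bar{H}g^{-1} \in \bar{\F}_{\phi(y)}$, and since $\bar{\F}$ is closed under subgroups, $\phi(K) \in \bar{\F}_{\phi(y)}$. Thus $K \in (\phi^{\**}\bar{\F})_y \subseteq \F_y$, and each orbit contributes a (trivial) cofibration of $\V^{\Aut(y)}_{\F_y}$.

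The main obstacle is the orbit-stabilizer bookkeeping in the second step; everything else is formal manipulation of pointwise characterizations. The key conceptual point is that both closure conditions defining a family of subgroups of a groupoid (conjugation-closure along arrows of $\G$ and closure under passage to subgroups) are used crucially to convert the condition $\phi(K) \leq g\bar{H}g^{-1}$ into membership in $\bar{\F}_{\phi(y)}$.
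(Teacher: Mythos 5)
Your proof is correct and follows exactly the route the paper intends: the paper omits the argument, simply citing the group-case analogues \cite[Props.~6.5, 6.6]{BP_geo} together with the pointwise description of cofibrations in Remark~\ref{SIGMACOF_REM}, and your verification (right adjoint preserves (trivial) fibrations via $(\phi^{\**}f)(x)^H = f(\phi(x))^{\phi(H)}$; left adjoint checked on the generators of \eqref{VGFGEN EQ} via the orbit--stabilizer computation) is precisely the adaptation being invoked.
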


We next discuss the interactions of equivariant model structures on 
$\mathcal{V}$ with the monoidal structure $\otimes$ on $\mathcal{V}$.

\begin{definition}\label{GGENOTITC DEF}
	Extending Definition \ref{GLOBMONAX_DEF},
	we write
	$
	\mathcal{J}^{\otimes}_{\G}
	=
	\mathcal J \otimes \V^{\G}
	=
	\sets{j \otimes X}{j \in \mathcal{J},X \in \V^{\G}}
	$
	and refer to the class of maps  
	$\mathcal{J}^{\otimes}_{\G}$-cof in $\V^{\G}$
	as the \emph{$\G$-genuine $\otimes$-trivial cofibrations}.
	
	Note that, as in Remark \ref{SIGMACOF_REM},
	$f$ is a $\G$-genuine $\otimes$-trivial cofibration if and only if $f(x)$ is a $\Aut(x)$-genuine $\otimes$-trivial cofibration for all $x \in \G$.
\end{definition}

\begin{proposition}\label{REGEOTCOF PROP}
	Both left adjoints in \eqref{FAMADJ EQ}
	send genuine $\otimes$-trivial cofibrations 
	to genuine $\otimes$-trivial cofibrations.
\end{proposition}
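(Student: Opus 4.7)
The plan is to observe that neither class of maps involved — the generators $\mathcal{J}^{\otimes}_{\G} = \mathcal{J} \otimes \V^{\G}$, nor their closure $\mathcal{J}^{\otimes}_{\G}\text{-cof}$ — depends on the family $\F$ at all, so the family conditions of Proposition \ref{EQQUILADJ PROP} play no role here. The whole statement reduces to a colimit and tensoring compatibility for the underlying functors $\phi_!$ and $\phi^{\**}$.

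First, I would recall that both $\phi_!$ and $\phi^{\**}$ are left adjoints (the latter in the adjunction $\phi^{\**} \dashv \phi_{\**}$), so both preserve all colimits, and any functor preserves retracts. Hence, in each of the two cases, it suffices to check the target lies in $\mathcal{J}^{\otimes}_{\bar{\G}}$ (resp. $\mathcal{J}^{\otimes}_{\G}$) when the source is a generator $j \otimes X$ with $j \in \mathcal{J}$.

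The key observation is then that, for both $L = \phi_!$ and $L = \phi^{\**}$, one has a natural isomorphism
\begin{equation}
L(j \otimes X) \simeq j \otimes L(X),
\end{equation}
where on the left $j$ is regarded as a map in $\V^{\G}$ (resp. $\V^{\bar \G}$) with constant/trivial action in the groupoid variable and tensored pointwise. For $\phi^{\**}$ this is immediate, since $\phi^{\**}$ is precomposition and tensoring is pointwise: $\phi^{\**}(j \otimes Y)(x) = j \otimes Y(\phi(x))$. For $\phi_!$ it follows because $\phi_!$ is computed as a pointwise colimit (a left Kan extension) and tensoring with a fixed object of $\V$ commutes with colimits (as $(\V,\otimes)$ is closed monoidal, cf. condition (iii) of Theorem \ref{THMI}); thus $\phi_!(j \otimes X)(y) \simeq \colim_{\phi(x) \to y}\, j \otimes X(x) \simeq j \otimes \phi_!(X)(y)$.

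Combining these, generators of $\mathcal{J}^{\otimes}_{\G}$ are sent by $\phi_!$ to $j \otimes \phi_!(X) \in \mathcal{J}^{\otimes}_{\bar{\G}}$, and generators of $\mathcal{J}^{\otimes}_{\bar{\G}}$ are sent by $\phi^{\**}$ to $j \otimes \phi^{\**}(Y) \in \mathcal{J}^{\otimes}_{\G}$. Extending along pushouts, transfinite composition, and retracts — each preserved by the respective left adjoint — yields the claim. I do not expect any real obstacle: the proof is essentially formal, and the only mild subtlety is to invoke the closedness of $(\V,\otimes)$ to move the colimit defining $\phi_!$ past the tensor factor $j$.
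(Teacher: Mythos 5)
Your proposal is correct and follows the same route as the paper, whose entire proof is the two identifications $\phi_!(j \otimes X) \simeq j \otimes \phi_!(X)$ and $\phi^{\**}(j \otimes X) \simeq j \otimes \phi^{\**}(X)$; you simply spell out the standard closure argument (left adjoints preserve pushouts, transfinite composites, and retracts) that the paper leaves implicit.
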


\begin{proof}
	This follows from the identifications
	$\phi_!\left(j \otimes X\right) \simeq j \otimes\phi_!\left( X\right)$
	and
	$\phi^{\**}\left(j \otimes X\right) \simeq j \otimes \phi^{\**}\left( X\right)$.
\end{proof}

The following is immediate from
\cite[Rem. 6.14]{BP21} and Remark \ref{SIGMACOF_REM}.

\begin{proposition}\label{RESGEN PROP}
	Suppose $(\V, \otimes)$ is a closed monoidal model category which is cofibrantly generated,
	and also that the model structures appearing below exist.
	Further, let $\G, \bar{\G}$ be groupoids and $\F,\bar{\F}$
	families of subgroups of $\G, \bar{\G}$.
	Then $\otimes$ includes a left Quillen bifunctor
	\[
	\V^{\G}_{\F} \times \V^{\bar \G}_{\bar{\F}} \xrightarrow{\otimes} \V^{\G \times \bar \G}_{\F \sqcap \bar{\F}}
	\]
	with the family $\F \sqcap \bar{\F}$ of $\G \times \bar{\G}$ is defined as follows 
	(for
	$\pi_\G \colon \G \times \bar{\G} \to \G$,
	$\pi_{\bar{\G}} \colon \G \times \bar{\G} \to \bar{\G}$
	the projections)
	\begin{equation}
	\label{FCAPBARF_EQ}
	\left(\F \sqcap \bar{\F}\right)_{(x,\bar{x})}
	=
	\pi_\G^{\**}(\F_x) \cap \pi_{\bar{\G}}^{\**}(\bar \F_{\bar x})
	=
	\left\{K\leq \Aut(x,\bar{x})\ |\ \pi_{\G} (K) \in \F_x,
	\pi_{\bar{\G}} (K) \in \F_{\bar{x}}
	\right\}.
	\end{equation}
\end{proposition}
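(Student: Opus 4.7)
The plan is to reduce the statement to the group-equivariant case handled by \cite[Rem. 6.14]{BP_geo} via a pointwise argument. By Remark \ref{SIGMACOF_REM}, (trivial) cofibrations in $\V^\G_\F$, $\V^{\bar\G}_{\bar\F}$, and $\V^{\G \times \bar\G}_{\F \sqcap \bar\F}$ admit pointwise characterizations: a map is a (trivial) cofibration iff its evaluations at each object are (trivial) cofibrations in the appropriate group-equivariant model structure $\V^{\mathsf{Aut}(x)}_{\F_x}$, $\V^{\mathsf{Aut}(\bar x)}_{\bar\F_{\bar x}}$, or $\V^{\mathsf{Aut}((x,\bar x))}_{(\F \sqcap \bar\F)_{(x,\bar x)}}$.

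Next, I would observe that the external tensor $\otimes \colon \V^\G \times \V^{\bar\G} \to \V^{\G \times \bar\G}$ is computed pointwise, in the sense that $(X \otimes \bar X)(x,\bar x) = X(x) \otimes \bar X(\bar x)$ equipped with the product action of $\mathsf{Aut}((x,\bar x)) = \mathsf{Aut}(x) \times \mathsf{Aut}(\bar x)$. Consequently the pushout-product $f \square \bar f$ of maps $f \in \V^\G$, $\bar f \in \V^{\bar\G}$ evaluated at $(x,\bar x)$ agrees with the pushout-product $f(x) \square \bar f(\bar x)$ in $\V^{\mathsf{Aut}(x) \times \mathsf{Aut}(\bar x)}$ with respect to the monoidal structure $\otimes$. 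Crucially, the definition \eqref{FCAPBARF_EQ} of $\F \sqcap \bar\F$ is arranged precisely so that, at the point $(x,\bar x)$, the family $(\F \sqcap \bar\F)_{(x,\bar x)}$ is the analogous product family on the group $\mathsf{Aut}(x) \times \mathsf{Aut}(\bar x)$ built from $\F_x$ and $\bar\F_{\bar x}$.

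Combining these two ingredients, the pushout-product axiom for $\otimes \colon \V^\G_\F \times \V^{\bar\G}_{\bar\F} \to \V^{\G \times \bar\G}_{\F \sqcap \bar\F}$ reduces pointwise to the corresponding axiom for $\otimes \colon \V^{\mathsf{Aut}(x)}_{\F_x} \times \V^{\mathsf{Aut}(\bar x)}_{\bar\F_{\bar x}} \to \V^{\mathsf{Aut}(x) \times \mathsf{Aut}(\bar x)}_{(\F \sqcap \bar\F)_{(x,\bar x)}}$, which is exactly the content of \cite[Rem. 6.14]{BP_geo}. The unit axiom (for a Quillen bifunctor, that cofibrant constants tensor correctly) follows similarly by pointwise evaluation from its analogue in the group case. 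The main technical point is the bookkeeping identification in Step 2 ensuring that \eqref{FCAPBARF_EQ} matches the product-of-families construction at each point, after which the argument is purely an application of the cited group-level result.
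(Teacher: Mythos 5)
Your argument is exactly the one the paper intends: Proposition \ref{RESGEN PROP} is stated as ``immediate from \cite[Rem. 6.14]{BP_geo} and Remark \ref{SIGMACOF_REM}'', i.e. reduce to the group case via the pointwise characterization of (trivial) cofibrations and the pointwise computation of the external tensor, checking that $\F \sqcap \bar{\F}$ restricts at each object $(x,\bar{x})$ to the product family on $\Aut(x)\times\Aut(\bar{x})$. Your write-up simply makes this one-line reduction explicit, so it is correct and follows the same route.
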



Unpacking the definition of left Quillen bifunctor,
Proposition \ref{RESGEN PROP}
says that,
if maps $f, \bar{f}$
in $\V^{\G}$, $\V^{\bar{\G}}$
are $\F,\bar{\F}$ cofibrations,
then the \textit{pushout product} map
$f\square \bar{f}$ 
in $\mathcal{V}^{\G \times \bar{\G}}$ (defined in e.g. \cite[11.1.7]{Ri14})
is a 
$\F \sqcap \bar{\F}$-cofibration,
which is trivial if either $f$ or $\bar{f}$ are.

Note, however, that should $\otimes$ be a symmetric monoidal structure, then when $\G = \bar \G$ and $f = \bar f$
the map $f \square f$
admits an additional $\Sigma_2$-action
(and, more generally, $f^{\square n}$ admits a $\Sigma_n$-action)
which is ignored by Proposition \ref{RESGEN PROP}.
To discuss such ``actions on powers'' we need a few preliminaries, 
starting with the following additional hypothesis on $\V$.

\begin{definition}[{\cite[Def. 6.16]{BP21}}]\label{CSPP_DEF}
	We say a symmetric monoidal model category $\V$ has \textit{cofibrant symmetric pushout powers} if,
	for all (trivial) cofibrations $f$, 
	the pushout product power $f^{\square n}$
	is a $\Sigma_n$-genuine (trivial) cofibration in $\V^{\Sigma_n}$ for all $n \geq 1$. 
\end{definition}

\begin{remark}
	We purposely excluded the $n=0$ case in Definition \ref{CSPP_DEF}.
	
	Unwinding definitions, one sees that for any map $f$,
	$f^{\square 0}$
	is always the map 
	$\emptyset \to 1_{\V}$
        from the initial object to the monoidal unit
	(indeed, $\square$ determines a monoidal structure on the category of arrows, with $\emptyset \to 1$ being the unit).
	However, while we assume that 
	$1_{\V}$ is cofibrant at certain points in this work, 
	we never wish to assume that
	$\emptyset \to 1_{\V}$
	is a trivial cofibration.
\end{remark}

Given a map $f$ in $\mathcal{V}^{\G}$
we write 
$f^{\square n}$  and 
$f^{\otimes n}$
for the maps in $\mathcal{V}^{\Sigma_n \wr \G}$
given by
\begin{equation}\label{FSQNDEF EQ}
f^{\square n}\left((x_i)_{1\leq i \leq n }\right)
=
\underset{1\leq i \leq n}{\mathlarger{\mathlarger{\mathlarger{\square}}}} f(x_i),
\qquad
f^{\otimes n}\left((x_i)_{1\leq i \leq n }\right)
=
\underset{1\leq i \leq n}{\bigotimes} f(x_i).
\end{equation}
Next, we write
\[
\pi_{\Sigma} \colon \Sigma_n \ltimes \G^{\times n} \to \Sigma_n
\qquad
\pi^i_{\G} \colon \Sigma_n \ltimes \G^{\times n} \to \G, 1\leq i \leq n
\]
for the projections onto each coordinate.
We warn that, while $\pi_{\Sigma}$ is a map of groupoids, 
the $\pi^i_{\G}$ are not.
Nonetheless, writing $\Sigma^i_n \leq \Sigma_n$
for the subgroup of permutations that fix $i$, 
one has that $\pi_{G}^i$ is a homomorphism when restricted to 
$\pi^{-1}_{\Sigma}(\Sigma_n^i)$ 
(indeed, one has an isomorphism 
$\pi^{-1}_{\Sigma}(\Sigma_n^i) \simeq 
(\Sigma_{1} \wr \G) \times (\Sigma_{n-1} \wr \G)$,
which identifies $\pi_{\G}^i$ with the projection to 
$(\Sigma_{1} \wr \G) \simeq \G$).
Given $(x_i) \in \Sigma_n \wr \G$
and a subgroup
$H \leq \Aut((x_i))$,
we then write
$H_i = H \cap \pi^{-1}_{\Sigma}(\Sigma_n^i)$
for the subgroup of $H$ whose projection to $\Sigma$ fixes $i$.

Given a family $\F$ of subgroups of $\G$,
we can now finally define the family $\F^{\ltimes n}$
of subgroups of $\Sigma_n \wr \G$ for $n\geq 1$ by
\begin{equation}\label{FWRNXI EQ}
\left(\F^{\ltimes n}\right)_{(x_i)}
=
\left\{
H \leq \Aut((x_i))
\ | \
\pi^i_{\G}(H_i) \in \F_{x_i} \text{ for } 1 \leq i \leq n
\right\}.
\end{equation}
Note that this construction is compatible with pullbacks
along a functor $\phi \colon \bar{\G} \to \G$, in the sense that 
\begin{equation}
\label{PHIFLTIMES_EQ}
(\Sigma_n \wr \phi)^{\**}\F^{\ltimes n} = (\phi^{\**} \F)^{\ltimes n}.
\end{equation}

\begin{lemma}[cf. {\cite[Prop. 6.23]{BP21}}]
	\label{LTIMESNMINC LEM}
	Let $\F$ be a family of subgroups in the groupoid $\G$ and write
	$\iota \colon \left(\Sigma_n \wr \G\right) \times \left(\Sigma_m \wr \G\right) \to \Sigma_{n+m} \wr \G$ for the standard inclusion.
	Then 
	\begin{equation}\label{LTIMESNMINC EQ}
	\F^{\ltimes n} \sqcap \F^{\ltimes m} \subseteq 
	\iota^{\**}
	\F^{\ltimes n+m}
	\end{equation}
	so that the composite
	\begin{equation}\label{LTIMESNMINC2 EQ}
	\V^{\Sigma_n \wr \G}_{\F^{\ltimes n}} \times \V^{\Sigma_m \wr \G}_{\F^{\ltimes m}}
	\xrightarrow{\otimes}
	\V^{\left(\Sigma_n \wr \G\right) \times \left(\Sigma_m \wr \G\right)}_{\F^{\ltimes n} \sqcap \F^{\ltimes m}}
	\xrightarrow{\iota_!}
	\V^{\Sigma_{n+m} \wr \G}_{\F^{\ltimes n+m}}
	\end{equation}
	is a left Quillen bifunctor.
\end{lemma}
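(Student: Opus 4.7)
The plan is to establish the two claims in sequence: first the inclusion \eqref{LTIMESNMINC EQ} of families, which is a group-theoretic statement, and then deduce the Quillen bifunctor claim by composing previously established results.

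For the inclusion, unpack the definitions. An object of $(\Sigma_n \wr \G) \times (\Sigma_m \wr \G)$ is a pair $((x_i)_{1 \leq i \leq n}, (y_j)_{1 \leq j \leq m})$ which $\iota$ sends to the concatenated tuple $(z_k)_{1 \leq k \leq n+m}$ with $z_k = x_k$ for $k \leq n$ and $z_k = y_{k-n}$ for $k > n$. Fix a subgroup $K \in (\F^{\ltimes n} \sqcap \F^{\ltimes m})_{((x_i),(y_j))}$, so that $K_1 := \pi_{\Sigma_n \wr \G}(K) \in \F^{\ltimes n}_{(x_i)}$ and $K_2 := \pi_{\Sigma_m \wr \G}(K) \in \F^{\ltimes m}_{(y_j)}$. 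The composite $\pi_\Sigma \circ \iota$ identifies $\Sigma_n \times \Sigma_m$ with the block-diagonal subgroup of $\Sigma_{n+m}$, so for $k \leq n$ one has $\iota(K) \cap \pi_\Sigma^{-1}(\Sigma_{n+m}^k) = \iota\bigl(K \cap (\pi_\Sigma^{-1}(\Sigma_n^k) \times (\Sigma_m \wr \G))\bigr)$, and moreover the restriction of $\pi_\G^k$ on $\Sigma_{n+m} \wr \G$ factors through projection to the first factor and agrees there with the $k$-th $\G$-projection on $\Sigma_n \wr \G$. The image $\pi_\G^k(\iota(K)_k)$ is thus contained in $\pi_\G^k((K_1)_k)$, and the latter lies in $\F_{x_k}$ by assumption; closure under subgroups (Definition \ref{FAMGROUPOID DEF}(i)) then places $\pi_\G^k(\iota(K)_k)$ in $\F_{x_k} = \F_{z_k}$. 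The argument for $k > n$ is symmetric, and together these yield $\iota(K) \in \F^{\ltimes n+m}_{(z_k)}$.

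For the Quillen bifunctor claim, Proposition \ref{RESGEN PROP} gives that $\otimes$ is a left Quillen bifunctor into $\V^{(\Sigma_n \wr \G) \times (\Sigma_m \wr \G)}_{\F^{\ltimes n} \sqcap \F^{\ltimes m}}$, while the inclusion \eqref{LTIMESNMINC EQ} combined with Proposition \ref{EQQUILADJ PROP} makes $\iota_!$ a left Quillen functor to $\V^{\Sigma_{n+m} \wr \G}_{\F^{\ltimes n+m}}$. Since $\iota_!$ is a left adjoint it commutes with pushouts, so the pushout-product of the composite $\iota_! \circ \otimes$ evaluated on cofibrations $f,g$ coincides with $\iota_!(f \square g)$; this is a (trivial if $f$ or $g$ is trivial) cofibration by the two inputs. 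The main (and only mildly subtle) step is the bookkeeping in the first paragraph showing that restricting to the $k$-th stabilizer commutes sufficiently with $\iota$ to produce the required containment of subgroup projections, after which the Quillen claim is a formal consequence of the preceding propositions.
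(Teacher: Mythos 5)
Your proposal is correct and follows essentially the same route as the paper: the inclusion \eqref{LTIMESNMINC EQ} is obtained by observing that the $i$-th (resp. $j$-th) $\G$-projection of the relevant stabilizer subgroup of $\iota(K)$ is computed through the projection to $\Sigma_n \wr \G$ (resp. $\Sigma_m \wr \G$), so the condition reduces to the defining condition of $\F^{\ltimes n} \sqcap \F^{\ltimes m}$, and the Quillen bifunctor claim then just combines Propositions \ref{EQQUILADJ PROP} and \ref{RESGEN PROP}. Your write-up is merely more explicit about the block-diagonal bookkeeping (and uses closure under subgroups where the paper implicitly has an equality of projections), but the argument is the same.
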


\begin{proof}
	To ease notation, we write
	$\{1,\cdots, n+m\} = \{1,\cdots,n\} \amalg \{1,\cdots,m\}$,
	and allow $1\leq i\leq n$ to range over the first summand and $1 \leq j \leq m$ to range over the second summand.
	
	Note now that, if $H$ is a group of automorphisms in 
	$(\Sigma_n \wr \G) \times (\Sigma_m \wr \G)$,
	then 
	$\pi^i_{\G}(H) = \pi^i_{\G}(\pi_{\Sigma_n \wr \G}(H))$
	and
	$\pi^j_{\G}(H) = \pi^j_{\G}(\pi_{\Sigma_m \wr \G}(H))$,
	so that 
	\eqref{LTIMESNMINC EQ} is immediate from definition of 
	$\F \sqcap \bar{\F}$.
	
	That \eqref{LTIMESNMINC2 EQ} is a Quillen bifunctor simply combines 
	Propositions \ref{EQQUILADJ PROP} and \ref{RESGEN PROP}.
\end{proof}

We now have the following, which is a strengthening of 
\cite[Prop. 6.25]{BP21}.

\begin{proposition}\label{SIGMAWRGF PROP}
	Suppose $(\V, \otimes)$ is as in Proposition \ref{RESGEN PROP} and,
	in addition, that it has cofibrant symmetric pushout powers.
	Further, let $\G$ be a groupoid and
	$\F$ a family of subgroups of $\G$.
	Then:
	\begin{enumerate}[label=(\roman*)]
		\item if $f$ is a (trivial) $\F$-cofibration in $\V^{\G}$
		then $f^{\square n}$ is a (trivial)
		$\F^{\ltimes n}$-cofibration in $\V^{\Sigma_n \wr \G}$;
		\item if $f$ is a (trivial) $\F$-cofibration between $\F$-cofibrant objects in $\V^{\G}$
		then $f^{\otimes n}$ is a (trivial)
		$\F^{\ltimes n}$-cofibration
		between $\F^{\ltimes n}$-cofibrant objects in $\V^{\Sigma_n \wr \G}$.
	\end{enumerate}
\end{proposition}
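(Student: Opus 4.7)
The plan is to prove (i) first and deduce (ii) from it using a standard filtration of $f^{\otimes n}$.

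For (i), I would adopt the bootstrapping strategy of \cite[Rem.~6.17]{BP_geo}, but now applied to the model category $\V^\G_\F$ in place of $\V$. Any $\F$-cofibration $f$ admits a transfinite cellular presentation by generators $\G(x,-)/H \cdot i$ with $H \in \F_x$ and $i$ a generating (trivial) cofibration of $\V$. The $n$-fold pushout power $f^{\square n}$ inherits a corresponding $\Sigma_n$-equivariant filtration whose associated graded pieces are inductions, along the inclusions $\iota \colon (\Sigma_{k_1}\wr\G) \times \cdots \times (\Sigma_{k_r}\wr\G) \hookrightarrow \Sigma_n \wr \G$, of $\square$-products of pushout powers of generators. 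Since the class of $\F^{\ltimes n}$-(trivial) cofibrations is closed under pushouts, transfinite composition, retracts, and (by Lemma \ref{LTIMESNMINC LEM}) the relevant inductions, this reduces (i) to verifying that $f^{\square n}$ is an $\F^{\ltimes n}$-(trivial) cofibration for a single generator $f = \G(x,-)/H \cdot i$.

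For such an $f$, one directly computes
\[
f^{\square n} \simeq (\G(x,-)/H)^{\otimes n} \cdot i^{\square n}
\]
in $\V^{\Sigma_n \wr \G}$, where the left factor is a presheaf of sets with the diagonal $\Sigma_n$-action. The key observations are: (a) $(\G(x,-)/H)^{\otimes n}$ is naturally isomorphic to the quotient representable $(\Sigma_n \wr \G)\bigl((x,\ldots,x),-\bigr)/(\Sigma_n \wr H)$, and the stabilizer $\Sigma_n \wr H$ lies in $\F^{\ltimes n}_{(x,\ldots,x)}$, since $\pi^i_\G\bigl((\Sigma_n \wr H)_i\bigr) = \pi^i_\G(\Sigma_n^i \wr H) = H \in \F_x$; (b) by the cofibrant symmetric pushout powers hypothesis on $\V$, the map $i^{\square n}$ is a $\Sigma_n$-genuine (trivial) cofibration, hence cellularly built from maps of the form $\Sigma_n/K \cdot i'$ with $K \leq \Sigma_n$ and $i'$ a generating (trivial) cofibration of $\V$. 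Combining (a) and (b), $f^{\square n}$ is cellularly built from generators of the form $(\Sigma_n \wr \G)\bigl((x,\ldots,x),-\bigr)/L \cdot i'$, where $L \leq \Sigma_n \wr H$ is the preimage of $K$; such subgroups $L$ remain in $\F^{\ltimes n}$ by closure under subgroups (Definition \ref{FAMGROUPOID DEF}(i)), establishing (i).

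For (ii), I would use the canonical filtration $A^{\otimes n} = Q_0 \hookrightarrow Q_1 \hookrightarrow \cdots \hookrightarrow Q_n = B^{\otimes n}$, where $Q_k$ is the colimit of the subcube of $[1]^n$ on tuples with at most $k$ ones ($0 \leftrightarrow A$, $1 \leftrightarrow B$). The transition $Q_{k-1} \to Q_k$ is a pushout along
\[
\iota_!\bigl(f^{\square k} \otimes A^{\otimes (n-k)}\bigr),
\qquad
\iota \colon (\Sigma_k \wr \G) \times (\Sigma_{n-k} \wr \G) \hookrightarrow \Sigma_n \wr \G.
\]
Applying (i) to $f$ gives $f^{\square k}$ as an $\F^{\ltimes k}$-(trivial) cofibration, while applying (i) to the cofibration $\emptyset \to A$ gives $A^{\otimes(n-k)}$ as $\F^{\ltimes (n-k)}$-cofibrant. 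Lemma \ref{LTIMESNMINC LEM} then yields each transition map, hence the composite $f^{\otimes n}$, as an $\F^{\ltimes n}$-(trivial) cofibration. The cofibrancy of $A^{\otimes n}$ and $B^{\otimes n}$ in $\V^{\Sigma_n \wr \G}_{\F^{\ltimes n}}$ follows by applying (i) to $\emptyset \to A$ and $\emptyset \to B$. The main obstacle is the bootstrapping step of (i): producing a $\Sigma_n$-equivariant cellular filtration of $f^{\square n}$ whose pieces have the desired inductive form, which is precisely the role of the cofibrant symmetric pushout powers hypothesis (via \cite[Rem.~6.17]{BP_geo}).
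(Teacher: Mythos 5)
Your proposal is correct, but part (i) is organized quite differently from the paper. The paper handles (i) in two steps: for $\G = G$ a group it simply cites \cite[Prop.~6.24]{BP_geo} (checking only that the cellular-fixed-points hypothesis used there can be dropped in favor of Proposition \ref{EQQUILADJ PROP} and Lemma \ref{LTIMESNMINC LEM}), and then reduces a general groupoid to a disjoint union of groups by partitioning $\{1,\dots,n\}$ according to which $x_i$ coincide, rewriting $f^{\square n}((x_i))$ as a box product $\mathop{\square}_l f(x_{\lambda_l})^{\square n_l}$ and checking that automorphism groups of tuples respect the partition. You instead run the cellular induction directly over the groupoid: reduce to the generators $\G(x,-)/H\cdot i$ of $\V^\G_\F$ via the pushout-power bootstrapping of \cite[Rem.~6.17]{BP_geo}, and for a generator compute $f^{\square n}\simeq (\Sigma_n\wr\G)((x,\dots,x),-)/(\Sigma_n\wr H)\cdot i^{\square n}$, feeding in the cofibrant symmetric pushout powers hypothesis on $i^{\square n}$ and closure of $\F^{\ltimes n}$ under subgroups. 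This buys you a self-contained argument that never passes through the group case (and never needs the partition/skeletalization step), at the cost of redoing explicitly the filtration content that the paper outsources to \cite[Prop.~6.24]{BP_geo}; be aware that the bootstrapping step is not a literal application of \cite[Rem.~6.17]{BP_geo} to $\V^\G_\F$, since $\V^{\Sigma_n\wr\G}_{\F^{\ltimes n}}$ is not $(\V^\G_\F)^{\Sigma_n}$ with its genuine structure, so the closure of the relevant class under cobase change and transfinite composition must be rechecked using Lemma \ref{LTIMESNMINC LEM} — which your sketch correctly identifies as the needed input. Your part (ii) is essentially identical to the paper's: the same $Q_0\hookrightarrow\cdots\hookrightarrow Q_n$ filtration with transition maps induced along $\iota$ from $f^{\square r}\square(\emptyset\to A)^{\square n-r}$, concluded by (i) and Lemma \ref{LTIMESNMINC LEM} (and, as in the paper, the trivial case correctly uses $r\geq 1$).
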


\begin{proof}
	We first prove the result when $\G = G$ is in fact a group.
	In this case,
	(i) is almost exactly \cite[Prop. 6.25]{BP21}, 
	except for the fact that throughout \cite{BP21}
	we assume $\V$ has cellular fixed points
	(i.e. that it satisfies the conditions in 
	\cite[Prop. 2.6]{Ste16}, which are a stronger version of Definition \ref{WEAKCELL DEF}), 
	so we must check that this assumption is not needed in the proof of 
	\cite[Prop. 6.25]{BP21}.
	Analyzing the proof therein, one sees that the only model structure requirements are the cofibrancy of pushout powers
	and that the functors in 
	\cite[Props 6.5 and 6.23]{BP21}
	are left Quillen (bi)functors.
	Noting that the latter results are generalized by Proposition \ref{EQQUILADJ PROP} and Lemma \ref{LTIMESNMINC LEM} yields (i).

	For (ii) when $\G=G$ is a group, we need to recall an argument 
	in the proof of \cite[Prop. 6.25]{BP21}. 
	Given composable arrows $Z_0 \xrightarrow{g} Z_1 \xrightarrow{f} Z_2$ in $\V$,
	denote by $Q^n(g), Q^n(f)$ the domains of $g^{\square n}, f^{\square n}$.
	There is a filtration of the box product of the composite
	$(fg)^{\square n}$ as
	\[
	Q^n(fg)
	\xrightarrow{k_0}
	\bullet
	\xrightarrow{k_1}
	\cdots
	\xrightarrow{k_{n-1}}
	\bullet
	\xrightarrow{k_{n}}
	Z_2^{\otimes n}
	\] 
	where each $k_r$, $0\leq r \leq n$ is given by a pushout as on the left below, with the right diagram specifying the $k_0$ case
	(this filtration is induced by a $\Sigma_n$-equivariant filtration 
	$P_0 \subseteq P_1 \subseteq \dots \subseteq P_n$ of the poset $P_n = (0 \to 1 \to 2)^{\times n}$,
	where $P_0$ consists of the tuples with at least one $0$-coordinate 
	and tuples in $P_r$ have at most $r$ $2$-coordinates;
	for more details, see the proof of \cite[Lemma 4.8]{Pe16})
	\begin{equation}\label{HGBOX_EQ}
	\begin{tikzcd}
	\bullet 
	\arrow[d, "\Sigma_n \cdot_{\Sigma_{n-r} \times \Sigma_r} (g^{\square n-r} \square f^{\square r})"'] \arrow[r]
	&
	\bullet \arrow[d, "k_r"]
	& &
	Q^n(g) \arrow[r] \arrow[d, "g^{\square n}"']
	&
	Q^n(fg) \arrow[d, "k_0"]
	\\
	\bullet \arrow[r]
	&
	\bullet
	& &
	Z_1^{\otimes n} \arrow[r]
	&
	\bullet
	\end{tikzcd}
	\end{equation}
	Specifying to the case $Z_0 = \emptyset$, one has
	$Q^n(g)= Q^n(fg) = \emptyset$
	so that $f^{\otimes n} \colon Z_1^{\otimes n} \to Z_2^{\otimes n}$
	is
	\[
	Z_1^{\otimes n}
	\xrightarrow{k_1}
	\bullet
	\xrightarrow{k_2}
	\cdots
	\xrightarrow{k_{n-1}}
	\bullet
	\xrightarrow{k_{n}}
	Z_2^{\otimes n}.
	\] 
	Part (i) of the result now implies that 
	$g^{\square n-r}$
	is a $\F^{\ltimes n-r}$-cofibration
	and
	$f^{\square r}$
	is a $\F^{\ltimes r}$-(trivial) cofibration
	(the trivial case uses $r\geq 1$),
	so (ii) for $\G=G$ a group follows from 
	Lemma \ref{LTIMESNMINC LEM}.

	We now prove the result for general groupoids.
	Since any groupoid is equivalent to a disjoint union of groups, 
	we may reduce to that case, i.e. we may assume that two objects of $\G$ are isomorphic iff they are equal.
	
	Given $(x_i) \in \Sigma_n \ltimes \G^{\times n}$,
	we now need to check that the maps in \eqref{FSQNDEF EQ}
	are (trivial) $\left(\F^{\ltimes n}\right)_{(x_i)}$-cofibrations.
	Form the partition 
	$\{1,\cdots,n\} = \lambda_1 \amalg \cdots \amalg \lambda_k$
	such that $i,j$ are in the same piece iff $x_i=x_j$, and write
	$n_l = |\lambda_l|$.
	Writing $x_{\lambda_l}$ for the common value of the $x_i$ with $i\in \lambda_l$, we then have
	\[
	f^{\square n}\left((x_i)_{1\leq i \leq n }\right)
	\simeq
	\underset{1\leq l \leq k}{\mathlarger{\mathlarger{\mathlarger{\square}}}} f(x_{\lambda_l})^{\square n_l},
	\qquad
	f^{\otimes n}\left((x_i)_{1\leq i \leq n }\right)
	\simeq
	\underset{1\leq l \leq k}{\bigotimes} f(x_{\lambda_l})^{\otimes n_l}.
	\]
	Writing $G_l$ for the automorphism group of $x_{\lambda_l}$,
	the group case shown above shows that the
	$f(x_{\lambda_l})^{\square n_l}$,
	$f(x_{\lambda_l})^{\otimes n_l}$
	are (trivial) $\F_{x_{\lambda_l}}^{\ltimes n_l}$-cofibrations
	in $\V^{\Sigma_{n_l}\ltimes G_l}$,
	the latter with 
	$\F_{x_{\lambda_l}}^{\ltimes n_l}$-cofibrant domain.
	Next, note that for any subgroup
	$H \leq \mathsf{Aut}_{\Sigma_n \wr \G}((x_i))$ the projection 
	$\pi_{\Sigma}(H)$ must preserve the partition 
	(or else there would be distinct $x_i$ which are isomorphic) 
	so that, writing 
	$\pi_{\G^{\times \lambda_l}} \colon
	\Sigma_n \ltimes \G^{\times n} \to \G^{\times \lambda_l}$
	for the projections,
	one has that the 
	$\pi_{\G^{\times \lambda_l}}$ are homomorphisms when restricted to $H$. The result now follows by Lemma \ref{LTIMESNMINC LEM}
	and the observation that
	$H \in \left(\F^{\ltimes n}\right)_{(x_i)}$
	iff
	$\pi_{\G^{\times \lambda_l}}(H) \in \F_{x_{\lambda_l}}^{\ltimes n_l}$
	for all $l$.
\end{proof}

\begin{remark}
	If one focuses exclusively on cofibrations and ignores statements concerning trivial cofibrations,
	Proposition \ref{SIGMAWRGF PROP} actually subsumes Proposition \ref{RESGEN PROP}. 
	Indeed, by considering the disjoint union groupoid 
	$\G \amalg \bar{\G}$
	with the disjoint union family 
	$\F \amalg \bar{\F}$,
	one can check that, for
	$x \in \G$, $\bar{x} \in \bar{\G}$,
	one has that for the tuple $(x,\bar{x}) \in \Sigma_2 \wr (\G \amalg \bar{\G})$
	it is
	$\left(\left(\F \amalg \bar{\F}\right)^{\ltimes 2}\right)_{(x,\bar{x})} \simeq \F \sqcap \bar{\F}$.
\end{remark}

\section{Model structures on equivariant operads with fixed set of colors}\label{FIXCOL SEC}

The goal of this section is to prove our main results,
Theorems \ref{THMI} and \ref{THMII}.

As discussed in the introduction to \S \ref{PRE SEC},
in the non-equivariant context the model structures on 
color fixed operads $\mathsf{Op}_{\mathfrak{C}}(\V)$
are obtained by via transfer from model structures on the categories
$\mathsf{Sym}_{\mathfrak{C}}(\V)$
of color fixed symmetric sequences.
Likewise, our model structures 
on color fixed equivariant operads 
$\mathsf{Op}^G_{\mathfrak{C}}(\V)$
will be transferred from model structures on 
color fixed equivariant symmetric sequences
$\mathsf{Sym}^G_{\mathfrak{C}}(\V)$
where, just as in \eqref{THMI_EQ},
the weak equivalences in 
$\mathsf{Sym}^G_{\mathfrak{C}}(\V)$
are determined by a $(G,\Sigma)$-family
(cf. Definition \ref{FAM1ST DEF}).
This section is then organized as follows.

In \S \ref{SYMC_MS_SEC} we simply translate
the work in \S \ref{FGPP_SEC} to the categories 
$\mathsf{Sym}^{G}_{\mathfrak{C}}(\V)
\simeq \V^{G \ltimes \Sigma^{op}_{\mathfrak{C}}}$.

Then in \S \ref{OPC_MS_SEC}
we prove Theorem \ref{THMI} by transferring the model structures on 
$\mathsf{Sym}^{G}_{\mathfrak{C}}(\V)$ to obtain model structures on
$\mathsf{Op}^{G}_{\mathfrak{C}}(\V)$.
The key ingredients to this proof are
Proposition \ref{SIGMAWRGF PROP} in \S \ref{FGPP_SEC} and
the filtrations of free operad extensions 
in Lemma \ref{OURE LEM}
(whose proof is deferred to Appendix \ref{MONAD_APDX}).

Lastly, \S \ref{INDSYS_SEC} proves 
Theorem \ref{THMII} via a more careful analysis
of the filtrations in the proof of Theorem \ref{THMI}.

\subsection{Homotopy theory of symmetric sequences with a fixed color $G$-set}
\label{SYMC_MS_SEC}

Following the identification 
$\mathsf{Sym}^G_{\mathfrak{C}}(\V)
\simeq \V^{G \ltimes \Sigma^{op}_{\mathfrak{C}}}$
in Proposition \ref{EQUIVFNCON PROP},
we now apply the framework from \S \ref{FGPP_SEC}  
to the groupoids
$\G = G \ltimes \Sigma_{\mathfrak C}^{op}$
to build model structures on the categories 
$\mathsf{Sym}^G_{\mathfrak{C}}(\V)$.

We now recall and elaborate on the $(G,\Sigma)$-families in Definition \ref{FAM1ST DEF}.
Notably, the families $\mathcal{F}_{\mathfrak{C}}$ below 
are such that 
change of colors $\varphi \colon \mathfrak{C} \to \mathfrak{D}$ 
induce Quillen adjunctions, cf. Corollary \ref{SYMADJ_COR}(i).

\begin{definition}\label{GSFAM_DEF}
	A \emph{$(G,\Sigma)$-family} is a family of subgroups $\mathcal{F}$ of the groupoid $G \times \Sigma^{op}$;
	i.e.
	a collection of families $\F_n$ of the groups $G\times \Sigma_n^{op}$ for each $n \geq 0$.
        
	Moreover, given a $(G,\Sigma)$-family $\F$ and a color $G$-set $\mathfrak C \in \mathsf{Set}^G$,
	we define the family
	$\mathcal{F}_{\mathfrak{C}}$ in
	$G \ltimes \Sigma^{op}_{\mathfrak{C}}$
	by $\mathcal{F}_{\mathfrak{C}} = \pi^{\**}_{\mathfrak{C}}(\mathcal{F})$,
	with $\pi_{\mathfrak{C}} \colon G \ltimes \Sigma_{\mathfrak{C}}^{op} \to G \times \Sigma^{op}$
	the canonical forgetful functor.
\end{definition}

\begin{remark}\label{FAMC_DEF_REM}
	The functors
	$\pi_{\mathfrak{C}} \colon
	G \ltimes \Sigma_{\mathfrak{C}}^{op} \to
	G \times \Sigma^{op}$
	are faithful and can thus be regarded as inclusions on hom sets.
	Thus, letting $\vect{C} \in \Sigma_{\mathfrak{C}}$ be a
	$\mathfrak{C}$-colored corolla with $n$ leaves,
	one has that
	$\F_{\mathfrak{C}} = \{\F_{\vect{C}}\}$ where
	\begin{equation}\label{FAMC_DEF_EQ}
	\F_{\vect{C}} = \F_n \cap \Aut_{G \ltimes \Sigma_{\mathfrak C}^{op}}(\vect C).
	\end{equation}
	Alternatively, following
	Definition \ref{STABS DEF},
	$\F_{\vect{C}}$ consists of the $\Lambda \in \F_n$
	such that $\Lambda$ stabilizes $\vect{C}$.
\end{remark}

In this paper and the sequel \cite{BP_TAS}, 
we will be interested in three main examples of $(G,\Sigma)$-families:

\begin{enumerate}[label = (\alph*)]
	\item First, there is the family $\F_{all}$ of all the subgroups of $G \times \Sigma^{op}$
	(in which case the $\F_{all,\mathfrak{C}}$ are also the families of all subgroups), which is useful mainly for technical purposes.
	
	\item Secondly, there is the family of $\F^{\Gamma}$
	of $G$-graph subgroups (e.g. \cite[Def. 6.36]{BP21}),
	where $\F^{\Gamma}_n$ consists of the subgroups
	$\Gamma \leq G \times \Sigma_n^{op}$
	such that $\Gamma \cap \Sigma_n^{op} = \{\**\}$.
	We note that the elements of such $\Gamma$
	have the form $(h,\phi(h)^{-1})$
	for $h$ ranging over some subgroup $H \leq G$
	and $\phi \colon H \to \Sigma_n$
	a homomorphism,
	motivating the ``graph subgroup'' terminology.
	
	Although secondary for our work in the current paper, we regard $\F^{\Gamma}$ as the ``canonical choice'' of
	$(G,\Sigma)$-family, 
	as this is the family featured in the Quillen equivalence
	$W_! \colon 
	\mathsf{dSet}^G \rightleftarrows 
	\mathsf{dSet}^G \colon hcN$
	in \cite[Thm. I]{BP_TAS}.
	
	\item Lastly, there are the indexing systems of Blumberg and Hill,
	which are special subfamilies of $\F^{\Gamma}$
	which share the key technical properties of 
	$\F^{\Gamma}$ itself,
	and are discussed in \S \ref{INDSYS_SEC}.
\end{enumerate}

\begin{example}
	Let $G = \mathbb{Z}_{/2} = \{\pm 1\}$ and 
	$\mathfrak{C} = \{\mathfrak{a}, -\mathfrak{a}, \mathfrak{b}\}$ where we implicitly have
	$-\mathfrak{b} = \mathfrak{b}$.
	Consider the two $\mathfrak{C}$-corollas 
	$\vect{C},\vect{D} \in \Sigma_{\mathfrak{C}}$ below.
	\begin{equation}
	\begin{tikzpicture}[auto,grow=up, level distance = 2.2em,
	every node/.style={font=\scriptsize,inner sep = 2pt}]%
	\tikzstyle{level 2}=[sibling distance=3em]%
	\node at (0,0) [font = \normalsize] {$\vect{C}$}%
	child{node [dummy] {}%
		child{node {}%
			edge from parent node [swap] {$-\mathfrak{a}$}}%
		child[level distance = 2.9em]{node {}%
			edge from parent node [swap,	near end] {$\mathfrak{b}$}}%
		child[level distance = 2.9em]{node {}%
			edge from parent node [near end] {$\mathfrak{b}$}}%
		child{node {}%
			edge from parent node  {$\mathfrak{a}$}}%
		edge from parent node [swap] {$\mathfrak{b}$}};%
	\node at (7,0) [font = \normalsize] {$\vect{D}$}%
	child{node [dummy] {}%
		child{node {}%
			edge from parent node [swap] {$-\mathfrak{a}$}}%
		child[level distance = 2.9em]{node {}%
			edge from parent node [swap,	near end] {$-\mathfrak{a}$}}%
		child[level distance = 2.9em]{node {}%
			edge from parent node [near end] {$\mathfrak{a}$}}%
		child{node {}%
			edge from parent node  {$\mathfrak{a}$}}%
		edge from parent node [swap] {$\mathfrak{b}$}};%
	\end{tikzpicture}%
	\end{equation}%
	The non-trivial $G$-graph subgroups of
	$\F^{\Gamma}_{\vect{C}}$,
	$\F^{\Gamma}_{\vect{D}}$
	then correspond to the possible $\mathbb{Z}_{/2}$-actions on the underlying trees $C,D$ which are compatible with the action on labels
	(in the sense that the composites
	$\boldsymbol{E}(C) \xrightarrow{-1} \boldsymbol{E}(C) \to \mathfrak{C}$
	and 
	$\boldsymbol{E}(C) \to \mathfrak{C} \xrightarrow{-1} \mathfrak{C}$ coincide).
	In this particular case, both 
	$\F^{\Gamma}_{\vect{C}}$,
	$\F^{\Gamma}_{\vect{D}}$
	have exactly two non-trivial groups,
	which correspond to the $\mathbb{Z}_{/2}$-actions on the underlying corollas that are depicted below.
	\begin{equation}
	\begin{tikzpicture}[auto,grow=up, level distance = 2.2em,
	every node/.style={font=\scriptsize,inner sep = 2pt}]%
	\tikzstyle{level 2}=[sibling distance=3em]%
	\node at (-1.6,0) [font = \normalsize] {$C_1$}%
	child{node [dummy] {}%
		child{node {}%
			edge from parent node [swap] {$-a$}}%
		child[level distance = 2.9em]{node {}%
			edge from parent node [swap,	near end] {$c\phantom{b}$}}%
		child[level distance = 2.9em]{node {}%
			edge from parent node [near end] {$b$}}%
		child{node {}%
			edge from parent node  {$a$}}%
		edge from parent node [swap] {$r$}};%
	\node at (1.6,0) [font = \normalsize] {$C_2$}%
	child{node [dummy] {}%
		child{node {}%
			edge from parent node [swap] {$-a$}}%
		child[level distance = 2.9em]{node {}%
			edge from parent node [swap,	near end] {$-b$}}%
		child[level distance = 2.9em]{node {}%
			edge from parent node [near end] {$b$}}%
		child{node {}%
			edge from parent node  {$a$}}%
		edge from parent node [swap] {$r$}};%
	\node at (5.4,0) [font = \normalsize] {$D_1$}%
	child{node [dummy] {}%
		child{node {}%
			edge from parent node [swap] {$-a$}}%
		child[level distance = 2.9em]{node {}%
			edge from parent node [swap,	near end] {$-b$}}%
		child[level distance = 2.9em]{node {}%
			edge from parent node [near end] {$b$}}%
		child{node {}%
			edge from parent node  {$a$}}%
		edge from parent node [swap] {$r$}};%
	\node at (8.6,0) [font = \normalsize] {$D_2$}%
	child{node [dummy] {}%
		child{node {}%
			edge from parent node [swap] {$-b$}}%
		child[level distance = 2.9em]{node {}%
			edge from parent node [swap,	near end] {$-a$}}%
		child[level distance = 2.9em]{node {}%
			edge from parent node [near end] {$b$}}%
		child{node {}%
			edge from parent node  {$a$}}%
		edge from parent node [swap] {$r$}};%
	\end{tikzpicture}%
	\end{equation}%
\end{example}

\begin{definition}\label{SYMGFV DEF}
	Let $\mathfrak C$ be a $G$-set and $\F$ a $(G, \Sigma)$-family.
	Then the \textit{$\F$-model structure} (if it exists) on the fiber $\Sym^{G}_{\mathfrak C}(\V)$ of $\Sym(\V)$ over $\mathfrak C$
	is the $\F_{\mathfrak{C}}$-model structure
	\begin{equation}
                \Sym^{G}_{\mathfrak{C},\F}(\V) = \V^{G \ltimes \Sigma_{\mathfrak C}^{op}}_{\F_{\mathfrak{C}}}
	\end{equation}
	where $\F_{\mathfrak{C}}$ is as in Definition \ref{GSFAM_DEF}.
        Explicitly, a map $X \to Y$ in $\Sym^G_{\mathfrak C, \mathcal F}(\V)$ is a weak equivalence (resp. fibration) if
        $X(\vect C)^\Lambda \to Y(\vect C)^\Lambda$       
        is so in $\V$ for all $\mathfrak C$-profiles $\vect C$ and $\Lambda \in \mathcal F_{\vect C}$. 

	When $\mathcal{F}=\mathcal{F}_{all}$ is the family of all subgroups,
        we refer to this model structure simply as the \emph{genuine model structure} on $\mathsf{Sym}^G_{\mathfrak{C}}(\V)$.
\end{definition}

\begin{remark}\label{VGSIGF REM}
	If $\V$ is cofibrantly generated,
	combining \eqref{VGFGEN EQ} with the 
	$\Sigma_{\mathfrak{C}}[G \cdot_{\mathfrak{C}} \vect{C}]$ notation
	for the representable functors 
	in Proposition \ref{REPALTDESC PROP},
	one has that the generating (trivial) cofibrations of
	$\Sym^{G}_{\mathfrak{C},\F}(\V)$
	are the sets of maps
	\begin{equation}\label{VGSIGF EQ}
	\mathcal{I}_{\mathfrak{C},\mathcal{F}}
	=
	\left\{
	\Sigma_{\mathfrak{C}}[G \cdot_{\mathfrak{C}} \vect{C}]/\Lambda \cdot i
	\right\}
	\qquad \qquad
	\mathcal{J}_{\mathfrak{C},\mathcal{F}}
	=
	\left\{
	\Sigma_{\mathfrak{C}}[G \cdot_{\mathfrak{C}} \vect{C}]/\Lambda \cdot j
	\right\}
	\end{equation}
	where $\vect{C}$ ranges over $\Sigma_{\mathfrak{C}}$,
	$\Lambda$ ranges over $\F_{\vect{C}}$,
	$i$ ranges over $\mathcal{I}$ and
	$j$ ranges over $\mathcal{J}$.
\end{remark}

\begin{corollary}\label{SYMADJ_COR}
        Suppose the model structures $\Sym_{\mathfrak C, \F}^G(\V)$ exist for all $(G, \Sigma)$-families $\F$ and $G$-sets $\mathfrak C$.
	\begin{enumerate}[label=(\roman*)]
        \item \label{SYMCOCHADJ_LBL}
                For any $(G,\Sigma)$-family $\F$ and
                map of colors $\varphi \colon \mathfrak{C} \to \mathfrak{D}$
                the induced adjunction
                \[
                        \varphi_! \colon \mathsf{Sym}^G_{\mathfrak{C},\F}(\V)
                        \rightleftarrows
                        \mathsf{Sym}^G_{\mathfrak{D},\F}(\V) \colon \varphi^{\**}
                \]
                is a Quillen adjunction.
                
        \item \label{FIXSETCHGR_LBL}
                For any homomorphism $\phi \colon G \to \bar G$ (and writing $\phi \colon G \times \Sigma^{op} \to \bar G \times \Sigma^{op}$ for the induced homomorphism),
                $(G,\Sigma)$-family $\F$ and $(\bar G,\Sigma)$-family $\bar \F$,
                and $\bar{G}$-set of colors $\mathfrak C$,
                the adjunction
                \[
                        \phi_! \colon \mathsf{Sym}^G_{\mathfrak{C},\F}(\V)
                        \rightleftarrows
                        \mathsf{Sym}^{\bar{G}}_{\mathfrak{C},\bar{\F}}(\V) \colon \phi^{\**}
                \]
                is a Quillen adjunction whenever $\F \subseteq \phi^{\**} \bar{\F}$.
                
        \item 
                For any homomorphism $\phi \colon G \to \bar G$,
                $(G,\Sigma)$-family $\F$ and $(\bar G,\Sigma)$-family $\bar{\F}$,
                and $G$-set of colors $\mathfrak C$,
                the adjunction
                \[
                        \bar{G} \cdot_G (-) \colon \mathsf{Sym}^G_{\mathfrak{C},\F}(\V)
                        \rightleftarrows
                        \mathsf{Sym}^{\bar{G}}_{\bar{G} \cdot_G \mathfrak{C},\bar{\F}}(\V) \colon \mathsf{fgt}
                \]
                is a Quillen adjunction whenever $\F \subseteq \phi^{\**} \bar{\F}$.
        \end{enumerate}
\end{corollary}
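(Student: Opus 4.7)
The plan is to deduce all three parts as instances of Proposition \ref{EQQUILADJ PROP}, applied to appropriate functors of groupoids constructed from the given data, using the identification $\Sym^G_{\mathfrak C}(\V) \simeq \V^{G \ltimes \Sigma^{op}_{\mathfrak C}}$ from Proposition \ref{EQUIVFNCON PROP} together with the definition $\F_{\mathfrak C} = \pi^{\**}_{\mathfrak C}\F$ from Definition \ref{GSFAM_DEF}. Throughout, the only nontrivial input is the evident functoriality of pullback on families, namely $(\psi\phi)^{\**} = \phi^{\**}\psi^{\**}$ for composable functors of groupoids.

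For (i), the $G$-equivariant map $\varphi\colon \mathfrak C \to \mathfrak D$ lifts to a functor of groupoids $\varphi\colon G \ltimes \Sigma^{op}_{\mathfrak C} \to G \ltimes \Sigma^{op}_{\mathfrak D}$ which fits into a triangle over $G \times \Sigma^{op}$; that is, $\pi_{\mathfrak D}\circ\varphi = \pi_{\mathfrak C}$. Hence
\[
\varphi^{\**}\F_{\mathfrak D} = \varphi^{\**}\pi^{\**}_{\mathfrak D}\F = (\pi_{\mathfrak D}\varphi)^{\**}\F = \pi^{\**}_{\mathfrak C}\F = \F_{\mathfrak C},
\]
so Proposition \ref{EQQUILADJ PROP} yields the Quillen adjunction; one should also check that the induced $\varphi_!\dashv\varphi^{\**}$ on symmetric sequences matches the description in Remark \ref{COLCHADJ REM}, which is routine. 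For (ii), the homomorphism $\phi\colon G \to \bar G$ lifts to a functor $\phi_{\mathfrak C}\colon G \ltimes \Sigma^{op}_{\mathfrak C} \to \bar G \ltimes \Sigma^{op}_{\mathfrak C}$ fitting into a commuting square with the projections $\pi^G_{\mathfrak C}$, $\pi^{\bar G}_{\mathfrak C}$ over $\phi\colon G \times \Sigma^{op} \to \bar G \times \Sigma^{op}$. Pulling the hypothesis $\F \subseteq \phi^{\**}\bar\F$ back along $\pi^G_{\mathfrak C}$ and applying functoriality of pullback yields $\F_{\mathfrak C} \subseteq \phi_{\mathfrak C}^{\**}\bar\F_{\mathfrak C}$, so Proposition \ref{EQQUILADJ PROP} again applies.

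For (iii), I would factor the adjunction as the composite
\[
\Sym^G_{\mathfrak C,\F}
\;\overset{\iota_!}{\underset{\iota^{\**}}{\rightleftarrows}}\;
\Sym^G_{\bar G \cdot_G \mathfrak C,\, \phi^{\**}\bar\F}
\;\overset{\phi_!}{\underset{\phi^{\**}}{\rightleftarrows}}\;
\Sym^{\bar G}_{\bar G \cdot_G \mathfrak C,\,\bar\F},
\]
where $\iota\colon \mathfrak C \to \bar G \cdot_G \mathfrak C$ is the canonical $G$-equivariant inclusion (with $\bar G \cdot_G \mathfrak C$ viewed as a $G$-set through $\phi$). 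The left half is Quillen by (i), and the right half is Quillen by (ii) applied to the $\bar G$-set of colors $\bar G \cdot_G \mathfrak C$ and the (tautological) hypothesis $\phi^{\**}\bar\F \subseteq \phi^{\**}\bar\F$. It then remains to observe that this composite is indeed $\bar G \cdot_G (-) \dashv \mathsf{fgt}$, which amounts to unwinding the free-forget adjunction between $G$-sets and $\bar G$-sets at the level of colors.

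The main obstacle is purely bookkeeping: keeping straight which projections and cleavages are being composed in (iii), since the adjunction simultaneously changes the equivariance group and the color set. Once the factorization above is verified at the level of symmetric sequences, everything reduces to direct appeals to Proposition \ref{EQQUILADJ PROP} together with the functoriality of the pullback operation on families.
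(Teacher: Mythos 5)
Your proof is correct and takes essentially the same route as the paper: parts (i) and (ii) are direct applications of Proposition \ref{EQQUILADJ PROP} via compatibility of the pullback families with the projection functors, and part (iii) is obtained by factoring through the intermediate color set $\bar G \cdot_G \mathfrak C$ and combining (i) with (ii). The only cosmetic difference is that the paper uses $\F$ rather than $\phi^{\**}\bar\F$ as the intermediate family (so that the left half is literally an instance of (i)), which changes nothing of substance.
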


\begin{proof}
        Parts (i) and (ii) are immediate from Proposition \ref{EQQUILADJ PROP}.
        Part (iii) follows by combining (i) applied to 
        the map of $G$-sets $\mathfrak{C} \to \bar{G} \cdot_G \mathfrak{C}$
        with (ii) applied to the $\bar{G}$-set $\bar{G} \cdot_G \mathfrak{C}$.
\end{proof}

\subsection{Homotopy theory of operads with a fixed color $G$-set}
\label{OPC_MS_SEC}

Recall that $\mathbb{F}^G_{\mathfrak{C}}$
denotes the fiber of the monad $\mathbb{F}^G$
for $\mathfrak{C} \in \mathsf{Set}^G$
(see Proposition \ref{FGC PROP}).

Our goal in this section is to prove 
Theorem \ref{THMI} by transferring the $\F$-model structures on
$\mathsf{Sym}^G_{\mathfrak{C}}(\V)$
from Definition \ref{SYMGFV DEF}
to $\mathsf{Op}^G_{\mathfrak{C}}(\V)$
along the free-forgetful adjunction
\begin{equation}\label{OPAUTADJ EQ}
\mathbb{F}^G_{\mathfrak{C}} \colon
\mathsf{Sym}^G_{\mathfrak{C}}(\V)
\rightleftarrows
\mathsf{Op}^G_{\mathfrak{C}}(\V)
\colon \mathsf{fgt}
\end{equation}
so that a map in $\mathsf{Op}^G_{\mathfrak{C}}(\V)$
is a weak equivalence/fibration iff so is the underlying map in 
$\mathsf{Sym}^G_{\mathfrak{C}}(\V)$.

The key to proving Theorem \ref{THMI} will be a suitable filtration,
given by Lemma \ref{OURE LEM},
of free operad extensions in $\mathsf{Op}_{\mathfrak{C}}^G(\V)$,
i.e. of pushouts as in \eqref{OURE EQ} below.
The following discussion and lemma summarizes the key properties of the filtration we will need.

We denote by $\Omega^a_{\mathfrak{C}}$
the $\mathfrak{C}$-colored variant of the 
\emph{alternating trees} $\Omega^a$ of \cite[Def. 5.52]{BP21}
(formally, $\Omega^a_{\mathfrak{C}}$ is defined by copying
Definition \ref{COLFOR DEF} but with 
$F$, $\rho \colon F \to F'$ therein now in $\Omega^a$).
Its objects are trees $\vect{T}$ 
whose set of vertices is partitioned into ``active'' and ``inert'' vertices,
$\boldsymbol{V}(\vect T) = 
\boldsymbol{V}^{ac}(\vect T) \amalg \boldsymbol{V}^{in}(\vect T)$,
in such a way that adjacent vertices are in different sides of the partition and the ``outer vertices'' (i.e. those adjacent to a leaf or root) are active.
Its maps are informally described as tall maps of trees that 
``send inert vertices to inert vertices''.

\begin{example}\label{ALTMAP EX}
	Below we depict a planar alternating map $f \colon T \to S$
	between alternating trees $S,T \in \Omega^a$.
	Active vertices are black $\bullet$ and inert vertices are white $\circ$.
	Pictorially, active vertices $\bullet$
	can be expanded to an alternating tree of the same arity,
	while inert vertices $\circ$ are preserved.
	\[
	\begin{tikzpicture}[grow=up,auto,level distance=2.3em,every node/.style = {font=\footnotesize},dummy/.style={circle,draw,inner sep=0pt,minimum size=2.1mm}]
	\tikzstyle{level 2}=[sibling distance = 4em]
	\tikzstyle{level 3}=[sibling distance = 3em]
	\tikzstyle{level 4}=[sibling distance = 1.5em]
	\node at (0,0.75) [font = \normalsize] {$T$}
	child{node [dummy,fill = black] {}
		child{node [dummy,fill=white] {}
			child{node [dummy,fill = black] {}
				child
				child
			}
			child{node [dummy,fill = black] {}
				edge from parent node [near end] {$d$}}
			edge from parent node [swap] {$e$}}
		child{node [dummy,fill=white] {}
			edge from parent node [swap, near end] {$c$}}
		child{node [dummy,fill=white] {}
			child{node [dummy,fill = black] {}
				child
				edge from parent node [swap, near end] {$a\phantom{d}$}}
			child{node [dummy,fill = black] {}
				child
				child
			}
			edge from parent node {$b$}}
	};
	\begin{scope}[level distance=1.75em]
	\tikzstyle{level 3}=[sibling distance = 6em]
	\tikzstyle{level 4}=[sibling distance = 4em]
	\tikzstyle{level 5}=[sibling distance = 3em]
	\tikzstyle{level 6}=[sibling distance = 1.5em]
	\tikzstyle{level 7}=[sibling distance = 1em]
	\node at (8,0) [font = \normalsize] {$S$}
	child{node [dummy,fill = black] {}
		child{node [dummy,fill = white] {}
			child{node [dummy,fill = black] {}
				child{node [dummy,fill=white] {}
					child{node [dummy,fill = black] {}
						child
						child
					}
					child{node [dummy,fill = black] {}
						child{node [dummy,fill=white] {}
							child{node [dummy,fill = black] {}}
							child{node [dummy,fill = black] {}}
						}
						child{node [dummy,fill=white] {}}
						edge from parent node [near end] {$d$}}
					edge from parent node [swap] {$e\phantom{1}$}}
			}
			child{node [dummy,fill = black] {}
				child{node [dummy,fill=white] {}
					edge from parent node [swap, near end] {$c\phantom{1}$}}
				child{node [dummy,fill=white] {}
					child{node [dummy,fill = black] {}
						child{node [dummy,fill=white] {}
							child{node [dummy,fill = black] {}
								child
							}
						}
						edge from parent node [swap,near end] {$a\phantom{d}$}}
					child{node [dummy,fill = black] {}
						child
						child
					}
					edge from parent node [near end] {$\phantom{1}b$}}
			}
		}
	};
	\end{scope}
	\draw [->] (2.5,2) -- node [swap] {$f$} (5,2);
	\end{tikzpicture}
	\]
\end{example}

Furthermore, we write
$\Omega_{\mathfrak C}^a[k] \subseteq \Omega_{\mathfrak C}^a$
for the full subcategory of $\vect{T}$ such that 
$|\boldsymbol{V}^{in}(\vect{T})| = k$,
i.e. such that $\vect{T}$ has exactly $k$ inert vertices.
Crucially, $\Omega^a_{\mathfrak{C}}[k]$
turns out to be a groupoid
(indeed, maps in $\Omega^a_{\mathfrak{C}}[k]$ can only replanarize vertices, 
or else the number of $\circ$ vertices would increase).
Adapting \eqref{LRDEF EQ} and \eqref{VFUNDEF EQ}, 
we have functors
\[
\mathsf{lr}_{\mathfrak C}^{a} \colon
\Omega^a_{\mathfrak{C}}[k]
\to
\Sigma_{\mathfrak{C}}
\qquad
\boldsymbol{V}^{in} \colon
\Omega^a_{\mathfrak{C}}[k]
\to
\Sigma_k \wr \Sigma_{\mathfrak{C}}
\]
where $\mathsf{lr}_{\mathfrak C}^{a}$ is defined as before (one just ignores the vertex partition data)
and $\boldsymbol{V}^{in}$ is the tuple containing only the inert vertices.

The proof of the following key Lemma is postponed to \S \ref{PUSHOUT_SEC} in the Appendix.
\begin{lemma}\label{OURE LEM}
	Fix a $G$-set of colors $\mathfrak{C}$ and let
	$u\colon X \to Y$ be a map in $\mathsf{Sym}^G_{\mathfrak{C}}(\V)$
	and
	$\mathbb{F} X \to \O$ be a map in $\mathsf{Op}^G_{\mathfrak{C}}(\V)$.
	Then, for the pushout 
	\begin{equation}\label{OURE EQ}
	\begin{tikzcd}
	\mathbb F X \arrow[d, "\mathbb{F}u"'] \arrow[r]
	&
	\O \arrow[d]
	\\
	\mathbb F Y \arrow[r]
	&
	\O[u]
	\end{tikzcd}
	\end{equation}
	in the category of operads $\mathsf{Op}^G_{\mathfrak{C}}(\V)$
	the map $\O \to \O[u]$ admits an underlying filtration
	\begin{equation}\label{OUFILRE EQ}
	\O = \O_0 \to \O_1 \to \O_2 \to \cdots \to \O_{\infty} = \O[u]
	\end{equation}
	of maps in $\mathsf{Sym}^G_{\mathfrak{C}}(\V)$ 
	where each map $\O_{k-1} \to \O_k$ fits into a pushout
	\begin{equation}\label{OUPUSHRE EQ}
	\begin{tikzcd}
	\bullet 
	\arrow{d}[swap]{\left(\mathsf{lr}_{\mathfrak C}^{a,op}\right)_!
		n_k^{(\O,X,Y)}}
	\arrow[r]
	&
	\O_{k-1} \arrow[d]
	\\
	\bullet \arrow[r]
	&
	\O_k
	\end{tikzcd}
	\end{equation}
	for 
	$
	\left(\mathsf{lr}_{\mathfrak C}^{a,op}\right)_! \colon
	\mathcal{V}^{G \ltimes \Omega^a_{\mathfrak{C}}[k]^{op}}
	\to
	\mathcal{V}^{G \ltimes \Sigma_{\mathfrak{C}}^{op}}
	=
	\mathsf{Sym}^G_{\mathfrak{C}}(\V)
	$
	is as in Proposition \ref{EQQUILADJ PROP}
	and $n_k^{(\O,X,Y)}$ the natural transformation in 
	$\mathcal{V}^{G \ltimes \Omega^a_{\mathfrak{C}}[k]^{op}}$
	whose constituent arrows for $\vect{T} \in \Omega^a_{\mathfrak C}[k]$ are
	\begin{equation}\label{NKOXY EQ}
	n_k^{(\O,X,Y)}(\vect{T})=
	\left(
	\bigotimes_{v \in \boldsymbol{V}^{ac}(\vect{T})}\O(\vect{T}_v)
	\otimes
	\mathop{\mathlarger{\mathlarger{\mathlarger{\square}}}}\limits_{v \in \boldsymbol{V}^{in}(\vect{T})} u(\vect{T}_v)
	\right)
	=
	\left(
	\mathop{\mathlarger{\mathlarger{\mathlarger{\square}}}}\limits_{v \in \boldsymbol{V}^{ac}(\vect{T})} \left( \emptyset \to \O(\vect{T}_v)\right) 
	\square
	\mathop{\mathlarger{\mathlarger{\mathlarger{\square}}}}\limits_{v \in \boldsymbol{V}^{in}(\vect{T})} u(\vect{T}_v)
	\right)
	\end{equation}
\end{lemma}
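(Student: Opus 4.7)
The plan is to construct the filtration explicitly via alternating colored trees, mirroring the classical Berger--Moerdijk approach to operadic pushouts but adapted to the equivariant colored setting.  The key insight is that any operation in $\O[u](\vect C)$ admits a canonical representation as an alternating $\mathfrak C$-colored tree $\vect T$ with $\mathsf{lr}(\vect T) = \vect C$, whose active vertices $v$ are decorated by operations of $\O(\vect T_v)$ and whose inert vertices are decorated by operations of $Y(\vect T_v)$, modulo relations from the operad structure on $\O$ and the map $X \to \O$.  The alternating condition is precisely what enforces canonical representatives:  two adjacent active vertices could be replaced by their operadic composite in $\O$, and two adjacent inert vertices could be expanded via the structure map $\mathbb F X \to \O$.

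I would then define $\O_k$ as the sub-symmetric sequence of $\O[u]$ generated by operations representable by alternating trees with at most $k$ inert vertices.  Stage $k-1 \to k$ corresponds to adjoining contributions from trees $\vect T$ with exactly $k$ inert vertices.  The boundary of this contribution, i.e.\ the part already present in $\O_{k-1}$, consists of those configurations where at least one inert vertex $v$ is labeled by an operation in the image of $u(\vect T_v)$, since such an element can be absorbed into an adjacent active vertex via $X \to \O$, reducing the count of inert vertices.  This explains why the relevant cell at stage $k$ is precisely the iterated pushout product $\mathop{\square}_{v \in \boldsymbol{V}^{in}(\vect T)} u(\vect T_v)$ tensored with $\bigotimes_{v \in \boldsymbol{V}^{ac}(\vect T)} \O(\vect T_v)$, matching the formula \eqref{NKOXY EQ}.

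The role of $(\mathsf{lr}^{a,op}_{\mathfrak C})_!$ is twofold: it aggregates contributions over all alternating trees sharing a common leaf-root $\vect C$, and it correctly enforces the equivariance by taking the quotient with respect to automorphisms in $G \ltimes \Omega^a_{\mathfrak C}[k]^{op}$.  Crucially, $\Omega^a_{\mathfrak C}[k]$ is a groupoid (since maps in $\Omega^a_{\mathfrak C}$ preserve the inert/active partition and, at fixed $k$, can only replanarize), so this Kan extension is essentially a coproduct over isomorphism classes in the style of Remark \ref{CONVER REM}, which is how the cell attached at stage $k$ acquires its equivariant structure.

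The main obstacle is rigorously verifying that this construction really produces a pushout square in the category of operads, as opposed to merely in symmetric sequences.  This requires tracking how the operadic composition structure on $\O[u]$ interacts with the filtration --- in particular, showing that composing elements across the filtration levels respects the grading by inert vertex count, and that the universal property of the pushout holds.  The $G$-equivariance is bookkeeping but conceptually harmless, since $G$ permutes labels without disturbing the active/inert partition of vertices.  As this is fundamentally a combinatorial-algebraic verification using the explicit description of the monad $\mathbb F^G$ (cf.\ Proposition \ref{FGC PROP} and the discussion culminating in Definition \ref{COLORMON_DEF} of the Appendix), I would defer the detailed argument to \S \ref{PUSHOUT_SEC}, where the necessary monadic machinery has been fully developed.
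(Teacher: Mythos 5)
Your combinatorial picture is the right one and agrees with the paper's: the filtration is indexed by the number of inert vertices of alternating $\mathfrak C$-colored trees, the cell attached at stage $k$ is the pushout-product $\bigotimes_{ac}\O(\vect T_v)\otimes\square_{in}u(\vect T_v)$ precisely because the ``already seen'' part is the locus where some inert vertex factors through $X$, and $(\mathsf{lr}^{a,op}_{\mathfrak C})_!$ over the groupoid $G\ltimes\Omega^a_{\mathfrak C}[k]^{op}$ is what assembles and equivariantly induces these cells. However, as written the proposal has a genuine gap: every step that actually requires proof is the one you defer. Defining $\O_k$ as ``the sub-symmetric sequence of $\O[u]$ generated by operations representable by trees with at most $k$ inert vertices'' is not available in a general $\V$ (there are no elements, and nothing guarantees the stages are subobjects); the paper instead \emph{defines} $\O_k$ as a left Kan extension over a truncated category of labeled trees (Definition \ref{FILTSTAGE DEF}), which requires first producing a presentation of $\O[u]$ itself as a single left Kan extension. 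That presentation is the real content: one writes $\O[u]$ as a double bar construction $\colim_{l,n}B_l(\mathbb F^{\circ n+1}\O,\mathbb F X,\dots,\mathbb F Y)$, uses that these colimits are reflexive coequalizers and hence computed in $\mathsf{Sym}^G_{\mathfrak C}(\V)$ rather than in operads (this, not a direct check of the operadic universal property, is how the operad-level pushout is converted into a symmetric-sequence computation), identifies each bar term as a $\mathsf{Lan}$ over a category of labeled strings, and realizes the bisimplicial diagram into a single extension-tree category $\Omega^e_{\mathfrak C}$ (Proposition \ref{EXTENTREE PROP}).

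Even granting that presentation, two further verifications are missing from your sketch. First, one must pass from $\Omega^e_{\mathfrak C}$ (all $\{\O,X,Y\}$-labeled trees) to the alternating subcategory $\widehat\Omega^e_{\mathfrak C}$ and from $\widehat\Omega^e_{\mathfrak C}[\le k-1]$ to $\widehat\Omega^e_{\mathfrak C}[\le k\setminus Y]$ without changing the Kan extension; this is a finality/initiality argument (Lemma \ref{LANINT LEM}), which is exactly the rigorous form of your ``absorb an $X$-labeled inert vertex into an adjacent active vertex'' heuristic. Second, the claim that \eqref{OUPUSHRE EQ} is a pushout comes from a pushout of nerves of the categories $\widehat\Omega^e_{\mathfrak C}[k\setminus Y]\to\widehat\Omega^e_{\mathfrak C}[\le k\setminus Y]$ versus $\widehat\Omega^e_{\mathfrak C}[k]\to\widehat\Omega^e_{\mathfrak C}[\le k]$, combined with the observation (Remark \ref{OEFIB REM}) that $\widehat\Omega^e_{\mathfrak C}[k\setminus Y]\to\widehat\Omega^e_{\mathfrak C}[k]$ is a map of Grothendieck fibrations over $\Omega^a_{\mathfrak C}[k]$ with fibers the punctured cube inside the cube $(Y\to X)^{\times\boldsymbol{V}^{in}(T)}$; computing the Kan extension fiberwise is what produces the pushout-product in \eqref{NKOXY EQ}. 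Your treatment of equivariance is essentially correct: since the whole construction is natural in change of colors, it upgrades to $G\ltimes(-)$ for free. So the gap is not in the shape of the argument but in the absence of the machinery that makes each step a theorem rather than a heuristic.
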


We can now prove our first main result, Theorem \ref{THMI}.

\begin{proof}[Proof of Theorem \ref{THMI}]    
        Since we are assuming $\V$ is cofibrantly generated and that the genuine model structure on $\V^G$ exists,
        Propositions \ref{EQUIVFNCON PROP} and \ref{ALLEQ PROP}
        imply that the model structure
        $\Sym_{\mathfrak C, \F}^G(\V)$ exists for all $(G, \Sigma)$-families $\F$ and $G$-sets $\mathfrak C$.
        Unpacking Definition \ref{SYMGFV DEF}, the desired $\F$-model structure $\Op_{\mathfrak C, \F}^G(\V)$ from Theorem \ref{THMI}
        is the model structure transferred from $\Sym_{\mathfrak C, \F}^G(\V)$ along the free-forgetful adjunction.
        Thus, 
        writing
	$\mathcal{I}_{\mathfrak{C},\mathcal{F}}$,
	$\mathcal{J}_{\mathfrak{C},\mathcal{F}}$
	for the generating sets of
        $\mathsf{Sym}^G_{\mathfrak C, \F}(\V)$
	(cf. Remark \ref{VGSIGF REM}),
        the $\F$-model structure $\mathsf{Op}^G_{\mathfrak C, \F}(\V)$
	will have generating sets 
	$\mathbb{F}^G_{\mathfrak{C}}\mathcal{I}_{\mathfrak{C},\mathcal{F}}$,
	$\mathbb{F}^G_{\mathfrak{C}}\mathcal{J}_{\mathfrak{C},\mathcal{F}}$.
	
	By \cite[Thm. 11.3.2]{Hir03},
	we need only show that all maps in
	$(\mathbb{F}^G_{\mathfrak{C}}\mathcal{J}_{\mathfrak{C},\mathcal{F}})$-cell
	are $\F$-weak equivalences in $\mathsf{Sym}^G_{\mathfrak{C}}(\V)$.
	Moreover, since trivial $\F$-cofibrations are genuine trivial cofibrations and 
	genuine weak equivalences are $\F$-weak equivalences, one needs only consider the genuine case, i.e. the case of $\F_{all}$ the family of all subgroups (this repeats the argument in \eqref{FAMTOGEN EQ}).

	Since we are assuming $\V$ is a symmetric monoidal model category that satisfies the global monoid axiom (Definition \ref{GLOBMONAX_DEF}),
	it suffices to show that pushouts of maps in 
	$(\mathbb{F}^G_{\mathfrak{C}}\mathcal{J}_{\mathfrak{C},\mathcal{F}_{all}})$,
	i.e. maps $\O \to \O[u]$ in \eqref{OURE EQ}
	for $u \in \mathcal{J}_{\mathfrak{C},\mathcal{F}_{all}}$,
	are equivariant $\otimes$-trivial cofibrations
	in $\mathsf{Sym}^G_{\mathfrak{C}}(\V) = \V^{G \ltimes \Sigma^{op}_{\mathfrak{C}}}$ (Definition \ref{GGENOTITC DEF}).

	We thus now let 
	$u \in \mathcal{J}_{\mathfrak{C},\mathcal{F}_{all}}$.
	Now note that, for $\vect{T} \in \Omega^a_{\mathfrak{C}}[k]$,
	we have
	\[
	\mathop{\mathlarger{\mathlarger{\mathlarger{\square}}}}\limits_{v \in \boldsymbol{V}^{in}(\vect{U})} u(\vect{T}_v)
	=
	u^{\square k}\left( \boldsymbol{V}^{in}(\vect T) \right)
	\]
	where
	$u^{\square k} \in 
	\V^{\Sigma_k \wr (G \ltimes \Sigma^{op}_{\mathfrak{C}})}$
	is as defined in \eqref{FSQNDEF EQ}.
	Since we are assuming $\V$ has cofibrant symmetric pushout powers,
        Proposition \ref{SIGMAWRGF PROP} implies that
	$u^{\square k}$ is a genuine trivial cofibration in 
	$\V^{\Sigma_k \wr (G \ltimes \Sigma^{op}_{\mathfrak{C}})}$.
	In turn,
	Proposition \ref{EQQUILADJ PROP} implies that
	$u^{\square k}(\boldsymbol{V}^{in}(-))$
	is similarly a genuine trivial cofibration in 
	$\V^{G \ltimes \Omega^{a,op}_{\mathfrak{C}}[k]}$.
	It is now clear that
	the map $n_k^{(\O,X,Y)}$ in \eqref{NKOXY EQ}
	is an equivariant
	$\otimes$-trivial cofibration in 
	$\V^{G \ltimes \Omega^{a,op}_{\mathfrak{C}}[k]}$,
	so that by 
	Proposition \ref{REGEOTCOF PROP}
	the maps $\O_{k-1} \to \O_{k}$
	in the pushouts \eqref{OUPUSHRE EQ}
	are genuine $\otimes$-trivial cofibrations
	in 
	$\mathsf{Sym}^G_{\mathfrak{C}}(\V)
	= \V^{G\ltimes \Sigma^{op}_{\mathfrak{C}}}$.
	Thus the composite $\O \to \O[u]$
	is also an
	equivariant $\otimes$-trivial cofibration,
	and the result follows.
\end{proof}

For later reference, 
we highlight two consequences of the previous proof.

\begin{remark}\label{GOTC_REM}
	$\F$-trivial cofibrations in $\Op^G_{\mathfrak C}(\V)$ are underlying genuine $\otimes$-trivial cofibrations
	in $\mathsf{Sym}^G_{\mathfrak C}(\V)$.
\end{remark}

\begin{remark}\label{FVGSIGF REM}
	The generating (trivial) cofibrations in
	$\mathsf{Op}^G_{\mathfrak{C},\F}(\V)$
	are the sets
	\begin{equation}\label{FVGSIGF EQ}
	\mathbb{F}^G_{\mathfrak{C}}\mathcal{I}_{\mathfrak{C},\mathcal{F}}
	=
	\left\{
	\mathbb{F}^G_{\mathfrak{C}}
	\left(\Sigma_{\mathfrak{C}}[G \cdot_{\mathfrak{C}} \vect{C}]/\Lambda \cdot i \right)
	\right\}
	\qquad \qquad
	\mathbb{F}^G_{\mathfrak{C}}\mathcal{I}_{\mathfrak{C},\mathcal{F}}
	=
	\left\{
	\mathbb{F}^G_{\mathfrak{C}}
	\left(\Sigma_{\mathfrak{C}}[G \cdot_{\mathfrak{C}} \vect{C}]/\Lambda \cdot j \right)
	\right\}
	\end{equation}
	where $\vect{C}$ ranges over $\Sigma_{\mathfrak{C}}$,
	$\Lambda$ ranges over $\F_{\vect{C}}$,
	$i$ ranges over $\mathcal{I}$,
	and $j$ ranges over $\mathcal{J}$.
\end{remark}

\begin{corollary}\label{OPADJ_COR}
        Suppose the assumptions of Theorem \ref{THMI} hold, so the model structures discussed in the items below exist.
	\begin{enumerate}[label=(\roman*)]
		\item \label{OPCOCHADJ_LBL}
		For any $(G,\Sigma)$-family $\F$ and map of colors 
		$\varphi \colon \mathfrak C \to \mathfrak D$, the induced adjunction
		\[
		\check{\varphi}_! \colon \mathsf{Op}^G_{\mathfrak{C},\F}(\V)
		\rightleftarrows
		\mathsf{Op}^G_{\mathfrak{D},\F}(\V) \colon \varphi^{\**}
		\]
		is a Quillen adjunction.
		\item \label{OPFIXSETCHGR_LBL}
		For any homomorphism $\phi \colon G \to \bar G$,
		$(G,\Sigma)$-family $\F$ and $(\bar G,\Sigma)$-family $\bar{\F}$,
		and $\bar G$-set of colors $\mathfrak C$,
		the adjunction
		\[
		\check{\phi}_! \colon \mathsf{Op}^G_{\mathfrak{C},\F}(\V)
		\rightleftarrows
		\mathsf{Op}^{\bar{G}}_{\mathfrak{C},\bar{\F}}(\V) \colon \phi^{\**}
		\]
		is a Quillen adjunction whenever $\F \subseteq \phi^{\**} \bar{\F}$.
		\item \label{OPCOMBADJ_LBL}
		For any homomorphism $\phi \colon G \to \bar G$,
		$(G,\Sigma)$-family $\F$ and $(\bar G,\Sigma)$-family $\bar{\F}$,
		and $G$-set of colors $\mathfrak C$,
		the adjunction
		\[
		\bar{G} \cdot_G (-) \colon \mathsf{Op}^G_{\mathfrak{C},\F}(\V)
		\rightleftarrows
		\mathsf{Op}^{\bar{G}}_{\bar{G} \cdot_G \mathfrak{C},\bar{\F}}(\V) \colon \mathsf{fgt}
		\]
		is a Quillen adjunction whenever $\F \subseteq \phi^{\**} \bar{\F}$.
	\end{enumerate}
\end{corollary}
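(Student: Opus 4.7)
The plan is to reduce all three parts to Corollary \ref{SYMADJ_COR} via the transfer principle. Since the $\F$-model structure on $\Op^G_{\mathfrak C, \F}(\V)$ is transferred from $\Sym^G_{\mathfrak C, \F}(\V)$ along the free-forgetful adjunction $\mathbb F^G_{\mathfrak C} \dashv \mathsf{fgt}$, a map in $\Op^G_{\mathfrak C, \F}(\V)$ is a fibration (resp.\ trivial fibration) precisely when its underlying map of symmetric sequences is one. Hence, to show that a given adjunction $L \dashv R$ between such operadic model categories is Quillen, it suffices to show that $R$ commutes with the forgetful functor to symmetric sequences and that the resulting underlying adjunction on symmetric sequences is Quillen.

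First, I would verify the compatibility of each right adjoint with the forgetful functor to $\Sym^G_{\mathfrak C, \F}(\V)$. For (i), this is exactly the content of Remark \ref{OP_MAP REM}, which records $\mathsf{fgt} \circ \varphi^{\**} = \varphi^{\**} \circ \mathsf{fgt}$. For (ii), the functor $\phi^{\**}$ just restricts the $\bar G$-action via $\phi$ without altering the underlying symmetric sequence, so the analogous compatibility is tautological. For (iii), the right adjoint $\mathsf{fgt}$ decomposes as $\iota^{\**} \circ \phi^{\**}$, where $\iota \colon \mathfrak C \hookrightarrow \bar G \cdot_G \mathfrak C$ is the canonical inclusion of $G$-sets (after restricting $\bar G \cdot_G \mathfrak C$ along $\phi$); each of these two factors already commutes with the forgetful functor by the previous two cases.

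With these compatibilities in hand, (i) and (ii) follow immediately from parts (i) and (ii) of Corollary \ref{SYMADJ_COR} respectively, while (iii) follows by combining them, exactly mirroring the decomposition used in the proof of Corollary \ref{SYMADJ_COR}(iii). There is no real obstacle here: all the genuine homotopy-theoretic content has already been packaged into Corollary \ref{SYMADJ_COR} (via Proposition \ref{EQQUILADJ PROP} and the pullback-family identity \eqref{PHISTAR_EQ}), and the passage from symmetric sequences to operads requires only the elementary observation that the three operadic right adjoints commute with the underlying forgetful functor.
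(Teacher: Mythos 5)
Your proposal is correct and takes essentially the same route as the paper: the paper's proof is a one-line observation that operadic weak equivalences, fibrations, and the right adjoints $\varphi^{\**},\phi^{\**},\mathsf{fgt}$ are all defined on underlying symmetric sequences, so everything reduces to Corollary \ref{SYMADJ_COR}. Your version merely spells out this reduction (transfer of the model structure plus compatibility of the right adjoints with $\mathsf{fgt}$) in more detail.
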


\begin{proof}
	Since operadic weak equivalences, fibrations and the forgetful functors $\varphi^{\**},\phi^{\**}$, $\mathsf{fgt}$
	are all defined in terms of the underlying symmetric sequences,
	this follows from Corollary \ref{SYMADJ_COR}.
\end{proof}

\begin{remark}\label{CSPNTHI REM}
	When \eqref{OURE EQ}
	is a diagram in $\mathsf{Cat}^G_{\mathfrak{C}}(\V)$, 
	the map 
	$n_k^{(\mathcal{O},X,Y)}(\vect{T})$
	in \eqref{NKOXY EQ}
	is necessarily $\emptyset \to \emptyset$
	unless $\vect{T}$ is a linear tree.
	But, if $\vect{T}$ is linear,
	its automorphism group in 
	$G \ltimes \Omega_{\mathfrak{C}}^{a,op}[k]$
	is simply a subgroup of $G$
	and does not permute the factors in \eqref{NKOXY EQ}.
	Hence, the key claim in the proof of Theorem \ref{THMI},
	i.e. that 
	$n_k^{(\mathcal{O},X,Y)}$
	is an equivariant $\otimes$-trivial cofibration in
	$\V^{G \ltimes \Omega_{\mathfrak{C}}^{a,op}[k]}$,
	follows by replacing the use of Proposition \ref{SIGMAWRGF PROP}
	with the more elementary
	Proposition \ref{RESGEN PROP},
	and thus does not require
	the cofibrant pushout powers condition
	in Definition \ref{CSPP_DEF} and Theorem \ref{THMI}.
\end{remark}

\begin{remark}\label{THMISM REM}
	If $\O$ in \eqref{OURE EQ}
	is known to be underlying genuine cofibrant
	(i.e. $\F_{all}$-cofibrant) in 
	$\mathsf{Sym}^{G}_{\mathfrak{C}}(\V)$ 
	then, replacing the use of the global monoid axiom with 
	Proposition \ref{RESGEN PROP},
	the argument in the proof of Theorem \ref{THMI}
	shows that the maps
	$\O_{k-1} \to \O_k$
	and their composite
	$\O \to \O[u]$
	are genuine trivial cofibrations
	(rather than just genuine $\otimes$-trivial cofibrations).
	
	On the other hand, conditions (i),(ii),(iii),(iv) in Theorem \ref{THMI}
	suffice to show that if $\O$ is $\mathcal{F}_{all}$-cofibrant in 
	$\mathsf{Op}^G_{\mathfrak{C}}(\V)$
	then it is underlying $\mathcal{F}_{all}$-cofibrant in 
	$\mathsf{Sym}^G_{\mathfrak{C}}(\V)$
	(this follows either by the proof of Theorem \ref{THMII} in \S \ref{INDSYS_SEC},
	or by repeating the argument in the proof of Theorem \ref{THMI} but now with $u \in \mathcal{I}_{\mathfrak{C},\F_{all}}$
	a generating cofibration).
	
	Therefore, by \cite[Thm. 2.2.2]{WY18},
	the semi-model structure analogue of Theorem \ref{THMI}
	does not require the global monoid axiom (v).
\end{remark}

\subsection{Pseudo indexing systems and underlying cofibrancy}
\label{INDSYS_SEC}

Our goal in this subsection is to prove Theorem \ref{THMII},
stating that 
for certain $(G,\Sigma)$-families $\F$
the forgetful functor
$\Op^G_{\mathfrak C, \F}(\V) \to \Sym^G_{\mathfrak C, \F}(\V)$ preserves cofibrations between cofibrant objects.

\begin{definition}
	Let $\F$ be a $(G,\Sigma)$-family.
	We say $\O \to \P$ in $\Op^G_{\mathfrak C}(\V)$
	is a \textit{$\Sigma_\F$-cofibration}
	if the underlying map is a cofibration in $\Sym^G_{\mathfrak C, \F}(\V)$.
	Similarly, we say    
	$\O$ is \textit{$\Sigma_\F$-cofibrant} if 
	the map $\emptyset \to \O$
	in $\mathsf{Sym}^G_{\mathfrak{C}}$ is an $\F$-cofibration.
\end{definition}

\begin{remark}
	Following Remark \ref{SIGMACOF_REM}, 
	note that 
	$\O \to \P$ is a $\Sigma_\F$-cofibration iff 
	$\O(\vect C) \to \P(\vect C)$ is an $\mathcal F_{\vect C}$-cofibration in $\V^{\Aut(\vect C)}$
	for all $\mathfrak C$-profiles $\vect C$.
\end{remark}

We now introduce some notation needed
to define the pseudo indexing systems in Theorem \ref{THMII}.

\begin{notation}\label{FSQUARE_NOT}
	Let $\F$ be a family of subgroups of the groupoid $\G$,
	cf. Definition \ref{FAMGROUPOID DEF}.
	We write $\F^{\ltimes}$
	for the family in 
	$\Sigma \wr \G \simeq \coprod_{n \geq 0} \Sigma_{n} \wr \G$
	given by the union
	$\coprod_{n \geq 0} \F^{\ltimes n}$,
	where $\F^{\ltimes n}$ for $n\geq 1$
	is as in \eqref{FWRNXI EQ},
	and $\F^{\ltimes 0}$ is the non-empty family of $\Sigma_0$.
	
	Similarly, for a map $f$ in $\V^{\mathcal G}$, 
	we write $f^{\square}$ for the map in $\V^{\Sigma \wr \mathcal G}$
	which is given by the map $f^{\square n}$
	in \eqref{FSQNDEF EQ}
	on each summand $\Sigma_n \wr \mathcal{G}$ with $n \geq 1$,
	and by $f^{\square 0} = (\emptyset \to 1_{\V})$
	on the summand 
	$\Sigma_0 \wr \mathcal{G} = \Sigma_0$.
\end{notation}

\begin{notation}
	For any $G$-set $\mathfrak C$, we let $\boldsymbol{V}_{\mathfrak C}$ denote composite below, which is 
	the (opposite of the) composite of the first two horizontal maps in \eqref{FGC_EQ}.
	\begin{equation}\label{VC_EQ}
	\boldsymbol{V}_{\mathfrak{C}} \ \colon \ 
	G^{op} \ltimes \Omega_{\mathfrak{C}}^{0}
	\xrightarrow{\ G^{op} \ltimes \boldsymbol{V} \ }
	G^{op} \ltimes \left(\Sigma \wr \Sigma_{\mathfrak{C}}\right)
	\xrightarrow{\ \Delta \ }
	\Sigma \wr  \left(G^{op} \ltimes \Sigma_{\mathfrak{C}} \right)
	\end{equation}
	If $\mathfrak C = \**$, we abuse notation slightly and simply write $\boldsymbol{V} = \boldsymbol{V}_{\**}$. 
\end{notation}

In the following 
we slightly abuse notation by conflating
a $(G,\Sigma)$-family $\F$,
i.e. a family in $G \times \Sigma^{op}$,
with its opposite family
in $G^{op} \times \Sigma$.

\begin{definition}\label{PIS_DEF}
	A $(G,\Sigma)$-family $\F$ is called a \textit{pseudo indexing system} if
	\begin{equation}\label{PIS_EQ}
	\boldsymbol{V}^{\**} \F^\ltimes \subseteq \mathsf{lr}^{\**}\F,
	\end{equation}
	for $\F^\ltimes$ as in Notation \ref{FSQUARE_NOT},
	and
	$\boldsymbol{V}^{\**} \F^\ltimes, \mathsf{lr}^{\**}\F$
	as defined by \eqref{PHISTAR_EQ}.
\end{definition}

\begin{remark}
	Pseudo indexing systems generalize the
	\textit{weak indexing systems} in our previous work
	\cite[Def. 9.5]{Per18}, \cite[Def. 4.58]{BP21}, \cite[Def. 6.2]{BP20}
	which are themselves a generalization of the 
	\textit{indexing systems} introduced by Blumberg-Hill \cite[Def. 3.22]{BH15}.
	
	More precisely, \cite[Rem. 6.47]{BP21} implies that
	a weak indexing system is precisely a pseudo indexing system consisting of $G$-graph subgroups (cf. \eqref{GRAPHEQ EQ}, \cite[Def. 6.36]{BP21}).
\end{remark}

\begin{remark}
	Unpacking \eqref{FWRNXI EQ}, one obtains a more 
	explicit description for the family
	$\boldsymbol{V}^{\**}\F^\ltimes$.
	
	For a tree $T \in \Omega$
	and vertex $v \in \boldsymbol{V}(T)$, write 
	$\mathsf{Aut}_{v}(T) \leq \mathsf{Aut}(T)$ for the subgroup which fixes the root of $T_v$.
	One then has a natural homomorphism
	$\pi_{T_v} \colon \mathsf{Aut}_v(T) \to \mathsf{Aut}(T_v)$
	(note that $\pi_{T_v}$ can not be defined on the full group
	$\mathsf{Aut}(T)$).
	Given a $(G,\Sigma)$-family $\F$ and a subgroup $\Lambda \leq G^{op} \times \mathsf{Aut}(T)$,
	one then has
	$\Lambda \in (\boldsymbol{V}^{\**}\F^\ltimes)_T$ iff
	$\pi_{T_v}\left( \Lambda \cap (G \times \mathsf{Aut}_{v}(T)) \right) \in \F_{T_v}$
	for all $v \in \boldsymbol{V}(T)$.
	
	Lastly, we note that the $\boldsymbol{V}^{\**}\F^\ltimes$
	construction generalizes a construction in
	\cite{BP21}.
	Following \cite[Rem. 6.48]{BP21}, 
	if $\F$ consists of $G$-graph subgroups
	(so that it can be identified with a 
	\textit{family of $G$-corollas} in the sense of \cite[Def. 4.53]{BP21})
	the family $(\boldsymbol{V}^{\**}\F^\ltimes)_T$ agrees
	with the family $\F_T$ from \cite[Prop. 6.46]{BP21}.
\end{remark}

\begin{remark}\label{PSEUSTI REM}
	Specifying \eqref{PIS_DEF}
	for the stick tree $\eta \in \Omega$,
	one has that 
	$
	\left(\boldsymbol{V}^{\**} \mathcal{F}^{\ltimes}\right)_{\eta}
	$
	is the family of all subgroups of
	$G^{op} \simeq G^{op} \times \mathsf{\mathsf{Aut}}(\eta)$,
	while
	$
	\left(\mathsf{lr}^{\**} \mathcal{F}\right)_{\eta}
	$
	is identified with the family 
	$\F_1$.
	Hence, in particular, if $\F$ is a pseudo indexing system then
	$\F_1$ must be the family of all subgroups of
	$G^{op} \simeq G^{op} \times \Sigma_1$.
\end{remark}

We can now prove the main result of this section, which is a refinement of Theorem
\ref{THMII}.

\begin{proposition}\label{SIGMAG_COF PROP}
	Suppose $(\V, \otimes)$ is as in Theorem \ref{THMI},
	and let $\F$ be a pseudo indexing system.
	
	If $F \colon \O \to \P$ is an $\F$-cofibration in $\Op^G_{\mathfrak C}(\V)$ and $\O$ is $\Sigma_\F$-cofibrant,
	then $F$ is a $\Sigma_\F$-cofibration.
\end{proposition}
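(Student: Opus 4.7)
The plan is to mirror the structure of the proof of Theorem \ref{THMI} in \S \ref{OPC_MS_SEC}, relying on the filtration of Lemma \ref{OURE LEM}, but now tracking $\F$-cofibrations in $\Sym^G_{\mathfrak C, \F}(\V)$ rather than genuine $\otimes$-trivial cofibrations, with the pseudo indexing system condition \eqref{PIS_EQ} playing the role that the global monoid axiom played in the earlier proof.

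First, using the small object argument together with the fact that $\F$-cofibrations in $\Sym^G_{\mathfrak C, \F}(\V)$ are closed under pushouts, retracts, and transfinite composition, I would reduce to showing the following statement: if $\O$ is $\Sigma_\F$-cofibrant and $u$ is a generating $\F$-cofibration in $\mathcal I_{\mathfrak C, \F}$ (cf.\ Remark \ref{VGSIGF REM}), then the associated free operad extension $\O \to \O[u]$ is a $\Sigma_\F$-cofibration and $\O[u]$ remains $\Sigma_\F$-cofibrant. Applying Lemma \ref{OURE LEM}, it would then suffice to show that each successive stage of the filtration, i.e.\ each map $(\mathsf{lr}^{a,op}_{\mathfrak C})_! n_k^{(\O, X, Y)}$, is an $\F$-cofibration in $\Sym^G_{\mathfrak C, \F}(\V)$.

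The heart of the argument is to show that $n_k^{(\O, X, Y)}$ is itself a $\boldsymbol{V}^{\**} \F^\ltimes$-cofibration in $\V^{G \ltimes \Omega^a_{\mathfrak C}[k]^{op}}$, where the vertex functor is suitably interpreted on the alternating category by splitting vertices into active and inert parts. The formula \eqref{NKOXY EQ} expresses $n_k^{(\O, X, Y)}(\vect T)$ as a box product of the maps $\emptyset \to \O(\vect T_v)$ for active $v$ and the maps $u(\vect T_v)$ for inert $v$. Since $\O$ is $\Sigma_\F$-cofibrant, each $\emptyset \to \O(\vect T_v)$ is an $\F$-cofibration between $\F$-cofibrant objects; analogously, $u$ being an $\F$-cofibration yields that each $u(\vect T_v)$ is an $\F$-cofibration. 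The cofibrancy of $n_k^{(\O, X, Y)}$ will then follow by combining parts (i) and (ii) of Proposition \ref{SIGMAWRGF PROP} with the bifunctor statement of Lemma \ref{LTIMESNMINC LEM}, applied to the decomposition of the vertex set into active and inert pieces. The pseudo indexing system condition $\boldsymbol{V}^{\**} \F^\ltimes \subseteq \mathsf{lr}^{\**} \F$ together with Proposition \ref{EQQUILADJ PROP} then shows that $(\mathsf{lr}^{a,op}_{\mathfrak C})_!$ sends this cofibration to an $\F$-cofibration in $\Sym^G_{\mathfrak C, \F}(\V)$, as required.

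The main technical obstacle lies in the preceding paragraph: one must carefully verify that the family on $\V^{G \ltimes \Omega^a_{\mathfrak C}[k]^{op}}$ which controls the box-product decomposition of $n_k^{(\O, X, Y)}$ is indeed the pullback of $\F^\ltimes$ along an appropriate vertex functor, and reconcile this with the fact that the pseudo indexing system condition \eqref{PIS_EQ} is formulated on $G^{op} \ltimes \Omega^0_{\mathfrak C}$ rather than on the alternating variant $\Omega^a_{\mathfrak C}[k]$. This reconciliation should proceed via the natural forgetful functor $\Omega^a_{\mathfrak C}[k] \to \Omega^0_{\mathfrak C}$, the separation of the box product into its active and inert factors via Lemma \ref{LTIMESNMINC LEM} (so that the active factor contributes only $\Sigma_\F$-cofibrancy data and the inert factor contributes the pushout-power of $u$), and the compatibility of the $\F^\ltimes$ construction with pullbacks recorded in \eqref{PHIFLTIMES_EQ}.
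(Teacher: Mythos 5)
Your proposal is correct and follows essentially the same route as the paper's proof: reduce via the filtration of Lemma \ref{OURE LEM}, show $n_k^{(\O,X,Y)}$ is a cofibration for the pullback of $\F^{\ltimes}$ along the (active/inert) vertex functor using Proposition \ref{SIGMAWRGF PROP}, and then use the pseudo indexing system condition together with Proposition \ref{EQQUILADJ PROP} to push forward along $\left(\mathsf{lr}^{a,op}_{\mathfrak C}\right)_!$; the reconciliation you flag as the main obstacle is precisely what the paper carries out in \eqref{FCA_SQ_EQ}--\eqref{THEPT EQ}. The only cosmetic difference is that the paper packages the active and inert factors as a single map $(\O,u)$ over the disjoint union groupoid $(G \ltimes \Sigma_{\mathfrak C}^{op})^{\amalg 2}$ and applies only part (i) of Proposition \ref{SIGMAWRGF PROP}, whereas you separate the two factors via Lemma \ref{LTIMESNMINC LEM}; these are equivalent by the remark following Proposition \ref{SIGMAWRGF PROP}.
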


\begin{remark}\label{STRTED REM}
	Theorem \ref{THMII} and Proposition \ref{SIGMAG_COF PROP}
	are the colored analogues of 
	\cite[Lemma 6.64]{BP21} and \cite[Rem. 6.70]{BP21},
	and it would be straightforward but tedious to adapt the proofs therein.
	However, as we have established 
	Proposition \ref{SIGMAWRGF PROP} for all groupoids
	(while in \cite{BP21}
	we only had access to \cite[Prop. 6.25]{BP21},
	which covers only groups),
	here we will be able to provide a significantly simpler proof.
\end{remark}

In the following, we use of the categories
$\Omega^{a}_{\mathfrak{C}}[k] 
\subset 
\Omega^{a}_{\mathfrak{C}}$
of alternating trees discussed in \S \ref{OPC_MS_SEC}.
Moreover, we write
$\boldsymbol{V}^a_{\mathfrak{C}}
\colon G^{op} \ltimes \Omega^{a}_{\mathfrak{C}}[k] 
\to 
\Sigma \wr 
\left((G^{op} \ltimes \Sigma_{\mathfrak{C}})^{\amalg 2}\right)$ 
for the natural variant of the vertex functor 
where the summands 
$(G^{op} \ltimes \Sigma_{\mathfrak{C}})^{\amalg 2} \simeq
(G^{op} \ltimes \Sigma_{\mathfrak{C}})^{\amalg \{a,i\}}$
separate active and inert vertices.

\begin{proof}[Proof of Proposition \ref{SIGMAG_COF PROP}]
	It is enough to show that,
	if $\O \to \O[u]$ is a free operad extension as in \eqref{OURE EQ},
	and for which
	$\O$ is $\Sigma_\F$-cofibrant and $u \colon X \to Y$ is a $\F$-cofibration in $\Sym^G_{\mathfrak C}(\V)$,
	then each of the filtration pieces 
	$\O_{k-1} \to \O_k$ in \eqref{OUFILRE EQ}
	are $\F$-cofibrations in $\Sym^G_{\mathfrak C}(\V)$.
	
	We now consider the following commutative diagrams, 
	where the composites of the top squares
	coincide.
	To ease notation, we have written
	$G^o$ as $G^{op}$
	and 
	$\Sigma \wr 
	\left((G^{op} \ltimes \Sigma_{\mathfrak{C}})^{\amalg 2}\right)$
	as
	$\Sigma \wr 
	(G^{o} \ltimes \Sigma_{\mathfrak{C}})^{\amalg 2}$, while $\nabla$
	denotes the fold map.
	\begin{equation}\label{FCA_SQ_EQ}
	\begin{tikzcd}[column sep = small]
	G^{o} \ltimes \Omega_{\mathfrak C}^{a}[k]
	\arrow[r] \arrow[d, "\pi_{\mathfrak C}^a"']
	&
	G^{o} \ltimes \Omega^{a}[k] \arrow[r, "\boldsymbol{V}^a"]
	\arrow[d, "\pi^a"]
	&
	\Sigma \wr (G^{o} \ltimes \Sigma)^{\amalg 2}
	\arrow[d, "\Sigma \wr \nabla"]
	& 
	G^{o} \ltimes \Omega_{\mathfrak C}^{a}[k] \arrow[r, "\boldsymbol{V}_{\mathfrak C}^{a}"]
	\arrow[d, "\pi_{\mathfrak C}^a"']
	&
	\Sigma \wr ( G^{o} \ltimes \Sigma_{\mathfrak C} )^{\amalg 2}
	\arrow[d, "\Sigma \wr \nabla_{\mathfrak C}"] \arrow[r]
	&
	\Sigma \wr ( G^{o} \ltimes \Sigma )^{\amalg 2}
	\arrow[d, "\Sigma \wr \nabla"]
	\\
	G^{o} \ltimes \Omega_{\mathfrak C}^{0} \arrow[r, "\pi_{\mathfrak C}"] \arrow{d}[swap]{\mathsf{lr}_{\mathfrak C}}
	&
	G^{o} \ltimes \Omega^{0} \arrow[r, "\boldsymbol{V}"] \arrow[d, "\mathsf{lr}"]
	&
	\Sigma \wr \left( G^{o} \ltimes \Sigma \right)
	& 
	G^{o} \ltimes \Omega_{\mathfrak C}^{0} \arrow[r, "\boldsymbol{V}_{\mathfrak C}"]
	&
	\Sigma \wr \left( G^{o} \ltimes \Sigma_{\mathfrak C} \right) \arrow[r, "\Sigma \wr \pi_{\mathfrak C}"]
	&
	\Sigma \wr \left( G^{o} \ltimes \Sigma \right)
	\\
	G^{o} \ltimes \Sigma_{\mathfrak C} \arrow[r, "\pi_{\mathfrak C}"]
	&
	G^{o} \times \Sigma
	\end{tikzcd}
	\end{equation}
	
	Recalling the family 
	$\mathcal{F}_{\mathfrak{C}}$ on 
	$G^{op} \ltimes \Sigma_{\mathfrak{C}}$
	in Definition \ref{GSFAM_DEF},
	we now have a sequence of identifications 
	(of families in $G^{op} \ltimes \Omega^a_{\mathfrak{C}}[k]$)
	\begin{equation}\label{VSTREWRT EQ}
	\left(\boldsymbol{V}_{\mathfrak C}^{a}\right)^{\**} \left( (\nabla_{\mathfrak C}^{\**} \F_{\mathfrak C})^\ltimes \right)
	=
	\left(\boldsymbol{V}_{\mathfrak C}^{a}\right)^{\**} \left( (\nabla_{\mathfrak C}^{\**} \pi^{\**}_{\mathfrak{C}} \F)^\ltimes \right) 
	=
	\left(\boldsymbol{V}_{\mathfrak C}^{a}\right)^{\**}  (\Sigma \wr \nabla_{\mathfrak C})^{\**} (\Sigma \wr \pi_{\mathfrak C})^{\**} \F^\ltimes
	=
	\left( \pi_{\mathfrak C}^a \right)^{\**} \pi_{\mathfrak C}^{\**} \boldsymbol{V}^{\**} \F^{\ltimes}
	\end{equation}
	where the first step is the definition 
	$\F_{\mathfrak{C}} = \pi^{\**}_{\mathfrak{C}} \mathcal{F}$,
	the second step follows from \eqref{PHIFLTIMES_EQ},
	and the third step follows since the composite top sections in 
	\eqref{FCA_SQ_EQ} coincide.
	Moreover, there are identifications
	\begin{equation}\label{LRSTR EQ}
	\left(\pi_{\mathfrak C}^a\right)^{\**} \mathsf{lr}_{\mathfrak C}^{\**} \F_{\mathfrak C}
	=
	\left(\pi_{\mathfrak C}^a\right)^{\**} \mathsf{lr}_{\mathfrak C}^{\**} \pi_{\mathfrak{C}}^{\**} \F
	=
	\left(\pi_{\mathfrak C}^a\right)^{\**} \pi_{\mathfrak C}^{\**} \mathsf{lr}^{\**} \F
	\end{equation}
	where the first step again uses the definition of $\F_{\mathfrak{C}}$
	and the second step uses the commutativity of the left bottom square in 
	\eqref{FCA_SQ_EQ}.
	
	It now follows from \eqref{VSTREWRT EQ},\eqref{LRSTR EQ} that,
	if $\F$ is a pseudo indexing system,
	i.e. if 
	$\boldsymbol{V}^{\**} \F 
	\subseteq 
	\mathsf{lr}^{\**} \F$,
	then
	\begin{equation}\label{THEPT EQ}
	\left(\boldsymbol{V}_{\mathfrak C}^{a}\right)^{\**} \left( (\nabla_{\mathfrak C}^{\**} \F_{\mathfrak C})^\ltimes \right)
	\subseteq
	\left(\pi_{\mathfrak C}^a\right)^{\**} \mathsf{lr}_{\mathfrak C}^{\**} \F_{\mathfrak C}.
	\end{equation}

	Abbreviating 
	$\mathsf{lr}_{\mathfrak{C}}^a
	=
	\mathsf{lr}_{\mathfrak{C}} \pi^a_{\mathfrak{C}}$
	for the left vertical composite in \eqref{FCA_SQ_EQ},
	applying Proposition \ref{EQQUILADJ PROP}
	to \eqref{THEPT EQ}
	yields that 
	$
	\left(\mathsf{lr}^{a,op}_{\mathfrak{C}}\right)_!
	\ \colon \
	\V^{G \ltimes \Omega_{\mathfrak C}^{a}[k]^{op}}
	\to
	\V^{G \ltimes \Sigma_{\mathfrak C}^{op}}
	$
	sends 
	$\left(\boldsymbol{V}_{\mathfrak C}^{a}\right)^{\**}\left( (\nabla_{\mathfrak C}^{\**} \F_{\mathfrak C})^\ltimes \right)$-cofibrations
	to
	$\F_{\mathfrak C}$-cofibrations.
	
	Next, notice that the map
	$(\O,u) = (\varnothing \to \O) \amalg (X \xrightarrow{u} Y)$
	is a $\nabla_{\mathfrak C}^{\**}\F_{\mathfrak C}$-cofibration
	in $\V^{(G \ltimes \Sigma_{\mathfrak C}^{op})^{\amalg 2}}$
	(this simply restates the assumption that
	$\varnothing \to \O$, $X \xrightarrow{u} Y$ 
	are $\F_{\mathfrak{C}}$-cofibrations
	in $\V^{G \ltimes \Sigma_{\mathfrak C}^{op}} = \mathsf{Sym}_{\mathfrak{C}}^G(\V)$).
	And, since the functor $n_k^{\O,X,Y} \colon G \ltimes \Omega_{\mathfrak C}^{a}[k]^{op} \to \V$ from \eqref{NKOXY EQ}
	can be described by
	\[
	n_k^{\O,X,Y} = \left( \boldsymbol{V}_{\mathfrak C}^{a}\right)^{\**}
	\left( \left(\O,u\right)^{\square} \right),
	\]
	Propositions \ref{SIGMAWRGF PROP}(i) and \ref{EQQUILADJ PROP}
	imply that $n_k^{\O,X,Y}$ is a 
	$(\boldsymbol{V}_{\mathfrak C}^{a})^{\**}\left( (\nabla_{\mathfrak C}^{\**} \F_{\mathfrak C})^\ltimes \right)$-cofibration.
	
	But it now follows that 
	$\left(\mathsf{lr}^{a,op}_{\mathfrak{C}}\right)_! n_k^{\O,X,Y}$
	is a $\F_{\mathfrak{C}}$-cofibration
	in $\V^{G \ltimes \Sigma_{\mathfrak C}^{op}} = \mathsf{Sym}_{\mathfrak{C}}^G(\V)$
	and thus,
	using the pushout squares \eqref{OUPUSHRE EQ},
	so is $\O_{k-1} \to \O_k$,
	finishing the proof.
\end{proof}

In the following,
we write $\eta_{\mathfrak C}$ for the initial 
object of $\Op^G_{\mathfrak C}(\V)$. 
This notation is motivated by the underlying identification
$\eta_{\mathfrak C} \simeq 
\coprod_{\mathfrak c \in \mathfrak C} \Sigma_{\mathfrak C}[\eta_{\mathfrak c}]$
in $\mathsf{Sym}^G_{\mathfrak{C}}(\V)$,
where $\eta_{\mathfrak c}$ is as in Remark \ref{ETACNOT REM}.

\begin{proof}[Proof of Theorem \ref{THMII}]
	Remark \ref{PSEUSTI REM}
	guarantees that the initial operad 
	$\eta_{\mathfrak{C}}$
	in $\mathsf{Op}^G_{\mathfrak{C}}(\V)$
	is $\Sigma_{\F}$-cofibrant for any
	pseudo indexing system $\F$.
	Hence, by first applying Proposition \ref{SIGMAG_COF PROP}
	to maps of the form $\eta_{\mathfrak{C}} \to \O$,
	one has that all $\F$-cofibrant operads
	$\O \in \mathsf{Op}^G_{\mathfrak{C}}(\V)$
	are $\Sigma_{\F}$-cofibrant,
	so that Theorem \ref{THMII} becomes a particular instance of 
	Proposition \ref{SIGMAG_COF PROP}.
\end{proof}


\appendix

\section{The monad for free colored operads}
\label{MONAD_APDX}

This appendix is fairly technical.
Its goal is to complete Definition \ref{FREEOP DEF}
by fully describing the monad structure 
on the fibered free operad monad $\mathbb{F}$
of \eqref{FREEOP_EQ},
and to use this to prove some necessary technical results,
most notably the key filtration result in Lemma \ref{OURE LEM}.

Our approach builds off our previous work in \cite{BP21},
and is motivated by the fact that the left Kan extension in \eqref{FREEOP_EQ} makes the monad structure somewhat awkward to describe and work with directly.
As such, our strategy is to note that there is an adjunction 
\begin{equation}\label{SPANSYMADJ EQ}
\mathsf{Lan} \colon
\mathsf{WSpan}^l(\Sigma_{\bullet}^{op},\mathcal{V}) 
\rightleftarrows
\mathsf{Sym}_{\bullet}(\mathcal{V})
\colon \upsilon
\end{equation}
that identifies $\mathsf{Sym}_{\bullet}(\V)$ 
as a reflexive subcategory of a larger category 
$\mathsf{WSpan}^l(\Sigma_{\bullet}^{op},\mathcal{V})$
of spans (Definition \ref{WSPANCAT DEF}),
with the left Kan extension being the reflection.
We then build a monad $N$ on the larger category 
$\mathsf{WSpan}^l(\Sigma_{\bullet}^{op},\mathcal{V})$,
and show that this monad can be transferred to 
$\mathsf{Sym}_{\bullet}(\mathcal{V})$.

In \S \ref{CSTRINGS_SEC}, \S \ref{WRACONST SEC}
we adapt (and improve on) 
some constructions in \cite{BP21}
to the colored context.
\S \ref{NONEQMON SEC} then describes the monads on
$\mathsf{Sym}_{\bullet}(\V)$, $\mathsf{WSpan}^l(\Sigma_{\bullet}^{op},\mathcal{V})$.
\S \ref{EQMON_SEC} is dedicated to proving Lemma \ref{OURE LEM}.
Lastly,
\S \ref{INJCOLCH AP}
proves some quick results that 
will be needed in the sequel \cite{BP_ACOP}.

\subsection{Colored strings}
\label{CSTRINGS_SEC}

We now recall some key notions in \cite{BP21}
regarding strings of trees.

Firstly, and as in \S \ref{EQCOSYMOP SEC},
we write 
$\Omega_{\mathfrak{C}}$
for the category of 
$\mathfrak{C}$-colored trees
$\vect{T} = (T,\mathfrak{c}\colon \boldsymbol{E}(T) \to \mathfrak{C})$
and color preserving maps.

We will make use of certain special types of maps in $\Omega_{\mathfrak{C}}$, all of which are defined in terms of the underlying map of trees.
A map $f \colon \vect{T} \to \vect{S}$
is called:
\emph{planar} is the underlying map of trees is planar;
\emph{tall} if it sends leaves to leaves and the root to the root;
an \emph{outer face}
if, for any factorization 
$f = f' t$ with $t$ a tall map, one has that $t$ is an isomorphism. 

Any map $\vect{T} \xrightarrow{f} \vect{S}$ in $\Omega_{\mathfrak{C}}$
the has a factorization
$\vect{T} \xrightarrow{f^t} \vect{R} \xrightarrow{f^o} \vect{S}$,
unique up to unique isomorphism,
with $f^t$ a tall map and $f^o$ an outer map
\cite[Prop. 3.36]{BP21}.
Moreover, this factorization is strictly unique if $f,f^o$ are required to be planar.

In the following,
we extend the notation $T_v$ for the corolla associated to a vertex 
$v \in \boldsymbol{V}(T)$.

\begin{notation}\label{SVNOT NOT}
	Given a planar map $\vect{T} \to \vect{S}$
	and $v \in \boldsymbol{V}(T)$,
	we write
	$\vect{T}_v \to \vect{S}_v \to \vect{S}$
	for the ``tall map followed by outer face''
	factorization of the composite
	$\vect{T}_v \to \vect{T} \to \vect{S}$.
\end{notation}

\begin{example}
	Consider the planar map $f \colon T \to S$ on the left below
	sending each edge of $T$ to the edge of $S$ with the same name.
	For each of the four vertices $v_1,v_2,v_3,v_4$ of $T$
	(these are ordered according to the planarization of $T$, 
	following the convention in \cite[\S 3.1]{BP21})
	we present the corresponding planar outer face
	$S_{v_i}$ on the right.
	\begin{equation}
	\begin{tikzpicture}[auto,grow=up, level distance = 2.2em,
	every node/.style={font=\scriptsize,inner sep = 2pt}]%
	\tikzstyle{level 2}=[sibling distance=3em]%
	\node at (0,0) [font = \normalsize] {$T$}%
	child{node [dummy] {}%
		child[level distance = 2.9em]{node [dummy] {}%
			child{node {}%
				edge from parent node [swap] {$c_1$}}%
			edge from parent node [swap,near end] {$c_2$}}%
		child[level distance = 2.9em]{node {}%
			edge from parent node [swap,near end] {$e$}}%
		child[level distance = 2.9em]{node [dummy] {}%
			child[level distance = 2.9em]{node [dummy] {}%
				edge from parent node [swap, near end] {$b$}}%
			child[level distance = 2.9em]{node {}%
				edge from parent node [near end] {$a$}}%
			edge from parent node [near end] {$d$}}%
		edge from parent node [swap] {$r$}};%
	\node at (4.3,0) [font = \normalsize] {$S$}%
	child{node [dummy] {}%
		child[level distance = 2.9em, sibling distance = 6em]{node [dummy] {}%
			child[sibling distance = 2em]{node {}%
				edge from parent node [swap] {$c$}}%
			child[level distance = 2.9em, sibling distance = 2em]{node {}%
				edge from parent node {$e$}}%
			edge from parent node [swap,near end] {}}%
		child[level distance = 2.9em, sibling distance = 6em]{node [dummy] {}%
			child[level distance = 2.9em, sibling distance = 2em]{node [dummy] {}%
				child[level distance = 2.9em]{node [dummy] {}%
					edge from parent node [swap] {}}%
				edge from parent node [swap, near end] {$b$}}%
			child[level distance = 2.9em, sibling distance = 2em]{node [dummy] {}%
				edge from parent node {}}%
			child[level distance = 2.9em, sibling distance = 2em]{node [dummy] {}%
				child[level distance = 2.9em]{node {}%
					edge from parent node [swap] {}}%
				child[level distance = 2.9em]{node {}%
					edge from parent node [swap] {}}%
				edge from parent node [near end] {$a$}}%
			edge from parent node [near end] {$d$}}%
		edge from parent node [swap] {$r$}};%
	\node at (10.5,2.5) [font = \normalsize] {$S_{v_1}$}%
	child{node [dummy] {}%
		child{node [dummy] {}%
			edge from parent node [swap] {}}%
		edge from parent node [swap] {$b$}};%
	\node at (8.5,1.5) [font = \normalsize] {$S_{v_2}$}%
	child{node [dummy] {}%
		child{node {}%
			edge from parent node [swap, near end] {$b$}}%
		child{node [dummy]{}%
			edge from parent node {}}%
		child{node {}%
			edge from parent node [near end] {$a$}}%
		edge from parent node [swap] {$d$}};%
	\node at (11,-0.3) [font = \normalsize] {$S_{v_4}$}%
	child{node [dummy] {}%
		child{node [dummy]{}%
			child{node {}%
				edge from parent node [swap, near end] {$c$}}%
			child{node {}%
				edge from parent node [near end] {$e$}}%
			edge from parent node {}}%
		child{node {}%
			edge from parent node [near end] {$d$}}%
		edge from parent node [swap] {$r$}};%
	\node at (12.5,2.5) [font = \normalsize] {$S_{v_3}$}%
	child{node {}%
		edge from parent node [swap] {$c$}};%
	\draw[<-] (3,0.3) -- (1.3,0.3) 
	node[midway,inner sep=4pt,font=\normalsize]{$f$}; %
	\end{tikzpicture}%
	\end{equation}%
\end{example}

We next recall some notation concerning 
the $\Sigma \wr (-)$ construction in Notation \ref{SIGWR NOT}.
For any category $\mathcal{C}$,
there is a natural ``unary tuple functor''
$\delta^{0} \colon \mathcal{C} \to \Sigma \wr \mathcal{C}$
sending
$c \in \mathcal{C}$ to $(c) \in \Sigma \wr \mathcal{C}$,
as well as a natural ``concatenation functor''
$\sigma^0 \colon \Sigma \wr \Sigma \wr \mathcal{C} 
\to \Sigma \wr \mathcal{C}$ given by 
\[
\left(
(c_{1,1},\cdots,c_{1,n_1}),
(c_{2,1},\cdots,c_{2,n_2}),
\cdots,
(c_{k,1},\cdots,c_{k,n_k})
\right)
\mapsto
\left(
c_{1,1},\cdots,c_{1,n_1},
c_{2,1},\cdots,c_{2,n_2},
\cdots,
c_{k,1},\cdots,c_{k,n_k}
\right)
\]
More generally, these functors are part of 
an augmented cosimplicial object
$\Sigma^{\wr n+1} \wr \C, n\geq -1$ in $\mathsf{Cat}$,
meaning that one has maps
\[
\Sigma^{\wr n+1} \wr \mathcal{C} 
\xrightarrow{\sigma^i},
\Sigma^{\wr n} \wr \mathcal{C},
\quad
0 \leq i \leq n - 1
\qquad \qquad
\Sigma^{\wr n+1} \wr \mathcal{C} 
\xrightarrow{\delta^j},
\Sigma^{\wr n+2} \wr \mathcal{C},
\quad
0 \leq j \leq n+1
\]
satisfying the cosimplicial identities. 
Explicitly, $\sigma^i$ concatenates the $(i+1)$-th and $(i+2)$-th $\Sigma$ coordinates
and $\delta^j$ inserts a unary $\Sigma$ coordinate in the $(j+1)$-th position.

\begin{definition}
	Let $\mathfrak{C}$ be a set of colors.
	For $n \geq 0$, the category $\Omega_{\mathfrak{C}}^n$ of \textit{$n$-strings} has as objects strings $\vect{T}_0 \to \vect{T}_1 \to \cdots \to \vect{T}_n$ of maps that are planar and tall, and arrows given by tuples 
	$\vect{T}_i \xrightarrow{\simeq} \vect{T}'_i$
	of compatible isomorphisms.
	In addition, we set $\Omega^{-1}_{\mathfrak{C}} = \Sigma_{\mathfrak{C}}$.
\end{definition}

\begin{remark}\label{ADDCOROL REM}
	By definition of tall planar map, 
	all trees $T_i$ in a string have the same leaf-root
	$\mathsf{lr}(\vect{T}_i)$. The convention $\Omega^{-1}_{\mathfrak{C}} = \Sigma_{\mathfrak{C}}$ 
	is motivated by the fact that one can 
	canonically extend an $n$-string as
	\[\mathsf{lr}(\vect{T}_0)=\vect{T}_{-1} \to \vect{T}_0 \to \vect{T}_1 \to \cdots \to \vect{T}_n.\]
\end{remark}

The key functors in \cite[\S 3.4]{BP21} now extend to the strings
$\Omega_{\mathfrak{C}}^n$.
Firstly, one has simplicial operators
\[
d_i \colon \Omega_{\mathfrak{C}}^n \to \Omega_{\mathfrak{C}}^{n-1},
\quad 0 \leq i \leq n;
\qquad \qquad
s_j \colon \Omega_{\mathfrak{C}}^{n} \to \Omega_{\mathfrak{C}}^{n+1},
\quad -1 \leq j \leq n
\]
which remove (resp. repeat) the $i$-th (resp. $j$-th) tree in the string (where the boundary cases must be interpreted in light of 
Remark \ref{ADDCOROL REM}).
Secondly, there are vertex operators
\[
\Omega^{n}_{\mathfrak{C}}
\xrightarrow{\boldsymbol{V}^0}
\Sigma \wr \Omega^{n}_{\mathfrak{C}}
\qquad \qquad
(\vect{T}_0 \to \vect{T}_1 \to \cdots \to \vect{T}_n)
\mapsto
(\vect{T}_{1,v} \to \cdots \to \vect{T}_{n,v})_{v \in \boldsymbol{V}(T_0)}
\]
where we note that the indexing set 
$\boldsymbol{V}(T_0)$ of the tuple
is ordered according to the planarization of $T_0$.
Moreover, one iteratively defines higher order vertex functors
by setting 
$\boldsymbol{V}^{k}\colon \Omega^{n}_{\mathfrak{C}}
\to \Sigma \wr \Omega^{n-k-1}_{\mathfrak{C}}$
to be the composite
\begin{equation}\label{VKDEF EQ}
\Omega^{n}_{\mathfrak{C}}
\xrightarrow{\boldsymbol{V}}
\Sigma \wr\Omega^{n-1}_{\mathfrak{C}}
\xrightarrow{\Sigma \wr \boldsymbol{V}^{k-1}}
\Sigma^{\wr 2} \wr \Omega^{n-k-1}_{\mathfrak{C}}
\xrightarrow{\sigma^0}
\Sigma \wr \Omega^{n-k-1}_{\mathfrak{C}}.
\end{equation}
One has an identification
$\boldsymbol{V}^k(\vect{T}_0 \to \cdots \to \vect{T}_n)
\simeq (\vect{T}_{k+1,v} \to \cdots \to \vect{T}_{n,v})_{v \in \boldsymbol{V}(T_k)}$, though some care is needed,
as the order on $\boldsymbol{V}(T_k)$ in this formula
does not depend only on the planarization of $T_k$.
Rather, cf. \cite[(3.99)]{BP21},
one has natural maps 
$\boldsymbol{V}(T_k) \to \boldsymbol{V}(T_{k-1}) 
\to \cdots
\to \boldsymbol{V}(T_{0})$
and, for $v,w \in \boldsymbol{V}(T_k)$,
the condition $v<w$
is determined by the lowest $i$
such that $v,w$ have distinct images in 
$\boldsymbol{V}(T_i)$.

Lastly, for a map of colors 
$\varphi \colon \mathfrak{C} \to \mathfrak{D}$,
there are change of color functors
$\varphi \colon \Omega^n_{\mathfrak{C}} \to \Omega^n_{\mathfrak{D}}$
defined by
$(\vect{T}_0 \to \cdots \to \vect{T}_n)
\mapsto 
(\varphi\vect{T}_0 \to \cdots \to \varphi\vect{T}_n)$,
where $\varphi \vect{T}_i$
applies $\varphi$ to the colors 
(cf. Definition \ref{COLFOR DEF}).

These operators satisfy a number of compatibilities (cf. \cite[Prop. 3.102]{BP21}). Firstly, the $d_i$, $s_j$ operators satisfy the  simplicial identities, 
and the $\boldsymbol{V}^k$ operators are ``additive'' in the sense that
the composite below is $\boldsymbol{V}^{k+l+1}$.
\begin{equation}\label{VKADD EQ}
\Omega^{n}_{\mathfrak{C}} \xrightarrow{\boldsymbol{V}^l} 
\Sigma \wr \Omega^{n-l-1}_{\mathfrak{C}} \xrightarrow{\Sigma \wr \boldsymbol{V}^k}
\Sigma^{\wr 2} \wr \Omega^{n-k-l-2}_{\mathfrak{C}} \xrightarrow{\sigma^0}
\Sigma \wr \Omega^{n-k-l-2}_{\mathfrak{C}}.
\end{equation}

The next results list the compatibilities between $d_i$, $s_j$ and the $\boldsymbol{V}^k$ operators.
We note that the natural isomorphisms $\pi_{i,k}$
are needed to account for different orderings on 
$\boldsymbol{V}(T_k)$,
cf. the comment following \eqref{VKDEF EQ}.

\begin{proposition}\label{CATDIAG PROP}
	One has the following diagrams in the $2$-category
	$\mathsf{Cat}$.
	\begin{enumerate}[label = (\roman*)]
		\item
		For $0\leq i < k \leq n$ there are $2$-isomorphisms  $\pi_{i,k}$,
		and for $-1 \leq j \leq k \leq n$ there are commutative diagrams
		\begin{equation}
		\begin{tikzcd}[row sep = 2pt, column sep = 35pt]
		\Omega_{\mathfrak{C}}^n
		\arrow{dr}[swap,name=U]{}{\boldsymbol{V}^k} \arrow{dd}[swap]{d_i} &
		&
		\Omega_{\mathfrak{C}}^n
		\arrow{dr}{\boldsymbol{V}^k} \arrow{dd}[swap]{s_j} &
		\\
		& \Sigma \wr \Omega_{\mathfrak{C}}^{n-k-1}
		&
		& \Sigma \wr \Omega_{\mathfrak{C}}^{n-k-1}
		\\
		|[alias=V]|
		\Omega_{\mathfrak{C}}^{n-1} \arrow{ur}[swap]{\boldsymbol{V}^{k-1}} &
		&
		\Omega_{\mathfrak{C}}^{n+1} \arrow{ur}[swap]{\boldsymbol{V}^{k+1}} &
		\arrow[Leftrightarrow, from=V, to=U,shorten >=0.15cm,shorten <=0.15cm
		,swap, near end, "\pi_{i,k}"
		]
		\end{tikzcd}
		\end{equation}
		\item
		For $-1 \leq k < i \leq n$ and for $-1 \leq k \leq j \leq n$
		there are commutative diagrams
		\begin{equation}
		\begin{tikzcd}[row sep = 8pt, column sep = 35pt]
		\Omega^n_{\mathfrak{C}}
		\arrow{r}[swap,name=U]{}{\boldsymbol{V}^k} \arrow{dd}[swap]{d_i} &
		\Sigma \wr \Omega^{n-k-1}_{\mathfrak{C}} \ar{dd}{d_{i-k-1}}
		&
		\Omega^n_{\mathfrak{C}}
		\arrow{r}{\boldsymbol{V}^k} \arrow{dd}[swap]{s_j} &
		\Sigma \wr \Omega^{n-k-1}_{\mathfrak{C}} \ar{dd}{s_{j-k-1}}
		\\
		\\
		|[alias=V]|
		\Omega^{n-1}_{\mathfrak{C}} \arrow{r}[swap]{\boldsymbol{V}^{k}} &
		\Sigma \wr \Omega^{n-k-2}_{\mathfrak{C}}
		&
		\Omega^{n+1}_{\mathfrak{C}} \arrow{r}[swap]{\boldsymbol{V}^{k}} &
		\Sigma \wr \Omega^{n-k}_{\mathfrak{C}}
		\end{tikzcd}
		\end{equation}
		Furthermore, these diagrams are pullback squares in $\mathsf{Cat}$.
		\item 
		all $d_i$, $s_j$, $\boldsymbol{V}^k$, and $\pi_{i,k}$
		are natural in $\mathfrak{C}$, i.e. for each map of colors
		$\varphi \colon \mathfrak{C} \to \mathfrak{D}$ one has commutative diagrams
		(for the prism, commutativity means that
		$\varphi \pi = \pi \varphi$)
		\[
		\begin{tikzcd}[column sep = 10pt, row sep = 4pt]
		\Omega^n_{\mathfrak{C}} \ar{r}{d_i} \ar{dd}[swap]{\varphi} &
		\Omega^{n-1}_{\mathfrak{C}} \ar{dd}{\varphi}
		&
		\Omega^n_{\mathfrak{C}} \ar{r}{s_j} \ar{dd}[swap]{\varphi} &
		\Omega^{n+1}_{\mathfrak{C}} \ar{dd}{\varphi}
		&
		\Omega^n_{\mathfrak{C}} \ar{r}{\boldsymbol{V}^k} \ar{dd}[swap]{\varphi} &
		\Sigma \wr \Omega^{n-k-1}_{\mathfrak{C}} \ar{dd}{\varphi}
		&
		\Omega^n_{\mathfrak{C}}
		\ar{rrrrr}[name=toE]{\boldsymbol{V}^k} \ar{rd}[swap]{d_i} \ar{dd}[swap]{\varphi}
		&&&
		&&
		\Sigma \wr \Omega^{n-k-1}_{\mathfrak{C}} \ar{dd}{\varphi}
		\\
		&
		&
		&
		&
		&
		&
		&
		|[alias=DBE]|
		\Omega^{n-1}_{\mathfrak{C}} \ar{rrrru}[swap]{\boldsymbol{V}^{k-1}}
		\\
		\Omega^n_{\mathfrak{D}} \ar{r}{d_i} &
		\Omega^{n-1}_{\mathfrak{D}}
		&
		\Omega^n_{\mathfrak{D}} \ar{r}{s_j} &
		\Omega^n_{\mathfrak{D}}
		&
		\Omega^n_{\mathfrak{D}} \ar{r}{\boldsymbol{V}^k} &
		\Sigma \wr \Omega^{n-k-1}_{\mathfrak{D}}
		&
		\Omega^n_{\mathfrak{D}} \ar{rrrrr}[name=toB]{\boldsymbol{V}^k} \ar{rd}[swap]{d_i}
		&&&
		&&
		\Sigma \wr \Omega^{n-k-1}_{\mathfrak{D}}
		\\
		&
		&
		&
		&
		&
		&
		&
		|[alias=D]| \Omega^{n-1}_{\mathfrak{D}} \ar{rrrru}[swap]{\boldsymbol{V}^{k-1}}
		\arrow[Leftrightarrow, from=DBE, to=toE, shorten <=0.1cm,shorten >=0.1cm
		,swap,near end,"\pi"
		]
		\arrow[Leftrightarrow, from=D, to=toB, shorten <=0.1cm,shorten >=0.1cm,swap,near end,"\pi"]
		\arrow[from=DBE, to=D, crossing over, near start, swap, "\varphi"]
		\end{tikzcd}
		\]
	\end{enumerate}
\end{proposition}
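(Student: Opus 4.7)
The plan is to reduce the colored statements to their uncolored analogues in \cite[Prop. 3.102]{BP_geo} via the faithful forgetful functor $U \colon \Omega^n_{\mathfrak C} \to \Omega^n$, with the pullback verification in (ii) being the main new content.

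First I observe that each of $d_i$, $s_j$, $\boldsymbol{V}^k$, and $\pi_{i,k}$ is defined purely in terms of tree-shaping operations (removing, duplicating, or extracting vertex neighborhoods of underlying trees), and that every colored tall planar map is uniquely determined by its underlying uncolored map together with compatibility of colorings. Consequently each colored operator is a strict lift of its uncolored analog along $U$, so the required identities, the $2$-isomorphisms $\pi_{i,k}$, and the commutation squares can all be transported directly from \cite[Prop. 3.102]{BP_geo} by verifying only that the induced reorderings of vertex sets preserve the inherited colorings (which they do, since they merely reindex the same labeled edges). For (iii), naturality in $\mathfrak C$ is essentially immediate once I note that the change-of-color functor $\varphi \colon \Omega^n_{\mathfrak C} \to \Omega^n_{\mathfrak D}$ acts only on colorings (leaving the underlying tree shapes untouched) and so commutes with every tree-shaping operator; the same observation, applied to $\pi_{i,k}$, yields commutativity of the prism by inspection.

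The main obstacle is checking the pullback claim in (ii). Given that the uncolored squares are pullbacks by \cite[Prop. 3.102]{BP_geo}, what remains is to verify that a cone into the colored square assembles uniquely to an object (and morphism) of $\Omega^n_{\mathfrak C}$. For the $d_i$ square with $k < i$, a cone consists of an $(n-1)$-string in $\Omega^{n-1}_{\mathfrak C}$ (namely the ambient string with $\vect T_i$ omitted) together with a tuple of $(n-k-1)$-strings $(\vect U_{k+1,v} \to \cdots \to \vect U_{n,v})_{v \in \boldsymbol V(T_k)}$ agreeing with $\boldsymbol V^k$ of the first after omitting $\vect U_{i,v}$. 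The uncolored pullback reconstructs the missing tree $T_i$ by grafting the $U_{i,v}$ at the vertices of $T_k$; since the edge set of this assembly is the disjoint union of the edge sets of the pieces (with identifications at the grafting edges, where colors already match by the cone condition), the colorings glue uniquely to a coloring on $T_i$. The $s_j$ pullback case is analogous but simpler, as repetition introduces no new edges. Morphism-level compatibility then follows formally, since colored maps in each $\Omega^n_{\mathfrak C}$ are precisely the uncolored maps that preserve colorings, and the uncolored pullback property supplies the required bijections.
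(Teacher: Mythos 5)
Your proposal is correct and matches the paper's (implicit) argument: the paper states this proposition without proof, relying on the citation of the uncolored analogue \cite[Prop. 3.102]{BP_geo}, and your reduction along the faithful forgetful functor $\Omega^n_{\mathfrak C} \to \Omega^n$ — with the observation that all operators strictly lift and that colorings glue uniquely in the pullback reconstruction of the omitted tree — is precisely the intended verification. You simply supply more detail (notably for the pullback claim in (ii)) than the paper chooses to record.
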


The following lists the compatibilities between the $\pi_{i,k}$ isomorphisms, 
which are extensions of the additivity of $\boldsymbol{V}^k$ in \eqref{VKADD EQ} and of the simplicial identities between the $d_i$, $s_j$ operators.

\begin{proposition}\label{CATDIAG2 PROP}
	In each item, the two composite natural transformations coincide.
	\begin{itemize}
		\item[(IT1)]
		For $0 \leq i < k $ and $-1 \leq l \leq n-k-1$
		\begin{equation}
		\begin{tikzcd}[row sep = 12pt, column sep = 18pt]
		|[alias=V]|
		\Omega_{\mathfrak{C}}^{n} \ar{r}{\boldsymbol{V}^{k}}[swap,name=UU]{} \arrow{d}[swap]{d_i}&
		\Sigma \wr \Omega_{\mathfrak{C}}^{n-k-1} \ar{r}{\boldsymbol{V}^l} &
		\Sigma^{\wr 2} \wr \Omega^{n-k-l-2}_{\mathfrak{C}} \ar{r}{\sigma^0} &
		\Sigma \wr \Omega^{n-k-l-2}_{\mathfrak{C}}
		&
		\Omega^{n}_{\mathfrak{C}} \ar{r}{\boldsymbol{V}^{k+l+1}}[swap,name=UUU]{} \arrow{d}[swap]{d_i}&
		\Sigma \wr \Omega^{n-k-l-2}_{\mathfrak{C}} &
		\\
		|[alias=VV]|
		\Omega^{n-1}_{\mathfrak{C}} \arrow{ur}[swap]{\boldsymbol{V}^{k-1}} & & &
		&
		|[alias=VVV]|
		\Omega^{n-1}_{\mathfrak{C}} \arrow{ur}[swap]{\boldsymbol{V}^{k+l}} &
		\arrow[Leftrightarrow, from=VV, to=UU,shorten >=0.05cm,shorten <=0.05cm
		,swap,near end,"\pi"
		]
		\arrow[Leftrightarrow, from=VVV, to=UUU,shorten >=0.05cm,shorten <=0.05cm
		,swap,near end,"\pi"
		]
		\end{tikzcd}
		\end{equation}
		
		\item[(IT2)]
		For $-1 \leq k < i < k + l + 1 \leq n$
		\begin{equation}
		\begin{tikzcd}[row sep = 12pt, column sep = 18pt]
		\Omega^n_{\mathfrak{C}} \ar{r}{\boldsymbol{V}^k} \ar{d}[swap]{d_i} &
		|[alias=V]|
		\Sigma \wr \Omega^{n-k-1}_{\mathfrak{C}} \ar{r}{\boldsymbol{V}^{l}}[swap,name=UU]{} \arrow{d}[swap]{d_{i-k-1}} &
		\Sigma^{\wr 2} \wr \Omega^{n-k-l-2}_{\mathfrak{C}} \ar{r}{\sigma^0} &
		\Sigma \wr \Omega^{n-k-l-2}_{\mathfrak{C}}
		&
		\Omega^{n}_{\mathfrak{C}} \ar{r}{\boldsymbol{V}^{k+l+1}}[swap,name=UUU]{} \arrow{d}[swap]{d_i}&
		\Sigma \wr \Omega^{n-k-l-2}_{\mathfrak{C}} &
		\\
		\Omega^{n-1}_{\mathfrak{C}} \ar{r}[swap]{\boldsymbol{V}^k} &
		|[alias=VV]|
		\Sigma \wr \Omega^{n-1}_{\mathfrak{C}} \arrow{ur}[swap]{\boldsymbol{V}^{l-1}} & &
		&
		|[alias=VVV]|
		\Omega^{n-1}_{\mathfrak{C}} \arrow{ur}[swap]{\boldsymbol{V}^{k+l}} &
		\arrow[Leftrightarrow, from=VV, to=UU,shorten >=0.05cm,shorten <=0.05cm
		,swap,near end,"\pi"
		]
		\arrow[Leftrightarrow, from=VVV, to=UUU,shorten >=0.05cm,shorten <=0.05cm
		,swap,near end,"\pi"
		]
		\end{tikzcd}
		\end{equation}
		
		\item[(FF1)]
		For $0 \leq i < i' < k \leq n$
		\begin{equation}
		\begin{tikzcd}[row sep = 12pt, column sep = 35pt]
		\Omega^n_{\mathfrak{C}}
		\arrow{dr}[swap,name=U]{}{\boldsymbol{V}^k} \arrow{d}[swap]{d_{i'}} &
		&
		\Omega^n_{\mathfrak{C}}
		\arrow{dr}[swap,name=UUU]{}{\boldsymbol{V}^k} \arrow{d}[swap]{d_i} &
		\\
		|[alias=V]|
		\Omega^{n-1}_{\mathfrak{C}} \ar{r}[near start,swap]{\boldsymbol{V}^{k-1}}[swap,name=UU]{} \arrow{d}[swap]{d_i}&
		\Sigma \wr \Omega^{n-k-1}_{\mathfrak{C}}
		&
		|[alias=VVV]|
		\Omega^{n-1}_{\mathfrak{C}} \ar{r}[near start, swap]{\boldsymbol{V}^{k-1}}[swap,name=UUUU]{} \ar{d}[swap]{d_{i'-1}} &
		\Sigma \wr \Omega^{n-k-1}_{\mathfrak{C}}
		\\
		|[alias=VV]|
		\Omega^{n-2}_{\mathfrak{C}} \arrow{ur}[swap]{\boldsymbol{V}^{k-2}} &
		&
		|[alias=VVVV]|
		\Omega^{n-2}_{\mathfrak{C}} \arrow{ur}[swap]{\boldsymbol{V}^{k-2}} &
		\arrow[Leftrightarrow, from=V, to=U,shorten >=0.05cm,shorten <=0.05cm
		,swap,near end,"\pi"
		]
		\arrow[Leftrightarrow, from=VV, to=UU,shorten >=0.25cm,shorten <=0.05cm
		,swap,near end,"\pi"
		]
		\arrow[Leftrightarrow, from=VVV, to=UUU,shorten >=0.05cm,shorten <=0.05cm
		,swap,near end,"\pi"
		]
		\arrow[Leftrightarrow, from=VVVV, to=UUUU,shorten >=0.25cm,shorten <=0.05cm
		,swap,near end,"\pi"
		]
		\end{tikzcd}
		\end{equation}
		
		\item[(FF2)]
		For $0 \leq i < k < i' \leq n$
		\begin{equation}
		\begin{tikzcd}[row sep = 12pt, column sep = 35pt]
		\Omega^n_{\mathfrak{C}}
		\arrow{r}[swap,name=U]{}{\boldsymbol{V}^k} \arrow{d}[swap]{d_{i'}} &
		\Sigma \wr \Omega^{n-k-1}_{\mathfrak{C}} \ar{d}{d_{i'-k-1}}
		&
		\Omega^n_{\mathfrak{C}}
		\arrow{dr}[swap,name=UUU]{}{\boldsymbol{V}^k} \arrow{d}[swap]{d_i} &
		\\
		|[alias=V]|
		\Omega^{n-1}_{\mathfrak{C}} \ar{r}{\boldsymbol{V}^{k}}[swap,name=UU]{} \arrow{d}[swap]{d_i}&
		\Sigma \wr \Omega^{n-k-2}_{\mathfrak{C}}
		&
		|[alias=VVV]|
		\Omega^{n-1}_{\mathfrak{C}} \ar{r}[near start, swap]{\boldsymbol{V}^{k-1}}[swap,name=UUUU]{} \ar{d}[swap]{d_{i'-1}} &
		\Sigma \wr \Omega^{n-k-1}_{\mathfrak{C}} \ar{d}{d_{i'-k-1}}
		\\
		|[alias=VV]|
		\Omega^{n-2}_{\mathfrak{C}} \arrow{ur}[swap]{\boldsymbol{V}^{k-1}} &
		&
		|[alias=VVVV]|
		\Omega^{n-2}_{\mathfrak{C}} \ar{r}[swap]{\boldsymbol{V}^{k-1}} &
		\Sigma \wr \Omega^{n-k-2}_{\mathfrak{C}}
		\arrow[Leftrightarrow, from=VV, to=UU,shorten >=0.05cm,shorten <=0.05cm
		,swap,near end,"\pi"
		]
		\arrow[Leftrightarrow, from=VVV, to=UUU,shorten >=0.05cm,shorten <=0.05cm
		,swap,near end,"\pi"
		]
		\end{tikzcd}
		\end{equation}
		
		\item[(DF1)]
		For $0 \leq j+1 < i \leq k \leq n $
		\begin{equation}
		\begin{tikzcd}[row sep = 12pt, column sep = 35pt]
		\Omega^{n}_{\mathfrak{C}}
		\arrow{dr}[swap,name=U]{}{\boldsymbol{V}^{k}} \arrow{d}[swap]{s_j} &
		&
		\Omega^{n}_{\mathfrak{C}}
		\arrow{dr}[swap,name=UUU]{}{\boldsymbol{V}^{k}} \arrow{d}[swap]{d_{i-1}} &
		\\
		|[alias=V]|
		\Omega^{n+1}_{\mathfrak{C}} \ar{r}{\boldsymbol{V}^{k+1}}[swap,name=UU]{} \arrow{d}[swap]{d_i}&
		\Sigma \wr \Omega^{n-k-1}
		&
		|[alias=VVV]|
		\Omega^{n-1}_{\mathfrak{C}} \ar{r}[near start, swap]{\boldsymbol{V}^{k-1}}[swap,name=UUUU]{} \ar{d}[swap]{s_j} &
		\Sigma \wr \Omega^{n-k-1}
		\\
		|[alias=VV]|
		\Omega^{n}_{\mathfrak{C}} \arrow{ur}[swap]{\boldsymbol{V}^{k}} &
		&
		|[alias=VVVV]|
		\Omega^{n}_{\mathfrak{C}} \arrow{ur}[swap]{\boldsymbol{V}^{k}} &
		\arrow[Leftrightarrow, from=VV, to=UU,shorten >=0.25cm,shorten <=0.05cm
		,swap,near end,"\pi"
		]
		\arrow[Leftrightarrow, from=VVV, to=UUU,shorten >=0.05cm,shorten <=0.05cm
		,swap,near end,"\pi"
		]
		\end{tikzcd}
		\end{equation}
		
		\item[(DF2)]
		For $0 \leq j+1 = i \leq k \leq n$ or 
		$0 \leq j = i \leq k \leq n$
		\begin{equation}
		\begin{tikzcd}[row sep = 12pt, column sep = 35pt]
		\Omega^n_{\mathfrak{C}}
		\arrow{dr}[swap,name=U]{}{\boldsymbol{V}^k} \arrow{d}[swap]{s_j} &
		&
		\Omega^n_{\mathfrak{C}}
		\arrow{dr}[swap,name=UUU]{}{\boldsymbol{V}^k} \arrow[equal]{dd} &
		\\
		|[alias=V]|
		\Omega^{n+1}_{\mathfrak{C}} \ar{r}{\boldsymbol{V}^{k+1}}[swap,name=UU]{} \arrow{d}[swap]{d_i}&
		\Sigma \wr \Omega^{n-k-1}_{\mathfrak{C}}
		&
		&
		\Sigma \wr \Omega^{n-k-1}_{\mathfrak{C}}
		\\
		|[alias=VV]|
		\Omega^{n}_{\mathfrak{C}} \arrow{ur}[swap]{\boldsymbol{V}^k} &
		&
		|[alias=VVVV]|
		\Omega^{n}_{\mathfrak{C}} \arrow{ur}[swap]{\boldsymbol{V}^k} &
		\arrow[Leftrightarrow, from=VV, to=UU,shorten >=0.25cm,shorten <=0.05cm
		,swap,near end,"\pi"
		]
		\end{tikzcd}
		\end{equation}
		
		\item[(DF3)]
		For $0\leq i < j \leq k \leq n$
		\begin{equation}
		\begin{tikzcd}[row sep = 12pt, column sep = 35pt]
		\Omega^n_{\mathfrak{C}}
		\arrow{dr}[swap,name=U]{}{\boldsymbol{V}^k} \arrow{d}[swap]{s_j} &
		&
		\Omega^n_{\mathfrak{C}}
		\arrow{dr}[swap,name=UUU]{}{\boldsymbol{V}^k} \arrow{d}[swap]{d_{i}} &
		\\
		|[alias=V]|
		\Omega^{n+1}_{\mathfrak{C}} \ar{r}{\boldsymbol{V}^{k+1}}[swap,name=UU]{} \arrow{d}[swap]{d_i}&
		\Sigma \wr \Omega^{n-k-1}_{\mathfrak{C}}
		&
		|[alias=VVV]|
		\Omega^{n-1}_{\mathfrak{C}} \ar{r}[near start, swap]{\boldsymbol{V}^{k-1}}[swap,name=UUUU]{} \ar{d}[swap]{s_{j-1}} &
		\Sigma \wr \Omega^{n-k-1}_{\mathfrak{C}}
		\\
		|[alias=VV]|
		\Omega^{n}_{\mathfrak{C}} \arrow{ur}[swap]{\boldsymbol{V}^k} &
		&
		|[alias=VVVV]|
		\Omega^{n}_{\mathfrak{C}} \arrow{ur}[swap]{\boldsymbol{V}^k} &
		\arrow[Leftrightarrow, from=VV, to=UU,shorten >=0.25cm,shorten <=0.05cm
		,swap,near end,"\pi"
		]
		\arrow[Leftrightarrow, from=VVV, to=UUU,shorten >=0.05cm,shorten <=0.05cm
		,swap,near end,"\pi"
		]
		\end{tikzcd}
		\end{equation}
		
		\item[(DF4)]
		For $0 \leq i < k \leq j \leq n$
		\begin{equation}
		\begin{tikzcd}[row sep = 12pt, column sep = 35pt]
		\Omega^n_{\mathfrak{C}}
		\arrow{r}[swap,name=U]{}{\boldsymbol{V}^k} \arrow{d}[swap]{s_j} &
		\Sigma \wr \Omega^{n-k-1}_{\mathfrak{C}} \ar{d}{s_{j-k-1}}
		&
		\Omega^n_{\mathfrak{C}}
		\arrow{dr}[swap,name=UUU]{}{\boldsymbol{V}^k} \arrow{d}[swap]{d_i} &
		\\
		|[alias=V]|
		\Omega^{n+1}_{\mathfrak{C}} \ar{r}{\boldsymbol{V}^{k}}[swap,name=UU]{} \arrow{d}[swap]{d_i}&
		\Sigma \wr \Omega^{n-k}_{\mathfrak{C}}
		&
		|[alias=VVV]|
		\Omega^{n-1}_{\mathfrak{C}} \ar{r}[near start, swap]{\boldsymbol{V}^{k-1}}[swap,name=UUUU]{} \ar{d}[swap]{s_{j-1}} &
		\Sigma \wr \Omega^{n-k-1}_{\mathfrak{C}} \ar{d}{s_{j-k-1}}
		\\
		|[alias=VV]|
		\Omega^{n}_{\mathfrak{C}} \arrow{ur}[swap]{\boldsymbol{V}^{k-1}} &
		&
		|[alias=VVVV]|
		\Omega^{n}_{\mathfrak{C}} \ar{r}[swap]{\boldsymbol{V}^{k-1}} &
		\Sigma \wr \Omega^{n-k}_{\mathfrak{C}}
		\arrow[Leftrightarrow, from=VV, to=UU,shorten >=0.05cm,shorten <=0.05cm
		,swap,near end,"\pi"
		]
		\arrow[Leftrightarrow, from=VVV, to=UUU,shorten >=0.05cm,shorten <=0.05cm
		,swap,near end,"\pi"
		]
		\end{tikzcd}
		\end{equation}
	\end{itemize}
\end{proposition}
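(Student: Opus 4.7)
The plan is to verify each of the eight identities by direct combinatorial inspection of the explicit formulas for $d_i$, $s_j$, $\boldsymbol{V}^k$, and $\pi_{i,k}$ from this subsection. In every case, the two composite natural transformations produce the same underlying tuple of substrings of trees, so the real content of each identity is that the orderings on the indexing vertex sets (mediated by nested $\pi$ isomorphisms) coincide.

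First I would dispose of the iterated-vertex identities (IT1) and (IT2). Both are consequences of the additivity formula \eqref{VKADD EQ} once one commutes the outer face operator $d_i$ past $\boldsymbol{V}^{k+l+1}$: for $i < k$, Proposition \ref{CATDIAG PROP} introduces a $\pi_{i,k+l+1}$ isomorphism, while for $k < i < k+l+1$ the commutation descends $d_i$ to $d_{i-k-1}$ applied inside the outer $\Sigma$, as in (IT2). In either case, one checks that the two different routes through the diagram give the same resulting $\pi$ on the common indexing set $\boldsymbol{V}(T_{k+l+1})$.

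Next I would handle (FF1), (FF2), and (DF1)--(DF4) via a common template. Trace each side of the diagram through the explicit vertex-by-vertex formulas, noting that the ordering on $\boldsymbol{V}(T_k)$ is determined, per the comment following \eqref{VKDEF EQ}, by the chain $\boldsymbol{V}(T_k) \to \boldsymbol{V}(T_{k-1}) \to \cdots \to \boldsymbol{V}(T_0)$ together with the planarizations of the intermediate $T_i$. Removing or duplicating $\vect{T}_i$ with $i < k$ affects only the $i$-th step of this lexicographic comparison, and the nested $\pi_{i,k}$'s record precisely this change; comparing how the $\pi$'s compose on either side of each identity is then a routine check.

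Since every functor and natural isomorphism above is natural in $\mathfrak C$ by Proposition \ref{CATDIAG PROP}(iii), it suffices to verify each identity in the uncolored case $\mathfrak C = \**$, where they are (extensions of) the compatibilities recorded in \cite[Prop. 3.102]{BP_geo}. The main obstacle is thus not conceptual but purely combinatorial bookkeeping: there are many cases indexed by the relative positions of $i$, $i'$, $j$, $k$, $l$, and tracking how nested $\pi$'s compose requires care. Once (IT1) and (IT2) are in hand, however, the remaining cases can largely be derived by specialization and iteration of the base cases.
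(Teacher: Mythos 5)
Your proposal is correct and matches the paper's treatment: the paper states Proposition \ref{CATDIAG2 PROP} without proof, presenting it as a routine extension of the uncolored compatibilities in \cite[Prop. 3.102]{BP_geo}, and your plan — reduce to $\mathfrak C = \**$ via the naturality in Proposition \ref{CATDIAG PROP}(iii) (together with faithfulness of the forgetful functor $\Omega^n_{\mathfrak C} \to \Omega^n$) and then check that the nested $\pi$'s, i.e.\ the reorderings of the vertex sets, compose the same way on both sides — is exactly the intended verification. The only caveat is your closing claim that (FF1)--(DF4) can be "derived by specialization and iteration" of (IT1)--(IT2): these have a different shape (two simplicial operators against one $\boldsymbol{V}^k$, rather than one against two), so they genuinely require their own direct check via your "common template," which you do describe.
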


\subsection{The $(-)\wr A$ construction}\label{WRACONST SEC}

One of the key ingredients used in \cite[\S 4.2]{BP21} when describing the monad on spans (cf. \eqref{SPANSYMADJ EQ})
is the use of categories 
$\Omega^n \wr A$ defined by pullbacks diagrams of the form
\begin{equation}\label{WRASAMPLE EQ}
\begin{tikzcd}
\Omega^n \wr A \ar{r}{\boldsymbol{V}^n} \ar{d} &
\Sigma \wr A  \ar{d}
\\
\Omega^n \ar{r}{\boldsymbol{V}^n} &
\Sigma \wr \Sigma
\end{tikzcd}
\end{equation}
Moreover, these categories are related by analogues of the operators $d_i$, $s_j$, $\boldsymbol{V}^k$, $\pi_{i,k}$
which satisfy all the analogues of the compatibilities 
listed in Propositions \ref{CATDIAG PROP} and \ref{CATDIAG2 PROP}.

In \cite{BP21} we built these analogue operators in a somewhat ad-hoc manner. Here we use a more systematic approach,
which views the $(-) \wr A$ construction as a $2$-categorical extension of the pullback operation in $\Cat$.
We now define the required $2$-categories,
which are a variant of the 
$\mathsf{Cat}\downarrow^l \V$
categories in Remark \ref{SUBCATDOWNL REM} with regard to 
a split Grothendieck fibration $\mathcal{E} \to \mathcal{B}$
(Remark \ref{SPLITOPFIB REM}).
In what follows, arrows 
$\varphi^{\**}e \to e$ in
the chosen cleavage of $\mathcal{E}$ are called \emph{pullback arrows}.

\begin{definition}
	Let $\mathcal{E} \to \mathcal{B}$ be a split Grothendieck fibration.
	We write $\mathsf{Cat}\downarrow^r_{\mathcal{B}} \mathcal{E}$ for the $2$-category such that:
	\begin{itemize}
		\item objects are functors $F \colon \mathcal{C} \to \mathcal{E}$; 
		
		\item an $1$-arrow from 
		$F \colon \mathcal{C} \to \mathcal{E}$
		to
		$F' \colon \mathcal{C}' \to \mathcal{E}$
		is a pair $(\varphi,\phi)$
		formed by a functor $\varphi\colon \mathcal{C} \to \mathcal{C}'$ and a natural transformation $\phi \colon F' \varphi \Rightarrow F$ consisting of pullback arrows over $\mathcal{B}$
		\begin{equation}
		\begin{tikzcd}[row sep = 5pt, column sep = 40pt]
		\mathcal{C} \arrow{dr}[name=U]{F} \arrow{dd}[swap]{\varphi}
		\\
		& \mathcal{E}
		\\
		|[alias=V]| \mathcal{C}' \arrow{ur}[swap]{F'}
		\arrow[Rightarrow, from=V, to=U,shorten >=0.25cm,shorten <=0.25cm
		,swap,near end,"\phi"
		]
		\end{tikzcd}
		\end{equation}
		
		\item a $2$-arrow from $(\varphi,\phi)$ to $(\varphi',\phi')$ is a $2$-arrow $\Phi \colon \varphi \Rightarrow \varphi'$ such that
		$\phi' \circ F' \Phi = \phi$.
		\begin{equation}
		\begin{tikzcd}[column sep = 65pt,row sep = 7pt]
		\mathcal{C} \arrow{dr}[name=U]{F} 
		\arrow[bend right]{dd}[swap]{\varphi}[name=F]{}
		\arrow[bend left]{dd}{\varphi'}[swap,name=FF]{}
		&
		&
		\mathcal{C} \arrow{dr}[name=U2]{F} 
		\arrow[bend right]{dd}[swap]{\varphi}
		&
		\\
		& \mathcal{E}
		&
		& \mathcal{E}
		\\
		\mathcal{C}' \arrow{ur}[swap]{F'}[near start, name=V]{}
		&
		&
		|[alias=V2]| \mathcal{C}' \arrow{ur}[swap]{F'}
		&
		\arrow[Rightarrow, from=V, to=U,shorten >=0.25cm,shorten <=0.25cm
		,swap,near end,"\phi'"
		]
		\arrow[Rightarrow, from=F, to=FF,shorten >=0.0cm,shorten <=0.0cm
		,swap,"\Phi"
		]
		\arrow[Rightarrow, from=V2, to=U2,shorten >=0.25cm,shorten <=0.25cm
		,swap,near end,"\phi"
		]
		\end{tikzcd}
		\end{equation}
	\end{itemize}
\end{definition}

Given a map $\rho \colon \mathcal{E} \to \mathcal{F}$
of split Grothendieck fibrations over $\mathcal{B}$
(i.e. $\rho$ sends pullback arrows to pullback arrows),
we now define a pullback $2$-functor 
\begin{equation}\label{WSPANPULL EQ}
\rho^{\**} \colon
\mathsf{Cat} \downarrow^r_\mathcal{B} \mathcal{F} 
\to
\mathsf{Cat} \downarrow^r_\mathcal{B} \mathcal{E}.
\end{equation}

On objects, i.e. functors $F \colon \mathcal{C} \to \mathcal{F}$, one sets 
$\rho^{\**}(\mathcal{C} \to \mathcal{F})=
(\mathcal{C} \times_{\mathcal{F}} \mathcal{E}
\to \mathcal{E})
$.

On $1$-arrows, i.e. pairs 
$(\varphi,\phi \colon F_2 \circ \varphi \Rightarrow F_1)$
as in the bottom of the diagram below
\[
\begin{tikzcd}[column sep = 20pt, row sep = 7pt]
\mathcal{C}_1 \times_{\mathcal{F}} \mathcal{E} 
\ar{rrrrr}[name=toE]{}[near end]{E_1} \ar[dashed]{rd}[swap]{\bar{\varphi}} \ar{dd}[swap]{p}
&&&
&&
\mathcal{E}  \ar{dd}{\rho}
\\
&
|[alias=DBE]|
\mathcal{C}_2 \times_{\mathcal{F}} \mathcal{E} \ar{rrrru}[swap]{E_2}
\\
\mathcal{C}_1 \ar{rrrrr}[name=toB]{}[near end]{F_1} \ar{rd}[swap]{\varphi}
&&&
&&
\mathcal{F} 
\\
&
|[alias=D]| \mathcal{C}_2 \ar{rrrru}[swap]{F_2}
\arrow[Rightarrow, from=DBE, to=toE, shorten <=0.15cm,shorten >=0.15cm,dashed
,swap,"\bar{\phi}"
]
\arrow[Rightarrow, from=D, to=toB, shorten <=0.15cm,shorten >=0.15cm,swap,"\phi"]
\arrow[from=DBE, to=D, crossing over, near start, "p"]
\end{tikzcd}
\]
we define $\rho^{\**}(\varphi,\phi)$ as the only possible choice of dashed data
$(\bar{\varphi},\bar{\phi})$ such that:
\begin{enumerate*}
\item[(i)] $\bar{\phi}$ consists of pullback arrows over $\mathcal{B}$
and;
\item[(ii)] the diagram commutes in the sense that
$p \bar{\varphi} = \varphi p$ and 
$\rho \bar{\phi} = \phi p$.
\end{enumerate*}

Alternatively, one has the explicit formula
\[
\rho^{\**} (\varphi,\phi)=
\left(
\left( \varphi p,
\left( \phi p \right)^{\**} E_1 \right),
\left( \phi p \right)^{\**} E_1 \Rightarrow E_1
\right).
\]


Lastly, on a $2$-arrow $\Phi \colon (\varphi,\phi) \Rightarrow (\varphi',\phi')$
as on the bottom of the leftmost diagram below
\begin{equation}\label{PULL2ARR EQ}
\begin{tikzcd}[column sep = 16pt, row sep = 13pt]
\mathcal{C}_1 \times_{\mathcal{F}} \mathcal{E} 
\ar{rrrrr}[name=toE]{}[near end]{E_1} 
\ar[bend left=23]{rd}[near start, swap, name=FE]{}
\ar[bend right=35]{rd}[name=FFE]{} \ar{dd}[swap]{p}
&&&
&&
\mathcal{E}  \ar{dd}
&&
\mathcal{C}_1 \times_{\mathcal{F}} \mathcal{E} 
\ar{rrrrr}[name=toE2]{}[near end]{E_1} 
\ar[bend right=35]{rd}{} \ar{dd}
&&&
&&
\mathcal{E}  \ar{dd}
\\
&
|[alias=DBE]|
\mathcal{C}_2 \times_{\mathcal{F}} \mathcal{E} \ar{rrrru}[swap]{E_2}
\ar{dd}[near start]{p} &&&&
&&
&
|[alias=DBE2]|
\mathcal{C}_2 \times_{\mathcal{F}} \mathcal{E} \ar{rrrru}[swap]{E_2} &&&&
\\
\mathcal{C}_1 \ar{rrrrr}[name=toB]{}[near end]{F_1} 
\ar[bend left=23]{rd}[swap,name=FF]{}
\ar[bend right=35]{rd} [name=F]{}
&&&
&&
\mathcal{F} 
&&
\mathcal{C}_1 \ar{rrrrr}[name=toB2]{}[near end]{F_1} 
\ar[bend right=35]{rd}{}
&&&
&&
\mathcal{F} 
\\
&
|[alias=D]| \mathcal{C}_2 \ar{rrrru}[swap]{F_2} &&&&
&&
&
|[alias=D2]|
\mathcal{C}_2 \ar{rrrru}[swap]{F_2} &&&&
\arrow[Rightarrow, from=DBE, to=toE, shorten <=0.15cm,shorten >=0.15cm
,swap,near end,"\bar{\phi}'"
]
\arrow[Rightarrow, from=DBE2, to=toE2, shorten <=0.15cm,shorten >=0.15cm
,swap,near end,"\bar{\phi}"
]
\arrow[Rightarrow, from=D, to=toB, shorten <=0.15cm,shorten >=0.15cm,swap,near end,"\phi'"]
\arrow[Rightarrow, from=D2, to=toB2, shorten <=0.15cm,shorten >=0.15cm,swap,near end,"\phi"]
\arrow[Rightarrow, from=F, to=FF, shorten <=0cm,shorten >=0cm,swap,"\Phi"]
\arrow[Rightarrow, from=FFE, to=FE, shorten <=0cm,shorten >=0cm,swap,dashed,"\bar{\Phi}"]
\arrow[from=DBE, to=D, crossing over]
\arrow[from=DBE2, to=D2, crossing over]
\end{tikzcd}
\end{equation}
we define $\rho^{\**}(\Phi)$
as the only choice (recall that $\bar{\varphi}'$ is a pullback arrow)
of dashed $\bar{\Phi}$
such that $\bar{\phi}' \circ E_2\bar{\Phi} = \bar{\phi}$
and $p \bar{\Phi} = \Phi p$.




\vskip 10pt

We are now ready to extend the $(-) \wr A$
construction from \eqref{WRASAMPLE EQ}.

First, note that, using the functor
$\boldsymbol{V}^n \colon \Omega^n_{\mathfrak{C}} \to \Sigma \wr \Sigma_{\mathfrak{C}}$,
the categories 
$\Omega^n_{\mathfrak{C}}$ may be regarded as objects in
$\mathsf{Cat} \downarrow^r_{\Sigma} \Sigma \wr \Sigma_{\mathfrak{C}}$.
Hence, given a functor $A \to \Sigma_{\mathfrak{C}}$
we define 
\begin{equation}\label{WRADEF EQ}
(-) \wr A \colon 
\mathsf{Cat} \downarrow^r_{\Sigma} \Sigma \wr \Sigma_{\mathfrak{C}}
\to
\mathsf{Cat} \downarrow^r_{\Sigma} \Sigma \wr A
\end{equation}
as the pullback $2$-functor \eqref{WSPANPULL EQ} for the map
$\Sigma \wr A \to \Sigma \wr \Sigma_{\mathfrak{C}}$.

Focusing on objects, \eqref{WRADEF EQ} then defines 
$\Omega_{\mathfrak{C}}^n \wr A$
together with maps 
$\Omega_{\mathfrak{C}}^n \wr A 
\xrightarrow{\boldsymbol{V}^n} \Sigma \wr A$.

Next, by specifying Proposition \ref{CATDIAG PROP}(i) to $k=n$,
one obtains $1$-arrows 
$(d_i,\pi_{i_n}), 0 \leq i < n$ and 
$(s_j,id_{\boldsymbol{V}^n}), -1 \leq j \leq n$
in 
$\mathsf{Cat} \downarrow^r_{\Sigma} \Sigma \wr \Sigma_{\mathfrak{C}}$.
Thus, by specifying \eqref{WRADEF EQ} to $1$-arrows
one also obtains $1$-arrows
$(d_i,\pi_{i_n}), 0 \leq i < n$ and 
$(s_j,id_{\boldsymbol{V}^n}), -1 \leq j \leq n$
in $\mathsf{Cat} \downarrow^r_{\Sigma} \Sigma \wr A$
between the $\Omega_{\mathfrak{C}}^n \wr A \to \Sigma \wr A$.

We next describe the functors
$\boldsymbol{V}^k \colon \Omega^n_{\mathfrak{C}} \wr A
\to 
\Sigma \wr \left(\Omega^{n-k-1}_{\mathfrak{C}} \wr A \right)$.
Note that the $\Sigma \wr (-)$ operation can be extended to a $2$-endofunctor
\begin{equation}\label{SIGMAENDO EQ}
\begin{tikzcd}[row sep = 0pt]
\mathsf{Cat} \downarrow^r_{\Sigma} \Sigma \wr A \ar{r}{\Sigma \wr (-)} &
\mathsf{Cat} \downarrow^r_{\Sigma} \Sigma \wr A
\\
\mathcal{C} \to \Sigma \wr A \ar[mapsto]{r} &
\Sigma \wr \mathcal{C} \to \Sigma^{\wr 2} \wr A \xrightarrow{\sigma^{0}} \Sigma \wr A 
\end{tikzcd}
\end{equation}
so that, noting that the diagram below consists of pullback squares,
\begin{equation}
\begin{tikzcd}[row sep = 8pt]
\Sigma \wr (\Omega^n \wr A) \ar{r}{\Sigma \wr \boldsymbol{V}^n} \ar{d} &
\Sigma \wr (\Sigma \wr A) \ar{d} \ar{r}{\sigma^0} &
\Sigma \wr A  \ar{d}
\\
\Sigma \wr \Omega^n \ar{r}{\Sigma \wr \boldsymbol{V}^n} &
\Sigma \wr (\Sigma \wr \Sigma_{\mathfrak{C}}) \ar{r}{\sigma^0} &
\Sigma \wr \Sigma_{\mathfrak{C}}
\end{tikzcd}
\end{equation}
one has identifications
$\Sigma \wr \left(\Omega^{n}_{\mathfrak{C}} \wr A \right)
\simeq 
\left(\Sigma \wr \Omega^{n}_{\mathfrak{C}}\right) \wr A $,
which are seen to agree with the $\sigma^i,\delta^j$ operators on $\Sigma^{\wr k} \wr (-)$.
Thus, we henceforth omit parenthesis and write 
$\Sigma \wr \Omega^{n}_{\mathfrak{C}} \wr A$
to denote 
$\Sigma \wr \left(\Omega^{n}_{\mathfrak{C}} \wr A \right)$.

Specifying \eqref{SIGMAENDO EQ} to $A = \Sigma_{\mathfrak{C}}$,
by \eqref{VKADD EQ} when $k=n-l-1$ one has that
$(\boldsymbol{V}^l,id_{\boldsymbol{V}^{n}})$
defines a $1$-arrow from 
$\Omega^n_{\mathfrak{C}} \to \Sigma \wr \Sigma_{\mathfrak{C}}$
to 
$\Sigma \wr \Omega^{n-l-1}_{\mathfrak{C}} \to \Sigma \wr \Sigma_{\mathfrak{C}}$
in
$\mathsf{Cat} \downarrow^r_{\Sigma} \Sigma \wr \Sigma_{\mathfrak{C}}$. 
Thus, applying \eqref{WRADEF EQ} yields a $1$-arrow
$(\boldsymbol{V}^l,id_{\boldsymbol{V}^{n}})$
from
$\Omega^n_{\mathfrak{C}}\wr A \to \Sigma \wr A$
to 
$\Sigma \wr \Omega^{n-l-1}_{\mathfrak{C}} \wr A \to \Sigma \wr A$
in
$\mathsf{Cat} \downarrow^r_{\Sigma} \Sigma \wr A$.

It is now immediate from $2$-functoriality of
\eqref{WRADEF EQ}
that one has natural transformations $\pi_{i,k}$
between the $\Omega^n_{\mathfrak{C}}\wr A$
satisfying the analogues of 
Proposition \ref{CATDIAG PROP}(i)(ii) and Proposition \ref{CATDIAG2 PROP}.

In what follows, we will find it convenient to abbreviate
$\Omega_{\mathfrak{C}}^n \wr A \to \Sigma \wr A$
as 
$\Omega_{\mathfrak{C}}^n \wr A$
and
$(d_i,\pi_{i_n}), (s_j,id_{\boldsymbol{V}^n}),
(\boldsymbol{V}^k,id_{\boldsymbol{V}^n})
$
as 
$d_i,s_j,\boldsymbol{V}^k$, 
i.e. we will leave the $\Sigma \wr A$
data implicit.

The analogue of Proposition \ref{CATDIAG PROP}(iii) requires an extra argument, and is stated in the following.

\begin{proposition}\label{SPANPIECE PROP}
	A commutative square
	\begin{equation}\label{SPANPIECE EQ}
	\begin{tikzcd}
	A \ar{d} \ar{r}{\varphi} &  \ar{d} B
	\\
	\Sigma_{\mathfrak{C}} \ar{r}[swap]{\varphi} & \Sigma_{\mathfrak{D}}
	\end{tikzcd}
	\end{equation}
	induces natural maps 
	$\varphi \colon
	\Omega_{\mathfrak{C}}^n \wr A \to 
	\Omega_{\mathfrak{D}}^n \wr B $
	such that the diagrams below commute.
	\[
	\begin{tikzcd}[column sep = 16pt, row sep = 10pt]
	\Omega^n_{\mathfrak{C}} \wr A \ar{r}{d_i} \ar{d}[swap]{\varphi} &
	\Omega^{n-1}_{\mathfrak{C}} \wr A \ar{d}{\varphi}
	&
	\Omega^n_{\mathfrak{C}} \wr A \ar{r}{s_j} \ar{d}[swap]{\varphi} &
	\Omega^{n+1}_{\mathfrak{C}}\wr A \ar{d}{\varphi}
	&
	\Omega^n_{\mathfrak{C}} \wr A \ar{r}{\boldsymbol{V}^k} \ar{d}[swap]{\varphi} &
	\Sigma \wr \Omega^{n-k-1}_{\mathfrak{C}} \wr A \ar{d}{\varphi}
	\\
	\Omega^n_{\mathfrak{D}} \wr B \ar{r}{d_i} &
	\Omega^{n-1}_{\mathfrak{D}} \wr B
	&
	\Omega^n_{\mathfrak{D}} \wr B \ar{r}{s_j} &
	\Omega^n_{\mathfrak{D}} \wr B
	&
	\Omega^n_{\mathfrak{D}} \wr B \ar{r}{\boldsymbol{V}^k} &
	\Sigma \wr \Omega^{n-k-1}_{\mathfrak{D}} \wr B
	\end{tikzcd}
	\]
	\[
	\begin{tikzcd}[column sep = 16pt, row sep = small]
	\Omega^n_{\mathfrak{C}} \wr A
	\ar{rrrrr}[name=toE]{\boldsymbol{V}^k} \ar{rd}[swap]{d_i} \ar{dd}[swap]{\varphi}
	&&&
	&&
	\Sigma \wr \Omega^{n-k-1}_{\mathfrak{C}} \wr A  \ar{dd}{\varphi}
	\\
	&
	|[alias=DBE]|
	\Omega^{n-1}_{\mathfrak{C}} \wr A \ar{rrrru}[swap]{\boldsymbol{V}^{k-1}}
	\\
	\Omega^n_{\mathfrak{D}} \wr B \ar{rrrrr}[name=toB]{\boldsymbol{V}^k} \ar{rd}[swap]{d_i}
	&&&
	&&
	\Sigma \wr \Omega^{n-k-1}_{\mathfrak{D}} \wr B
	\\
	&
	|[alias=D]| \Omega^{n-1}_{\mathfrak{D}} \wr B \ar{rrrru}[swap]{\boldsymbol{V}^{k-1}}
	\arrow[Leftrightarrow, from=DBE, to=toE, shorten <=0.15cm,shorten >=0.15cm
	,swap,near end,"\pi"
	]
	\arrow[Leftrightarrow, from=D, to=toB, shorten <=0.15cm,shorten >=0.15cm,swap,near end,"\pi"]
	\arrow[from=DBE, to=D, crossing over, near start, swap, "\varphi"]
	\end{tikzcd}
	\]
\end{proposition}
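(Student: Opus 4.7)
The plan is to obtain everything from the universal property of the pullbacks defining $\Omega_{\mathfrak D}^n \wr B$, combined with Proposition~\ref{CATDIAG PROP}(iii) and the $2$-functoriality of the pullback construction $(-) \wr A$ from \S\ref{WRACONST SEC}. Conceptually nothing new is needed; the work is bookkeeping with the $2$-categorical data.

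First I would construct the functor $\varphi \colon \Omega_{\mathfrak C}^n \wr A \to \Omega_{\mathfrak D}^n \wr B$. Writing $\Omega_{\mathfrak D}^n \wr B = \Omega_{\mathfrak D}^n \times_{\Sigma \wr \Sigma_{\mathfrak D}} \Sigma \wr B$, it suffices to specify two compatible functors out of $\Omega_{\mathfrak C}^n \wr A$: one into $\Omega_{\mathfrak D}^n$ and one into $\Sigma \wr B$. I take the composites
\[
\Omega_{\mathfrak C}^n \wr A \longrightarrow \Omega_{\mathfrak C}^n \xrightarrow{\varphi} \Omega_{\mathfrak D}^n,
\qquad
\Omega_{\mathfrak C}^n \wr A \longrightarrow \Sigma \wr A \xrightarrow{\Sigma \wr \varphi} \Sigma \wr B,
\]
where the initial arrows are the defining projections of $\Omega_{\mathfrak C}^n \wr A$. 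Compatibility after projecting to $\Sigma \wr \Sigma_{\mathfrak D}$ follows from the identity $\varphi \boldsymbol{V}^n = \boldsymbol{V}^n \varphi$ of Proposition~\ref{CATDIAG PROP}(iii) together with the commutativity of the square \eqref{SPANPIECE EQ}.

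Second, the three commutative squares for $d_i$, $s_j$, and $\boldsymbol{V}^k$ are each verified by the same universal property: composing either leg with the two structure projections of the target pullback produces diagrams that commute already on the uncolored level, by the corresponding squares in Proposition~\ref{CATDIAG PROP}(iii), and on the $\Sigma \wr (-)$ level, by functoriality of $\Sigma \wr \varphi$. For the $\boldsymbol{V}^k$ case the target $\Sigma \wr \Omega_{\mathfrak D}^{n-k-1} \wr B$ is itself a pullback built via the $2$-endofunctor in \eqref{SIGMAENDO EQ}, so the same recipe applies, now using the additivity identity \eqref{VKADD EQ}.

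Third, the prism diagram for $\pi_{i,k}$ is the only step requiring genuine $2$-categorical care, and this is the main (if mild) obstacle. By construction, $\pi_{i,k}$ on $\Omega_{\mathfrak C}^n \wr A$ is the image under the pullback $2$-functor \eqref{WRADEF EQ} of the $2$-isomorphism $\pi_{i,k}$ in Proposition~\ref{CATDIAG PROP}(i), and pullbacks of $2$-cells are characterized by the uniqueness in \eqref{PULL2ARR EQ}. Thus, to show that the two pastings of $2$-cells comprising the prism agree, it is enough to check that both postcomposites with the projections to $\Omega_{\mathfrak D}^{n-1}$ (and further to $\Sigma \wr \Omega_{\mathfrak D}^{n-k-1}$) coincide. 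The first postcomposite reduces to the prism of Proposition~\ref{CATDIAG PROP}(iii) on the uncolored $\Omega^n$, and the second is a strict identity because the $2$-cells live entirely over the identity on the $\Sigma \wr (-)$-leg. Uniqueness in the universal property of the pullback then yields the prism, completing the proof.
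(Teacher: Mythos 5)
Your proposal is correct in substance but is organized differently from the paper's argument, so a comparison is in order. You verify everything componentwise through the universal property of the defining pullbacks: functors (and $2$-cells) into $\Omega^n_{\mathfrak{D}}\wr B$ are determined by their projections to $\Omega^n_{\mathfrak{D}}$ and to $\Sigma\wr B$, and each projection is matched against Proposition \ref{CATDIAG PROP}(iii) or against functoriality of $\Sigma\wr\varphi$. The paper instead factors the square \eqref{SPANPIECE EQ} as $A \to B\times_{\Sigma_{\mathfrak{D}}}\Sigma_{\mathfrak{C}} \to B$ and handles the two halves by purely formal devices: the first half (a map of fibrations over the \emph{same} base $\Sigma_{\mathfrak{C}}$) follows from naturality of $(-)\wr A$ in $A$, while the second half follows by regarding all the diagrams of Proposition \ref{CATDIAG PROP}(iii) as diagrams in the $2$-category $\mathsf{Cat}\downarrow^r_{\Sigma}\Sigma\wr\Sigma_{\mathfrak{D}}$ (via the composites $\Omega^n_{\mathfrak{C}}\xrightarrow{\varphi}\Omega^n_{\mathfrak{D}}\xrightarrow{\boldsymbol{V}^n}\Sigma\wr\Sigma_{\mathfrak{D}}$) and applying the $2$-functor \eqref{WRADEF EQ}, so that commutativity of the images is automatic. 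The paper's route buys you the prism for free and avoids tracking $2$-cells by hand; your route is more elementary but requires a point you gloss over: the projection $\boldsymbol{V}^{n-1}\colon\Omega^{n-1}_{\mathfrak{C}}\wr A\to\Sigma\wr A$ does \emph{not} strictly commute with $d_i$ (only up to the pullback $2$-cell $\pi_{i,n}$), so when you compare $E'_B\circ\varphi\circ d_i$ with $E'_B\circ d_i\circ\varphi$, and likewise when you assert the $\Sigma\wr(-)$-leg of the prism is a "strict identity," you must invoke the fact that $\Sigma\wr\varphi\colon\Sigma\wr A\to\Sigma\wr B$ preserves the chosen pullback arrows over $\Sigma$, i.e.\ is a map of split fibrations, so that $(\Sigma\wr\varphi)\circ\pi^{\ast}E = \pi^{\ast}((\Sigma\wr\varphi)\circ E)$. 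This holds and is exactly the hypothesis built into \eqref{WSPANPULL EQ}, but stating it explicitly is what makes your componentwise verification close up.
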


\begin{proof}
	The desired maps 
	$\varphi \colon
	\Omega_{\mathfrak{C}}^n \wr A \to 
	\Omega_{\mathfrak{D}}^n \wr B $
	are obtained by just drawing the pullback diagrams defining each term,
	so we focus on the more interesting claim that the given diagrams commute.
	To see this, we first factor \eqref{SPANPIECE EQ} as
	\[
	\begin{tikzcd}
	A \ar{d} \ar{r} & B \times_{\Sigma_{\mathfrak{D}}} \Sigma_{\mathfrak{C}} \ar{r} \ar{d} &  \ar{d} B
	\\
	\Sigma_{\mathfrak{C}} \ar[equal]{r} & \Sigma_{\mathfrak{C}} \ar{r} & \Sigma_{\mathfrak{D}}
	\end{tikzcd}
	\]
	and note that it suffices to prove the result separately for each half.
	For the left half, the desired commutativity claims 
	follow from naturality of $(-) \wr A$ with respect to $A$.
	On the other hand, for the right square the commutativity claims follow by instead noting that all diagrams in 
	Proposition \ref{CATDIAG PROP}(iii)
	can be regarded as diagrams in the $2$-category
	$\mathsf{Cat} \downarrow^r_{\Sigma} \Sigma \wr \Sigma_{\mathfrak{D}}$
	(by using the composites 
	$\Omega_{\mathfrak{C}}^n \xrightarrow{\varphi} \Omega_{\mathfrak{D}}^n \xrightarrow{\boldsymbol{V}^n} \Sigma \wr \Sigma_{\mathfrak{D}}$) 
	and then applying the pullback functor $(-) \wr B$. 
\end{proof}

In what follows, 
for an increasing sequence $i_1<i_2<\cdots<i_k$,
we write
$d_{i_1,i_2,\cdots,i_k}=d_{i_1} d_{i_2} \cdots d_{i_k}$.

Next, note that, using the composite functors 
$\Omega^n_{\mathfrak{C}} \wr A \to \Omega^n_{\mathfrak{C}} 
\xrightarrow{d_{0,\cdots,n}} \Sigma_{\mathfrak{C}}$,
one can regard the $\Omega_{\mathfrak{C}}^n \wr (-)$ constructions
as endofunctors on the usual $1$-overcategory
$\mathsf{Cat} \downarrow \Sigma_{\mathfrak{C}}$.

\begin{proposition}\label{ASSOCIDS PROP}
	Let $k,l\geq -1$. One has canonical natural identifications 
	$\Omega^k_{\mathfrak{C}} \wr \Omega^l_{\mathfrak{C}} \wr A
	\simeq 
	\Omega^{k+l+1}_{\mathfrak{C}} \wr A $.
	
	Moreover, these identifications are associative in the sense that, for any $k,l,m \leq -1$,
	the iterated composite identifications below coincide.
\[
	\Omega^k_{\mathfrak{C}} \wr \Omega^l_{\mathfrak{C}} \wr \Omega^m_{\mathfrak{C}} \wr A
	\simeq 
	\Omega^{k+l+1}_{\mathfrak{C}} \wr \Omega^m_{\mathfrak{C}} \wr A
	\simeq 
	\Omega^{k+l+m+2}_{\mathfrak{C}} \wr A
	\qquad
	\Omega^k_{\mathfrak{C}} \wr \Omega^l_{\mathfrak{C}} \wr \Omega^m_{\mathfrak{C}} \wr A
	\simeq 
	\Omega^{k}_{\mathfrak{C}} \wr \Omega^{l+m+1}_{\mathfrak{C}} \wr A
	\simeq 
	\Omega^{k+l+m+2}_{\mathfrak{C}} \wr A
\]
	Furthermore, the identifications above also induce the following identifications
	\[
	d_i \wr \Omega^l \wr A \simeq d_i \wr A
	\quad
	\pi_{i,k} \wr \Omega^l \wr A \simeq \pi_{i,k} \wr A
	\quad
	s_j \wr \Omega^l \wr A \simeq s_j \wr A
	\quad
	\Omega^k \wr d_i \wr A \simeq d_{k+i+1} \wr A
	\quad
	\Omega^k \wr s_j \wr A \simeq s_{k+j+1} \wr A
	\]
\end{proposition}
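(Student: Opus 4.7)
The plan is to build all of the claimed identifications from the pasting lemma for pullbacks, exploiting the additivity formula \eqref{VKADD EQ} for the $\boldsymbol{V}^k$ operators. Throughout, I will use the identification $\Sigma \wr (\Omega^l_{\mathfrak{C}} \wr A) \simeq (\Sigma \wr \Omega^l_{\mathfrak{C}}) \wr A$ already established before \eqref{SIGMAENDO EQ}, which lets us iterate the $(-) \wr A$ construction freely.

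First, I would establish the basic identification $\Omega^k_{\mathfrak{C}} \wr \Omega^l_{\mathfrak{C}} \wr A \simeq \Omega^{k+l+1}_{\mathfrak{C}} \wr A$ as follows. Unwinding definitions, the outer $\Omega^k_{\mathfrak{C}} \wr (-)$ construction produces the pullback of $\boldsymbol{V}^k \colon \Omega^k_{\mathfrak{C}} \to \Sigma \wr \Sigma_{\mathfrak{C}}$ along the map $\Sigma \wr (\Omega^l_{\mathfrak{C}} \wr A) \to \Sigma \wr \Sigma_{\mathfrak{C}}$, which via the identification above is $\Sigma \wr \boldsymbol{V}^l \colon \Sigma \wr \Omega^l_{\mathfrak{C}} \wr A \to \Sigma \wr \Sigma \wr \Sigma_{\mathfrak{C}}$ followed by $\sigma^0$. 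By the pasting lemma for pullbacks, iterating produces the pullback of $\Sigma \wr A \to \Sigma \wr \Sigma_{\mathfrak{C}}$ along the composite
\[
\Omega^k_{\mathfrak{C}} \xrightarrow{\boldsymbol{V}^k} \Sigma \wr \Omega^l_{\mathfrak{C}} \xrightarrow{\Sigma \wr \boldsymbol{V}^l} \Sigma^{\wr 2} \wr \Sigma_{\mathfrak{C}} \xrightarrow{\sigma^0} \Sigma \wr \Sigma_{\mathfrak{C}}.
\]
But \eqref{VKADD EQ} (specialized to the case $n = k+l+1$, so that $\Omega^{n-k-l-2}_{\mathfrak{C}} = \Omega^{-1}_{\mathfrak{C}} = \Sigma_{\mathfrak{C}}$) identifies this composite with $\boldsymbol{V}^{k+l+1}$, giving the desired identification with $\Omega^{k+l+1}_{\mathfrak{C}} \wr A$.

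For associativity, both composite identifications
$\Omega^k_{\mathfrak{C}} \wr \Omega^l_{\mathfrak{C}} \wr \Omega^m_{\mathfrak{C}} \wr A \simeq \Omega^{k+l+m+2}_{\mathfrak{C}} \wr A$
exhibit the common triple iterated pullback as a single pullback along $\boldsymbol{V}^{k+l+m+2}$; concretely, both rely on \eqref{VKADD EQ} applied twice, and coincidence of the two orders of nesting reduces to the associativity of the concatenation operators $\sigma^0$ in the augmented cosimplicial structure on $\Sigma^{\wr n+1} \wr \mathcal C$, together with the two-dimensional pasting law for pullbacks. For the operator identifications, the maps $d_i$, $s_j$, $\boldsymbol{V}^k$, and the natural isomorphisms $\pi_{i,k}$ on $\Omega^n_{\mathfrak{C}} \wr A$ were all obtained by applying the $2$-functor $(-) \wr A$ of \eqref{WRADEF EQ} to the corresponding operators on $\Omega^n_{\mathfrak{C}}$; hence each of the five claimed identities reduces to an elementary check at the level of the pullback squares defining the involved terms. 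For instance, $\Omega^k \wr d_i \wr A \simeq d_{k+i+1} \wr A$ follows by tracing the face operator $d_i \colon \Omega^l_{\mathfrak{C}} \to \Omega^{l-1}_{\mathfrak{C}}$ through the pasting diagram that realizes the identification of the first part, using Proposition \ref{CATDIAG PROP}(ii) to reindex.

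The main obstacle is bookkeeping rather than content. Each identification is forced by the universal property of pullbacks, but the $2$-categorical nature of the constructions (particularly the presence of the isomorphisms $\pi_{i,k}$) means that one must verify the intended identifications at the $2$-cell level, not just the $1$-cell level. Practically, this amounts to verifying that all the relevant instances of Proposition \ref{CATDIAG2 PROP} survive pullback along $\Sigma \wr A \to \Sigma \wr \Sigma_{\mathfrak{C}}$, which they do by the $2$-functoriality of \eqref{WRADEF EQ}. Once this bookkeeping is set up carefully, every claim in the proposition becomes a direct consequence of the pasting lemma, applied either to the additive formula \eqref{VKADD EQ} for $\boldsymbol{V}^k$ or to the compatibility diagrams of Propositions \ref{CATDIAG PROP} and \ref{CATDIAG2 PROP}.
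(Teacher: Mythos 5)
Your approach is the paper's own: paste pullback squares and use the additivity formula \eqref{VKADD EQ} to identify the iterated pullback with a single pullback along $\boldsymbol{V}^{k+l+1}$, with associativity coming from the three-level analogue and the operator identities from $2$-functoriality of $(-)\wr A$. However, the displayed composite in your first step does not type-check: $\boldsymbol{V}^k$ out of $\Omega^k_{\mathfrak{C}}$ lands in $\Sigma \wr \Omega^{k-k-1}_{\mathfrak{C}} = \Sigma \wr \Sigma_{\mathfrak{C}}$, not in $\Sigma \wr \Omega^l_{\mathfrak{C}}$; the composite you want starts from $\Omega^{k+l+1}_{\mathfrak{C}}$, whose $\boldsymbol{V}^k$ does land in $\Sigma \wr \Omega^{l}_{\mathfrak{C}}$. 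This is not just a typo, because it conceals the one non-formal input of the argument: to compute the outer pullback $\Omega^k_{\mathfrak{C}} \times_{\Sigma \wr \Sigma_{\mathfrak{C}}} \bigl(\Sigma \wr (\Omega^l_{\mathfrak{C}} \wr A)\bigr)$ in two stages whose first stage has apex $\Omega^{k+l+1}_{\mathfrak{C}}$, you must first identify the pullback of $\Sigma \wr d_{0,\cdots,l} \colon \Sigma \wr \Omega^l_{\mathfrak{C}} \to \Sigma \wr \Sigma_{\mathfrak{C}}$ along $\boldsymbol{V}^k \colon \Omega^k_{\mathfrak{C}} \to \Sigma \wr \Sigma_{\mathfrak{C}}$ with $\Omega^{k+l+1}_{\mathfrak{C}}$. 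That is precisely the ``furthermore, these diagrams are pullback squares'' clause of Proposition \ref{CATDIAG PROP}(ii), iterated over $0 \leq i \leq l$; it is the bottom-left square of the diagram the paper draws, and your write-up never invokes it at this point (you only cite it later, for the $\Omega^k \wr d_i$ identity). Once that square is supplied, the rest of your argument --- pasting, \eqref{VKADD EQ} with $n = k+l+1$, and the compatibilities of Propositions \ref{CATDIAG PROP} and \ref{CATDIAG2 PROP} surviving pullback --- goes through and coincides with the paper's proof.
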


\begin{proof}
	The first claim follows by noting that all squares in the diagram below are pullback squares
	\[
	\begin{tikzcd}
	\Omega^{k+l+1}_{\mathfrak{C}} \wr A \ar{r}{\boldsymbol{V}^k} \ar{d} &
	\Sigma \wr \Omega^{l}_{\mathfrak{C}} \wr A  \ar{d} \ar{r}{\boldsymbol{V}^l} &
	\Sigma^{\wr 2} \wr A \ar{d}
	\\
	\Omega^{k+l+1}_{\mathfrak{C}} \ar{r}{\boldsymbol{V}^k} 
	\ar{d}[swap]{d_{k+1,\cdots,k+l+1}} &
	\Sigma \wr \Omega^{l}_{\mathfrak{C}} \ar{r}{\boldsymbol{V}^l}
	\ar{d}{d_{0,\cdots,l}} &
	\Sigma^{\wr 2} \wr \Sigma_{\mathfrak{C}}
	\\
	\Omega^{k}_{\mathfrak{C}} \ar{r}{\boldsymbol{V}^k} &
	\Sigma \wr \Sigma_{\mathfrak{C}}
	\end{tikzcd}
	\]
	while associativity follows from the obvious 
	``3 level analogue'' of the diagram above.

	For the additional identifications, 
	those identifications concerning $d_i$ and $\pi_{i,k}$
	follow from the left diagram below 
	(the bottom section of which commutes by 
	Proposition \ref{CATDIAG2 PROP}(FF2)),
	the identification concerning $d_{k+i+1}$ follows from the rightmost diagram, and the identifications concerning 
	$s_j$ and $s_{k+j+1}$
	follow from obvious analogues of these diagrams.
	\[
	\begin{tikzcd}[row sep = 5pt,column sep = 13pt]
	\Omega^{k+l+1}_{\mathfrak{C}} \wr A \ar{rrr}[near end, name=TT,swap]{} \ar{rd}[swap]{d_i} \ar{dd} &&&
	\Sigma \wr \Omega^{l}_{\mathfrak{C}} \wr A \ar{dd}
	&
	\Omega^{k+l+1}_{\mathfrak{C}} \wr A \ar{rr} \ar{rd}[swap]{d_{k+i+1}} \ar{dd} &&
	\Sigma \wr \Omega^{l}_{\mathfrak{C}} \wr A \ar{rd} \ar{dd}
	\\
	&
	|[alias=TD]|
	\Omega^{k+l}_{\mathfrak{C}} \wr A \ar{rru} &&
	&
	&
	\Omega^{k+l}_{\mathfrak{C}} \wr A \arrow[rr, crossing over] &&
	\Sigma \wr \Omega^{l-1}_{\mathfrak{C}} \wr A 	 \ar{dd}
	\\
	\Omega^{k+l+1}_{\mathfrak{C}} \ar{rrr}[near end, name=MT,swap]{} \ar{rd}[swap]{d_i} \ar{dd} &&&
	\Sigma \wr \Omega^{l}_{\mathfrak{C}} \ar{dd}
	&
	\Omega^{k+l+1}_{\mathfrak{C}} \ar{rr} \ar{rd}[swap]{d_{k+i+1}} \ar{dd}&&
	\Sigma \wr \Omega^{l}_{\mathfrak{C}} \ar{rd} \ar{dd}
	\\
	&
	|[alias=MD]|
	\Omega^{k+l}_{\mathfrak{C}} \ar{rru} \arrow[uu, leftarrow, crossing over] &&
	&
	&
	\Omega^{k+l}_{\mathfrak{C}} \arrow[rr, crossing over] \arrow[uu, leftarrow, crossing over] &&
	\Sigma \wr \Omega^{l-1}_{\mathfrak{C}} \ar{dd}
	\\
	\Omega^{k}_{\mathfrak{C}} \ar{rrr}[near end, name=DT,swap]{} \ar{rd}[swap]{d_i} &&&
	\Sigma \wr \Sigma_{\mathfrak{C}}
	&
	\Omega^{k}_{\mathfrak{C}} \ar{rr} \ar[equal]{rd} &&
	\Sigma \wr \Sigma_{\mathfrak{C}} \ar[equal]{rd}
	\\
	&
	|[alias=DD]|
	\Omega^{k-1}_{\mathfrak{C}} \ar{rru} \arrow[uu, leftarrow, crossing over] &&
	&
	&
	\Omega^{k}_{\mathfrak{C}} \ar{rr} \arrow[uu, leftarrow, crossing over] &&
	\Sigma \wr \Sigma_{\mathfrak{C}} 
	\arrow[Leftrightarrow, from=TT, to=TD,shorten >=0.05cm,shorten <=0.05cm, near start,
	"\pi"
	]
	\arrow[Leftrightarrow, from=MT, to=MD,shorten >=0.05cm,shorten <=0.05cm, near start,
	"\pi"
	]
	\arrow[Leftrightarrow, from=DT, to=DD,shorten >=0.05cm,shorten <=0.05cm, near start,
	"\pi"
	]
	\end{tikzcd}
	\]
\end{proof}

\subsection{The fibered monads on spans and symmetric sequences} \label{NONEQMON SEC}

In this section we finally complete the definition of the fibered free operad monad
$\mathbb{F}$ in Definition \ref{FREEOP DEF},
starting with the promised monad $N$ on the category $\mathsf{WSpan}^l(\Sigma_{\bullet}^{op},\mathcal{V})$ of spans
(cf. \eqref{SPANSYMADJ EQ}).

\subsubsection*{The fibered monad on colored spans}

\begin{definition}\label{WSPANCAT DEF}
	The category $\mathsf{WSpan}^l(\Sigma_{\bullet}^{op},\mathcal{V})$ has
	\begin{itemize}
		\item objects given by a choice of a set of colors $\mathfrak{C}$
		and a span $\Sigma^{op}_{\mathfrak{C}} \leftarrow A^{op} \rightarrow \mathcal{V}$
		\item morphisms given by a choice of a map of colors
		$\varphi \colon \mathfrak{C} \to \mathfrak{D}$,
		together with a commutative square and natural transformation as given below.
		\begin{equation}\label{COLORSPANMAP EQ}
		\begin{tikzcd}[column sep = 20pt]
		\Sigma_{\mathfrak{C}}^{op}
		\ar{d}[swap]{\varphi^{op}} &
		A^{op}
		\ar{r}[name=U,swap]{} \ar{d} \ar{l} &
		\mathcal{V}	
		\\
		\Sigma_{\mathfrak{D}}^{op}
		&
		|[alias=V]|
		B^{op} \ar{l}
		\ar{ru}
		\arrow[Leftarrow, from=V, to=U,shorten >=0.05cm,shorten <=0.1cm]
		\end{tikzcd}
		\end{equation}
	\end{itemize}
\end{definition}

\begin{remark}
	By definition, there is a forgetful functor
	$\mathsf{WSpan}^l(\Sigma_{\bullet}^{op},\mathcal{V}) \to \mathsf{Set}$
	which remembers the set of colors.
	Moreover, this is a Grothendieck fibration, with the cartesian arrows the diagrams \eqref{COLORSPANMAP EQ} where the square is a pullback square and the natural transformation is an isomorphism.
\end{remark}

\begin{remark}\label{LANADJ REM}
	Given a span $\Sigma^{op}_{\mathfrak{C}} \leftarrow A^{op} \rightarrow \mathcal{V}$
	one can the form the left Kan extension
	\[
	\begin{tikzcd}[column sep = 30pt]
	A^{op}
	\ar{r}[name=U,swap]{}{F} \ar{d} 
	&
	\mathcal{V}	
	\\
	|[alias=V]|
	\Sigma_{\mathfrak{C}}^{op} 
	\ar{ru}[swap]{\mathsf{Lan} F}
	\arrow[Leftarrow, from=V, to=U,shorten >=0.05cm,shorten <=0.05cm]
	\end{tikzcd}
	\]
	This defines an adjunction (cf. Definition \ref{FIBADJ DEF}),
	fibered over $\mathsf{Set}$,
	\[
	\mathsf{Lan} \colon
	\mathsf{WSpan}^l(\Sigma_{\bullet}^{op},\mathcal{V}) 
	\rightleftarrows
	\mathsf{Sym}_{\bullet}(\mathcal{V})
	\colon \upsilon
	\]
	where the inclusion $\upsilon$ sends $\Sigma^{op}_{\mathfrak{C}} \to \mathcal{V}$ 
	to the span
	$\Sigma^{op}_{\mathfrak{C}} \xleftarrow{=} \Sigma^{op}_{\mathfrak{C}} \to \mathcal{V}$.
\end{remark}

\begin{remark}
	One can also define a larger category 
	$\mathsf{WSpan}^l( - ,\mathcal{V})$
	where the categories $\Sigma_{\mathfrak{C}}^{op}$ in the spans (and functors between them) are allowed to be any category (any functor),
	in which case left Kan extension defines a fibered adjunction over 
	$\mathsf{Cat}$ (cf. Remark \ref{SUBCATDOWNL REM})
	\[
	\mathsf{Lan} \colon
	\mathsf{WSpan}^l( - ,\mathcal{V}) 
	\rightleftarrows
	\mathsf{Cat} \downarrow ^l \mathcal{V}
	\colon \upsilon.
	\]
\end{remark}

\begin{definition}[{cf. \cite[Def. 4.15]{BP21}}]
        \label{NCOLOR DEF}
	The monad $N$ on 
	$\mathsf{WSpan}^l(\Sigma_{\bullet}^{op},\mathcal{V})$
	sends the span 
	$\Sigma^{op}_{\mathfrak{C}} \leftarrow A^{op} \to \mathcal{V}$
	to the (opposite of the) composite span in
	\begin{equation}\label{NCOLOR EQ}
	\begin{tikzcd}
	\Omega^0_{\mathfrak{C}} \wr A \ar{r}{\boldsymbol{V}^0} \ar{d} &
	\Sigma \wr A  \ar{d} \ar{r} &
	\Sigma \wr \mathcal{V}^{op} \ar{r}{\otimes} &
	\mathcal{V}^{op}
	\\
	\Omega^0_{\mathfrak{C}} \ar{r}{\boldsymbol{V}^0} \arrow[d, "\mathsf{lr}"'] &
	\Sigma \wr \Sigma_{\mathfrak{C}} 
	\\
	\Sigma_{\mathfrak{C}}
	\end{tikzcd}
	\end{equation}
	has monad multiplication
	$\mu \colon N N
	\Rightarrow 
	N$ given by the diagram
	(note that Proposition \ref{ASSOCIDS PROP} identifies 
	$NN$ evaluated at 
	$\Sigma^{op}_{\mathfrak{C}} \leftarrow A^{op} \to \V$
	as (the opposite of) the composite top span)
	\begin{equation}\label{NMONMULTTR EQ}
	\begin{tikzcd}
	\Sigma_{\mathfrak{C}} \ar[equal]{d}&
	\Omega^1_{\mathfrak{C}} \wr A \ar{l} \arrow[r, "\boldsymbol{V}^0"] \ar{d}[swap]{d_0}&
	\Sigma \wr \Omega^0_{\mathfrak{C}} \wr A \arrow[r, "\boldsymbol{V}^0"] &
	|[alias=U]|
	\Sigma^{\wr 2} \wr A \ar{r} \ar{d}[swap]{\sigma^0} &
	\Sigma^{\wr 2} \wr \mathcal{V}^{op} \ar{r}{\otimes} \ar{d}[swap]{\sigma^0} &
	\Sigma \wr \mathcal{V}^{op} \ar{r}{\otimes} &
	|[alias=UU]|
	\mathcal{V}^{op} \ar[equal]{d}
	\\
	\Sigma_{\mathfrak{C}} &
	|[alias=V]|
	\Omega^0_{\mathfrak{C}} \wr A \ar{l} \ar{rr}[swap]{\boldsymbol{V}^0} & &
	\Sigma \wr A \ar{r} &
	|[alias=VV]|
	\Sigma \wr \mathcal{V}^{op} \ar{rr}[swap]{\otimes} & &
	\mathcal{V}^{op}
	\arrow[Leftrightarrow, from=V, to=U,shorten >=0.15cm,shorten <=0.15cm
	,swap,"\pi"
	]
	\arrow[Leftrightarrow, from=VV, to=UU,shorten >=0.15cm,shorten <=0.15cm,swap,"\alpha"
	]
	\end{tikzcd}
	\end{equation}
	(where $\alpha$ is the associativity data for
	$\otimes$, cf. \cite[(2.14)]{BP21})
	and unit
	$\eta \colon id \Rightarrow N$ given by
	\begin{equation}\label{NMONIDTR EQ}
	\begin{tikzcd}
	\Sigma_{\mathfrak{C}} \ar[equal]{d} & 
	A \ar{d}[swap]{s_{-1}} \ar{l} \ar[equal]{r} &
	A \ar{d}[swap]{\delta^0} \ar{r} &
	\mathcal{V}^{op} \ar{d}[swap]{\delta^0} \ar[equal]{r} &
	\mathcal{V}^{op} \ar[equal]{d}
	\\
	\Sigma_{\mathfrak{C}} &
	\Omega^0_{\mathfrak{C}} \wr A \ar{l} \ar{r} &
	\Sigma \wr A \ar{r} &
	\Sigma \wr \mathcal{V}^{op} \ar{r}{\otimes} &
	\mathcal{V}^{op}
	\end{tikzcd}
	\end{equation}
	
	Functoriality of $N$ with respect to maps that change colors follows from Proposition \ref{SPANPIECE PROP}.
\end{definition}

\begin{proposition}[{cf. \cite[Prop. 4.18]{BP21}}]
        \label{MONISMON PROP}
	$N$ is a monad on $\mathsf{WSpan}^l(\Sigma_{\bullet}^{op},\mathcal{V})$,
	fibered over $\mathsf{Set}$.
\end{proposition}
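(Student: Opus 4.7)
The plan is to adapt the proof of the non-colored analogue \cite[Prop. 4.18]{BP_geo} by leveraging the $(-) \wr A$ construction of \S \ref{WRACONST SEC}, which essentially reduces all needed identities to manipulations of the operators $d_i, s_j, \boldsymbol{V}^k, \pi_{i,k}$ catalogued in Propositions \ref{CATDIAG PROP} and \ref{CATDIAG2 PROP}.

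First I would address the easier structural requirements. Functoriality of $N$ in the span direction is clear from the definition \eqref{NCOLOR EQ}, while functoriality with respect to color changes $\varphi \colon \mathfrak{C} \to \mathfrak{D}$ follows from Proposition \ref{SPANPIECE PROP}, which ensures that $d_i, s_j, \boldsymbol{V}^k, \pi_{i,k}$ on $\Omega^n_{\mathfrak{C}} \wr A$ are natural in the coloring, together with the commutativity of \eqref{COLORLR_EQ} (needed to match the $\mathsf{lr}$-leg of the span). That $N$, $\mu$, and $\eta$ are all \emph{fibered} over $\mathsf{Set}$ is built into the construction: inspecting \eqref{NCOLOR EQ}, \eqref{NMONMULTTR EQ}, and \eqref{NMONIDTR EQ}, the leftmost column is the identity on $\Sigma_{\mathfrak{C}}$, so neither $N$ nor its structure $2$-cells alter the underlying set of colors.

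The main work is then verifying the associativity and unitality axioms for $(\mu, \eta)$. For associativity, Proposition \ref{ASSOCIDS PROP} identifies the triple composite $NNN$ evaluated on $\Sigma_{\mathfrak{C}}^{op} \leftarrow A^{op} \to \mathcal{V}$ with a span whose middle category is $\Omega^2_{\mathfrak{C}} \wr A$, and identifies the two ways of applying $\mu$ (either composing the outer two $N$'s or the inner two) with the compositions of two face-maps $d_i$ on this $\Omega^2_{\mathfrak{C}} \wr A$. The required equality of these two composites then reduces to: (a) simplicial-type compatibilities, specifically identities (IT1), (IT2), (FF2) from Proposition \ref{CATDIAG2 PROP} (so that the $\pi_{i,k}$ isomorphisms arising from the two orders match up); (b) associativity of $\sigma^0$ in the cosimplicial object $\Sigma^{\wr n+1} \wr \mathcal{C}$; and (c) associativity of the monoidal product $\otimes$ on $\mathcal{V}$. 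Similarly, unitality reduces via Proposition \ref{ASSOCIDS PROP} (particularly the identities $\Omega^k \wr s_j \simeq s_{k+j+1}$ and $s_j \wr \Omega^l \simeq s_j$) and the degenerate-face compatibilities in Proposition \ref{CATDIAG2 PROP} (the (DF*) family) to the simplicial identities $d_0 s_{-1} = d_0 s_0 = \mathrm{id}$ combined with the unit axioms for $\otimes$.

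The main obstacle is purely bookkeeping: one must carefully match up, in the two parenthesizations of $\mu$, the various $\pi_{i,k}$ coherence cells that appear when one commutes $d_i$ past $\boldsymbol{V}^k$ and past $\sigma^0$. This is precisely what Propositions \ref{CATDIAG PROP}, \ref{CATDIAG2 PROP}, and \ref{ASSOCIDS PROP} are designed to handle, and once these are invoked the verification becomes diagrammatic. No new combinatorial input beyond the uncolored case is required, because the $2$-functoriality of $(-) \wr A$ from \eqref{WRADEF EQ} propagates all of the relevant coherences from $\Omega^n$ to $\Omega^n_{\mathfrak{C}} \wr A$ uniformly in $\mathfrak{C}$.
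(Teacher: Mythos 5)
Your proposal follows essentially the same route as the paper's proof: write out the two composites $\mu N$ and $N\mu$ (resp.\ the unit composites) as explicit pasting diagrams over $\Omega^{\bullet}_{\mathfrak C}\wr A$, match the left-hand (tree-combinatorial) portions using the coherences of Propositions \ref{CATDIAG PROP}--\ref{CATDIAG2 PROP} and \ref{ASSOCIDS PROP} as transported by the $2$-functor $(-)\wr A$, and match the right-hand portions using the monoidal axioms of $\V$, with fiberedness and color-naturality read off from \eqref{NCOLOR EQ} and Proposition \ref{SPANPIECE PROP}. The only discrepancy is the precise labels of the coherence identities invoked for the leftmost sections --- the paper uses (IT1) and (FF1) of Proposition \ref{CATDIAG2 PROP} rather than your (IT1), (IT2), (FF2) --- but this is bookkeeping within the same toolbox and does not affect the argument.
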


\begin{proof}
	To check associativity, the functor $\mu N \colon 
	N N N
	\Rightarrow N N$
	is encoded by the diagram
	\[
	\begin{tikzcd}[column sep=12pt]
	\Omega^2_{\mathfrak{C}} \wr A \ar{r} \ar{d}[swap]{d_0} &
	\Sigma \wr \Omega^1_{\mathfrak{C}} \wr A \ar{r} &
	|[alias=UUU]|
	\Sigma^{\wr 2} \wr \Omega^0_{\mathfrak{C}} \wr A
	\ar{d}[swap]{\sigma^0} \ar{r} &
	\Sigma^{\wr 3} \wr A \ar{d}[swap]{\sigma^0} \ar{r} &
	\Sigma^{\wr 3} \wr \mathcal{V}^{op} \ar{d}[swap]{\sigma^0} \ar{r}{\otimes} &
	\Sigma^{\wr 2} \wr \mathcal{V}^{op} \ar{d}[swap]{\sigma^0} \ar{r}{\otimes} &
	\Sigma \wr \mathcal{V}^{op} \ar{r}{\otimes} & 
	|[alias=UUUU]|
	\mathcal{V}^{op} \ar[equal]{d}
	\\
	|[alias=VVV]|
	\Omega^1_{\mathfrak{C}} \wr A \ar{rr} \ar{d}[swap]{d_0} & &
	\Sigma \wr \Omega^0_{\mathfrak{C}} \wr A \ar{r} &
	|[alias=U]|
	\Sigma^{\wr 2} \wr A \ar{r} \ar{d}[swap]{\sigma^0} &
	\Sigma^{\wr 2} \wr \mathcal{V}^{op} \ar{r}{\otimes} \ar{d}[swap]{\sigma^0} &
	|[alias=VVVV]|
	\Sigma \wr \mathcal{V}^{op} \ar{rr}{\otimes} & &
	|[alias=UU]|
	\mathcal{V}^{op} \ar[equal]{d}
	\\
	|[alias=V]|
	\Omega^0_{\mathfrak{C}} \wr A \ar{rrr} & & &
	\Sigma \wr A \ar{r} &
	|[alias=VV]|
	\Sigma \wr \mathcal{V}^{op} \ar{rrr}{\otimes} & & &
	\mathcal{V}^{op}
	\arrow[Leftrightarrow, from=V, to=U,shorten >=0.15cm,shorten <=0.15cm
	,swap,"\pi"
	]
	\arrow[Leftrightarrow, from=VV, to=UU,shorten >=0.15cm,shorten <=0.15cm,swap,"\alpha"
	]
	\arrow[Leftrightarrow, from=VVV, to=UUU,shorten >=0.15cm,shorten <=0.15cm
	,swap,"\pi"
	]
	\arrow[Leftrightarrow, from=VVVV, to=UUUU,shorten >=0.15cm,shorten <=0.15cm,swap,"\alpha"
	]
	\end{tikzcd}
	\]
	while the functor
	$ N \mu \colon 
	N N N
	\Rightarrow N N$
	is encoded by
	\[
	\begin{tikzcd}[column sep=12pt]
	\Omega^2_{\mathfrak{C}} \wr A \ar{d}[swap]{d_1} \ar{r} &
	\Sigma \wr \Omega^1_{\mathfrak{C}} \wr A \ar{d}[swap]{d_0} \ar{r} &
	\Sigma^{\wr 2} \wr \Omega^0_{\mathfrak{C}} \wr A \ar{r} &
	|[alias=UUU]|
	\Sigma^{\wr 3} \wr A \ar{d}[swap]{\sigma^1} \ar{r} &
	\Sigma^{\wr 3} \wr \mathcal{V}^{op} \ar{d}[swap]{\sigma^1} \ar{r}{\otimes} &
	\Sigma^{\wr 2} \wr \mathcal{V}^{op} \ar{r}{\otimes} &
	|[alias=UUUU]|
	\Sigma \wr \mathcal{V}^{op} \ar{r}{\otimes} \ar[equal]{d} &
	\mathcal{V}^{op} \ar[equal]{d}
	\\
	\Omega^1_{\mathfrak{C}} \wr A \ar{r} \ar{d}[swap]{d_0} &
	|[alias=VVV]|
	\Sigma \wr \Omega^0_{\mathfrak{C}} \wr A \ar{rr} & &
	|[alias=U]|
	\Sigma^{\wr 2} \wr A \ar{r} \ar{d}[swap]{\sigma^0} &
	|[alias=VVVV]|
	\Sigma^{\wr 2} \wr \mathcal{V}^{op} \ar{rr}{\otimes} \ar{d}[swap]{\sigma^0} & &
	\Sigma \wr \mathcal{V}^{op} \ar{r}{\otimes} &
	|[alias=UU]|
	\mathcal{V}^{op} \ar[equal]{d}
	\\
	|[alias=V]|
	\Omega^0_{\mathfrak{C}} \wr A \ar{rrr} & & &
	\Sigma \wr A \ar{r} &
	|[alias=VV]|
	\Sigma \wr \mathcal{V}^{op} \ar{rrr}{\otimes} & & &
	\mathcal{V}^{op}
	\arrow[Leftrightarrow, from=V, to=U,shorten >=0.15cm,shorten <=0.15cm
	,swap,"\pi"
	]
	\arrow[Leftrightarrow, from=VV, to=UU,shorten >=0.15cm,shorten <=0.15cm
	]
	\arrow[Leftrightarrow, from=VVV, to=UUU,shorten >=0.15cm,shorten <=0.15cm
	,swap,"\pi"
	]
	\arrow[Leftrightarrow, from=VVVV, to=UUUU,shorten >=0.15cm,shorten <=0.15cm
	]
	\end{tikzcd}
	\]
	That the leftmost sections of these diagrams match follows by 
	parts (IT1) and (FF1) of Proposition \ref{CATDIAG2 PROP},
	while the fact that the rightmost sections match follows since
	$\mathcal{V}$ is a monoidal category.
	
	Unitality of $N$
	follows by a simpler version of the argument above,
	cf. \cite[(4.21)(4.22)]{BP21}.
\end{proof}

\subsubsection*{The fibered monad on colored symmetric sequences}

We will now use the fibered adjunction
\[
\mathsf{Lan} \colon
\mathsf{WSpan}^l(\Sigma_{\bullet}^{op},\mathcal{V}) 
\rightleftarrows
\mathsf{Sym}_{\bullet}(\mathcal{V})
\colon \upsilon
\]
from Remark \ref{LANADJ REM} to induce a fibered monad on 
$\mathsf{Sym}_{\bullet}(\mathcal{V})$.
To do so, we will verify the conditions in \cite[Prop. 2.27]{BP21},
requiring that the natural transformations
\[
\mathsf{Lan} \upsilon \xrightarrow{\epsilon} id
\qquad
\mathsf{Lan} N \xrightarrow{\eta} \mathsf{Lan} N \upsilon \mathsf{Lan}
\]
are natural isomorphisms.
This is clear for $\epsilon$ while for $\eta$ it follows from the following two lemmas, the first of which is proven exactly as in \cite[Lemma 2.21]{BP21}.

\begin{lemma}[{cf. \cite[Lemma 2.21]{BP21}}]
	\label{FINWRPRODLIM LEM}
	If in $\mathcal{V}$ the monoidal product 
	commutes with colimits in each variable, and the leftmost diagram
	\begin{equation}\label{WRLAN EQ}
	\begin{tikzcd}[column sep = 4.5em]
	\mathcal{C} \ar{r}[swap,name=F]{}{F} \ar{d}[swap]{k} & 
	\mathcal{V}^{op} 
& 
	\Sigma \wr \mathcal{C} \ar{d}[swap]{\Sigma \wr k} 
	\ar{r}[swap,name=FF]{}{\Sigma \wr F} & 
	\Sigma \wr \mathcal{V}^{op} \ar{r}{\otimes} &
	\mathcal{V}^{op}
\\
	|[alias=D]|\mathcal{D} \ar{ru}[swap]{H} &
& 
	|[alias=FD]|\Sigma \wr \mathcal{D}
	\ar{ru}[swap]{\Sigma \wr H}
	\ar[bend right=13]{rru}[swap]{\otimes \circ (\Sigma \wr H)}
	&
	\arrow[Rightarrow, from=D, to=F,shorten <=0.10cm]
	\arrow[Rightarrow, from=FD, to=FF,shorten <=0.10cm]
	\end{tikzcd}
	\end{equation}
	is a right Kan extension diagram,
	then so is the composite of the rightmost diagram. 
\end{lemma}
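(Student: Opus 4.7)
The plan is to verify the claim via the pointwise formula for Kan extensions, using that when interpreted in $\V$ (rather than $\V^{op}$), the relevant right Kan extensions into $\V^{op}$ are colimit formulas in $\V$ and so interact well with the assumption that $\otimes$ commutes with colimits in each variable. Concretely, writing $G = \otimes \circ (\Sigma \wr F) \colon \Sigma \wr \mathcal C \to \V^{op}$ and $\bar{H} = \otimes \circ (\Sigma \wr H) \colon \Sigma \wr \mathcal D \to \V^{op}$, I will show that for each $(d_1,\dots,d_n) \in \Sigma \wr \mathcal D$, the canonical comparison map
\[
\bar{H}(d_1,\dots,d_n) = \bigotimes_{i=1}^n H(d_i) \;\longrightarrow\; \operatorname{Ran}_{\Sigma \wr k} G(d_1,\dots,d_n)
\]
is an isomorphism, where the right hand side is evaluated using the pointwise formula.

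The first main step is the combinatorial identification of the relevant under/over category. An object of $(\Sigma \wr \mathcal C) \downarrow (d_1,\dots,d_n)$ consists of a tuple $(c_i)_{1 \leq i \leq m}$ together with a map in $\Sigma \wr \mathcal D$ to $(d_1,\dots,d_n)$; such a map forces $m = n$ and consists of a permutation $\sigma \in \Sigma_n$ along with maps $k(c_i) \to d_{\sigma(i)}$ in $\mathcal D$. I will reindex via $c'_j = c_{\sigma^{-1}(j)}$ and observe that morphisms in the undercategory force the permutation component to be determined by the source and target relabellings. This produces an equivalence of categories
\[
(\Sigma \wr \mathcal C) \downarrow (d_1,\dots,d_n) \;\simeq\; \prod_{j=1}^n \left(\mathcal C \downarrow d_j\right),
\]
compatible with the projection down to $\Sigma \wr \mathcal C$ given by the tuple $(c'_j)$.

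The second step is to combine this identification with the hypothesis that $\otimes$ commutes with colimits in each variable (equivalently, with limits in $\V^{op}$ in each variable). Using a standard Fubini argument for iterated colimits factor-by-factor, one obtains
\[
\underset{\prod_j (\mathcal C \downarrow d_j)}{\operatorname{colim}} \bigotimes_{j=1}^n F(c'_j) \;\simeq\; \bigotimes_{j=1}^n \underset{\mathcal C \downarrow d_j}{\operatorname{colim}} F(c'_j) \;\simeq\; \bigotimes_{j=1}^n H(d_j),
\]
where the last step uses that the left diagram in \eqref{WRLAN EQ} is a Kan extension. Translating back through the $\V \leftrightarrow \V^{op}$ duality identifies this composite with the universal comparison map above, yielding the desired result.

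The only step that requires genuine care is the undercategory identification; everything else is the formal interplay between pointwise Kan extension formulas and the colimit-preservation of $\otimes$. The anticipated obstacle is therefore bookkeeping of the $\Sigma_n$-symmetry in the reindexing, in particular checking that the equivalence $(\Sigma \wr \mathcal C) \downarrow (d_1,\dots,d_n) \simeq \prod_j (\mathcal C \downarrow d_j)$ is natural in $(d_1,\dots,d_n)$ in a way compatible with the canonical maps induced from the left Kan extension hypothesis, so that the resulting isomorphism really does coincide with the one induced by the universal property of Kan extension.
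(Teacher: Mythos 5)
Your overall strategy --- compute the right Kan extension pointwise, identify the relevant comma category for a tuple $(d_1,\dots,d_n)$ as a product of comma categories for the individual $d_j$, and then use that $\otimes$ commutes with colimits in each variable (via a Fubini argument for iterated colimits) to pull the tensor product out --- is exactly the intended argument; the paper gives no proof of this lemma, deferring instead to \cite[Lemma 2.21]{BP_geo}, whose proof runs along these same lines.

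There is, however, a variance slip in your key combinatorial step. The left-hand diagram exhibits $H$ as a \emph{right} Kan extension of the $\mathcal{V}^{op}$-valued functor $F$ along $k$, so the pointwise formula at $d$ is a limit in $\mathcal{V}^{op}$, i.e. a colimit in $\mathcal{V}$, indexed by (the opposite of) the comma category $d \downarrow k$ whose objects are pairs $(c,\, d \to k(c))$. You instead work with the comma category of tuples $(c_i)$ equipped with maps $k(c_i) \to d_{\sigma(i)}$, which is the indexing category for a \emph{left} Kan extension. In every application in the paper the categories $\mathcal{C}$ and $\mathcal{D}$ are groupoids, so the two comma categories are isomorphic and nothing is lost, but for the lemma as stated your identification is of the wrong category. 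The fix is cosmetic: the same reindexing (now with the permutation absorbed into the target rather than the source) identifies $(d_1,\dots,d_n)\downarrow(\Sigma \wr k)$ with $\prod_{j}(d_j \downarrow k)$, and the rest of your argument goes through verbatim. The compatibility check you flag at the end --- that the composite isomorphism agrees with the canonical comparison map induced by the universal property --- is indeed the remaining bookkeeping and is routine once the comma categories are set up with the correct variance.
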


\begin{lemma}[{cf. \cite[Lemma 4.27]{BP21}}]
	\label{LANPULLCOMA LEM}
	Suppose that $\mathcal{V}$ is complete. If the rightmost triangle in 
	\[
	\begin{tikzcd}
	\Omega_{\mathfrak{C}}^{0} \wr A \ar{r}{\boldsymbol{V}^0} 
	\ar{d} & 
	\Sigma \wr A  
	\ar{d}  \ar{r}[swap,name=F]{}&
	\mathcal{V}^{op}
	\\
	\Omega_{\mathfrak{C}}^{0} \ar{r}[swap]{\boldsymbol{V}^0} & 
	|[alias=FEG]|\Sigma \wr \Sigma_{\mathfrak{C}} \ar{ru}
	\arrow[Rightarrow, from=FEG, to=F,shorten <=0.15cm]
	\end{tikzcd}
	\]
	is a right Kan extension diagram then so is the composite diagram.
\end{lemma}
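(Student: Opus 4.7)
The plan is to apply the pointwise formula for right Kan extensions (working in $\V^{op}$) at each object $\vect{T} \in \Omega_{\mathfrak{C}}^0$. Writing $p \colon \Omega^0_{\mathfrak C} \wr A \to \Omega^0_{\mathfrak C}$ and $p' \colon \Sigma \wr A \to \Sigma \wr \Sigma_{\mathfrak{C}}$ for the vertical maps in the defining pullback square, $q \colon \Omega^0_{\mathfrak C} \wr A \to \Sigma \wr A$ for the horizontal map, and $F'$ for the given functor $\Sigma \wr A \to \V^{op}$, the composite diagram has top functor $F = F' \circ q$. The conclusion to prove is that
\[H(\vect T) = \lim\nolimits_{\vect T \downarrow p} F,\]
where by the hypothesis on the rightmost triangle, $H(\vect T)$ agrees with $G(\boldsymbol{V}^0(\vect T)) = \lim_{\boldsymbol{V}^0(\vect T) \downarrow p'} F'$. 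Since $F = F' \circ q$, it suffices to verify that the natural comparison functor
\[\iota \colon \vect T \downarrow p \longto \boldsymbol{V}^0(\vect T) \downarrow p', \qquad (\phi \colon \vect T \to \vect S,\, \tilde a) \mapsto (\boldsymbol{V}^0(\phi),\, \tilde a),\]
is initial, as then the two limits coincide and the result follows.

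The key input for initiality is the strict pullback defining $\Omega_{\mathfrak C}^0 \wr A$, combined with the groupoid structure of both $\Omega^0_{\mathfrak C}$ and $\Sigma \wr \Sigma_{\mathfrak{C}}$. Given an object $(\psi \colon \boldsymbol{V}^0(\vect T) \to \tilde b,\, \tilde a)$ of the target with $p'(\tilde a) = \tilde b$, the isomorphism $\psi$ transports the gluing data of the tree $\vect T$ along the tuple $\tilde b$, yielding a tree $\vect S \in \Omega^0_{\mathfrak C}$ with $\boldsymbol{V}^0(\vect S) = \tilde b$ together with a unique isomorphism $\phi \colon \vect T \to \vect S$ such that $\boldsymbol{V}^0(\phi) = \psi$. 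The pair $(\phi, \tilde a)$ is then an object of $\vect T \downarrow p$ whose image under $\iota$ maps identically to $(\psi, \tilde a)$, establishing non-emptiness of the slice $\iota \downarrow (\psi, \tilde a)$.

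The main obstacle will be verifying that these slices are also connected. Any two objects of $\iota \downarrow (\psi, \tilde a)$ correspond to lifts of the same $\psi$, and the uniqueness of the tree reconstruction above shows that two such lifts differ only by automorphisms of the reconstructed data; these can be reconciled by a morphism in $\vect T \downarrow p$ projecting to the identity on $\tilde a$. I expect this to reduce to a diagram chase using the universal property of the pullback together with the fact that all morphisms in $\Omega^0_{\mathfrak C}$ and $\Sigma \wr \Sigma_{\mathfrak{C}}$ are invertible. Once initiality is confirmed, Quillen's Theorem A (for limits) delivers $\lim_{\vect T \downarrow p} F \simeq \lim_{\boldsymbol{V}^0(\vect T) \downarrow p'} F'$, and invoking the hypothesis identifies this with $H(\vect T)$, completing the proof.
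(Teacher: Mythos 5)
Your overall strategy is the same as the paper's: both arguments reduce the lemma to a cofinality statement for the comparison functor between the comma categories computing the two pointwise Kan extensions, with the combinatorial input being that isomorphisms of trees are determined by the induced data on vertex corollas. However, your non-emptiness step rests on a claim that is false as stated. You assert that an arbitrary morphism $\psi \colon \boldsymbol{V}^0(\vect{T}) \to \tilde{b}$ in $\Sigma \wr \Sigma_{\mathfrak{C}}$ lifts to a tree isomorphism $\phi \colon \vect{T} \to \vect{S}$ with $\boldsymbol{V}^0(\vect{S}) = \tilde{b}$ and $\boldsymbol{V}^0(\phi) = \psi$ on the nose. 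But $\boldsymbol{V}^0$ orders the vertex tuple according to the planarization of the tree, and a tree isomorphism can only realize a restricted set of permutations of that ordering: for instance, if $\vect{T}$ is a linear tree whose two unary vertices carry the same signature, the morphism $\psi$ transposing the two entries of $\boldsymbol{V}^0(\vect{T})$ admits no lift, since isomorphisms of linear trees preserve the distance to the root and hence induce the identity outer permutation. This is precisely the point the paper's proof is organized around: the lifting statement (in fact an isomorphism of categories) is proved only after composing with the retraction onto the subcategory $(\vect{T}_v) \downarrow_{\Sigma} \Sigma \wr \Sigma_{\mathfrak{C}}$ of morphisms lying over identities in the outer $\Sigma$, and general $\psi$ are then handled by the separate observation that this subcategory is coreflective in the full comma category (the coreflection being pullback along the outer permutation), so that its inclusion is cofinal in the relevant sense.

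The repair is either to adopt that factorization or to fix your construction directly: given $(\psi,\tilde{a})$ with outer permutation $\sigma$, lift only $\sigma^{\**}\psi$ (the component of $\psi$ over the identity permutation) to a tree isomorphism $\phi_0 \colon \vect{T} \to \vect{S}_0$, take $\sigma^{\**}\tilde{a}$ as the $\Sigma \wr A$-component, and use the reindexing morphism $\sigma^{\**}\tilde{a} \to \tilde{a}$ as the structure map down to $(\psi,\tilde{a})$. One then checks that this object is terminal in the slice $\iota \downarrow (\psi,\tilde{a})$, because every morphism in $\Sigma \wr A$ factors uniquely as a morphism over the identity permutation followed by a reindexing. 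In particular, the connectivity step you flagged as the main obstacle is not an obstacle once non-emptiness is set up correctly: the slices are not merely connected but admit terminal objects, which is exactly the content of the paper's ``isomorphism followed by cofinal coreflective inclusion'' factorization.
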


Our proof Lemma \ref{LANPULLCOMA LEM} will be a more formalized version of the proof in \cite[Lemma 4.27]{BP21}.
For 
$\pi \colon \mathcal{E} \to \mathcal{B}$
a Grothendieck fibration and $e \in \mathcal{E}$,
we write $e \downarrow_{\mathcal{B}} \mathcal{E}$
for the subcategory of the undercategory
$e \downarrow \mathcal{E}$
consisting of the objects and maps
over $id_{\pi(b)}$.
Pullbacks over $\mathcal{B}$ 
then provide a retraction 
$r \colon e \downarrow \mathcal{E} \to
e \downarrow_{\mathcal{B}} \mathcal{E}$,
which is a right adjoint to the inclusion
$e \downarrow_{\mathcal{B}} \mathcal{E} 
\hookrightarrow
e \downarrow \mathcal{E}$.
In other words, 
$e \downarrow_{\mathcal{B}} \mathcal{E} $
is a coreflexive subcategory of 
$e \downarrow \mathcal{E} $,
so that the inclusion
$e \downarrow_{\mathcal{B}} \mathcal{E} 
\hookrightarrow
e \downarrow \mathcal{E}$
is final.

\begin{proof}
	Firstly, note that the composite
	\begin{equation}\label{COMPISO EQ}
	\begin{tikzcd}
	\vect{T} \downarrow \Omega^0_{\mathfrak{C}} 
	\ar{r} & 
	\left( \vect{T}_v \right)_{\boldsymbol{V}(T)} \downarrow \Sigma \wr \Sigma_{\mathfrak{C}}
	\ar{r}{r} & 
	\left( \vect{T}_v \right)_{\boldsymbol{V}(T)} \downarrow_{\Sigma} \Sigma \wr \Sigma_{\mathfrak{C}}
	\end{tikzcd}
	\end{equation}
	is an isomorphism. 
	Indeed, the objects of 
	$\vect{T} \downarrow \Omega^0_{\mathfrak{C}}$
	are determined by underlying isomorphisms 
	$f \colon T \xrightarrow{\simeq} T'$ in $\Omega$,
	which are in turn determined by 
	a tuple of isomorphisms of vertices 
	$f_v \colon T_v \xrightarrow{\simeq} T'_v$ in $\Sigma$
	for each $v \in \boldsymbol{V}(T)$,
	cf. \cite[Prop. 3.12]{BP21}.
	We now claim that the maps
	\begin{equation}\label{COMPISO2 EQ}
	\begin{tikzcd}
	\left(\vect{T},(a_v)_{\boldsymbol{V}(T)}\right) \downarrow \Omega^0_{\mathfrak{C}} \wr A
	\ar{r} &  
	\left( a_v \right)_{\boldsymbol{V}(T)} \downarrow \Sigma \wr A
	\ar{r}{r} &  
	\left( a_v \right)_{\boldsymbol{V}(T)} \downarrow_{\Sigma} \Sigma \wr A
	\end{tikzcd}
	\end{equation}
	are likewise isomorphisms.
	To see this, we first write
	$D$, $\bar{D}$ for the composite functors in \eqref{COMPISO EQ},\eqref{COMPISO2 EQ},
	and $\rho \colon A \to \Sigma_{\mathfrak{C}}$
	for the given map.
	An object in the target of \eqref{COMPISO2 EQ}
	is a tuple
	$\bar{f}_v \colon a_v \to b_v$ of maps in $A$ for $v \in \boldsymbol{V}(T)$.
	Writing $\rho (\bar{f}_v) \colon  \rho (a_v) \to \rho( b_v)$
	as
	$f_v \colon \vect{T}_v \to \vect{T'_v}$
	and
	$D^{-1}\left(\rho (\bar{f}_v)_{\boldsymbol{V}(T)}\right)$
	as 
	$f \colon \vect{T} \to \vect{T'}$
	one then has
	\[
	\bar{D}^{-1}
	\left(( a_v \xrightarrow{\bar{f}_v} b_v)_{v \in \boldsymbol{V}(T)}\right)=
	\left(
	\vect{T} \xrightarrow{f} \vect{T'},
	(a_v)_{v \in \boldsymbol{V}(T)} \to 
	(b_w)_{w \in \boldsymbol{V}(T')}
	\right)
	\]
	where we note that the map 
	$(a_v)_{v \in \boldsymbol{V}(T)} \to 
	(b_w)_{w \in \boldsymbol{V}(T')}$
	involves a permutation of tuples induced by the isomorphism
	$\boldsymbol{V}(T) \simeq \boldsymbol{V}(T')$.
	Now consider the diagram 
	\begin{equation}\label{COMPISO3 EQ}
	\begin{tikzcd}
	\left(\vect{T},(a_v)\right)
	\ar{r} & 
	\left( a_v \right) \downarrow \Sigma \wr A
	\ar{r}{r} &
	\left(a_v \right) \downarrow_{\Sigma} \Sigma \wr A
	\ar{r} &
	\left(a_v \right) \downarrow \Sigma \wr A.
	\end{tikzcd}
	\end{equation}
	To finish the proof,
	we show that the first map in \eqref{COMPISO3 EQ} is final. 
	Since this first map is naturally isomorphic to the full composite in \eqref{COMPISO3 EQ}, we need only show that the latter is final.
	But this follows since \eqref{COMPISO2 EQ} is an isomorphism and the last map in \eqref{COMPISO3 EQ} is known to be final.
\end{proof}

Since Lemmas \ref{FINWRPRODLIM LEM},\ref{LANPULLCOMA LEM}
verify the hypotheses of \cite[Prop. 2.27]{BP21},
we can finally complete Definition \ref{FREEOP DEF}
by describing the monad structure on $\mathbb{F}$.

\begin{definition}\label{COLORMON_DEF}
	The fibered free operad monad $\mathbb{F}$
	on $\mathsf{Sym}_{\bullet}(\mathcal{V})$
	has underlying functor
	$\mathbb{F} = \mathsf{Lan} N \upsilon$ and multiplication and unit given by
	\[
	\mathsf{Lan} N \upsilon \mathsf{Lan} N \upsilon \xleftarrow{\simeq} 
	\mathsf{Lan} N N \upsilon \to 
	\mathsf{Lan} N \upsilon
	\qquad
	id \xleftarrow{\simeq} 
	\mathsf{Lan} \upsilon \to
	\mathsf{Lan} N \upsilon.
	\]
\end{definition}

\subsection{Free extensions of operads}\label{PUSHOUT_SEC}

Our overall goal in this section is to prove Lemma \ref{OURE LEM},
which allows us to understand free operad extensions,
i.e. pushouts of the form 
\begin{equation}\label{OU EQ}
\begin{tikzcd}
\mathbb F X \arrow[d, "\mathbb{F}u"'] \arrow[r]
&
\O \arrow[d]
\\
\mathbb F Y \arrow[r]
&
\O[u].
\end{tikzcd}
\end{equation}
where $u \colon X \to Y$ is a map of symmetric sequences,
and where we moreover require that \eqref{OU EQ} is a fibered diagram over $\mathsf{Set}$, i.e. that all maps therein are the identity on color sets.

Moreover, in order to understand the equivariant case, 
we will \emph{not} fix the set of colors,  
but rather consider all colors simultaneously, 
and note that our constructions are natural 
on the diagrams \eqref{OU EQ} with respect to change of colors.
More explicitly, this means that the work in this section will be natural with regard to commutative diagrams
\begin{equation}\label{COLORCHNAT EQ}
\begin{tikzcd}
X \arrow[r, "u"',swap] \arrow[d]
&
Y \arrow[d]
&
\mathbb F X \arrow[d] \arrow[r]
&
\O \arrow[d]
\\
X' \arrow[r, "u'"']
&
Y'
&
\mathbb F X' \arrow[r]
&
\O'
\end{tikzcd}
\end{equation}
where all vertical maps induce the same map 
$\varphi \colon \mathfrak{C} \to \mathfrak{D}$ on objects.

To understand the pushouts \eqref{OU EQ},
we will produce a filtration
\begin{equation}\label{FILT EQ}
\O = \O_0 \into \O_1 \into \O_2 \into \dots \into \colim_k \O_k = \O[u]
\end{equation}
of the underlying symmetric sequences, i.e. with 
$\mathcal{O}_i \in \mathsf{Sym}_{\bullet}(\mathcal{V})$
(moreover, all maps in \eqref{FILT EQ} will, again, be the identity on colors).

Writing $\amalg_{\mathsf{Set}}$ and $\mathbin{\check\amalg}_{\mathsf{Set}}$
for the fibered coproducts in 
$\mathsf{Sym}_{\bullet}(\mathcal{V})$ and
$\mathsf{Op}_{\bullet}(\mathcal{V})$
(i.e. these are the coproducts within each fixed color fiber over $\mathsf{Set}$, rather than the coproducts in the overall categories),
the discussion in $(5.3)$ through $(5.7)$ of \cite{BP21}
yields that
\begin{align*}
\O[u]
&
\simeq \mathrm{coeq}\left(
\O \mathbin{\check\amalg_{\mathsf{Set}}} \mathbb F X \mathbin{\check\amalg}_{\mathsf{Set}} \mathbb F Y \rightrightarrows \O \mathbin{\check\amalg}_{\mathsf{Set}} \mathbb F Y
\right)
\\
&
\simeq \colim_{[l] \in \Delta^{op}} 
B_l \left( \O, \mathbb F X, \mathbb F X, \mathbb F X, \mathbb FY \right)
\\
&
\simeq \colim_{[l] \in \Delta^{op},[n] \in \Delta^{op}} 
B_l \left( \mathbb F^{\circ n+1} \O, \mathbb F X, \mathbb F X, \mathbb F X, \mathbb FY \right)
\\
&
\simeq \colim_{[l] \in \Delta^{op},[n] \in \Delta^{op}} 
\mathsf{Lan} N \circ \left( N^{\circ n} \upsilon \O \amalg_{\mathsf{Set}} \upsilon X^{\amalg_{\mathsf{Set}} 2l +1} \amalg_{\mathsf{Set}} \upsilon Y \right),
\stepcounter{equation}\tag{\theequation}\label{OU EQ1}
\end{align*}
where $B_{\bullet}$ denotes the \textit{double bar construction}
with respect to $\mathbin{\check\amalg}_{\mathsf{Set}}$,
$\mathbb{F}^{\bullet +1} \mathcal{O}$ denotes the simplicial resolution of $\mathcal{O}$, 
and $N$ is the monad on spans in Definition \ref{NCOLOR DEF}.
Crucially, we note that colimits over $\Delta^{op}$
are computed by the reflexive coequalizer determined by levels $0$ and $1$, 
so that the colimits in \eqref{OU EQ1}
can be computed in $\mathsf{Sym}_{\bullet}(\mathcal{V})$
rather than in $\mathsf{Op}_{\bullet}(\mathcal{V})$.

By construction,
$N \left(N^{\circ n} \upsilon \O \amalg_{\mathsf{Set}} \upsilon X^{\amalg_{\mathsf{Set}} 2l +1}\amalg_{\mathsf{Set}} \upsilon Y \right)$
denotes a certain span
$\Sigma_{\mathfrak{C}}^{op} \leftarrow 
\left(\Omega^{n,\lambda_l}_{\mathfrak{C}}\right)^{op} \to \mathcal{V}$
which we will explicitly identify 
in \S \ref{LCS_SEC}.
This will then allows us to 
apply (the natural analogue of)
\cite[Prop. 5.42]{BP21}
along each simplicial direction
to convert the last line of \eqref{OU EQ1}
into a $\mathsf{Lan}$
over a single span
$\Sigma_{\mathfrak{C}}^{op} \leftarrow 
\left|\Omega^{n,\lambda_l}_{\mathfrak{C}}\right|^{op} \to \mathcal{V}$.

The task of describing 
$\Omega^{n,\lambda_l}_{\mathfrak{C}}$
is similar to the spirit of Proposition \ref{ASSOCIDS PROP}, which shows that
$N^{\circ n+1}$ is naturally calculated using the
$\Omega_{\mathfrak{C}}^{n} \wr (-)$ 
construction.

In practice, we will prefer to describe a slightly more general variant of the $\Omega^{n,\lambda_l}_{\mathfrak{C}}$ categories.
For $\lambda = \lambda_a \amalg \lambda_i$
a partition of $\set{1,2,\dots,l}$,
we write 
$N^{\times \lambda}$
for the monad (cf. \cite[\S 2.3]{BP21}) on 
$\left(\mathsf{WSpan}_l(\Sigma_{\bullet}^{op},\mathcal{V})\right)^{\times l}$
given by
\begin{equation}\label{NLAMBMON EQ}
\left(N^{\times \lambda} (A_j)\right)_k = 
\begin{cases}
N(A_k) & \text{if } k\in \lambda_a
\\
A_k & \text{if } k\in \lambda_i
\end{cases}
\end{equation}
Note that, writing 
$\langle\langle l\rangle\rangle = 
\{-\infty,-l,\cdots,0,\cdots,l,\infty\}$
and $\lambda_l$ for the partition on 
$\langle\langle l\rangle\rangle$ with
$\left(\lambda_l\right)_{a}=\{-\infty\}$,
the last term in \eqref{OU EQ1} is then
$\mathsf{Lan}N\coprod_{\mathsf{Set}}
\left(N^{\lambda_l}\right)^{\circ n}
(\upsilon\O,\upsilon X,\cdots,\upsilon X,\upsilon Y)$.

We note that $N^{\times \lambda}$
preserves the fibered product
$\left(\mathsf{WSpan}_l(\Sigma_{\bullet}^{op},\mathcal{V})\right)^{\times_{\mathsf{Set}} l}$,
i.e. the subcategory of those tuples $(A_j)$
where all $A_j$ have the same color set $\mathfrak{C} \in \mathsf{Set}$ (and likewise for maps), 
and we abuse notation by also writing 
$N^{\times \lambda}$
for the monad restricted to this subcategory.

Out next task is to understand the composites
$N\coprod_{\mathsf{Set}}
\left(N^{\lambda}\right)^{\circ n}$.

\subsubsection*{Labeled colored strings}
\label{LCS_SEC}

Let $l \geq 1$. A \emph{$l$-labeling} of a tree $\vect{T} \in \Omega_{\mathfrak{C}}$ is a map
$\boldsymbol{V}(T) \to \{1,\cdots,l\}$.
Further, a map 
$\vect{T} \to \vect{S}$ of labeled trees
is called a \emph{label map}
if, for all $v \in \boldsymbol{V}(T)$,
all the vertices in $\vect{S}_v$ (Notation \ref{SVNOT NOT})
have the same label as $v$.
Lastly, given a subset $\lambda_i \subseteq \{1,\cdots,l\}$,
a label map 
$\vect{T} \to \vect{S}$
is called \emph{$\lambda_i$-inert}
if $\vect{S}_v$ is a corolla whenever the label of
$v \in \boldsymbol{V}(T)$ is in $\lambda_i$.

The categories $\Omega_{\mathfrak C}^{n,s,\lambda}$
defined below will represent
the functors
$N^{\circ s+1} \circ \coprod_{\mathsf{Set}} \circ \left(N^{\times \lambda}\right)^{\circ n-s}$.

\begin{definition}[{cf. \cite[Def. 5.11]{BP21}}]\label{CLPS DEF}
	Given $-1 \leq s \leq n$, $l \geq 0$, and a partition $\lambda = \lambda_a \amalg \lambda_i$ of $\set{1,2,\dots,l}$,
	define $\Omega_{\mathfrak C}^{n,s,\lambda}$ to have as objects
	$n$-planar strings
	\begin{equation}
	\mathsf{lr}(\vect{T}_0)=
	\vect{T}_{-1} \xrightarrow{f_0} \vect{T}_0 
	\xrightarrow{f_1} \vect{T}_1 
	\xrightarrow{f_2} \dots
	\vect{T}_{s} \xrightarrow{f_{s+1}} \vect{T}_{s+1}
	\xrightarrow{f_{s+2}}  \dots
	\xrightarrow{f_n} \vect{T}_n
	\end{equation}
	together with $l$-labelings of $\vect{T}_s, \vect{T}_{s+1}, \cdots, \vect{T}_n$,
	such that
	$f_{r}, r>s$ are $\lambda_i$-inert label maps.
	
	Arrows in $\Omega_{\mathfrak C}^{n,s,\lambda}$
	are tuples of isomorphisms 
	$\left(\rho_r \colon \vect{T}_r \to \vect{T}'_r\right)$
	such that $\rho_r,r \geq s$ are label maps.
	
	Further, for any $s<0$ or $n<s'$, we write
	\[
	\Omega_{\mathfrak{C}}^{n,s,\lambda} = \Omega_{\mathfrak{C}}^{n,-1,\lambda},
	\qquad
	\Omega_{\mathfrak{C}}^{n,s',\lambda} = \Omega_{\mathfrak{C}}^{n}.
	\]
\end{definition}

We now discuss the functors relating the $\Omega_{\mathfrak{C}}^{n,s,\lambda}$ categories. Firstly, for 
$s \leq s'$ 
and map of labels 
$\gamma \colon \{1,\cdots,l'\} \to \{1,\cdots,l\}$
such that $\lambda'_a \subseteq \gamma^{-1}\left( \lambda_a\right)$
there are natural functors
\[
\Omega_{\mathfrak{C}}^{n,s,\lambda} \to \Omega_{\mathfrak{C}}^{n,s',\lambda},
\qquad
\Omega_{\mathfrak{C}}^{n,s,\lambda'} \xrightarrow{\gamma} \Omega_{\mathfrak{C}}^{n,s,\lambda}.
\]
Second, by keeping track of labels on vertices,
the functors from \S \ref{CSTRINGS_SEC} relating the categories 
$\Omega^n_{\mathfrak{C}}$ extend to the categories
$\Omega_{\mathfrak{C}}^{n,s,\lambda}$. Indeed, for 
$k \leq n$
and 
$\varphi \colon \mathfrak{C} \to \mathfrak{D}$ a map of colors
one has functors
\begin{equation}\label{FGTLABEL EQ}
\Omega_{\mathfrak{C}}^{n,s,\lambda} \xrightarrow{\boldsymbol{V}^k} \Sigma \wr\Omega_{\mathfrak{C}}^{n-k-1,s-k-1,\lambda},
\qquad
\Omega_{\mathfrak{C}}^{n,s,\lambda} \xrightarrow{\varphi} \Omega_{\mathfrak{D}}^{n,s,\lambda}.
\end{equation}

Lastly, one also has simplicial operators $d_i$, $s_j$, 
but some care is needed with the way these interact with the index $s$. To do so, defining functions $d_i,s_j\colon \mathbb{Z} \to \mathbb{Z}$ by
\begin{equation}\label{SIMPLEXP EQ}
d_i(s) = 
\begin{cases}
s-1, & i<s
\\
s, & s\leq i
\end{cases}
\qquad
s_j(s) = 
\begin{cases}
s+1, & j<s
\\
s, & s\leq j
\end{cases}
\end{equation}
one has simplicial operators
\[
\Omega_{\mathfrak{C}}^{n,s,\lambda} \xrightarrow{d_i} \Sigma \wr\Omega_{\mathfrak{C}}^{n,d_i(s),\lambda},
\qquad
\Omega_{\mathfrak{C}}^{n,s,\lambda} \xrightarrow{s_j} \Sigma \wr\Omega_{\mathfrak{C}}^{n,s_j(s),\lambda},
\]
for $0\leq i \leq n$ and $-1\leq j \leq n$.
In practice, we will prefer to suppress $s$ from the notation,
and write 
$\Omega_{\mathfrak{C}}^{n,\bullet,\lambda}$ to denote the string of categories 
$\Omega_{\mathfrak{C}}^{n,s,\lambda}$ as a whole.
Lastly, the $\pi_{i,k}$ natural isomorphisms for $i<k$ from Proposition \ref{CATDIAG PROP}
generalize to natural isomorphisms
\begin{equation}
\begin{tikzcd}[row sep = tiny, column sep = 35pt]
\Omega_{\mathfrak{C}}^{n,s,\lambda}
\arrow{r}{\boldsymbol{V}^k} \arrow{dd}[swap]{d_i} &
|[alias=U]|
\Sigma \wr \Omega_{\mathfrak{C}}^{n-k-1,s-k-1,\lambda}
\ar[equal]{dd}{}
\\
\\
|[alias=V]|
\Omega_{\mathfrak{C}}^{n-1,d_i(s),\lambda} \arrow{r}[swap]{\boldsymbol{V}^{k-1}} &
\Sigma \wr \Omega_{\mathfrak{C}}^{n-k-1,d_i(s)-k,\lambda}
\arrow[Leftrightarrow, from=V, to=U,shorten >=0.15cm,shorten <=0.15cm
,swap,"\pi_{i,k}"
]
\end{tikzcd}
\end{equation}
(note that the right vertical map is an identity even if
$s-k-1 \neq d_i(s)-k$, since that can only occur if $s\leq i \leq k$, implying that the rightmost terms are both $\Sigma \wr \Omega_{\mathfrak{C}}^{n-k-1,-1,\lambda}$).

\begin{remark}
	We now discuss the naturality of the given functors on the categories
	$\Omega_{\mathfrak{C}}^{n,s,\lambda}$ just described.
	\begin{enumerate}[label=(\roman*)]
		\item by keeping track of vertex labels, all the analogues of the properties in Propositions \ref{CATDIAG PROP} and \ref{CATDIAG2 PROP} extend (note that this includes the pullback claims in 
		Proposition \ref{CATDIAG PROP}(ii)).
		\item the change of color functors $\varphi$, change of label functors $\gamma$, and the forgetful functors in
		\eqref{FGTLABEL EQ} are all natural with respect to each other.
		\item $d_i$, $s_j$, $\boldsymbol{V}^k$
		and $\pi_{i,k}$, are natural with respect to the change of color functors $\varphi$, change of label functors $\gamma$, and the forgetful functors in
		\eqref{FGTLABEL EQ}, 
		in the sense that they satisfy the analogues of  
		Proposition \ref{CATDIAG PROP}(iii) with 
		the role of $\varphi$ replaced with the latter functors.
		\item
		For $k \leq s \leq s'$ the following squares are pullback squares
		\[
		\begin{tikzcd}[column sep = small, row sep = small]
		\Omega^{n,s,\lambda}_{\mathfrak{C}} \ar{r}{\boldsymbol{V}^k} \ar{dd} &
		\Sigma \wr \Omega^{n-k-1,s-k-1,\lambda}_{\mathfrak{C}} \ar{dd}
		\\
		\\
		\Omega^{n,s',\lambda}_{\mathfrak{C}} \ar{r}[swap]{\boldsymbol{V}^k} &
		\Sigma \wr \Omega^{n-k-1,s'-k-1,\lambda}_{\mathfrak{C}}
		\end{tikzcd}
		\]
	\end{enumerate}
\end{remark}

The following is the main purpose of the 
$\Omega_{\mathfrak{C}}^{n,s,\lambda}$ categories,
adapting the work in \S \ref{WRACONST SEC}.

\begin{definition}[{cf. \cite[Not. 5.25]{BP21}}]
        \label{NA_DEF}
	Given a $l$-tuple of functors
	$\left(A_j \to \Sigma_{\mathfrak C} \right)_{1\leq j \leq l}$,
	we write
	\begin{equation}\label{WRAJDEF EQ}
	(-) \wr (A_j) \colon 
	\mathsf{Cat} \downarrow^r_{\Sigma} \Sigma \wr \Sigma_{\mathfrak{C}}^{\amalg l}
	\to
	\mathsf{Cat} \downarrow^r_{\Sigma} \Sigma \wr \amalg_j A_j
	\end{equation}
	for the pullback \eqref{WSPANPULL EQ} for the map
	$\Sigma \wr \amalg_j A_j \to \Sigma \wr \Sigma_{\mathfrak{C}}^{\amalg l}$.
	
	In particular, for all $-1\leq s \leq n$, this defines categories
	$\Omega^{n,s,\lambda}_{\mathfrak{C}} \wr (A_j)$ via pullbacks
	(note that the $s \leq n$ restriction guarantees that the target of the lower $\boldsymbol{V}^n$ is indeed $\Sigma \wr \Sigma_{\mathfrak{C}}^{\amalg l}$)
	\begin{equation}\label{WRAJSAMPLE EQ}
	\begin{tikzcd}
	\Omega^{n,s,\lambda}_{\mathfrak{C}} \wr (A_j) \ar{r}{\boldsymbol{V}^n} \ar{d} &
	\Sigma \wr \amalg_j A_j  \ar{d}
	\\
	\Omega^{n,s,\lambda}_{\mathfrak{C}} \ar{r}{\boldsymbol{V}^n} &
	\Sigma \wr \Sigma_{\mathfrak{C}}^{\amalg l}
	\end{tikzcd}
	\end{equation}
	along with analogues of $d_i$ (for $i<n$), $s_j$, $\boldsymbol{V}^k$, $\pi_{i,k}$
	and of the forgetful functors in \eqref{FGTLABEL EQ}
	(cf. the discussion following \eqref{WRADEF EQ}).
\end{definition}

\begin{proposition}\label{SPANPIECEJ PROP}
	A tuple of commutative squares
	\begin{equation}\label{SPANPIECEJ EQ}
	\begin{tikzcd}[row sep = 6pt]
	A_j \ar{d} \ar{r}{\varphi} &  \ar{d} B_j
	\\
	\Sigma_{\mathfrak{C}} \ar{r}[swap]{\varphi} & \Sigma_{\mathfrak{D}}
	\end{tikzcd}
	\end{equation}
	induces natural maps 
	$\varphi \colon
	\Omega_{\mathfrak{C}}^{n,\bullet,\lambda} \wr (A_j) \to 
	\Omega_{\mathfrak{D}}^{n,\bullet,\lambda} \wr (B_j) $.
	
	Similarly, a map of tuples $A_j \to B_{g(j)}$ for 
	$\gamma \colon \{1,\cdots,l\} \to \{1,\cdots,l'\}$
	induces natural maps 
	$\gamma \colon
	\Omega_{\mathfrak{C}}^{n,\bullet,\lambda} \wr (A_j) \to 
	\Omega_{\mathfrak{C}}^{n,\bullet,\lambda'} \wr (B_{j'}) $.

	Moreover, both $\varphi$ and $\gamma$ satisfy the analogues of the commutativity properties in Proposition \ref{SPANPIECE PROP}.
	In particular, the diagram below commutes.
	\[
	\begin{tikzcd}[column sep = 5pt, row sep = 4pt]
	\Omega^{n,\bullet,\lambda}_{\mathfrak{C}} \wr (A_j)
	\ar{rrrrr}[name=toE2,near end]{\boldsymbol{V}^k} \ar{rd}[swap]{d_i} \ar{dd}[swap]{\varphi \gamma}
	&&&
	&&
	\Sigma \wr \Omega^{n-k-1,\bullet,\lambda}_{\mathfrak{C}} \wr (A_j) \ar{dd}{\varphi \gamma}
	\\
	&
	|[alias=DBE2]|
	\Omega^{n-1,\bullet,\lambda}_{\mathfrak{C}} \wr (A_j) \ar{rrrru}[swap]{\boldsymbol{V}^{k-1}}
	\\
	\Omega^{n,\bullet,\lambda'}_{\mathfrak{D}} \wr (B_{j'}) \ar{rrrrr}[name=toB2, near end]{\boldsymbol{V}^k} \ar{rd}[swap]{d_i}
	&&&
	&&
	\Sigma \wr \Omega^{n-k-1,\bullet,\lambda'}_{\mathfrak{D}} \wr (B_{j'})
	\\
	&
	|[alias=D2]| \Omega^{n-1,\bullet,\lambda'}_{\mathfrak{D}} \wr (B_{j'}) \ar{rrrru}[swap]{\boldsymbol{V}^{k-1}}
	\arrow[Leftrightarrow, from=DBE2, to=toE2, shorten <=0.1cm,shorten >=0.15cm
	,swap,near end,"\pi"
	]
	\arrow[Leftrightarrow, from=D2, to=toB2, shorten <=0.1cm,shorten >=0.15cm,swap,near end,"\pi"]
	\arrow[from=DBE2, to=D2, crossing over, near start, swap, "\varphi \gamma"]
	\end{tikzcd}
	\]
\end{proposition}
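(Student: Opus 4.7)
The plan is to essentially mimic the argument in the proof of Proposition \ref{SPANPIECE PROP}, now in the labeled tuple setting of Definition \ref{NA_DEF}.

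First, I would construct the maps $\varphi$ and $\gamma$ at the level of objects. The desired functor $\varphi \colon \Omega^{n,\bullet,\lambda}_{\mathfrak{C}} \wr (A_j) \to \Omega^{n,\bullet,\lambda}_{\mathfrak{D}} \wr (B_j)$ is built from the universal property of the pullback square \eqref{WRAJSAMPLE EQ}: the functor $\varphi \colon \Omega^{n,\bullet,\lambda}_{\mathfrak{C}} \to \Omega^{n,\bullet,\lambda}_{\mathfrak{D}}$ (from \eqref{FGTLABEL EQ}) together with the induced functor $\Sigma \wr \amalg_j A_j \to \Sigma \wr \amalg_j B_j$ are compatible over $\Sigma \wr \Sigma^{\amalg l}_{\mathfrak{D}}$. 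Similarly, for a map of tuples $A_j \to B_{\gamma(j)}$, the induced functor $\gamma$ comes from composing with the change-of-label functor on the base and the map of coproducts $\amalg_j A_j \to \amalg_{j'} B_{j'}$ on the pullback piece.

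For the commutativity assertions, I would follow the factorization strategy of Proposition \ref{SPANPIECE PROP}. Factor the square \eqref{SPANPIECEJ EQ} as
\begin{equation*}
\begin{tikzcd}[row sep=5pt]
A_j \ar{d} \ar{r} & B_j \times_{\Sigma_{\mathfrak{D}}} \Sigma_{\mathfrak{C}} \ar{r} \ar{d} & B_j \ar{d} \\
\Sigma_{\mathfrak{C}} \ar[equal]{r} & \Sigma_{\mathfrak{C}} \ar{r} & \Sigma_{\mathfrak{D}}
\end{tikzcd}
\end{equation*}
and verify each half separately. For the left half, where only the tuple changes, the commutativity of the operators $d_i, s_j, \boldsymbol{V}^k, \pi_{i,k}$ with the induced map follows from the naturality of the pullback $2$-functor $(-) \wr (A_j)$ in the argument $(A_j)$. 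For the right half, where the tuple is the pullback of $(B_j)$, note that each of the commutativity diagrams from the labeled analogue of Proposition \ref{CATDIAG PROP}(iii) (which, as noted in the remark preceding Definition \ref{NA_DEF}, holds by keeping track of vertex labels) already lives in the $2$-category $\mathsf{Cat} \downarrow^r_{\Sigma} \Sigma \wr \Sigma^{\amalg l}_{\mathfrak{D}}$; applying the pullback $2$-functor $(-) \wr (B_j)$ from \eqref{WRAJDEF EQ} then delivers the desired commutativity. The compatibility with $\gamma$ is handled analogously, using that $\gamma$ acts diagonally on the string data and the tuple data, and that the operators $d_i, s_j, \boldsymbol{V}^k, \pi_{i,k}$ commute with the label-change functors from \eqref{FGTLABEL EQ} by construction.

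The main obstacle is purely bookkeeping: one must carefully check that all the naturality and commutativity properties listed in Propositions \ref{CATDIAG PROP} and \ref{CATDIAG2 PROP}, in their labeled variants for $\Omega_{\mathfrak{C}}^{n,s,\lambda}$, continue to hold after applying the pullback $2$-functor $(-) \wr (A_j)$, and that the mutual compatibilities of $\varphi$ and $\gamma$ (in particular for the prism-style diagram involving $\boldsymbol{V}^k$, $d_i$ and $\pi_{k-1,k}$) do not require any new coherence data beyond what is already packaged into the $2$-functoriality of \eqref{WRAJDEF EQ}. Since all the required coherences for the unlabeled versions $\Omega^n_{\mathfrak{C}} \wr A$ were established in Proposition \ref{SPANPIECE PROP}, and the only additions are the (routine) tracking of labels on vertices and of the simplicial shift function in \eqref{SIMPLEXP EQ}, this bookkeeping goes through without substantive complication.
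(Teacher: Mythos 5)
Your proposal is correct and matches the paper's argument, which simply states that the result follows by repeating the proof of Proposition \ref{SPANPIECE PROP}: factor the square through $B_j \times_{\Sigma_{\mathfrak{D}}} \Sigma_{\mathfrak{C}}$, use naturality of the pullback $2$-functor in the tuple argument for one half, and apply the $2$-functor $(-)\wr(B_j)$ to the labeled analogues of Proposition \ref{CATDIAG PROP}(iii) for the other. Your additional remarks on the $\gamma$ case and the label bookkeeping are consistent with what the paper leaves implicit.
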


\begin{proof}
	This follows by repeating the argument in the proof of Proposition \ref{SPANPIECE PROP}.
\end{proof}

Using the composite functors
$\Omega_{\mathfrak{C}}^{n,s,\lambda} \wr (A_j)
\to \Omega_{\mathfrak{C}}^{n,s,\lambda} 
\to \Omega^{-1,0}_{\mathfrak{C}} = \Sigma_{\mathfrak{C}}$,
we can regard the 
$\Omega_{\mathfrak{C}}^{n,s,\lambda} \wr (-)$
construction as a functor
$\left(\mathsf{Cat}\downarrow \Sigma_{\mathfrak{C}}\right)^{\times l}
\to \mathsf{Cat}\downarrow \Sigma_{\mathfrak{C}}$.
In the following, 
$(\OC^k)^{\times \lambda}$
denotes the tuple given by 
$\OC^k$ for entries in $\lambda_a$ and by
$\Sigma_{\mathfrak{C}}$ for entries in $\lambda_i$,
and $(\OC^k)^{\times \lambda} \wr (A_j)$ is computed entrywise.

\begin{corollary}[{cf. \cite[Cor. 5.35]{BP21}}]
	\label{LABIDEN_COR}
	Let $-1 \leq k, -1 \leq s \leq n$.
	There are natural identifications
	\[
	\OC^k \wr \OC^{n,s,\lambda} \wr (A_j) \simeq
	\OC^{n+k+1,s+k+1,\lambda} \wr (A_j),
	\qquad
	\OC^{n,s,\lambda} \wr (\OC^k)^{\times \lambda} \wr (A_j) \simeq
	\OC^{n+k+1,s,\lambda} \wr (A_j)	
	\]
	which are unital and associative in the natural ways.
	Moreover, these induce identifications
	\[
	d_i \wr \Omega^{n,s,\lambda} \wr (A_j) \simeq d_i \wr (A_j)
	\quad
	\pi_{i,k} \wr \Omega^{n,s,\lambda} \wr (A_j) \simeq \pi_{i,k} \wr (A_j)
	\quad
	s_j \wr \Omega^{n,s,\lambda} \wr (A_j) \wr A \simeq s_j \wr  (A_j)
	\]
	\[
	\Omega^k \wr (d_i) \wr (A_j) \simeq d_{k+i+1} \wr (A_j)
	\quad
	\Omega^k \wr (s_j) \wr (A_j) \simeq s_{k+j+1} \wr (A_j)
	\]
\end{corollary}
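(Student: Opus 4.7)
The plan is to reduce both identifications to the unlabeled/uncolored analogue established in Proposition \ref{ASSOCIDS PROP} by iterating the pullback characterization of $\Omega_{\mathfrak{C}}^{n,s,\lambda} \wr (-)$ from Definition \ref{NA_DEF}. More precisely, each wreath on the left hand side is built as a pullback along $\boldsymbol{V}^n$ (cf. \eqref{WRAJSAMPLE EQ}); stacking two such pullbacks and pasting them with the pullback squares provided by Proposition \ref{CATDIAG PROP}(ii) for the base categories $\Omega^{n,s,\lambda}_{\mathfrak{C}}$ produces a single pullback whose base is identified, by the uncolored analogue, with the string category $\Omega^{n+k+1,?,\lambda}_{\mathfrak{C}}$ for the appropriate index.

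First I would verify both identifications at the level of the base string categories, i.e. that $\OC^k \wr \OC^{n,s,\lambda} \simeq \OC^{n+k+1,s+k+1,\lambda}$ and $\OC^{n,s,\lambda} \wr (\OC^k)^{\times \lambda} \simeq \OC^{n+k+1,s,\lambda}$ as categories over $\Sigma \wr \Sigma_{\mathfrak{C}}^{\amalg l}$. For the first, the $k+1$ additional string entries coming from $\OC^k$ are inserted \emph{below} $\vect T_s$, so they remain unlabeled and the index $s$ shifts by $k+1$, which is precisely the content of the simplicial conventions \eqref{SIMPLEXP EQ} and of Proposition \ref{CATDIAG2 PROP}(FF2). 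For the second, the $k+1$ new entries are inserted \emph{above} each of the original $\vect T_r$ for $r\geq s$; the choice of $\OC^k$ vs.\ $\Sigma_{\mathfrak{C}}$ in $(\OC^k)^{\times \lambda}$ is dictated by whether the vertex label is active or inert, and this is exactly what is needed so that the new $f_r$ for $r>s$ remain $\lambda_i$-inert label maps in the sense of Definition \ref{CLPS DEF}. Once the base identifications are established, the $(A_j)$-wreath piece passes through transparently by pasting of pullback squares, exactly as in the proof of Proposition \ref{ASSOCIDS PROP}.

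Next I would upgrade these to identifications in $\mathsf{Cat} \downarrow^r_{\Sigma} \Sigma \wr \amalg_j A_j$, and then deduce the asserted compatibilities with $d_i$, $s_j$ and $\pi_{i,k}$. As in Proposition \ref{ASSOCIDS PROP}, each of these operators arises as a $1$- or $2$-arrow in the $2$-category $\mathsf{Cat} \downarrow^r_{\Sigma} \Sigma \wr \Sigma_{\mathfrak{C}}^{\amalg l}$ (using the extensions of $\boldsymbol{V}^k$ and $\pi_{i,k}$ to the labeled setting listed after Definition \ref{CLPS DEF}); the desired equalities $d_i \wr \Omega^{n,s,\lambda} \wr (A_j) \simeq d_i \wr (A_j)$, $\Omega^k \wr (d_i) \wr (A_j) \simeq d_{k+i+1} \wr (A_j)$, and the analogues for $s_j$ and $\pi_{i,k}$, then follow by $2$-functoriality of the pullback $2$-functor in \eqref{WSPANPULL EQ}, applied to $\Sigma \wr \amalg_j A_j \to \Sigma \wr \Sigma_{\mathfrak{C}}^{\amalg l}$. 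Unitality and associativity of the identifications likewise come for free from the corresponding $2$-categorical statements at the base level.

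The main obstacle I anticipate is the bookkeeping of the label data and the index $s$ through the pasting: concretely, in the second identification one must check that the pullback of a vertex indexed by $j\in\lambda_i$ uses the \emph{trivial} wreath factor $\Sigma_{\mathfrak{C}}$ rather than $\OC^k$, which is precisely what enforces the $\lambda_i$-inert condition on the newly inserted maps $f_r, r>s$. This is not a conceptual difficulty but a careful unwinding of Definition \ref{CLPS DEF} together with the explicit fibrewise description of $\boldsymbol{V}^k$ on labeled strings; it mirrors the corresponding bookkeeping in the proof of \cite[Cor. 5.35]{BP_geo}, now executed in the colored context afforded by the $2$-categorical pullback formalism of \S \ref{WRACONST SEC}.
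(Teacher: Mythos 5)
Your proposal follows essentially the same route as the paper: the paper's proof simply exhibits the two iterated diagrams of pullback squares (the labeled analogues of those in Proposition \ref{ASSOCIDS PROP}) and concludes by pasting, with the base-level squares encoding exactly the bookkeeping of where the $k+1$ new string entries are inserted and how the index $s$ and the $\lambda_i$-inertness condition are preserved, and with the operator compatibilities handled by the $2$-functoriality of the pullback formalism of \S\ref{WRACONST SEC}. Your more explicit verification of the base identifications and your treatment of the $(A_j)$-wreath and of $d_i$, $s_j$, $\pi_{i,k}$ match the paper's argument.
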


\begin{proof}
	Much as in Proposition \ref{ASSOCIDS PROP}, this follows by noting that all squares in the following diagrams are pullback squares.
\begin{equation}\label{LSTRINGS EQ}
\begin{tikzcd}[column sep = 11pt, row sep = 9pt]
	\OC^{n+k+1,s+k+1,\lambda} \arrow{r}{\boldsymbol{V}^k} \wr (A_j)
	\ar{d}
	&
	\Sigma \wr \OC^{n,s,\lambda} \wr (A_j) \ar{r}{\boldsymbol{V}^n} \ar{d}
	&
	\Sigma^{\wr 2} \wr \amalg_j A_j \arrow{r} \ar{d}
	&
	\Sigma \wr \amalg_j A_j \ar{d}
	\\
	\OC^{n+k+1,s+k+1,\lambda} \arrow{r}{\boldsymbol{V}^k} \arrow[d]
	&
	\Sigma \wr \OC^{n,s,\lambda} \arrow{r}{\boldsymbol{V}^n} \arrow[d]
	&
	\Sigma^{\wr 2} \wr \Sigma_{\mathfrak C}^{\amalg l} \ar{r}
	&
	\Sigma \wr \Sigma_{\mathfrak C}^{\amalg l}
	\\
	\OC^k \arrow{r}{\boldsymbol{V}^k}
	&
	\Sigma \wr \Sigma_{\mathfrak C}
	\end{tikzcd}
\end{equation}	
\begin{equation}
\begin{tikzcd}[column sep = 11pt, row sep = 9pt]
	\OC^{n+k+1,s,\lambda} \arrow{r}{\boldsymbol{V}^n} \wr (A_j)
	\ar{d}
	&
	\Sigma \wr \amalg \left(\OC^{k}\right)^{\times \lambda} \wr (A_j)
	\ar{r}{\boldsymbol{V}^k} \ar{d}
	&
	\Sigma \wr \amalg_j \Sigma \wr A_j \arrow{r} \ar{d}
	&
	\Sigma^{\wr 2} \wr \amalg_j A_j \arrow{r} \ar{d}
	&
	\Sigma \wr \amalg_j A_j \ar{d}
	\\
	\OC^{n+k+1,s,\lambda} \arrow{r}{\boldsymbol{V}^n} \arrow[d]
	&
	\Sigma \wr \amalg \left(\OC^{k}\right)^{\times \lambda} \arrow{r}{\boldsymbol{V}^k} \arrow[d]
	&
	\Sigma \wr \amalg_l \Sigma \wr \Sigma_{\mathfrak{C}} \ar{r}
	&
	\Sigma^{\wr 2} \wr \amalg_l \Sigma_{\mathfrak{C}} \ar{r}
	&
	\Sigma \wr \Sigma_{\mathfrak C}^{\amalg l}
	\\
	\OC^{n,s,\lambda} \arrow{r}{\boldsymbol{V}^n}
	&
	\Sigma \wr \Sigma_{\mathfrak C}^{\amalg l}
\end{tikzcd}
\end{equation}
\end{proof}

\subsubsection*{Filtration of free extensions}
\label{EQMON_SEC}

We now return to proving Lemma \ref{OURE LEM},
describing the free extensions \eqref{OU EQ}.
As discussed after \eqref{NLAMBMON EQ},
let $\lambda_l$ denote the partition on 
\[
\langle \langle l \rangle \rangle
=
\{-\infty,-l,\cdots,-1,0,1,\cdots,+\infty\}
\]
such that $\left(\lambda_l\right)_a = \{-\infty\}$,
and define $N_{n,l}^{(\O,X,Y)}$ to be the opposite of the composite
\[
\Omega_{\mathfrak C}^{n,0,\lambda_l} \xrightarrow{(\boldsymbol{V}^0)^{\circ n+1}}
\Sigma^{\wr n} \wr \coprod_{\langle \langle l \rangle \rangle} \Sigma_{\mathfrak C} \xrightarrow{(\O,X,\dots,X,Y)}
\Sigma^{\wr n} \wr \V^{op} \xrightarrow{\otimes}
\V^{op}.
\]
The upshot of \S \ref{LCS_SEC}, in particular Corollary \ref{LABIDEN_COR}, is that
$\mbox{$N \left(N^{\circ n} \upsilon \O \amalg_{\mathsf{Set}} \upsilon X^{\amalg_{\mathsf{Set}} 2l +1}\amalg_{\mathsf{Set}} \upsilon Y \right)$}$
is the span
$\mbox{$\Sigma_{\mathfrak C}^{op} \xleftarrow{\mathsf{lr}} (\Omega_{\mathfrak C}^{n,0,\lambda_l})^{op} \xrightarrow{N_{n,l}^{(\O,X,Y)}} \V$}$
and hence, following \eqref{OU EQ1}, we conclude that
\begin{equation}\label{1STRED EQ}
\O[u] \simeq
\mathop{\colim}\limits_{(\Delta \times \Delta)^{op}}
\left(
\mathsf{Lan}_{\left(\Omega_{\mathfrak C}^{n,\lambda_l} \to \Sigma_{\mathfrak C}\right)^{op}} N_{n,l}^{(\O,X,Y)}
\right).
\end{equation}

Moreover, the simplicial operators in the $l$ direction are described by antisymmetric functions $\langle \langle l \rangle \rangle
\to \langle \langle l' \rangle \rangle
$
which are given by \eqref{SIMPLEXP EQ} on non-negative values.

In what follows,
the realization $|\mathcal{C}_{\bullet}|$
of a simplicial object
$\mathcal{C}_{\bullet} \in \mathsf{Cat}^{\Delta^{op}}$
is as in \cite[(A.1)]{BP21}.

\begin{proposition}\label{EXTENTREE PROP}
	The double simplicial realization
	$|\Omega_{\mathfrak C}^{n,\lambda_l}|$,
	denoted $\OC^e$
	and called the \emph{extension tree category},
	has objects the 
	$\{\O,X,Y\}$-labeled trees,
	and arrows the tall maps $f \colon \vect{T} \to \vect{S}$ such that
	\begin{enumerate}[label=(\roman*)]
		\item if $\vect{T}_v$ has a $X$-label, 
		then $\vect{S}_v \in \Sigma_{\mathfrak{C}}$ and
		$\vect{S}_v $ has a $X$-label;
		\item if $\vect{T}_v$ has a $Y$-label, then 
		$\vect{S}_v \in \Sigma_{\mathfrak{C}}$ and
		$\vect{S}_v $ has either a $X$-label or a $Y$-label;
		\item if $\vect{T}_v$ has a $\O$-label, then 
		$\vect{S}_v $ has only $X$ and $\O$ labels.
	\end{enumerate}
\end{proposition}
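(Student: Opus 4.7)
The plan is to compute the bisimplicial realization in stages, adapting the strategy of \cite[Prop. 5.42]{BP_geo} to the present colored, labeled setting. I would first realize in the $n$-direction with $l$ fixed, and then realize the resulting $l$-simplicial category, at each stage exploiting the wreath product formalism to reduce to a tractable computation on groupoids of trees.

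For the $n$-realization, recall that an object of $\OC^{n,0,\lambda_l}$ is a string $\mathsf{lr}(\vect{T}_0) = \vect{T}_{-1} \to \vect{T}_0 \to \cdots \to \vect{T}_n$ of $l$-labeled trees in which all maps after $\vect{T}_0$ are $\lambda_i$-inert label maps, and the face maps $d_i$ and degeneracies $s_j$ act by removing or duplicating trees in the string. Following the extra-degeneracy pattern of \cite[App. A]{BP_geo}, I expect $|\OC^{\bullet,0,\lambda_l}|_n$ to be identified with the category whose objects are $l$-labeled trees and whose arrows $f \colon \vect{T} \to \vect{S}$ are tall label maps satisfying the constraint that $\vect{S}_v$ is a corolla bearing the same label as $v$ whenever $v \in \boldsymbol{V}(T)$ carries a $\lambda_i$-label. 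This simultaneously records the inertness hypothesis built into the definition of $\OC^{n,0,\lambda_l}$ and the collapse of a string of $\lambda_i$-inert maps into a single tall map between its endpoints.

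Next, I would realize in the $l$-direction. The antisymmetric simplicial operators on $\langle\langle l \rangle\rangle$ fix $\{-\infty, +\infty\}$ and act by the standard $d_i, s_j$ on $\{-l, \ldots, l\}$, so after $l$-realization the labels collapse to exactly three orbit classes, which we identify with $\O = [-\infty]$, $X = [\mathrm{interior}]$, and $Y = [+\infty]$. The $\lambda_i$-inertness constraint, transported through the realization, becomes precisely the requirement that $\vect{S}_v$ is a corolla with matching label for $X$- and $Y$-labeled vertices, yielding conditions (i) and (ii). Condition (iii) has a slightly different origin: it reflects the structure of the bar construction $B_l\bigl(\O, \mathbb F X, \mathbb F X, \mathbb F X, \mathbb F Y\bigr)$, in which the only expansions that a vertex originating from $\O$ can undergo are by the acting $\mathbb F X$, never producing a $Y$-label, so this asymmetry between the three labels survives the realization.

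The principal obstacle will be the careful execution of the $n$-realization, where the simplicial identities interact with the $\boldsymbol{V}^k$ and $\pi_{i,k}$ data in a subtle way and where one must verify that isomorphism classes of strings really do collapse to the specified set of tall maps. The naturality results of Proposition \ref{SPANPIECEJ PROP} together with the associativity identifications in Corollary \ref{LABIDEN_COR} are designed to let us reduce the realization to an iterated wreath product computation, which in turn is a standard simplicial colimit of groupoids of trees; after this step the $l$-realization is essentially routine since its simplicial structure is entirely combinatorial on labels.
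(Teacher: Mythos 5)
Your overall strategy coincides with the paper's: realize first in the $n$-direction for fixed $l$, identify the result as a category of $\langle\langle l\rangle\rangle$-labeled trees with tall label maps that are inert away from the $\O$-label, and then realize in the $l$-direction. Your description of the $n$-realization matches the paper's (which in turn defers the details to the non-colored precursor in \cite{BP_geo}), and your explanation of condition (iii) via the one-sided role of $\O$ in the bar construction is correct.

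The weak point is the $l$-realization. You assert that the inertness constraint ``becomes precisely the requirement that $\vect{S}_v$ is a corolla with \emph{matching} label for $X$- and $Y$-labeled vertices, yielding conditions (i) and (ii),'' but condition (ii) is strictly stronger than label-matching: a $Y$-labeled vertex is allowed to map to an $X$-labeled corolla. These $Y\to X$ relabelings (and likewise the corollas labeled $X$ appearing inside the expansion of an $\O$-vertex in (iii)) are exactly the \emph{new} morphisms created by the $l$-realization --- they are not present at level $l=0$ and cannot be obtained by passing to orbit classes of labels, since $X$ and $Y$ lie in different orbits. The paper's proof accounts for them by factoring an arbitrary arrow of $\OC^e$ canonically as a relabel map (an underlying isomorphism of trees changing only labels) followed by a label map, and then matching relabel maps with \emph{objects} of the level-$1$ category $\Omega^{t,\lambda_1}_{\mathfrak C}$ (whose two faces record the source and target labelings, in particular sending the label $+1$ to $Y$ under one face and to $X$ under the other) and label maps with \emph{morphisms} of the level-$0$ category $\Omega^{t,\lambda_0}_{\mathfrak C}$. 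Without some such identification of the generating $1$-simplices of the $l$-direction, your sketch produces too few morphisms and in particular does not recover the punctured-cube fibers $(Y\to X)^{\times \boldsymbol{V}^{in}(T)}$ of Remark \ref{OEFIB REM} on which the filtration argument depends.
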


\begin{proof}
	This is a direct analogue of \cite[Prop. 5.47]{BP21}, and the proof therein carries through without notable changes, so we only sketch the key arguments.
	Firstly, it is straightforward \cite[Rem. 5.41]{BP21} that, for each fixed $l$,
	the realization $|\Omega^{n,\lambda_l}|$
	in the $n$ direction is the category 
	$\Omega^{t,\lambda_l}_{\mathfrak{C}}$
	with objects the $\langle \langle l \rangle \rangle$-labeled trees and  maps the tall label maps which are inert on colors other than $-\infty$/$\O$.
	Moreover, maps 
	$\vect{T} \to \vect{S}$
	in 
	$\Omega^e_{\mathfrak{C}}$
	canonically factor as
	$\vect{T} \to \vect{T'} \to \vect{S}$,
	where the first map is a relabel map (i.e. an underlying isomorphism of trees that just changes labels) and the second map is a label map. 
	Hence, the result follows from the observation that relabel maps in 
	$\Omega^e_{\mathfrak{C}}$
	correspond to objects of  
	$\Omega^{t,\lambda_1}$
	while label maps correspond to maps of
	$\Omega^{t,\lambda_0}$.
\end{proof}

We note that the proof of \cite[Prop. 5.42]{BP21} 
holds when replacing $\Omega_G$ with $\Omega_{\mathfrak C}$,
and we thus 
apply it to
\eqref{1STRED EQ}
(that the ``natural transformation component of differential operators are isomorphisms'' condition for the $n$ direction follows from \eqref{NMONMULTTR EQ} and \eqref{NMONIDTR EQ},
while in the $l$ direction it follows since the associated maps of tuples (cf. Proposition \ref{SPANPIECEJ PROP}) are the identity in each coordinate)
to yield
\begin{equation}\label{2NDRED EQ}
\O[u] \simeq
\mathsf{Lan}_{\left(\Omega_{\mathfrak C}^{e} \to
	\Sigma_{\mathfrak C}\right)^{op}} N^{(\O,X,Y)}.
\end{equation}

The desired filtration \eqref{FILT EQ} will now be obtained by
first replacing $\Omega_{\mathfrak C}^e$ in \eqref{2NDRED EQ} with a suitable subcategory $\widehat{\Omega}_{\mathfrak C}^{e}$,
and then producing a filtration
$\widehat{\Omega}_{\mathfrak C}^{e}[\leq k]$
of 
$\widehat{\Omega}_{\mathfrak C}^{e}$ itself.


\begin{definition}
	Let
	$\widehat{\Omega}_{\mathfrak C}^{e} \hookrightarrow \Omega_{\mathfrak C}^{e}$
	denote the full subcategory of those labeled trees whose underlying tree is alternating
	(cf. Example \ref{ALTMAP EX} and the preceding discussion),
	active nodes are labeled by $\O$, 
	and inert nodes are labeled by $X$ or $Y$.
	
	We write $\boldsymbol{V}^{in}(T)$
	(resp. $\boldsymbol{V}^{Y}(T)$)
	for the subset of inert (resp. $Y$-labeled) vertices.
	
	Letting	$|\vect{T}| = |\boldsymbol{V}^{in}(T)|$,
	$|\vect{T}|_Y = |\boldsymbol{V}^{Y}(T)|$,
	we define subcategories of 
	$\widehat{\Omega}_{\mathfrak C}^{e}$ by:
	\begin{enumerate}[label=(\roman*)]
		\item $\widehat{\Omega}_{\mathfrak C}^{e}[\leq k]$ (resp. $\widehat{\Omega}_{\mathfrak C}^{e}[k]$)
		denotes the full subcategory of those $\vect{T}$ with $|\vect{T}| \leq k$ ($|\vect{T}|=k$);
		\item $\widehat{\Omega}_{\mathfrak C}^{e}[\leq k \setminus Y]$ (resp. $\widehat{\Omega}_{\mathfrak C}^{e}[k \setminus Y]$)
		denotes the further full subcategory of those $\vect{T}$ with $|\vect{T}|_Y \neq k$.
	\end{enumerate}
	Subcategories $\OC^a[\leq k], \OC^a[k]$ of the category $\OC^a$
	of alternating trees are defined similarly.
\end{definition}

The following results follow exactly as in the cited results from 
\cite{BP21} that they adapt.

\begin{lemma}[{cf. \cite[Cor. 5.62, Lemma 5.68]{BP21}}]
	\label{LANINT LEM}
	
	$\widehat\Omega_{\mathfrak C}^e \into 
	\Omega_{\mathfrak C}^e$
	is $\Ran$-initial over $\SC$.
	More explicitly, for a functor
	$N\colon \Omega_{\mathfrak C}^e \to \mathcal{V}$
	with $\mathcal{V}$ complete,
	it is
	$\mathsf{Ran}_{\Omega_{\mathfrak C}^e \to \Sigma_{\mathfrak{C}}}
	N \simeq 
	\mathsf{Ran}_{\widehat\Omega_{\mathfrak C}^e \to \Sigma_{\mathfrak{C}}} N$.	
	
	Similarly, $\widehat\Omega_{\mathfrak C}^e[\leq k-1] \into 
	\widehat\Omega_{\mathfrak C}^e[\leq k \setminus Y]$
	is $\Ran$-initial over $\SC$.
\end{lemma}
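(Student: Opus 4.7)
The plan is to reduce both claims to a pointwise initiality condition and then exhibit canonical alternating refinements (resp. contractions) witnessing this, closely following the blueprint of \cite[Cor. 5.62, Lemma 5.68]{BP_geo}, modified only to track the extra coloring data $\mathfrak{C}$.

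First, I would unpack the condition ``$\mathsf{Ran}$-initial over $\SC$'' as follows: an inclusion of subcategories $i \colon \mathcal{A} \hookrightarrow \mathcal{B}$ over a common functor to $\SC$ is $\mathsf{Ran}$-initial iff, for every $\vect{C} \in \SC$, the induced functor of slice categories $\vect{C} \downarrow_\SC \mathcal{A} \to \vect{C} \downarrow_\SC \mathcal{B}$ is initial, i.e.\ for each object $(\vect{T}, \vect{C} \to \mathsf{lr}(\vect{T}))$ in the target the comma category of lifts to $\mathcal{A}$ is both nonempty and connected. Since tall maps fix the leaf-root, a choice of $\vect{C} \to \mathsf{lr}(\vect{T})$ is preserved along any tall map out of $\vect{T}$, so the problem reduces to checking, at the level of labeled trees alone, that the inclusions in question admit initial lifts in the undercategory $\vect{T} \downarrow (-)$ computed within tall label maps.

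For the first claim, I would construct, for each $\vect{T} \in \OC^e$ (labeled by $\{\mathcal{O},X,Y\}$ subject to (i)--(iii) of Proposition \ref{EXTENTREE PROP}), a canonical \emph{alternating refinement} $\vect{T} \to \widehat{\vect{T}} \in \widehat\OC^e$. Concretely, $\widehat{\vect{T}}$ is obtained by inserting a minimal number of unary (stick) vertices along the edges of $\vect{T}$: between two adjacent $\mathcal{O}$-labeled vertices one inserts a stick $X$-labeled (inert) vertex; between two adjacent inert vertices (both labeled in $\{X,Y\}$, noting that (i)--(iii) already force such inert vertices to be corollas) one inserts a stick $\mathcal{O}$-labeled (active) vertex; finally, along each outer edge adjacent to an inert vertex, one inserts a stick $\mathcal{O}$-vertex so that outer vertices become active. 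The resulting labeled tree lies in $\widehat\OC^e$, and the tall map $\vect{T} \to \widehat{\vect{T}}$ is initial among all tall maps from $\vect{T}$ into $\widehat\OC^e$ since any such lift must perform at least these insertions (any ``further'' refinement happens within an $\mathcal{O}$-labeled vertex, which is always permitted by (iii)).

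For the second claim, given $\vect{T} \in \widehat\OC^e[\leq k \setminus Y]$ with $|\vect{T}| = k$ but $|\vect{T}|_Y < k$, I would produce a canonical reduction by contracting an $X$-labeled inert vertex $v$ together with its adjacent $\mathcal{O}$-labeled active vertices into a single $\mathcal{O}$-labeled vertex (invoking the map $\mathbb{F} X \to \mathcal{O}$ of \eqref{OU EQ} to legalize the contracted labeling); this lowers the number of inert vertices, producing a tree in $\widehat\OC^e[\leq k-1]$ together with a tall map $\vect{T} \to \vect{T}_{\mathrm{red}}$. Making the choice of $v$ canonical (e.g.\ by the planar order together with the canonical order on $\boldsymbol{V}(T_k)$ discussed after \eqref{VKDEF EQ}) yields initiality.

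The main obstacle will be the \emph{connectedness} half of initiality, not existence. Given two lifts $\vect{T} \to \vect{T}'$ and $\vect{T} \to \vect{T}''$ of $\vect{T}$ into $\widehat\OC^e$ (resp.\ into $\widehat\OC^e[\leq k - 1]$), one must produce a zig-zag connecting them inside the relevant slice. I expect to argue this by showing that the canonical refinement (resp.\ reduction) is in fact a weak terminal object in the slice category $\vect{T} \downarrow \widehat\OC^e$ (resp.\ in $\vect{T} \downarrow \widehat\OC^e[\leq k-1]$), so that every lift admits a map to it, whence any two lifts become connected via this common receptacle. Verifying this weak-terminality requires a careful combinatorial check that (a) labels on outer vertices are forced by the alternating condition, (b) insertions of stick vertices commute in the only manner that the axioms allow, and (c) for the second claim the choice of contracted $X$-vertex can always be moved past any other tall map. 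All these checks are local at individual vertices, and the colored labels are preserved strictly by tall label maps, so the passage from the $G$-equivariant combinatorics of \cite{BP_geo} to the present $\mathfrak{C}$-colored setting should introduce no essential new difficulty.
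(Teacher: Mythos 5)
Your overall strategy (reduce $\Ran$-initiality to a pointwise ``nonempty and connected comma category'' condition and exhibit canonical witnesses) is the right one, and it is essentially what the paper delegates to \cite[Cor.~5.62, Lemma~5.68]{BP_geo}; the problem is that your comma categories point the wrong way, and the explicit constructions built on that choice do not even produce maps of $\OC^e$. Since $\mathsf{Ran}_{\mathsf{lr}}N(\vect{C})$ is a limit over the comma category $\vect{C}\downarrow\mathsf{lr}$, initiality of the inclusion requires, for each $\vect{T}$, that the \emph{over}-category of alternating trees equipped with maps $\vect{S}\to\vect{T}$ in $\OC^e$ be nonempty and connected -- not the undercategory $\vect{T}\downarrow\widehat\OC^e$ you work with. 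Recall from Notation \ref{SVNOT NOT} and Example \ref{ALTMAP EX} that a tall map $\vect{S}\to\vect{T}$ expands each vertex of $S$ into a subtree of $T$, so the alternating tree must be a \emph{coarsening} of $\vect{T}$. Your map $\vect{T}\to\widehat{\vect{T}}$ typically does not exist: an inserted unary $\O$-vertex lying between two inert vertices $v,w$ of $T$, or below an inert root vertex, would have to belong to $\widehat{\vect{T}}_v$ or $\widehat{\vect{T}}_w$, which conditions (i)--(ii) of Proposition \ref{EXTENTREE PROP} force to be corollas. (Those conditions constrain \emph{maps}, not objects, so they do not ``force inert vertices to be corollas'' in an object of $\OC^e$.) Worse, inserting a ``stick $X$-labeled vertex'' between adjacent $\O$-vertices is wrong in substance and not just in direction: $X$ is a bare symmetric sequence with no units, so $N(\widehat{\vect{T}})$ would acquire a spurious tensor factor $X(\mathfrak{c};\mathfrak{c})$ admitting no comparison with $N(\vect{T})$, and the Kan extension would change. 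The correct move is to \emph{merge} adjacent $\O$-vertices (together with any $X$-vertices one chooses to absorb) into a single active block, which is exactly what condition (iii) permits. The same directional error breaks your second argument: there is no tall map $\vect{T}\to\vect{T}_{\mathrm{red}}$ (the $X$-labeled inert vertex of $\vect{T}$ cannot land inside the merged $\O$-vertex), only a map $\vect{T}_{\mathrm{red}}\to\vect{T}$; and the map $\mathbb{F}X\to\O$ plays no role in ``legalizing'' arrows of $\OC^e$, whose existence is purely combinatorial.

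The repair runs as follows. For $\vect{T}\in\OC^e$, the category of alternating trees over $\vect{T}$ has a weakly terminal object: keep every $X$- and $Y$-labeled vertex of $T$ as an inert corolla, collapse each maximal $\O$-labeled subtree of $T$ to a single active vertex, and insert unary active vertices (expanding to sticks of $T$) exactly where alternation and the ``outer vertices are active'' condition force them. Every other alternating coarsening -- these are parametrized by which $X$-vertices of $T$ one absorbs into active blocks -- maps to this one over $\vect{T}$, giving nonemptiness and connectedness simultaneously. For the second claim, an object $\vect{T}\in\widehat\OC^e[k\setminus Y]$ has at least one $X$-labeled inert vertex, and the over-category of trees in $\widehat\OC^e[\leq k-1]$ mapping to $\vect{T}$ is governed by the (punctured) cubes of Remark \ref{OEFIB REM}, recording which nonempty set of $X$-labeled inert vertices is absorbed; its connectivity is what replaces your ad hoc planar-order choice, which by itself would establish neither initiality nor connectedness.
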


\begin{remark}[{cf. \cite[Remark 5.66]{BP21}}]
	\label{OEFIB REM}
	The following diagram
	\begin{equation}
	\begin{tikzcd}
	\widehat\Omega_{\mathfrak C}^e[k \setminus Y] \arrow[rr, hookrightarrow] \arrow[dr]
	&&
	\widehat\Omega_{\mathfrak C}^e[k] \arrow[dl]
	\\
	&
	\Omega_{\mathfrak C}^a[k]
	\end{tikzcd}
	\end{equation}
	is a map of Grothendieck fibrations over $\Omega_{\mathfrak C}^a[k]$
	such that fibers over $\vect{T} \in \Omega_{\mathfrak C}^a[k]$ are the punctured cube and cube categories
	\begin{equation}
	(Y \to X)^{\times \boldsymbol{V}^{in}(T)} - Y^{\times \boldsymbol{V}^{in}(T)},
	\qquad
	(Y \to X)^{\times \boldsymbol{V}^{in}(T)}.
	\end{equation}
	for $\boldsymbol{V}^{in}(T)$ the set of inert vertices.
\end{remark}

We can finally describe the filtration \eqref{FILT EQ}.
\begin{definition}\label{FILTSTAGE DEF}
	Let $\O_k$ denote the left Kan extension, 
	where we abbreviate the restriction of $N^{(\O,X,Y)}$ as $\widetilde{N}$
	\begin{equation}
	\begin{tikzcd}[column sep = 40pt]
	\widehat\Omega_{\mathfrak C}^e[\leq k]^{op}
	\ar{r}{\widetilde N}[swap,name = A]{} \arrow[d, "\mathsf{lr}"']
	&
	\V
	\\
	|[alias = B]|
	\SC^{op}
	\arrow[ur, "\O_k"']
	\arrow[Rightarrow, from = A, to = B, shorten <=0.2cm, shorten >=0.2cm]
	\end{tikzcd}
	\end{equation}
\end{definition}

Since $\widehat\Omega_{\mathfrak C}^e[\leq 0] \simeq \SC$
and the nerve of $\widehat \Omega_{\mathfrak C}^e$ is the union of the nerves of the $\widehat\Omega_{\mathfrak C}^e[\leq k]$
the desired filtration \eqref{FILT EQ} follows.
We now finally prove Lemma \ref{OURE LEM}.

\begin{proof}[Proof of Lemma \ref{OURE LEM}]
	Assume first that $G = \**$ is the trivial group.
	
	The desired filtration \eqref{OUFILRE EQ} is as described by Definition \ref{FILTSTAGE DEF},
	so it remains only to check that the filtration stages
	fit into the pushout diagrams as in \eqref{OUPUSHRE EQ}.
	
	To obtain these, we consider the following diagram,
	where the left square is a pushout at the level of nerves (cf. \cite[(5.75)]{BP21}),
	so that after taking $\mathsf{Lan}$ 
	one obtains the right pushout
	(that the top right corner is $\O_{k-1}$ follows from Lemma \ref{LANINT LEM}; note that the ``ops''
	convert $\mathsf{Ran}$ to $\mathsf{Lan}$)
	\begin{equation}\label{FILTLAN EQ}
	\begin{tikzcd}
	\widehat\Omega_{\mathfrak C}^e[k \setminus Y] \arrow[r] \arrow[d]
	&
	\widehat\Omega_{\mathfrak C}^e[\leq k \setminus Y] \arrow[d]
	&
	\mathsf{Lan}_{\widehat\Omega_{\mathfrak C}^e[k \setminus Y]^{op} \to \SC^{op}} \tilde N \arrow[r] \arrow[d]
	&
	\O_{k-1} \arrow[d]
	\\
	\widehat\Omega_{\mathfrak C}^e[k] \arrow[r]
	&
	\widehat\Omega_{\mathfrak C}^e[\leq k]
	&
	\mathsf{Lan}_{\widehat \Omega_{\mathfrak C}^e[k]^{op} \to \SC^{op}} \tilde N \arrow[r]
	&
	\O_k
	\end{tikzcd}
	\end{equation}
	But now note that Remark \ref{OEFIB REM}
	allows us to iteratively compute the
	$\mathsf{Lan}$
	appearing in \eqref{FILTLAN EQ}
	by first left Kan extending to 
	$\Omega^a_{\mathfrak{C}}[k]$,
	showing that the map between the $\mathsf{Lan}$ terms 
	in \eqref{FILTLAN EQ} is
	\begin{equation}\label{FILTLANFIN EQ}
	\mathsf{Lan}_{(\OC^a[k] \to \SC)^{op}}\left(
	\bigotimes_{v \in V^{ac}(T)}\O(T_v) \otimes
	\mathop{\mathlarger{\mathlarger{\mathlarger{\square}}}}\limits_{v \in V^{in}(T)} u(T_v)
	\right)
	\end{equation}
	which matches \eqref{NKOXY EQ}, finishing the proof of the 
	$G = \**$ case
	(compare with \cite[Prop. 5.77]{BP21}).
	
	For the case of a general group $G$,
	note first that, since all our constructions 
	are compatible with changes of color
	$\varphi \colon \mathfrak{C} \to \mathfrak{D}$,
	the left Kan extension in \eqref{2NDRED EQ}
	is compatible with the $G$-action on $\mathfrak{C}$
	(see the discussion following Remark \ref{CONVER REM}
	concerning cocartesian arrows in $\mathsf{Cat} \downarrow^l \V$).
	Thus, we have the alternative formula
	\begin{equation}\label{3RDRED EQ}
	\O[u] \simeq
	\mathsf{Lan}_{\left(G^{op} \ltimes \Omega_{\mathfrak C}^{e} 
		\to
		G^{op} \ltimes \Sigma_{\mathfrak C}\right)^{op}} N^{(\O,X,Y)}.
	\end{equation}
	Moreover, since the subcategories 
	$\widehat{\Omega}_{\mathfrak{C}}^e$,
	$\widehat{\Omega}_{\mathfrak{C}}^e[\leq k]$,
	$\widehat{\Omega}_{\mathfrak{C}}^e[k]$
	are all compatible with the $G$-action, we can replace these categories with 
	$G^{op} \ltimes \widehat{\Omega}_{\mathfrak{C}}^e$,
	$G^{op} \ltimes \widehat{\Omega}_{\mathfrak{C}}^e[\leq k]$,
	$G^{op} \ltimes \widehat{\Omega}_{\mathfrak{C}}^e[k]$
	in Definition \ref{FILTSTAGE DEF}
	as well as in \eqref{FILTLAN EQ}
	(so that the right square is now in 
	$\mathsf{Sym}_{\mathfrak{C}}(\mathcal{V})^G = \V^{G \ltimes \Sigma^{op}_{\mathfrak{C}}}$).
	And, since the diagram in Remark \ref{OEFIB REM}
	remains a map of Grothendieck fibrations 
	upon applying $G^{op} \ltimes (-)$,
	one likewise has the $G^{op} \ltimes (-)$ analogue of
	\eqref{FILTLANFIN EQ},
	showing that the description in 
	\eqref{NKOXY EQ} holds for a general $G$.
\end{proof}

\begin{remark}[{cf. \cite[Prop. 5.77]{BP21}}]
	\label{FILTPUSH REM}
	
	Similarly to the formulas \eqref{FROPEXP EQ} and \eqref{FROPEXPG EQ} for
	$\mathbb{F}_\mathfrak{C} X$ and       
	$\mathbb{F}_\mathfrak{C}^G X$,
	one can use the description of left Kan extensions
	over maps of groupoids (cf. Remark \ref{CONVER REM})
	to give an explicit pointwise 
	description of the right pushouts in \eqref{FILTLAN EQ}.
	Namely, for each $\mathfrak C$-profile $\vect{C} \in \Sigma_{\mathfrak C}$, 
	one has the following alternative pushout diagrams
	\vspace{-10pt}
	\begin{align}\label{FILTPUSH EQ}
	\begin{tikzcd}[ampersand replacement=\&]
	\mathop{\coprod}\limits_{[\vect{T}] \in \Iso(\vect{C} \downarrow \Omega_{\mathfrak C}^a[k])}
	\left(
	\mathop{\bigotimes}\limits_{v \in \boldsymbol{V}^{ac}(T)} \O(\vect{T}_v) \otimes
	Q^{in}_{\vect{T}}[u]
	\right) \cdot_{\Aut_{\OC} (\vect{T})} \Aut_{\SC}(\vect{C})
	\arrow[r] \arrow[d]
	\&[15pt]
	\O_{k-1}(\vect{C}) \arrow[d]
	\\                  
	\mathop{\coprod}\limits_{[\vect{T}] \in \Iso(\vect{C} \downarrow \Omega_{\mathfrak C}^a[k])}
	\left(
	\mathop{\bigotimes}\limits_{v \in \boldsymbol{V}^{ac}(T)} \O(\vect{T}_v) \otimes
	\mathop{\bigotimes}\limits_{v \in \boldsymbol{V}^{in}(T)} Y(\vect{T}_v)
	\right) \cdot_{\Aut_{\OC}(\vect{T})} \Aut_{\SC}(\vect{C})
	\arrow[r]
	\&
	\O_k(C)
	\end{tikzcd}
	\end{align}
	\begin{align}\label{FILTPUSHG EQ}
	\begin{tikzcd}[ampersand replacement=\&,column sep = 5pt]
	\mathop{\coprod}\limits_{[\vect{T}] \in \Iso(\vect{C} \downarrow G^{op} \ltimes \Omega_{\mathfrak C}^a[k])}
	\left(
	\mathop{\bigotimes}\limits_{v \in \boldsymbol{V}^{ac}(T)} \O(\vect{T}_v) \otimes
	Q^{in}_{\vect{T}}[u]
	\right) \cdot_{\Aut_{G^{op} \ltimes \OC}(\vect{T})} \Aut_{G^{op} \ltimes \SC}(\vect{C})
	\arrow[r] \arrow[d]
	\&
	\O_{k-1}(\vect{C}) \arrow[d]
	\\                  
	\mathop{\coprod}\limits_{[\vect{T}] \in \Iso(\vect{\vect{C}} \downarrow G^{op} \ltimes \Omega_{\mathfrak C}^a[k])}
	\left(
	\mathop{\bigotimes}\limits_{v \in \boldsymbol{V}^{ac}(T)} \O(\vect{T}_v) \otimes
	\mathop{\bigotimes}\limits_{v \in \boldsymbol{V}^{in}(T)} Y(\vect{T}_v)
	\right) \cdot_{\Aut_{G^{op} \ltimes \OC}(\vect{T})} \Aut_{G^{op} \ltimes \SC}(\vect{C})
	\arrow[r]
	\&
	\O_k(\vect{C})
	\end{tikzcd}
	\end{align}
	where $Q^{in}_{\vect{T}}[u]$ denotes the source of the pushout-product map
	\begin{equation}
	\mathop{\mathlarger{\mathlarger{\mathlarger{\square}}}}_{v \in \boldsymbol{V}^{in}(T)} u(\vect{T}_v) \colon Q^{in}_{\vect{T}}[u] \to \bigotimes_{v \in \boldsymbol{V}^{in}(T)} Y(\vect{T}_v).
	\end{equation}
	Moreover, just as in Remark \ref{FROPEXPG REM},
	the contrast between
	\eqref{FILTPUSH EQ} and \eqref{FILTPUSHG EQ}
	is that \eqref{FILTPUSH EQ} has more coproduct summands while 
	\eqref{FILTPUSHG EQ}
	has larger inductions.
\end{remark}

\subsection{Injective change of color and pushouts of operads}
\label{INJCOLCH AP}

This subsection is dedicated to proving
Corollary \ref{LOCALISO_COR} below,
which will be needed 
to prove \cite[Prop. \ref{AC-J_CELL_PROP}]{BP_ACOP} in the sequel,
and deals with pushouts in 
$\mathsf{Op}_{\bullet}(\V)$
whose maps are injective on colors.

First note that, by a variation of the arguments in \eqref{OU EQ1}, \eqref{1STRED EQ}, \eqref{2NDRED EQ},
any pushout 
\[
\begin{tikzcd}
\mathcal{A} \ar{r} \ar{d} & \mathcal{O} \ar{d}
\\
\mathcal{B} \ar{r} & \mathcal{P}
\end{tikzcd}
\]
in $\mathsf{Op}_{\mathfrak{C}}(\V) $ has a description
\begin{align*}
\mathcal{P}
&
\simeq \colim_{[l] \in \Delta^{op}} 
B_l \left( \O, \mathcal{A}, \mathcal{A}, \mathcal{A}, \mathcal{B} \right)
\\
&
\simeq \colim_{[l] \in \Delta^{op},[n] \in \Delta^{op}} 
B_l \left( 
\mathbb F^{\circ n+1} \O, 
\mathbb F^{\circ n+1} \mathcal{A}, 
\mathbb F^{\circ n+1} \mathcal{A}, 
\mathbb F^{\circ n+1} \mathcal{A}, 
\mathbb F^{\circ n+1} \mathcal{B} \right)
\\
&
\simeq \colim_{[l] \in \Delta^{op},[n] \in \Delta^{op}} 
\mathsf{Lan} N \circ \left( N^{\circ n} \upsilon \O 
\amalg_{\mathsf{Set}}
\left( N^{\circ n} \upsilon \mathcal{A}\right)^{\amalg_{\mathsf{Set}} 2l +1}
\amalg_{\mathsf{Set}}
N^{\circ n} \upsilon \mathcal{B} \right)
\\
&	
\simeq
\mathop{\colim}\limits_{(\Delta \times \Delta)^{op}}
\left(
\mathsf{Lan}_{\left(\Omega_{\mathfrak C}^{n,\lambda^a_l} \to \Sigma_{\mathfrak C}\right)^{op}} N_{n,l}^{(\O,\mathcal{A},\mathcal{B})}
\right)
\\
&	
\simeq
\mathsf{Lan}_{\left(\Omega_{\mathfrak C}^{p} \to
	\Sigma_{\mathfrak C}\right)^{op}} N^{(\O,A,B)}
\stepcounter{equation}\tag{\theequation}\label{OU EQ2}
\end{align*}
where the partition $\lambda^a_l$ in the fourth line is the fully-active partition with $\left(\lambda^a_l\right)_a = \langle \langle l \rangle \rangle$
and, in analogy to Proposition \ref{EXTENTREE PROP},
the double realization
$\OC^p \simeq |\Omega_{\mathfrak C}^{n,\lambda^a_l}|$
is the category whose objects are the
$\{\mathcal{O},\mathcal{A},\mathcal{B}\}$-labeled trees,
and whose arrows are tall maps $\vect{T} \to \vect{S}$ such that

\begin{enumerate}[label=(\roman*)]
	\item if $v \in \boldsymbol{V}(T)$ is $\mathcal{A}$-labeled,
	then all vertices in $\vect{S}_{v}$ are $\mathcal{A}$-labeled;
	\item if $v \in \boldsymbol{V}(T)$ is $\mathcal{B}$-labeled,
	then all vertices in $\vect{S}_{v}$ are either $\mathcal{A}$-labeled or $\mathcal{B}$-labeled;
	\item if $v \in \boldsymbol{V}(T)$ is $\mathcal{O}$-labeled, then all vertices in $\vect{S}_{v}$ are either $\mathcal{A}$-labeled or $\mathcal{O}$-labeled.
\end{enumerate}
Moreover, one has the formula
(where $\boldsymbol{V}^{\mathcal{O}}$,
$\boldsymbol{V}^{\mathcal{A}}$,
$\boldsymbol{V}^{\mathcal{B}}$
denote
$\mathcal{O}$,$\mathcal{A}$,$\mathcal{B}$-labeled vertices)
\begin{equation}\label{NBAO EQ}
N^{(\mathcal{O},\mathcal{A},\mathcal{B})}(\vect{T}) = 
\left(\bigotimes_{v \in \boldsymbol{V}^{\mathcal{O}}(T)} \mathcal{O}(\vect{T}_v) \right)
\otimes
\left(\bigotimes_{v \in \boldsymbol{V}^{\mathcal{A}}(T)} \mathcal{A}(\vect{T}_v) \right)
\otimes
\left(\bigotimes_{v \in \boldsymbol{V}^{\mathcal{B}}(T)} \mathcal{B}(\vect{T}_v) \right).
\end{equation}

In the following, recall that if 
$\varphi \colon \mathfrak{C} \hookrightarrow \mathfrak{D}$
is injective, then
$\varphi_! = \check{\varphi}_!$,
cf. Remark \ref{OP_MAP REM}.

\begin{lemma}\label{BASICPUSH LEMMA}
	Let $\varphi \colon \mathfrak{C} \hookrightarrow \mathfrak{D}$ be an injective map of colors, 
	$\mathcal{A} \to \mathcal{B}$ a map in 
	$\mathsf{Op}_{\mathfrak{C}}(\V)$,
	and
	$\mathcal{O} \in \mathsf{Op}_{\mathfrak{D}}(\V)$.
	Then, if the leftmost diagram below in 
	$\mathsf{Op}_{\mathfrak{D}}(\V)$ is a pushout,
	so is the adjoint rightmost diagram in $\mathsf{Op}_{\mathfrak{C}}(\V)$.
	\[
	\begin{tikzcd}
	\varphi_! \mathcal{A} \ar{r} \ar{d} & \mathcal{O} \ar{d}
	&
	\mathcal{A} \ar{r} \ar{d} & \varphi^{\**} \mathcal{O} \ar{d}
	\\
	\varphi_! \mathcal{B} \ar{r} & \mathcal{P}
	&
	\mathcal{B} \ar{r} & \varphi^{\**} \mathcal{P}
	\end{tikzcd}
	\]
\end{lemma}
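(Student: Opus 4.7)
The plan is to use the explicit left Kan extension formula \eqref{OU EQ2} for both sides. Writing $\mathcal{P}' := \mathcal{B} \cup_{\mathcal{A}} \varphi^*\mathcal{O}$ for the putative pushout in $\mathsf{Op}_{\mathfrak{C}}(\V)$, one has
\[
\mathcal{P} \simeq \mathsf{Lan}_{(\Omega_{\mathfrak{D}}^{p} \to \Sigma_{\mathfrak{D}})^{op}} N^{(\mathcal{O},\varphi_!\mathcal{A},\varphi_!\mathcal{B})},
\qquad
\mathcal{P}' \simeq \mathsf{Lan}_{(\Omega_{\mathfrak{C}}^{p} \to \Sigma_{\mathfrak{C}})^{op}} N^{(\varphi^*\mathcal{O},\mathcal{A},\mathcal{B})}.
\]
Since $\varphi$ is injective, the extension-by-$\emptyset$ description of $\varphi_!$ on symmetric sequences (cf.\ Remark \ref{OP_MAP REM}) yields $\varphi_!\mathcal{A}(\varphi\vect{T}_v) = \mathcal{A}(\vect{T}_v)$ and similarly for $\mathcal{B}$, so that precomposition along $\varphi\colon \Omega_{\mathfrak{C}}^{p} \to \Omega_{\mathfrak{D}}^{p}$ identifies the two functors $N^{(\mathcal{O},\varphi_!\mathcal{A},\varphi_!\mathcal{B})} \circ \varphi^{op} = N^{(\varphi^*\mathcal{O},\mathcal{A},\mathcal{B})}$. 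The canonical comparison $\mathcal{P}' \to \varphi^*\mathcal{P}$ is then the Beck--Chevalley map associated to the commutative square of $\mathsf{lr}$ and $\varphi$, and it is precisely the adjoint comparison in the statement.

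To show this comparison is an isomorphism, I would fix $\vect{C} \in \Sigma_{\mathfrak{C}}$ and analyze the induced functor between opposites of undercategories
\[
\Phi\colon \left(\vect{C} \downarrow_{\mathsf{lr}} \Omega_{\mathfrak{C}}^{p}\right)^{op} \longrightarrow \left(\varphi\vect{C} \downarrow_{\mathsf{lr}} \Omega_{\mathfrak{D}}^{p}\right)^{op}.
\]
The key observation is that, since $\varphi_!\mathcal{A}$ and $\varphi_!\mathcal{B}$ vanish away from $\varphi(\mathfrak{C})$-colored signatures, the value $N^{(\mathcal{O},\varphi_!\mathcal{A},\varphi_!\mathcal{B})}(\vect{T})$ is the initial object of $\V$ unless every edge adjacent to an $\mathcal{A}$- or $\mathcal{B}$-labeled vertex lies in $\varphi(\mathfrak{C})$. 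In particular any inner edge of $\vect{T}$ whose color is not in $\varphi(\mathfrak{C})$ must connect two $\mathcal{O}$-labeled vertices.

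The main step is then a contraction argument: given such an inner edge $e$ between $\mathcal{O}$-labeled vertices $v_1,v_2$, let $\vect{T}'$ be obtained by grafting the two sub-corollas $\vect{T}_{v_1}$ and $\vect{T}_{v_2}$ into a single $\mathcal{O}$-labeled vertex. Condition (iii) of the description following \eqref{OU EQ2} gives a morphism $\vect{T}' \to \vect{T}$ in $\Omega_{\mathfrak{D}}^{p}$ (the collapsed $\mathcal{O}$-vertex expands into the original sub-tree), hence an arrow $\vect{T} \to \vect{T}'$ in the opposite undercategory, under which $N^{(\mathcal{O},\varphi_!\mathcal{A},\varphi_!\mathcal{B})}(\vect{T})$ factors through $N^{(\mathcal{O},\varphi_!\mathcal{A},\varphi_!\mathcal{B})}(\vect{T}')$ via operadic composition in $\mathcal{O}$ along $e$. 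Iterating this contraction over all inner edges of $\vect{T}$ with colors outside $\varphi(\mathfrak{C})$ eventually produces a tree in the image of $\Phi$.

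The main obstacle is formalizing the finality/cofinality claim: one must verify that, for each object of the target undercategory on which $N$ is non-initial, the comma category of $\Phi$-images mapping to it is non-empty and connected, so that the canonical map of colimits is an isomorphism. I expect this to follow from the contraction construction above by induction on the number of non-$\varphi(\mathfrak{C})$-colored inner edges, with connectedness coming from the Church--Rosser type confluence of edge contractions (any two contraction orders yield canonically isomorphic reductions). Once this is in hand, the colimits defining $\mathcal{P}'(\vect{C})$ and $\varphi^*\mathcal{P}(\vect{C}) = \mathcal{P}(\varphi\vect{C})$ agree, completing the proof.
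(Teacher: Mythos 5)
Your proposal follows the same route as the paper: reduce via \eqref{OU EQ2} to comparing the two left Kan extensions, identify $N^{(\mathcal{O},\varphi_!\mathcal{A},\varphi_!\mathcal{B})}\circ\varphi$ with $N^{(\varphi^{\**}\mathcal{O},\mathcal{A},\mathcal{B})}$ using injectivity of $\varphi$, and then argue finality of the induced functor of (opposite) undercategories by contracting the edges whose colors lie outside $\varphi(\mathfrak{C})$ — which, as you correctly observe, can only connect $\mathcal{O}$-labeled vertices. The obstacle you flag is real but is closed in the paper by two cleaner observations rather than your induction-plus-confluence scheme. First, your finality criterion ``for each object on which $N$ is non-initial'' is not the standard one; the paper instead lets $\widehat{\Omega}^p_{\mathfrak{D}}\subseteq\Omega^p_{\mathfrak{D}}$ be the full subcategory of labeled trees whose $\mathcal{A}$- and $\mathcal{B}$-labeled vertices have all adjacent edges colored in $\mathfrak{C}$, notes that $N$ is initial off this subcategory \emph{and} that this subcategory is a sieve, so that $N$ is the left Kan extension of its restriction and one may legitimately discard the remaining objects before any finality argument. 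Second, rather than contracting one edge at a time, the paper contracts all of $\boldsymbol{E}_{\mathfrak{D}\setminus\mathfrak{C}}(\vect{T})$ simultaneously, and observes that $\vect{T}\mapsto\vect{T}-\boldsymbol{E}_{\mathfrak{D}\setminus\mathfrak{C}}(\vect{T})$ is an adjoint to the inclusion of undercategories $(\vect{C}\downarrow\Omega^p_{\mathfrak{C}})\to(\vect{C}\downarrow\widehat{\Omega}^p_{\mathfrak{D}})$; finality of the opposite functors is then formal, and the Church--Rosser confluence analysis you anticipate is unnecessary. With these two adjustments your argument is essentially the paper's proof.
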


\begin{proof}
	
	We start by noting that the top composite in the diagram
	\[
	\begin{tikzcd}
	\Omega_{\mathfrak{C}}^{p,op} \ar{r}{\varphi} \ar{d}[swap]{\mathsf{lr}} &
	\Omega_{\mathfrak{D}}^{p,op} 
	\ar{rr}{N^{(\mathcal{O},\varphi_!\mathcal{A},\varphi_!\mathcal{B})}}
	\ar{d}[swap]{\mathsf{lr}} &&
	\mathcal{V}
	\\
	\Sigma_{\mathfrak{C}}^{op} \ar{r}{\varphi} &
	\Sigma_{\mathfrak{D}}^{op} 
	\end{tikzcd}
	\]
	is $N^{(\varphi^{\**}\mathcal{O},\mathcal{A},\mathcal{B})}$
	by the last part of Remark \ref{OP_MAP REM}, 
	so that our intended result is equivalent to showing that 
	following map is an isomorphism.
	\begin{equation}\label{LANISO EQ}
	\mathsf{Lan}_{\Omega_{\mathfrak{C}}^{p,op} \to \Sigma_{\mathfrak{C}}^{op}}
	N^{(\varphi^{\**}\mathcal{O},\mathcal{A},\mathcal{B})}
	\xrightarrow{\simeq}
	\left(
	\mathsf{Lan}_{\Omega_{\mathfrak{D}}^{p,op} \to \Sigma_{\mathfrak{D}}^{op}}
	N^{(\mathcal{O},\varphi_!\mathcal{A},\varphi_!\mathcal{A})}
	\right) \circ \varphi 
	\end{equation}
	To establish \eqref{LANISO EQ}, first let $\widehat{\Omega}^p_{\mathfrak{D}}$
	be the full subcategory of $\Omega^p_{\mathfrak{D}}$
	such that,
	if $v \in \boldsymbol{V}(T)$ is 
	$\mathcal{A}$ or $\mathcal{B}$-labeled,
	then $\vect{T}_v \in \Sigma_{\mathfrak C}$
	(i.e., all edges of $\vect{T}_v$ are colored by $\mathfrak{C}$).
	
	It follows from \eqref{NBAO EQ} that 
	$N^{(\mathcal{O}, \varphi_! \mathcal{A}, \varphi_! \mathcal{B})}(\vect{T}) =
	\emptyset$ whenever $\vect{T} \not \in \widehat{\Omega}^p_{\mathfrak{D}}$,
	and it is straightforward to check that 
	$\widehat{\Omega}^p_{\mathfrak{D}}$
	is a sieve of $\Omega^p_{\mathfrak{D}}$, 
	i.e. that for any map $\vect{T} \to \vect{S}$ 
	in $\Omega^p_{\mathfrak{D}}$ such that 
	$\vect{S} \in \widehat{\Omega}^p_{\mathfrak{D}}$ it is 
	$\vect{T} \in \widehat{\Omega}^p_{\mathfrak{D}}$.
	It then follows that 
	$N^{(\mathcal{O}, \varphi_! \mathcal{A}, \varphi_! \mathcal{B})}$
	is the left Kan extension of its restriction to 
	$\widehat{\Omega}^p_{\mathfrak{D}}$, 
	so we are free to replace
	$\Omega^p_{\mathfrak{D}}$
	with
	$\widehat{\Omega}^p_{\mathfrak{D}}$
	in the rightmost Kan extension in \eqref{LANISO EQ}.
	
	Note next that,
	for each $\vect{C} \in \Sigma_{\mathfrak{C}}$,
	the inclusion of undercategories
	$(\vect{C} \downarrow \Omega^p_{\mathfrak{C}})
	\to
	(\vect{C} \downarrow \widehat{\Omega}^p_{\mathfrak{D}})
	$
	has a left adjoint given by  
	$\vect{T} \mapsto \vect{T} - 
	\boldsymbol{E}_{\mathfrak{D} \setminus \mathfrak{C}}(\vect{T})$,
	where 
	$\boldsymbol{E}_{\mathfrak{D} \setminus \mathfrak{C}}(\vect{T})$
	is the set of edges of $\vect{T}$ whose colors are not in $\mathfrak{C}$
	(that this has a natural vertex labeling follows since all the edges being collapsed must connect $\mathcal{O}$-vertices, so there is no ambiguity as to how to label the vertices of $\vect{T} - \boldsymbol{E}_{\mathfrak{D} \setminus \mathfrak{C}}(\vect{T})$).
	But this shows that the opposite maps 
	$(\vect{C} \downarrow \Omega^p_{\mathfrak{C}})^{op}
	\to
	(\vect{C} \downarrow \widehat{\Omega}^p_{\mathfrak{D}})^{op}$
	are final and,
	since \eqref{LANISO EQ} is 
	computed via colimits over these (opposite) undercategories, the result follows.
\end{proof}

\begin{corollary}\label{FGTPUSH_COR}
	Suppose that $F,G$ on the left pushout diagram below are both injective on colors.
	\[
	\begin{tikzcd}
	\mathcal{A} \ar{r}{F} \ar{d}[swap]{G} & \mathcal{O} \ar{d}
	&
	\mathcal{A} \ar{r} \ar{d}[swap]{G} & F^{\**} \mathcal{O} \ar{d}
	\\
	\mathcal{B} \ar{r}{\bar{F}} & \mathcal{P}
	&
	\mathcal{B} \ar{r} & \bar{F}^{\**} \mathcal{P}
	\end{tikzcd}
	\]
	Then the rightmost diagram is also a pushout diagram.
\end{corollary}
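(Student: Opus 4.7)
The plan is to reduce to Lemma \ref{BASICPUSH LEMMA} by first converting the pushout in $\mathsf{Op}_\bullet(\V)$ to a pushout in a single fiber category, then applying the lemma, and finally identifying the result via a Beck--Chevalley base change which is available because pushouts of injections of sets are also pullbacks.

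First, I would recall that a pushout in $\mathsf{Op}_\bullet(\V)$ may be computed as follows: the underlying set of colors is the pushout $\mathfrak{P}=\mathfrak{O}\sqcup_\mathfrak{A}\mathfrak{B}$, with canonical maps $\alpha\colon\mathfrak{O}\to\mathfrak{P}$ and $\bar{F}\colon\mathfrak{B}\to\mathfrak{P}$; inside the fiber $\mathsf{Op}_\mathfrak{P}(\V)$, the operad $\mathcal{P}$ is then the pushout of $\check{\alpha}_!\mathcal{O}\leftarrow \check{\alpha}_!F_!\mathcal{A}=\check{\bar{F}}_!G_!\mathcal{A}\to\check{\bar{F}}_!\mathcal{B}$. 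Since $F,G$ are injective on colors, so are $\alpha$ and $\bar{F}$, the color square is simultaneously a pushout and a pullback of sets, and by Remark \ref{OP_MAP REM} all the operadic pushforwards $\check{\varphi}_!$ agree with their underlying counterparts $\varphi_!$, so I will drop the checks.

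Next, I would apply Lemma \ref{BASICPUSH LEMMA} to the injection $\bar{F}\colon\mathfrak{B}\hookrightarrow\mathfrak{P}$ with the operad map $G_!\mathcal{A}\to\mathcal{B}$ in $\mathsf{Op}_\mathfrak{B}(\V)$, noting that $\bar{F}_!(G_!\mathcal{A}\to\mathcal{B})=(\alpha_!F_!\mathcal{A}\to\bar{F}_!\mathcal{B})$ matches the input span of the fiber pushout above. The lemma then yields a pushout in $\mathsf{Op}_\mathfrak{B}(\V)$:
\[
\begin{tikzcd}
G_!\mathcal{A}\ar{r}\ar{d}&\bar{F}^{\**}\alpha_!\mathcal{O}\ar{d}\\
\mathcal{B}\ar{r}&\bar{F}^{\**}\mathcal{P}
\end{tikzcd}
\]

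The remaining step, which I expect to carry the bulk of the (still straightforward) bookkeeping, will be to establish the Beck--Chevalley identification $\bar{F}^{\**}\alpha_!\mathcal{O}\simeq G_!F^{\**}\mathcal{O}$ in $\mathsf{Op}_\mathfrak{B}(\V)$. Since $\varphi_!$ along an injective $\varphi$ is extension by the initial object on signatures outside the image (cf.\ Remark \ref{OP_MAP REM}) and $\varphi^{\**}$ is pointwise restriction, I would verify signature-wise that for $\vect{B}\in\Sigma_\mathfrak{B}$ both $(\bar{F}^{\**}\alpha_!\mathcal{O})(\vect{B})$ and $(G_!F^{\**}\mathcal{O})(\vect{B})$ equal $\mathcal{O}(F\vect{A})$ when $\vect{B}=G\vect{A}$ for some (necessarily unique) $\vect{A}\in\Sigma_\mathfrak{A}$, and equal $\emptyset$ otherwise; crucially, the characterization ``$\vect{B}$ lies in $G(\Sigma_\mathfrak{A})$ iff $\bar{F}\vect{B}$ lies in $\alpha(\Sigma_\mathfrak{O})$'' uses that the color square is a pullback. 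The operadic structure transports identically on both sides, so the isomorphism lifts to operads. Under this identification the top row of the square above becomes $G_!$ applied to the adjoint $\mathcal{A}\to F^{\**}\mathcal{O}$ of the original $F$, so the square is exactly the fiber-over-$\mathfrak{B}$ reformulation of the right square of the corollary (whose color-set pushout is $\mathfrak{A}\sqcup_\mathfrak{A}\mathfrak{B}=\mathfrak{B}$) being a pushout in $\mathsf{Op}_\bullet(\V)$, finishing the argument.
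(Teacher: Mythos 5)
Your argument is correct, but it takes a genuinely different route from the paper's. The paper's proof is a one-line reduction: one adjoins to both $\mathcal{A}$ and $\mathcal{O}$ a disjoint trivial operad on the colors $\mathfrak{C}_{\mathcal{B}}\setminus\mathfrak{C}_{\mathcal{A}}$ (which changes neither the pushout $\mathcal{P}$ nor, up to the evident identification, the restricted square), so that $G$ becomes a fixed-color map and Lemma \ref{BASICPUSH LEMMA} applies directly in the single fiber over $\mathfrak{C}_{\mathcal{B}}$. You instead make the bifibration mechanics explicit: you compute the pushout in $\Op_\bullet(\V)$ as a fiberwise pushout over the color pushout $\mathfrak{P}$ (the dual of Remark \ref{LIMINFIBSUP REM}, together with $\check{\varphi}_!=\varphi_!$ for injective $\varphi$ from Remark \ref{OP_MAP REM}), apply Lemma \ref{BASICPUSH LEMMA} along $\bar{F}\colon\mathfrak{B}\hookrightarrow\mathfrak{P}$, and then invoke the Beck--Chevalley isomorphism $\bar{F}^{\**}\alpha_!\simeq G_!F^{\**}$, which is where the injectivity of both $F$ and $G$ enters (the color square is simultaneously a pushout and a pullback of sets, so ``extension by $\emptyset$ then restrict'' agrees with ``restrict then extend by $\emptyset$''). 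Both arguments rest on the same key input, Lemma \ref{BASICPUSH LEMMA}; the paper's trick is shorter and hides the base change inside the disjoint-union maneuver, while yours isolates the base-change isomorphism as the actual content and would generalize more transparently to other bifibered settings. The only points you should make sure to nail down in a full write-up are (i) the fiberwise computation of pushouts in $\Op_\bullet(\V)$, which the paper never states explicitly for colimits (only the limit version appears, in Remark \ref{LIMINFIBSUP REM}), and (ii) the compatibility of the Beck--Chevalley isomorphism with the adjunction units, so that the top arrow of your final square really is $G_!$ applied to the adjoint $\mathcal{A}\to F^{\**}\mathcal{O}$; both are routine.
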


\begin{proof}
	This follows by adding to both $\mathcal{A}$ and $\mathcal{O}$ a disjoint trivial operad on the object set
	$\mathfrak{C}_{\mathcal{B}} \setminus \mathfrak{C}_{\mathcal{A}}$.
	Doing so does not alter the left pushout,
	but the map $G$ now becomes a fixed color map,
	so that Lemma \ref{BASICPUSH LEMMA} can be applied.
\end{proof}

\begin{corollary}\label{LOCALISO_COR}
	Suppose that we have a pushout in $\Op_{\bullet}(\V)$ such that $F,G$ are both injective on colors.
	\[
	\begin{tikzcd}
	\mathcal{A} \arrow[d, "G"'] \arrow[r, "F"]
	&
	\O \arrow[d]
	\\
	\mathcal{B} \ar{r}{\bar{F}}
	&
	\P
	\end{tikzcd}
	\]
	If $F\colon \mathcal{A} \to \mathcal{O}$ is a local isomorphism, then so is $\bar{F} \colon \mathcal{B} \to \P$.
\end{corollary}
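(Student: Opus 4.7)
The plan is to reduce the claim to a direct application of Corollary \ref{FGTPUSH_COR}, together with the elementary categorical fact that pushouts transport isomorphisms across squares.

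First I would unpack the hypothesis. To say $F \colon \mathcal{A} \to \mathcal{O}$ is a local isomorphism means that for every $\vect{C} \in \Sigma_{\mathfrak{C}_{\mathcal{A}}}$ the map $\mathcal{A}(\vect{C}) \to \mathcal{O}(F \vect{C})$ is an isomorphism in $\V$; by Remark \ref{OP_MAP REM} this is equivalent to asking that the adjoint map $\mathcal{A} \to F^{\**}\mathcal{O}$ be an isomorphism in $\Op_{\mathfrak{C}_{\mathcal{A}}}(\V)$. Similarly, it suffices to show that the adjoint map $\mathcal{B} \to \bar{F}^{\**}\mathcal{P}$ is an isomorphism in $\Op_{\mathfrak{C}_{\mathcal{B}}}(\V)$, since this is what it means for $\bar{F}$ to be a local isomorphism.

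Next, I would apply Corollary \ref{FGTPUSH_COR} (noting that the hypotheses $F,G$ injective on colors hold by assumption) to obtain that the adjoint square
\[
\begin{tikzcd}
\mathcal{A} \arrow[d, "G"'] \arrow[r]
&
F^{\**} \O \arrow[d]
\\
\mathcal{B} \arrow[r]
&
\bar{F}^{\**} \P
\end{tikzcd}
\]
is again a pushout diagram in $\Op_\bullet(\V)$. By the hypothesis combined with the reformulation in the previous paragraph, the top horizontal map $\mathcal{A} \to F^{\**}\O$ is an isomorphism.

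Finally, I would invoke the general fact that in any category, if one leg of a pushout square is an isomorphism then the parallel leg is too. Explicitly, given the pushout above with $\mathcal{A} \xrightarrow{\simeq} F^{\**}\O$, one checks directly that the square with bottom edge $\mathcal{B} \xrightarrow{=} \mathcal{B}$ and right edge $F^{\**}\O \to \mathcal{B}$ (induced by $G$ composed with the inverse of $\mathcal{A} \to F^{\**}\O$) is also a pushout of the same span, hence $\bar{F}^{\**}\P \simeq \mathcal{B}$ canonically and the bottom map $\mathcal{B} \to \bar{F}^{\**}\P$ is an isomorphism. This gives the desired conclusion. The entire argument is quite short; the substantive work has already been done in Lemma \ref{BASICPUSH LEMMA} and Corollary \ref{FGTPUSH_COR}, which are what make it possible to reduce an all-colors pushout to a fixed-color comparison, and no further obstacle arises here.
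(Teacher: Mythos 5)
Your proof is correct and follows the paper's own argument essentially verbatim: restate the local isomorphism condition as the adjoint map $\mathcal{A} \to F^{\**}\mathcal{O}$ being an isomorphism, apply Corollary \ref{FGTPUSH_COR} to see the adjoint square is a pushout, and conclude since pushouts of isomorphisms are isomorphisms. The only difference is that you spell out the last (standard) categorical step, which the paper leaves as "immediate."
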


\begin{proof}
	The desired claim can be restated as saying that,
	if $\mathcal{A} \to F^{\**} \mathcal{O}$
	is an isomorphism,
	then so is $\mathcal{B} \to \bar{F}^{\**} \mathcal{O}$.
	But this is immediate from Corollary \ref{FGTPUSH_COR}.      
\end{proof}

\providecommand{\bysame}{\leavevmode\hbox to3em{\hrulefill}\thinspace}
\providecommand{\MR}{\relax\ifhmode\unskip\space\fi MR }
\providecommand{\MRhref}[2]{%
  \href{http://www.ams.org/mathscinet-getitem?mr=#1}{#2}
}
\providecommand{\doi}[1]{%
  doi:\href{https://dx.doi.org/#1}{#1}}
\providecommand{\arxiv}[1]{%
  arXiv:\href{https://arxiv.org/abs/#1}{#1}}
\providecommand{\href}[2]{#2}

\makeatletter\@input{labels-OneColor.tex}\makeatother

\end{document}